\newcommand {\R}{\mathbb{R}}
\newcommand {\Z}{\mathbb{Z}}
\newcommand {\N}{\mathbb{N}}
\newcommand {\C}{\mathbb{C}}
\newcommand {\D}{\mathbb{D}}
\newcommand {\back}{\backslash}
\newcommand{\la}{\lambda}
\newcommand{\re}{\operatorname{Re}}
\newcommand{\sumast}{\mathop{\sum\nolimits^{\mathrlap{\ast}}}}
\newcommand{\Mod}[1]{\ (\mathrm{mod}\ #1)}
\newtheorem{thm}{Theorem}[section]
\newtheorem{lemma}[thm]{Lemma}
\newtheoremstyle{named}{}{}{\itshape}{}{\bfseries}{.}{.5em}{\thmnote{#3}}
\theoremstyle{named}
\numberwithin{equation}{section}
\title{A GL(3) converse theorem via a ``beyond endoscopy'' approach}
\author{Valentin Blomer and Wing Hong Leung}
\date{}
\begin{document}

\begin{abstract} We give a new proof of the converse theorem for Maa{\ss} forms on ${\rm GL}(3)$ using a technique that is inspired by Langlands' philosophy of ``beyond endoscopy'', thereby implementing these ideas for the first time in a higher rank setting. 
    \end{abstract}

\subjclass[2020]{11F55, 11F72, 11M41}
\keywords{converse theorem, Kuznetsov formula, beyond endoscopy, Voronoi formula, higher rank}

\thanks{First author supported by DFG through  EXC-2047/1 - 390685813 and BL 915/5-1 and by ERC
Advanced Grant 101054336. Second author acknowledges support and excellent working conditions by the Max-Planck Institute in Bonn. }

\maketitle

\section{Introduction}

\subsection{Converse theorems}

A converse theorem is a statement that the coefficients of a Dirichlet series come from an automorphic   form provided that the Dirichlet series has sufficiently nice properties -- at least a functional equation, usually also functional equations for suitable twists, and sometimes an Euler product. The classical converse theorems for ${\rm GL}(2)$ go back to Hecke \cite{He} and Weil \cite{We}. The first converse theorem for ${\rm GL}(3)$ was proved by Jacquet, Piatetskii-Shapiro and Shalika \cite[Theorem (13.6)]{JPSS}: roughly speaking it says that if the coefficients $A(n, m)$ satisfy the Hecke relations and if for all Dirichlet twists the $L$-functions $\sum_{n} A(n, 1) \chi(n) n^{-s}$ satisfy the correct functional equations, then $A(n, 1)$ are the Hecke eigenvalues of an automorphic form on ${\rm GL}(3)$. A different proof was obtained by Miller and Schmid \cite[Theorem 7.10]{MS} and another proof can be found in \cite[Theorem 7.1.3]{goldfeld2006automorphic}. 

In this paper we present a new and structurally different proof which is inspired by  Langlands' ideas on ``beyond endoscopy''. This is the first implementation in a higher rank set-up. We now explain this connection.

In \cite{La}, Langlands proposed a method to understand functoriality not by endoscopic methods, but by analytic properties of $L$-functions. More precisely, functorial lifts should be witnessed by poles of certain associated $L$-functions which in turn can be detected by methods of analytic number theory. The prototypical example are dihedral forms on ${\rm GL}(2)$ (automorphic induction from ${\rm GL}(1)$ Gr\"o\ss encharacters) which are distinguished by the fact that their symmetric square $L$-functions have a pole at $s=1$. This was worked out by Venkatesh \cite{Ve}, see also \cite{Sa-letter}.  In his thesis \cite[Chapter 3]{Ve-thesis}, he observed that the same ideas can be used to prove a ${\rm GL}(2)$ converse theorem: suppose that a sequence $b(n)$ satisfies the Ramanujan conjecture (only for technical simplicity) and the Voronoi summation formula (this is essentially the same as the functional equation for Dirichlet twists), then it comes from a modular form. We will recall the proof in the next subsection. 

Langlands' ideas on ``beyond endoscopy'' have been applied in various other classical ${\rm GL}(2)$ situations, see e.g.\ \cite{Al2, GM, He1} for a Selberg-type bound towards the Ramanujan conjecture and the existence of poles at $s=1$ for Rankin-Selberg and Asai $L$-functions of ${\rm GL}(2)$ automorphic forms. An interesting and difficult application of these ideas to a new proof of Waldspurger’s theorem can be found in \cite{Sa1, Sa2}. 

However, this philosophy has resisted all attempts so far to be applicable to higher rank situations. 
The present paper is an extension to ${\rm GL}(3)$ of Venkatesh's version of the converse theorem using genuine higher rank tools.

\begin{thm}\label{thm1} Suppose that a sequence $B(n, m)$ of complex numbers satisfies the usual Hecke relations, the Ramanujan conjecture and the Voronoi summation formula. Then this sequence is the set of Fourier coefficients of an automorphic form on ${\rm GL}(3)$ whose archimedean Langlands parameter is determined by the gamma factors in the Voronoi summation formula. 
\end{thm}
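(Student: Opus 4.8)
The plan is to mimic Venkatesh's "beyond endoscopy" proof of the GL(2) converse theorem, but with the Kuznetsov formula for GL(3) in place of the GL(2) one. Starting from the sequence $B(n,m)$, I would form, for a fixed test function with spectral transform supported away from the trivial representation, the weighted sum
\[
\mathcal{S}(X) = \sum_{n,m} B(n,m)\, \overline{\lambda_\varpi(n,m)}\, w\!\left(\frac{n^2 m}{X}\right)
\]
over an orthonormal-ish family of GL(3) Maa{\ss} cusp forms $\varpi$ (plus Eisenstein contributions), and try to show that as $X \to \infty$ the main term isolates exactly one cuspidal (or residual/Eisenstein) representation whose Hecke eigenvalues must then coincide with $B(n,m)$. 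Concretely: insert the definition of $B(n,m)$ into the GL(3) Kuznetsov formula; the geometric side produces a "diagonal" term (the Kloosterman zero-frequency / identity contribution) plus Kloosterman sums attached to the two nontrivial Weyl elements, and these are controlled using the Ramanujan bound on $B(n,m)$ together with the assumed Voronoi summation formula, which lets one dualize the Kloosterman sums the same way the Poisson/Voronoi step works in rank one.

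The key steps, in order, are: \emph{(i)} set up the spectral side — choose archimedean test functions realizing, via the Kuznetsov/Lebedev–Whittaker transform, a smooth bump at a chosen spectral parameter $\mu$, so that the spectral sum $\sum_\varpi h(\mu_\varpi)\, \widehat{B}_\varpi(\text{stuff})$ detects whether some $\varpi$ has $\lambda_\varpi = B$; \emph{(ii)} open everything with the GL(3) Kuznetsov formula of Buttcane/Blomer, turning the problem into estimating the long-Weyl-element Kloosterman term $\sum_{D_1,D_2} S(\dots)/(D_1 D_2)\cdot \mathcal{J}(\dots)$ against $B(n,m)$; \emph{(iii)} apply the hypothesized Voronoi summation formula to the $n,m$-sums to convert oscillatory Kloosterman sums into dual sums with controlled size, extracting a main term of size $\sim X$ (the "diagonal") and error $o(X)$; \emph{(iv)} conclude that the diagonal forces a nonzero residue, i.e.\ some automorphic $\varpi_0$ with $\lambda_{\varpi_0}(n,m) = B(n,m)$ for all $n,m$; \emph{(v)} identify the archimedean parameter of $\varpi_0$ by matching the gamma factors in the Voronoi formula against the known Archimedean factors of the Whittaker transform, and use the Hecke relations plus strong multiplicity one to upgrade pointwise agreement of finitely-detected coefficients to the full statement that $B(n,m)$ are the Fourier coefficients of $\varpi_0$.

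I expect the main obstacle to be step \emph{(iii)}: in rank one the single Kloosterman term dualizes cleanly under one application of Voronoi, but on GL(3) there are two nontrivial Bruhat cells and the long Weyl element's Kloosterman sum is a genuine two-variable hyper-Kloosterman-type object whose Bessel-function weight $\mathcal{J}$ is a nasty double integral/Mellin–Barnes transform; getting the Voronoi summation (applied separately in $n$ and in $m$, with the Hecke relations mediating the interaction) to produce a clean main term of the right order, rather than drowning in the archimedean analysis of $\mathcal{J}$, is where the real work lies. A secondary difficulty is bookkeeping the continuous spectrum — the minimal and maximal parabolic Eisenstein contributions must be shown either to cancel or to be separable from the cuspidal main term — and ensuring the test function can be chosen so that exactly one spectral component survives, which is what makes the final "it comes from a single automorphic form" conclusion rigorous rather than merely heuristic.
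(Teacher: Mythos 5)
Your overall strategy -- Kuznetsov on GL(3), localize the spectral side with a test function peaked at a chosen $\mu$, attack the long Weyl element with the assumed Voronoi formula, then use Hecke relations and multiplicity one to pin down a single form -- is indeed the architecture of the paper (which works with $\sum_n B(1,n)A_\pi(1,n)\overline{A_\pi(1,m)}$ for a fixed auxiliary $m$ rather than your full Rankin--Selberg pairing, precisely to keep the combinatorics manageable; the paper notes your variant would work but be ``even more complicated''). However, there is a genuine gap at your step \emph{(iii)}, and it is exactly the point where the rank-one analogy breaks. On GL(2) a single Voronoi application shortens the $n$-sum to length $O(1)$ and the Kloosterman sum collapses to a Ramanujan sum, producing the diagonal. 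On GL(3) a single application of Voronoi to the long-Weyl-element term does \emph{not} shorten the $n$-sum at all (the dual length is comparable), and no main term emerges. The paper's central new idea is a three-step chain: Voronoi in $n$ modulo $D_1$, then \emph{additive reciprocity} $e(n\overline{D_2}/D_1)\approx e(n\overline{D_1}/D_2)$ (legitimate because $n\asymp D_1D_2$ after the first Voronoi), then a \emph{second} Voronoi modulo the much smaller $D_2$, and finally \emph{Poisson summation} in the long variable $D_1$, which manufactures the congruence $n\equiv m \pmod{D_2}$ that collapses to the equality $n=m$ by size. Without reciprocity and the second dualization your single Voronoi step leaves you with an oscillatory sum of the same length and the argument stalls.

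The archimedean counterpart of this chain is also a missing ingredient you would need: the main term only assembles because the GL(3) Kuznetsov kernel for the long Weyl element factors (Lemma \ref{kernelequal} of the paper) as a product of two Voronoi kernels times a ``diagonal'' gamma factor $\Gamma(1-s_1-s_2)$ reflecting reciprocity, and because of Buttcane's GL(3) analogue of the Sears--Titchmarsh inversion (Lemma \ref{ortho}), which converts the resulting integral transform back into $h(\mu_0)$. ``Matching gamma factors'' at the end, as you propose, is not enough; the kernel identity is what drives the computation. Finally, your treatment of the continuous spectrum is too optimistic: the Eisenstein contributions neither cancel nor separate by test-function choice alone (the $\mu$-integral runs over the whole tempered line); the paper must \emph{prove} they are $o(X)$, using that the hypothesized Voronoi formula has no main term (so $B(1,n)$ is ``orthogonal'' to divisor-type coefficients), and for the maximal parabolic this requires a genuine shifted-convolution argument via Jutila's circle method. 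These are not bookkeeping issues but substantive steps without which the proof does not close.
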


The Hecke relations mean that $B(n, m)$ is multiplicative in both arguments (in particular $B(1, 1) = 1$) and locally a ${\rm PGL}(3)$ Schur polynomial:
\begin{equation}\label{sat}
B(p^{k}, p^{\ell}) =  \det \left(\begin{smallmatrix} \alpha_p^{k+\ell + 2} & \beta_p^{k+\ell + 2} & \gamma_p^{k + \ell + 2}\\ \alpha_p^{k+1} & \beta_p^{k+1} & \gamma_p^{k+1}\\ 1 & 1 & 1\end{smallmatrix}\right)/\det \left(\begin{smallmatrix} \alpha_p^{2} & \beta_p^{  2} & \gamma_p^{ 2}\\ \alpha_p  & \beta_p  & \gamma_p \\ 1 & 1 & 1\end{smallmatrix}\right)
\end{equation}
for primes $p$ and $k,\ell \in \Bbb{N}_0$ and ``Satake parameters'' $\alpha_p, \beta_p, \gamma_p \in \Bbb{C}$ satisfying $\alpha_p \beta_p\gamma_p = 1$ (trivial central character) 
and $|\alpha_p| =  |\beta_p| = | \gamma_p| = 1 $ (Ramanujan conjecture). The Voronoi formula with respect to a ``spectral parameter'' $\mu$ is spelled out explicitly in Lemma \ref{lem.Voronoi} below. It is equivalent to the functional equations of twisted $\mathrm{GL}(3)$ $L$-functions by the work of Kiral and Zhou \cite{kiral2016voronoi}. The assumption of the Ramanujan conjecture is only for technical convenience and can  most likely be relaxed to the combination of  
(a) 
unitarity, i.e.\  
$\{\alpha_p^{-1}, \beta_p^{-1}, \gamma_p^{-1}\} = \{\overline{\alpha_p}, \overline{\beta_p}, \overline{\gamma_p}\}$, (b)
 some suitable spectral gap at each place, e.g.\ the Kim-Sarnak bound \cite{KS}, 
and (c) a Rankin-Selberg bound $\sum_{nm^2 \leq x} |B(m, n)|^2 \ll x$. 
 On the other hand, we don't expect to find Fourier coefficients of automorphic forms on ${\rm GL}(3)$ \emph{not} satisfying the Ramanujan conjecture.

\subsection{The method}

We start by sketching the method in the case of ${\rm GL}(2)$ Maa{\ss} forms, ignoring all technicalities. Suppose that $b(n)$ is a sequence of numbers satisfying the usual Voronoi summation formula with spectral parameter $t_0$. Fix a weight function $h$ and an integer $m$, and for a parameter $X$ tending to infinity (while keeping $m$ and $h$ fixed) consider
\begin{equation}\label{sum}
\sum_{j}h(t_j)  \lambda_j(m) \sum_{n \asymp X} b(n) \lambda_j(n)
\end{equation}
where the $j$-sum runs over a basis Maa{\ss} forms with Hecke eigenvalues $\lambda_j(n)$ and spectral parameter $t_j$. If the $b(n)$ come from an automorphic form, they should ``resonate'' with some element in the $j$-sum and produce a main term of size $X$. We sum over $j$ (ignoring the Eisenstein spectrum as well as the necessary harmonic weights)  with the Kuznetsov formula, which produces a diagonal contribution of size $O(1)$ (since $m$ is fixed) and an off-diagonal term roughly of the shape
$$\sum_{n \asymp X} b(n) \sum_{c} \frac{1}{c} S(n, m, c) \int \frac{h(t)}{\cosh(\pi t)} J_{2it}\Big( \frac{4 \pi \sqrt{nm}}{c}\Big) t\, \mathrm{d}t.$$
The effective length of the $c$-sum is $O(\sqrt{x})$. We can now open the Kloosterman sum and apply Voronoi summation in $n$ with parameter $t_0$. This will reduce the $n$-sum to length $O(1)$ and turn the Kloosterman sum into a Ramanujan sum which we think of as a congruence $n \equiv m$ (mod $c$). So heuristically we obtain
$$  \sum_{c \asymp X^{1/2}}\frac{1}{c} \sum_{\substack{n \asymp 1\\ n \equiv m \, (\text{ mod } c)}} b(n) \int_{x \asymp X} \int \frac{h(t)}{\cosh(\pi t)\cosh(\pi t_0)} J_{2it}\Big( \frac{4 \pi \sqrt{xm}}{c}\Big) J_{2it_0}\Big( \frac{4 \pi \sqrt{xn}}{c}\Big)  t\, \mathrm{d}t\, \mathrm{d}x.$$
This restricts the $n$-sum to $n=m$ (the congruence becomes an equality!), in which case the double integral is an instance of the Sears-Titchmarsh transform, and we obtain asymptotically
\begin{equation}\label{eval}
X b(m) h(t_0).
\end{equation}
Since $m$ and $h$ are free, we can think of $h$ as approximating a delta function at $t_0$, and in the finite dimensional vector space of Maa{\ss} forms with spectral parameter $t_0$ we can vary $m$ to show the sequence $b(m)$ comes from a Maa{\ss} form with spectral parameter $t_0$.  

A few remarks are in order. First of all, the sum \eqref{sum} contains only elements $b(n)$ with $n \asymp X$, nevertheless the Voronoi formula contains enough ``glue'' so that the asymptotic evaluation \eqref{eval} features $b(m)$ with $m$ fixed, in particular far outside the range $[X, 2X]$. More importantly, while the argument is global in nature, there are some interesting local features. The archimedean part is powered by the fact that the Kuznetsov kernel is essentially the same as the Voronoi kernel, and that it is unitary and can be inverted (``Sears-Titchmarsh transform''). 

Let's try the same for ${\rm GL}(3)$ as a back-of-an-envelope computation. We consider
$$\sum_{j} A_j(m, 1) \sum_{n \asymp X} B(1, n) A_j(1, n)$$
Applying the ${\rm GL}(3)$ Kuznetsov formula and keeping only the long Weyl element contribution, we obtain something roughly of the shape
\begin{equation}\label{afterkuz}
\sum_{n \asymp X} B(1, n)\sum_{D_1 \asymp X^{2/3}} \sum_{D_2 \asymp X^{1/3}} \frac{S(n, D_2, D_1)S(m, D_1, D_2)}{D_1D_2},
\end{equation}
at least when $(D_1, D_2) = 1$, in which case the ${\rm GL}(3)$ Kloosterman sum factorizes nicely into two ${\rm GL}(2)$ Kloosterman sums. We pretend for simplicity that all pairs of numbers are coprime.  Next we open the first Kloosterman sum and apply the Voronoi formula to the $n$-sum getting something of the shape
$$\sum_{n \asymp X} B(n, 1)\sum_{D_1 \asymp X^{2/3}} \sum_{D_2 \asymp X^{1/3}} \frac{e(n \overline{D_2}/D_1) S(m, D_1, D_2)}{D_1^{1/2}D_2}.$$
(Here and henceforth the notation $\overline{x}$  denotes the multiplicative inverse of $x$ with respect to a modulus that is clear from the context.)  
This step has not reduced the length of the $n$-sum (unlike in the ${\rm GL}(2)$ case), but improved the shape. As $n \asymp D_1D_2$, we can apply additive reciprocity at no cost and write $e(n \overline{D_2}/D_1) \approx e(-n \overline{D_1}/D_2)$, which reduces the conductor of the character. Now we can apply Voronoi summation a \emph{second} time with respect to a much smaller modulus $D_2$ getting something roughly of the shape
$$X^{1/2} \sum_{n \asymp 1} B(1, n)\sum_{D_1 \asymp X^{2/3}} \sum_{D_2 \asymp X^{1/3}} \frac{S(n, D_1, D_2) S(m, D_1, D_2)}{D_1D_2}.$$
Poisson summation in $D_1$ (which is much longer than $D_2$) produces the congruence $n\equiv m$ (mod $D_2$) which has to be an equality due to the size condition, as in the ${\rm GL}(2)$ case. At this point  there is no oscillation anymore and we end up with $X B(1, m)$. Note that instead of a single Voronoi step we need the combination Voronoi-reciprocity-Voronoi. 

This global argument has an important local counterpart, and  it turns out that the  ${\rm GL}(3)$ Kuznetsov kernel is a convolution of two Voronoi kernels and a   ``diagonal'' exponential reflecting reciprocity, i.e.\ its double Mellin transform is  $$
\gamma(s_1 + s_2, \Bbb{1}) \gamma(1-s_1, \pi) \gamma(1 - s_2, \tilde{\pi})
$$  
with $\gamma(s, \pi) =  L_{\infty}(1-s, \tilde{\pi})/L_{\infty}(s, \pi)$. This identity has not yet appeared in the literature, and we will prove it in a precise form in Lemma \ref{kernelequal} below. There is a $p$-adic analogue of this formula of which we state the following special case: if $\chi, \psi$ are two non-trivial characters modulo $p$, then the finite double Mellin transform of the long Weyl element Kloosterman sum as in \eqref{KNDecomposition} satisfies
\begin{equation}\label{p-adic}
\underset{n, m\, (\text{mod } p)}{\sum\sum} S(n, 1, 1, m; p^2, p^2)\chi(-n)\psi(-m) =    \tau(\overline{\chi\psi}) \tau(\chi)^3\tau(\psi)^3 
\end{equation}
where $\tau$ denotes the Gau{\ss} sum. We prove and discuss this identity in Section \ref{sec23}. It is the analogue of the classical ${\rm GL}(2)$ formula $\sum_{n \, (\text{mod } p)} S(n, 1, p) \chi(n) = \tau(\chi)^2.$ 
The key point is that the kernels of the local orbital integrals in the ${\rm GL}(3)$ relative trace formula can be expressed in terms of concrete automorphic data.

This is in a nutshell the content of the present paper. Making this argument precise, however, takes the next 50 pages and involves a gigantic alphabet of summation variables. More precisely, we will prove the following theorem. Let $X>1$. Let $V\in C_c^\infty([1,2])$  and  $m$ be a fixed positive  integer
.  Let $h$ be a rapidly decaying, Weyl group invariant and holomorphic test function with prescribed zeros as specified at the beginning of Section \ref{sec35}. Let $\mathcal{B}$ be an orthonormal basis of Hecke-Maa{\ss} cusp forms with archimedean Langlands parameter $\mu = (\mu_1, \mu_2, \mu_3)$ for the group ${\rm SL}_3(\Bbb{Z})$. For $\pi\in\mathcal{B}$ let $A_\pi(m,n)$ be the corresponding Fourier coefficient. 
Consider the sum \begin{equation}\label{sm}
    S_m(X)=\sum_{\pi\in \mathcal{B}} \sum_n B(1,n)A_\pi(1,n)\overline{A_\pi(1,m)}V\left(\frac{n}{X}\right)\frac{h(\mu_\pi)}{L(1, \text{Ad}, \pi)}.
\end{equation}

\begin{thm}\label{thm2} Keep the assumptions on the sequence $B$ from Theorem \ref{thm1}. In particular, assume that the Voronoi summation formula of Lemma \ref{lem.Voronoi} holds with a spectral parameter $\mu_0 \in (i\Bbb{R})^3$ (i.e.\ satisfying the Ramanujan conjecture). Then there exist  constants $c \not= 0$, $\delta > 0$ such that for 
cubefree $m \in \Bbb{N}$ one has 
    $$S_m(X)=c B(1, m) \tilde{V}(1) h(\mu_0) X+O_{m, h, V}(X^{1-\delta})$$
    where $\tilde{V}$ denotes the  Mellin transform of $V$. 
 \end{thm}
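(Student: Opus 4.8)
The plan is to carry out the heuristic sketch of the introduction rigorously, treating the cuspidal sum $S_m(X)$ by the ${\rm GL}(3)$ Kuznetsov formula and isolating the long-Weyl-element contribution as the only source of a main term. First I would apply the Kuznetsov formula to \eqref{sm}, inserting the harmonic weight $1/L(1,\mathrm{Ad},\pi)$ which is precisely what the spectral side of Kuznetsov produces; this also forces us to carry along the Eisenstein contributions (both minimal and maximal parabolic), which I expect to be negligible because the test function $h$ has its zeros prescribed so as to kill the relevant residual/continuous terms, or else can be bounded directly using convexity and the polynomial growth of the Eisenstein Fourier coefficients. On the geometric side one gets the identity contribution, the two ``${\rm GL}(2)$-type'' Weyl element terms $w_4, w_5$, and the long Weyl element term $w_6$. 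The identity term is $O(1)$ because $m$ is fixed and $n \asymp X$ forces $n \neq m$ for large $X$, so it contributes nothing; the $w_4, w_5$ terms involve a single ${\rm GL}(2)$ Kloosterman sum and a one-dimensional Bessel-type transform, and I would bound these by opening the Kloosterman sum, applying the ${\rm GL}(3)$ Voronoi formula (Lemma \ref{lem.Voronoi}) once in $n$, and estimating trivially — the point is that after one Voronoi step the dual sum is short and there is still oscillation from the remaining exponential, giving a power saving.

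The heart of the argument is the long Weyl element term, which after opening both ${\rm GL}(2)$ Kloosterman sums appearing in the (coprime) factorization of the ${\rm GL}(3)$ Kloosterman sum looks like \eqref{afterkuz}. Here I would implement the Voronoi–reciprocity–Voronoi chain: apply Lemma \ref{lem.Voronoi} in $n$ with spectral parameter $\mu_0$ to convert one additive character into a shorter dual sum with modulus $D_1$; use additive reciprocity (legitimate because the relevant sizes match, $n \asymp D_1 D_2$) to swap the modulus from $D_1$ to the much smaller $D_2$; then apply Voronoi a second time with respect to $D_2$, which now genuinely shortens the dual $n$-sum to length $O(1)$ and pulls out the factor $X^{1/2}$. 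At this stage the sum is $X^{1/2}\sum_{n \asymp 1} B(1,n)\sum_{D_1,D_2} D_1^{-1}D_2^{-1} S(n,D_1,D_2)S(m,D_1,D_2)$ times an archimedean weight, and a Poisson summation in the long variable $D_1$ produces a congruence $n \equiv m \pmod{D_2}$; since $n$ and $m$ are both $O(1)$ while $D_2$ can be taken large, the congruence collapses to $n = m$, killing all remaining oscillation and leaving $c\, B(1,m)\, \tilde V(1)\, h(\mu_0)\, X$. The non-$n=m$ terms and the error from truncating the various sums give the $O(X^{1-\delta})$. Crucially, the bookkeeping that the archimedean integrals at each step really do compose to the constant $c \neq 0$ times $h(\mu_0)$ rests on the identity of Lemma \ref{kernelequal} identifying the ${\rm GL}(3)$ Kuznetsov kernel's double Mellin transform with $\Gamma_\Bbb{R}(1-s_1-s_2)\prod_j \Gamma_\Bbb{R}(s_1+\mu_j)\Gamma_\Bbb{R}(s_2-\mu_j)$, together with Buttcane's ${\rm GL}(3)$ Sears–Titchmarsh inversion, so the two Voronoi gamma factors and the ``diagonal'' reciprocity gamma factor match up exactly and the spectral localization at $\mu_0$ survives.

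The main obstacle, and where most of the fifty pages will go, is the analytic control of the triple (in fact higher-dimensional, once all the divisor and coprimality splittings are in place) sum of products of Kloosterman sums with an oscillatory archimedean kernel: one must justify each Voronoi application (controlling the error terms and the ranges where the dual Bessel transform is not rapidly decaying), track the precise shape and support of the Bessel/Mellin–Barnes kernels through the Voronoi–reciprocity–Voronoi composition so that the contour shifts are legitimate and the gamma factors assemble as claimed, and handle the non-coprime cases where the ${\rm GL}(3)$ Kloosterman sum does \emph{not} factor into two ${\rm GL}(2)$ Kloosterman sums (this is where the cubefree hypothesis on $m$ enters, simplifying the Hecke-relation manipulations $B(1,n)A_\pi(1,n)$ and the divisor combinatorics). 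A secondary but delicate point is the uniformity in $X$ of all error terms while $m$, $h$, $V$ stay fixed, and ensuring the constant $c$ is genuinely nonzero — this reduces to the non-vanishing of an explicit ratio of gamma functions coming from Lemma \ref{kernelequal} evaluated at the relevant point, which one checks directly.
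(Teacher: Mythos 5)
Your outline of the main term — Kuznetsov, then Voronoi–reciprocity–Voronoi on the long Weyl element, a Poisson step producing a congruence that collapses to $n=m$, and the archimedean bookkeeping via Lemma \ref{kernelequal} together with Buttcane's ${\rm GL}(3)$ Sears–Titchmarsh inversion — is the same route the paper takes, and your identification of the local ingredients is accurate. The treatment of $\Delta$, $\Sigma_4$, $\Sigma_5$ is also essentially what the paper does.

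There is, however, a genuine gap in your treatment of the Eisenstein contribution, and it is not a small one (it occupies an entire section of the paper). Neither of your two proposed mechanisms works. The prescribed zeros of $h$ sit at $\mu_i-\mu_j=2n+1$, i.e.\ at \emph{odd integers}; their sole purpose is to cancel the poles of ${\rm spec}(\mu)$ when contours are shifted in the kernel analysis. They do not vanish anywhere on the tempered spectrum $\Re\mu=0$ where the minimal and maximal Eisenstein series live — indeed $h(\mu_0)=1$ there by design — so they cannot kill $\mathcal{E}_{\min}$ or $\mathcal{E}_{\max}$. And ``convexity plus polynomial growth of the Eisenstein Fourier coefficients'' only gives the trivial bound: $\sum_n B(1,n)A_{\mu}(1,n)V(n/X)$ and $\sum_n B(1,n)A_{\mu,g}(1,n)V(n/X)$ are sums of length $X$ with essentially bounded coefficients, so this yields $O(X^{1+\varepsilon})$, which is the same size as the main term and therefore useless.

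The actual mechanism is that the Voronoi formula for $B$ has \emph{no main term}, so $B(1,n)$ is ``orthogonal'' to divisor-type coefficients, and one must exhibit this cancellation explicitly. For $\mathcal{E}_{\min}$ one unfolds $A_\mu(1,n)=\sum_{d_1d_2d_3=n}d_1^{\mu_1}d_2^{\mu_2}d_3^{\mu_3}$, pushes the Hecke relations through, and applies Voronoi (with $q=1$) to the longest remaining variable to save the full length of that variable, giving $O(X^{2/3+\varepsilon})$. For $\mathcal{E}_{\max}$ the problem becomes a genuine ${\rm GL}(3)\times{\rm GL}(2)$ shifted-convolution-type sum $\sum_n B(1,n)\lambda_g(n)F(n)$, for which the paper sets up Jutila's circle method, applies ${\rm GL}(3)$ Voronoi and ${\rm GL}(2)$ Voronoi against the resulting additive characters, and optimizes the modulus $Q$ to land at $O(X^{12/13+\varepsilon})$. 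Without some argument of this kind your error term never gets below $X$, so the asymptotic in Theorem \ref{thm2} does not follow from your proposal as written.
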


In other words, the fact that the sequence $B$ satisfies a  Voronoi formula implies that it resonates with  some $A_{\pi}$, and we show this using the  Kuznetsov formula. 
The restriction to $m$ cubefree is   convenient and the minimal requirement to deduce Theorem \ref{thm1} since $A(1, p)$ and $A(1, p^2)$ generate all coefficients $A(p^n, p^m)$ from the Hecke relations. 


An obvious question is  whether this argument can also be applied for ${\rm GL}(4).$ The analogue of \eqref{afterkuz} is
$$\sum_{n \asymp X} B(1, 1, n) \sum_{D_1 \asymp X^{3/4}} \sum_{D_2 \asymp X^{1/2}} \sum_{D_3 \asymp X^{1/4}} \frac{S(n, D_2, D_1) S(D_1, D_3, D_2) S(m, D_2, D_3)}{D_1D_2D_3}.$$
Voronoi in $n$ now increases the length to $X^2$, but does not change the shape and keeps a ${\rm GL}(2)$ Kloosterman sum (since $2-4 = - 2$). So we run immediately out of moves. We could have guessed this before since it is expected that for a ${\rm GL}(4)$ converse theorem ${\rm GL}(1)$ twists alone (i.e.\ Voronoi summation) do not suffice, but also ${\rm GL}(2)$ twists are necessary which we cannot easily incorporate in this scheme. 

Let us end the introduction with a quick computation what constant $c$ we expect (although this is not relevant for the argument). If the sequence $B(1, n)$ resonates with some $A_{\pi}(1, n)$ then it should come from the  contragredient representation $\tilde{\pi} \in \mathcal{B}$, and we should have
$$c =  \frac{1}{L(1, \text{Ad}, \pi)}\underset{s=1}{\text{res}}\sum_n  \frac{B(1, n)B(n, 1)}{n^s}.$$
A straightforward computation with \eqref{sat} and geometric series  shows 
$$\sum_n  \frac{B(1, n)B(n, 1)}{n^s} = L(s, \pi \times \tilde{\pi}) \prod_p\Big( 1 - \frac{1}{p^{2s}} - \frac{1}{p^{4s}} + \frac{1}{p^{6s}} + B(p, p)\Big( -\frac{1}{p^{2s}} + \frac{2}{p^{3s}} - \frac{1}{p^{4s}}\Big)\Big),$$
so that
\begin{equation}\label{euler-c}
c = \prod_p\Big( \Big(1 + \frac{1}{p^2}\Big)\Big(1 - \frac{1}{p^2}\Big)^2 - \frac{B(p, p)}{p^2} \Big(1 - \frac{1}{p}\Big)^2 \Big).
\end{equation}
We could have avoided this somewhat artificial Euler product by considering a true Rankin-Selberg situation in \eqref{sm} like
$$\sum_{\pi\in \mathcal{B}} \sum_{n_1, n_2} B(n_2,n_1)A_\pi(n_1, n_2)\overline{A_\pi(1,m)}V\left(\frac{n_1n_2^2}{X}\right)\frac{h(\mu_\pi)}{L(1, \text{Ad}, \pi)},$$
but this would have made the computations in Section \ref{sec5} even more complicated. The fact that in Subsection \ref{eulerprod} we arrive -- in a highly non-trivial way -- at this prediction is a strong indication that the long and subtle computations are correct. 

At some points in the argument we will use some support of a computer algebra system. These are all finite and straightforward computations, either  simplex algorithms or computations with multiple finite geometric sums. On the authors' notebooks they can be performed in a few seconds, but it would be extremely tedious to carry them out by hand. We provide the {\tt mathematica} code at the relevant places. 

\textbf{Acknowledgement:} We would like to thank Laurent Montaigu for pointing out an error in Lemma 2.5 in an earlier version. 

\section{Automorphic forms on \texorpdfstring{${\rm GL}(3)$}{GL(3)}}

This section contains a number of new and known results on ${\rm GL}(3)$ automorphic forms. We point out in particular some useful Hecke relations (Lemma \ref{lem.Hecke}), a new identity for the Kuznetsov kernel (Lemma \ref{kernelequal}) and a proof of \eqref{p-adic}, as well as  sharp bounds of independent interest for certain integral transforms in the Kuznetsov formula (Lemma \ref{Kw6}).

\subsection{Hecke relations}\label{sect.Hecke}

Let $A(m,n)$ be the Fourier coefficients of a $\mathrm{SL}(3,\Z)$ Hecke Maass cusp form. They satisfy the following Hecke relations by \cite[Thm 6.4.11]{goldfeld2006automorphic}.
\begin{equation}\tag{$*$}
\begin{aligned}
& A\left(m_1 m_1^{\prime}, m_2 m_2^{\prime}\right)=A\left(m_1, m_2\right) \cdot A\left(m_1^{\prime}, m_2^{\prime}\right), \quad \text { if }\left(m_1 m_2, m_1^{\prime} m_2^{\prime}\right)=1, \\
& A(n, 1) A\left(m_1, m_2\right)=\sum_{\substack{d_0 d_1 d_2=n \\
d_1\mid m_1 \\
d_2\mid m_2}} A\left(\frac{m_1 d_0}{d_1}, \frac{m_2 d_1}{d_2}\right), \quad  A(1, n) A\left(m_1, m_2\right)=\sum_{\substack{d_0 d_1 d_2=n \\
d_1\mid m_1 \\
d_2\mid m_2}} A\left(\frac{m_1 d_2}{d_1}, \frac{m_2 d_0}{d_2}\right), \\
& A(m, n) = \overline{A(n, m)}. 
\end{aligned}
\end{equation}
Any sequence $B(m,n)$ satisfying $(*)$ is completely determined by $B(1,p), B(1,p^2)$ with   $p$ running through all the primes. By M\"obius inversion, any such sequence also satisfies \begin{equation}\label{mob1}
    B(m,n)=\sum_{d\mid (m,n)}\mu(d)B\left(\frac{m}{d},1\right)B\left(1,\frac{n}{d}\right),
    \end{equation}
    \begin{equation}\label{mob2}
     B(mn,1)=\sum_{\substack{abc=n\\b\mid c\mid m}}\mu(b)\mu(c)B\left(\frac{m}{c},\frac{c}{b}\right)B(a,1).
     \end{equation}
Moreover, by a more involved M\"obius inversion, we have the following lemma.

\begin{lemma}\label{lem.Hecke}
    Let $B(n,m)$ be a sequence satisfying the Hecke relations $(*)$. Then for any $n_1,n_2,m$, we have \begin{align*}
        B(n_1n_2,m)=\mathop{\sum\sum}_{\substack{abc=n_1\\b|mc, c|n_2}}\mu(b)\mu(c)B\left(\frac{n_2}{c},\frac{mc}{b}\right)B(a,1)
    \end{align*}
    as well as the corresponding dual relation with all entries of $B$ exchanged. 
\end{lemma}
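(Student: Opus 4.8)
The plan is to peel the identity apart into a purely local statement at each prime and then verify that local statement by a short, finite computation; there is no analytic content.

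\emph{Multiplicative reduction.} Both sides are multiplicative in the following sense. Write $n_1=\prod_p p^{\alpha_p}$, $n_2=\prod_p p^{\beta_p}$, $m=\prod_p p^{\gamma_p}$. By the first relation in $(*)$ we have $B(n_1n_2,m)=\prod_p B(p^{\alpha_p+\beta_p},p^{\gamma_p})$. On the right-hand side, a factorization $abc=n_1$ corresponds uniquely to a choice of exponents $a_p+b_p+c_p=\alpha_p$ for each $p$; the conditions $b\mid mc$ and $c\mid n_2$ are equivalent to $b_p\le \gamma_p+c_p$ and $c_p\le \beta_p$ for all $p$; and $\mu(b)\mu(c)$, $B(n_2/c,mc/b)$ and $B(a,1)$ all factor over the primes (using $(*)$ again). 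Hence the sum over $(a,b,c)$ splits as a product over $p$, and it suffices to prove, for every prime $p$ and all integers $\alpha,\beta,\gamma\ge 0$,
\[
B(p^{\alpha+\beta},p^\gamma)=\sum_{\substack{a+b+c=\alpha\\ b\le\gamma+c,\ c\le\beta}}\mu(p^b)\mu(p^c)\,B\bigl(p^{\beta-c},p^{\gamma+c-b}\bigr)\,B(p^a,1).
\]

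\emph{The local identity.} Since $\mu(p^b)\mu(p^c)=0$ unless $b,c\in\{0,1\}$, the right-hand side has at most four terms; with the convention $B(p^k,\cdot)=0$ for $k<0$ it equals
\[
B(p^\beta,p^\gamma)B(p^\alpha,1)-\bigl(B(p^\beta,p^{\gamma-1})+B(p^{\beta-1},p^{\gamma+1})\bigr)B(p^{\alpha-1},1)+B(p^{\beta-1},p^\gamma)B(p^{\alpha-2},1).
\]
The cleanest way to finish is to note that $(*)$ forces $B(p^k,p^\ell)$ to be given by the Satake/Schur expression \eqref{sat} with $\alpha_p\beta_p\gamma_p=1$ — the two relations in the second line of $(*)$ at the prime $p$ are exactly the ${\rm GL}(3)$ Pieri rules, which determine every $B(p^k,p^\ell)$ from $B(p,1)$ and $B(1,p)$ — whereupon the displayed identity becomes a finite identity among symmetric polynomials in three variables, verified by clearing the Vandermonde denominator in \eqref{sat} and summing the resulting finite geometric series (the type of computation performed by computer algebra elsewhere in the paper). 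Alternatively one can argue purely from $(*)$ by induction on $\alpha$: the case $\alpha=0$ is trivial; the case $\alpha=1$ is precisely the Hecke relation $B(p,1)B(p^\beta,p^\gamma)=B(p^{\beta+1},p^\gamma)+B(p^{\beta-1},p^{\gamma+1})+B(p^\beta,p^{\gamma-1})$; and for $\alpha\ge 2$ one applies the $\alpha=1$ relation once to reduce the first exponent and then feeds in the inductive hypothesis, recombining with the help of the scalar identities $B(p,1)B(p^a,1)=B(p^{a+1},1)+B(p^{a-1},p)$ and $B(1,p)B(p^a,1)=B(p^a,p)+B(p^{a-1},1)$, which are again instances of $(*)$.

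\emph{Dual relation and main obstacle.} The dual statement, with the two arguments of $B$ interchanged everywhere, follows from the identity just proved by conjugating via $B(m,n)=\overline{B(n,m)}$ (the third line of $(*)$), or equivalently by running the same argument with the two Hecke relations in $(*)$ interchanged. I expect the only real difficulty to be combinatorial bookkeeping: tracking the conditions $b\mid mc$, $c\mid n_2$ and the M\"obius signs through the multiplicative reduction, and making sure the local verification covers the boundary cases ($\gamma=0$ versus $\gamma\ge 1$, $\beta=0$ versus $\beta\ge 1$, small $\alpha$) correctly.
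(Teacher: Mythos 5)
Your proof is correct in outline, but it takes a genuinely different route from the paper's. The paper argues globally: it detects the condition $n_1/a=n_2/b=1$ by M\"obius inversion, reshuffles the resulting divisor sums, introduces an auxiliary variable $\ell=1$ detected by a second M\"obius inversion, and closes by invoking \eqref{mob2}; it never localizes at a prime. You instead reduce to prime powers and verify a four-term local identity. Your multiplicative reduction and the local identity are both correct, and of your two suggested verifications the Schur route is sound (the two Pieri-type relations in $(*)$ determine every $B(p^k,p^\ell)$ from $B(1,p)$ and $B(p,1)$, and any pair of values is realized by parameters with $\alpha_p\beta_p\gamma_p=1$), whereas the induction on $\alpha$ is under-specified: multiplying the inductive hypothesis by $B(p,1)$ produces cross terms such as $B(p^{\alpha-2},p)B(p^\beta,p^\gamma)$, which neither Pieri rule nor your two scalar identities handle directly, so the recombination is more involved than you indicate. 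In fact neither detour is needed. Writing the Pieri rule at $p$ as
$$B(p^a,1)\,B(p^\beta,p^\gamma)=\sum_{\substack{d_0+d_1+d_2=a\\ d_1\le\beta,\ d_2\le\gamma}}B\bigl(p^{\beta+d_0-d_1},p^{\gamma+d_1-d_2}\bigr)=:S(a,\beta,\gamma),$$
your four-term right-hand side is $S(\alpha,\beta,\gamma)-S(\alpha-1,\beta,\gamma-1)-S(\alpha-1,\beta-1,\gamma+1)+S(\alpha-2,\beta-1,\gamma)$, and the index shifts $(d_0,d_1,d_2)\mapsto(d_0,d_1,d_2+1)$, $(d_0,d_1+1,d_2)$, $(d_0,d_1+1,d_2+1)$ identify the last three sums with the subsums of $S(\alpha,\beta,\gamma)$ over $d_2\ge1$, over $d_1\ge1$ with $d_2\le\gamma+1$, and over $d_1,d_2\ge1$ with $d_2\le\gamma+1$; the inclusion--exclusion leaves exactly the single term $(d_0,d_1,d_2)=(\alpha,0,0)$, i.e.\ $B(p^{\alpha+\beta},p^\gamma)$. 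Two small points: your vanishing convention must apply to negative exponents in \emph{both} arguments (the condition $b\le\gamma+c$ at $(b,c)=(1,0)$ is absorbed by the second slot $p^{\gamma-1}$, not the first), and the dual relation does follow by conjugation or by symmetry, as you say. What your approach buys is independence from \eqref{mob2} and a transparent four-term structure; what the paper's buys is that it works formally with the relations $(*)$ without ever analyzing the local Hecke algebra.
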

\begin{proof}
    Applying M\"obius inversion, we have \begin{align*}
        B(n_1n_2,m)=&\sum_{a\mid n_1}\sum_{b\mid n_2}B\left(ab,\frac{mn_1}{a}\right)\delta\left(\frac{n_1}{a}=\frac{n_2}{b}=1\right) = \mathop{\sum\sum}_{\substack{a\mid n_1,b\mid n_2\\n_1/a=n_2/b}}B\left(ab,\frac{mn_1}{a}\right)\sum_{c\mid \frac{n_1}{a}}\mu(c)\\
        =&\mathop{\sum\sum\sum}_{\substack{acd_1=n_1\\bcd_1=n_2}}B\left(ab,\frac{mn_1}{a}\right)\mu(c) = \sum_{c\mid (n_1,n_2)}\mu(c)\sum_{d_1\mid \left(\frac{n_1}{c},\frac{n_2}{c}\right)}B\Big(\frac{n_1n_2}{c^2d_1^2},mcd_1\Big).
    \end{align*}
    Now we add an extra variable $\ell=1$ and detect it by M\"obius inversion. In this way we can recast $B(n_1n_2, m)$ as \begin{align*}
       &\sum_{c\mid (n_1,n_2)}\mu(c)\sum_{d_1\mid \left(\frac{n_1}{c},\frac{n_2}{c}\right)}\sum_{\ell\mid \left(\frac{n_1}{cd_1},mc\right)}B\left(\frac{n_1n_2}{c^2d_1^2\ell},\frac{mcd_1}{\ell}\right)\sum_{b\mid \ell}\mu(b)=\sum_{c\mid (n_1,n_2)}\mu(c)\sum_{d_1\mid \left(\frac{n_1}{c},\frac{n_2}{c}\right)}\mathop{\sum\sum}_{bd_2\mid \left(\frac{n_1}{cd_1},mc\right)}\mu(b)B\left(\frac{n_1n_2}{bc^2d_1^2d_2},\frac{mcd_1}{bd_2}\right)\\
        =&\sum_{c\mid (n_1,n_2)}\mu(c)\sum_{b\mid \left(\frac{n_1}{c},mc\right)}\mu(b)\sum_{d_1\mid \left(\frac{n_1}{c},\frac{n_2}{c}\right)}\sum_{d_2\mid \left(\frac{n_1}{bcd_1},\frac{mc}{b}\right)}B\left(\frac{n_1n_2}{bc^2d_1^2d_2},\frac{mcd_1}{bd_2}\right) =\mathop{\sum\sum}_{\substack{abc=n_1\\b\mid mc, c|n_2}}\mu(b)\mu(c)\mathop{\sum\sum}_{\substack{d_0d_1d_2=a\\d_1\mid \frac{n_2}{c},d_2\mid \frac{mc}{b}}}B\left(\frac{n_2a}{cd_1^2d_2},\frac{mcd_1}{bd_2}\right).
    \end{align*}
    Applying the Hecke relation in $(*)$, we conclude the claim. 
\end{proof}


\subsection{Voronoi summation}

Let $A(m,n)$ be the Fourier coefficients of a $\mathrm{SL}(3,\Z)$ Hecke Maass cusp form with Langlands parameter $\mu \in (i\Bbb{R})^3$. They  satisfy the following Voronoi summation.

\begin{lemma}[{\cite[Thm 1.18]{MS}}]\label{lem.Voronoi}
    Let $m, a, q$ be integers such that $q>0$ and $(a,q)=1$. Let $g\in C_c^\infty(\R^+)$. Then for any $\sigma>-1$, we have
    \begin{align*}
        \sum_{n=1}^\infty A(m,n) e\Big(\frac{an}{q}\Big)g(n) = q\sum_\pm \sum_{n_0\mid mq}\sum_{n=1}^\infty \frac{A(n,n_0)}{n_0n} S\Big(m\overline{a}, \pm n; \frac{mq}{n_0}\Big) G_\pm \Big(\frac{n_0^2n}{mq^3}\Big),
    \end{align*}
    where \begin{align}\label{defG}
        G_\pm(y) = \frac{1}{2\pi i}\int_{(\sigma)} y^{-s}\mathcal{G}_\mu^{ \pm}(s+1)\tilde{g}(-s)\mathrm{d}s,
    \end{align}
    with \begin{align*}
        \mathcal{G}_\mu^{ \pm}(s)=4(2 \pi)^{-3 s} \prod_{j=1}^3 \Gamma\left(s+\mu_j\right)\left(\prod_{j=1}^3 \cos \left(\frac{\pi\left(s+\mu_j\right)}{2}\right) \pm \frac{1}{i} \prod_{j=1}^3 \sin \left(\frac{\pi\left(s+\mu_j\right)}{2}\right)\right)
    \end{align*}
    and $\tilde{g}(s)=\int_0^\infty g(x)x^{s-1}\mathrm{d} x$   the Mellin transform of $g$.
\end{lemma}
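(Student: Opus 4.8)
The plan is to obtain the formula from Mellin inversion together with the functional equations of the \emph{multiplicatively} twisted $L$-functions attached to $f$. First I would invoke Mellin inversion to write $g(n)=\frac{1}{2\pi i}\int_{(\sigma')}\tilde g(s)\,n^{-s}\,ds$ for $\sigma'$ large, so that the left-hand side becomes $\frac{1}{2\pi i}\int_{(\sigma')}\tilde g(s)\,D_m(s;a/q)\,ds$, where $D_m(s;a/q)=\sum_{n\ge 1}A(m,n)\,e(an/q)\,n^{-s}$ is the additively twisted Dirichlet series, absolutely convergent in the region $\re s \gg 1$ by the Rankin--Selberg bound. Everything then reduces to proving the analytic continuation and a precise functional equation for $D_m(s;a/q)$, expressing it on the dual side as a sum over $n_0\mid mq$ of Kloosterman-weighted Dirichlet series in the coefficients $A(n,n_0)$, times the appropriate ratio of archimedean factors.

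To establish that functional equation I would reduce the additive twist to multiplicative twists. After peeling off the part of $n$ sharing prime factors with $mq$ by means of the Hecke relations $(*)$ -- this is precisely the mechanism that creates the divisor sum over $n_0\mid mq$ and the coefficients $A(n,n_0)$ on the dual side -- the remaining coprime additive character is expanded into Dirichlet characters via Gauss sums, $e(an/q)=\sum_{q'\mid q}\tau(\bar\chi)^{-1}\bar\chi(a)\chi(n)+\cdots$, which rewrites $D_m(s;a/q)$ as a finite combination of $L(s,f\times\chi)$ for characters $\chi$ of modulus dividing $q$. Each such twist has a completed $L$-function $\Lambda(s,f\times\chi)=(\mathrm{cond})^{s/2}\prod_{j=1}^3\Gamma_{\R}(s+\mu_j+\delta_\chi)\,L(s,f\times\chi)$, with $\delta_\chi\in\{0,1\}$ the parity of $\chi$, satisfying $\Lambda(s,f\times\chi)=\epsilon(f\times\chi)\,\Lambda(1-s,\tilde f\times\bar\chi)$ for an explicit root number built from Gauss sums; I would take this ${\rm GL}(3)\times{\rm GL}(1)$ functional equation (Godement--Jacquet / Jacquet--Shalika) as known.

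Now I would apply the functional equation to each multiplicative twist and reassemble the pieces. Summing over the residue classes mod $q$, the products of Gauss sums appearing on the dual side collapse into the Kloosterman sums $S(m\bar a,\pm n;mq/n_0)$; the two signs $\pm$ arise precisely from separating the contributions of the even characters ($\delta_\chi=0$) from the odd ones ($\delta_\chi=1$), and the ratio of archimedean factors $\prod_{j}\Gamma_{\R}(1-s+\mu_j+\delta_\chi)/\Gamma_{\R}(s-\mu_j+\delta_\chi)$, summed over the two parities, turns -- via the reflection formula, which trades $\Gamma_{\R}(s)\Gamma_{\R}(1-s)$ for a cosine or a sine -- into exactly the trigonometric combination $\prod_{j}\cos\frac{\pi(s+\mu_j)}{2}\pm\frac1i\prod_{j}\sin\frac{\pi(s+\mu_j)}{2}$ inside $\mathcal G_\mu^\pm$, the factor $(2\pi)^{-3s}$ recording the archimedean conductor. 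Finally I would substitute the resulting expression for $D_m(s;a/q)$ back into the Mellin integral, shift the contour past $s=1$ (legitimate because $f$ is cuspidal, so there is no pole), and recognise the remaining $s$-integral as $G_\pm$ in \eqref{defG}; a careful tracking of the powers of $q$, $m$ and $n_0$ produces the argument $n_0^2n/(mq^3)$ and the overall factor $q$.

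The step I expect to be the main obstacle is the combinatorial and conductor bookkeeping when $(m,q)>1$: one must organise the Hecke-relation expansion of $A(m,n)$ and the Gauss-sum expansion of $e(an/q)$ so that the dual side assembles \emph{cleanly} into a single sum over $n_0\mid mq$ with Kloosterman sums of modulus $mq/n_0$, rather than into a messier family of incomplete character sums and ramified Euler factors. An alternative, and in some respects cleaner, route -- which sidesteps this issue -- is the automorphic-distribution method of Miller--Schmid: one restricts the automorphic distribution attached to $f$ to a generic line, reads off its two Fourier expansions from the ${\rm GL}(2)\subset{\rm GL}(3)$ covariance and from the long-Weyl-element intertwiner, and matches the ``signed'' transition coefficients; this is exactly the argument of \cite[Thm 1.18]{MS} that we quote here.
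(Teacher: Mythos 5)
The paper does not prove this lemma at all: it is imported verbatim as \cite[Thm 1.18]{MS}, so there is no internal argument to compare against. Your sketch therefore has to be judged on its own terms, and it follows the \emph{classical} route (Mellin inversion, expansion of the additive character into multiplicative characters via Gauss sums, the ${\rm GL}(3)\times{\rm GL}(1)$ functional equations, then reassembly into Kloosterman sums), whereas the cited proof of Miller--Schmid proceeds via automorphic distributions and the long-Weyl-element intertwiner -- a fact you correctly note at the end. The broad outline you give is the standard one and each individual ingredient (the parity split producing the two signs $\pm$, the reflection formula producing the $\cos$/$\sin$ combination in $\mathcal{G}_\mu^{\pm}$, the conductor bookkeeping producing $(2\pi)^{-3s}$ and the argument $n_0^2 n/(mq^3)$) is in the right place.

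The one point to be honest about is that the step you flag as ``the main obstacle'' is not a routine bookkeeping exercise but the genuinely hard part of this route, and your sketch does not actually carry it out. The Gauss-sum expansion $e(an/q)=\frac{1}{\varphi(q)}\sum_{\chi\,(q)}\tau(\bar\chi)\chi(an)$ only detects the terms with $(n,q)=1$; the terms with $(n,q)>1$, together with non-primitive characters and the dependence on the first index $m$, must be handled by an inductive use of the Hecke relations, and it is precisely here that early classical treatments were restricted to prime or squarefree $q$ (and $m=1$). Getting the ramified contributions to collapse into the single clean sum over $n_0\mid mq$ with Kloosterman sums of modulus $mq/n_0$ is where all the work lies. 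So your proposal is a correct roadmap for an alternative proof, but it would not be complete without executing that reduction; as a justification for the lemma as used in this paper, the citation to \cite{MS} (or to the adelic treatment of Ichino--Templier) is the appropriate thing, and your closing remark already points there.
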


By \cite[Lemma 6]{Blo12}, we have the asymptotics \begin{align}\label{GL3KernelAsymp}
    G_\pm(x)=x^{2/3}\int_0^\infty \frac{g(y)}{y^{1/3}}W_\mu^{\pm}(y)e(\pm 3(xy)^{1/3})\mathrm{d}y,
\end{align}
for some smooth function $W_\mu^\pm$ satisfying $x^j\frac{\mathrm{d}^j}{\mathrm{d}x^j}W_\mu^\pm(x)\ll_{\mu,j} 1$ for any $j\geq0$.







\subsection{Kloosterman sums}\label{sec23}

We now prepare for the Kuznetsov formula. Most notation is taken from \cite[Section  3 and 4]{Blomer2015OnTS}. 

For   integers $n_1,n_2,m_1,m_2,D_1,D_2$, we define two types of Kloosterman sums as follows, \begin{align}\label{ShortKloostermanSum}
\tilde{S}\left(n_1, n_2, m_1 ; D_1, D_2\right):=\sum_{\substack{C_1\left(\bmod D_1\right), C_2\left(\bmod D_2\right) \\\left(C_1, D_1\right)=\left(C_2, D_2 / D_1\right)=1}} e\left(n_2 \frac{\bar{C}_1 C_2}{D_1}+m_1 \frac{\bar{C}_2}{D_2 / D_1}+n_1 \frac{C_1}{D_1}\right)
\end{align}
for $D_1 \mid D_2$, and \begin{align*}
& S\left(n_1, m_2, m_1, n_2 ; D_1, D_2\right) 
 =\sum_{\substack{B_1, C_1\left(\bmod D_1\right) \\
B_2, C_2\left(\bmod D_2\right) \\
D_1 C_2+B_1 B_2+D_2 C_1 \equiv 0\left(\bmod D_1 D_2\right) \\
\left(B_j, C_j, D_j\right)=1}} e\left(\frac{n_1 B_1+m_1\left(Y_1 D_2-Z_1 B_2\right)}{D_1}+\frac{m_2 B_2+n_2\left(Y_2 D_1-Z_2 B_1\right)}{D_2}\right), 
\end{align*}
where $Y_j B_j+Z_j C_j \equiv 1\left(\bmod D_j\right)$ for $j=1,2$.

For the second Kloosterman sum, we have the following decomposition into classical Kloosterman sums due to Kiral and Nakasuji \cite[Thm 5.10]{kiral2022parametrization}, \begin{align}\label{KNDecomposition}
    &S(n_1,m_2,m_1,n_2;D_1,D_2)\nonumber\\
    &=\sum_{D_0\mid (D_1,D_2)}D_0\sumast_{\substack{\alpha\Mod{D_0}\\m_1\frac{D_2}{D_0}+m_2\frac{D_1}{D_0}\alpha\equiv 0\Mod{D_0}}}S\left(n_1,\frac{m_1D_2+m_2D_1\alpha}{D_0^2};\frac{D_1}{D_0}\right)S\left(n_2,\frac{m_1D_2\overline{\alpha}+m_2D_1}{D_0^2};\frac{D_2}{D_0}\right).
\end{align}
(Note that $\bar{\alpha}$ in the second Kloosterman sum is well-defined by the congruence condition.)  
This is in some sense the analogue of the integral formulae \eqref{doubleBessel} below. We use this formula to quickly prove \eqref{p-adic}: for two non-trivial characters $\chi, \psi$ modulo $p$ we have
\begin{displaymath}
\begin{split}
\underset{n, m\, (\text{mod } p)}{\sum\sum} S(n, 1, 1, m; p^2, p^2)\chi(-n)\psi(-m) =\underset{n, m\, (\text{mod } p)}{\sum\sum} \sum_{j \leq 2} p^j \underset{\substack{\alpha \, (\text{mod } p^j)\\ p^{2j-2} \mid 1+\alpha}}{\left.\sum \right.^{\ast}} S(n, (1 + \alpha)p^{2-2j}, p^{2-j})S(n, (1 + \bar{\alpha})p^{2-2j}, p^{2-j})\chi(-n)\psi(-m).
\end{split}
\end{displaymath}
Note that this is well-defined. It is easy to see that the terms $j=0$ and $j=2$ vanish. For the $j=1$ term we open the Kloosterman sums and sum over $n, m$ getting
$$p\tau(\chi)\tau(\psi) \underset{\substack{\alpha, d, f\, (\text{mod } p)\\ (\alpha df, p) = 1}}{\sum\sum\sum}\bar{\chi}(-d) \bar{\psi}(-f) e\Big( \frac{(1+\alpha)\bar{d} + (1+ \bar{\alpha})\bar{f}}{p}\Big) = p\tau(\chi)^2\tau(\psi)^2 \underset{ \alpha \, (\text{mod } p)}{\left.\sum \right.^{\ast}}\bar{\chi}(-1 - \alpha) \bar{\psi}(-1 - \bar{\alpha}).$$
Changing variables $\beta = \overline{1 + \alpha}$ and using the identities \cite[(3.18), (3.20]{IK} for the Jacobi sum depending on whether $\chi\psi$ is non-trivial or trivial, followed by \cite[(3.13), (3.15)]{IK},  we conclude the proof of \eqref{p-adic}.   

Although we will not use this formula directly in our paper, it is an interesting $p$-adic analogue of Lemma \ref{kernelequal} below that will be used explicitly in Subsection \ref{endgame}: the double Mellin transform of the Kuznetsov kernel associated with the long Weyl element can be expressed in terms of epsilon-factors (i.e.\ Gau{\ss} sums resp.\ Gamma quotients). The reader may wonder why we consider Kloosterman sums to moduli $(p^2, p^2)$ rather than the seemingly more natural choice of moduli $(p, p)$. The reason  {is} simply that 
$$\underset{n, m\, (\text{mod } p)}{\sum\sum} S(n, 1, 1, m; p^a, p^b)\chi(n)\psi(m) = 0$$
for $a=b=1$ and $\chi, \psi$ modulo $p$ not both trivial, and the only well-defined choice of $a, b$ where this does not vanish is $a=b=2$. 

\subsection{Integral kernels}\label{sect.IntegralKernels}

For $s\in\C$, $\mu\in\Lambda_0 := \{( {\mu}_1, \mu_2, \mu_3) \in (i\Bbb{R})^3 \mid \mu_1 + \mu_2 + \mu_3 = 0\}$, define the meromorphic function \begin{align*}
    \tilde{G}^{ \pm}(s, \mu):=\frac{\pi^{-3 s}}{12288 \pi^{7 / 2}}\left(\prod_{j=1}^3 \frac{\Gamma\left(\frac{1}{2}\left(s-\mu_j\right)\right)}{\Gamma\left(\frac{1}{2}\left(1-s+\mu_j\right)\right)} \pm i \prod_{j=1}^3 \frac{\Gamma\left(\frac{1}{2}\left(1+s-\mu_j\right)\right)}{\Gamma\left(\frac{1}{2}\left(2-s+\mu_j\right)\right)}\right).
\end{align*}
and for $\epsilon_1, \epsilon_2=\pm1$, $s=(s_1,s_2)\in\C^2$, $\mu\in\Lambda_0$, define \begin{align*}
    G_{\mathrm{sym}}^{\epsilon_1,\epsilon_2}(s,\mu)=\frac{1}{1024 \pi^{5 / 2}} \sum_{\substack{d_1,d_2,d_3 \in\{0,1\}\\ d_3\equiv d_1+d_2\Mod{2}}} \epsilon_1^{d_1} \epsilon_2^{d_2}(-1)^{d_1 d_2} \frac{\Gamma\left(\frac{1+d_3-s_1-s_2}{2}\right)}{\Gamma\left(\frac{d_3+s_1+s_2}{2}\right)} \prod_{i=1}^3 \frac{\Gamma\left(\frac{d_1+s_1-\mu_i}{2}\right) \Gamma\left(\frac{d_2+s_2+\mu_i}{2}\right)}{\Gamma\left(\frac{1+d_1-s_1+\mu_i}{2}\right) \Gamma\left(\frac{1+d_2-s_2-\mu_i}{2}\right)}.
\end{align*}
The latter is taken from \cite[(16)-(18)]{buttcane2020plancherel}, and it is the symmetrized version over the Weyl group of the function   defined in \cite[Section 3.3]{Blomer2015OnTS}. 
We can now define the following integral kernels. For $y\in\R\back\{0\}$ and $\mu\in\Lambda_0$, define
\begin{align}\label{K4Kernel}
    K_{w_4}(y;\mu)=\int_{-i\infty}^{i\infty}|y|^{-s}\tilde{G}^{\mathrm{sgn}(y)}(s,\mu)\frac{\mathrm{d}s}{2\pi i},
\end{align}
and 
for $y=(y_1,y_2)\in(\R\back\{0\})^2$, we define
\begin{align}\label{SymKernelMellin}
    K_{w_6}^{\mathrm{sym}}(y ; \mu)=\int_{-i\infty}^{i\infty} \int_{-i\infty}^{i\infty} \left(\pi^2 |y_1|\right)^{-s_1}\left(\pi^2 |y_2|\right)^{-s_2} G_{\mathrm{sym}}^{\mathrm{sgn}(y_1),\mathrm{sgn}(y_2)}(s, \mu) \frac{\mathrm{d} s_1 \mathrm{~d} s_2}{(2 \pi i)^2}.
\end{align}
 We generally follow the Barnes integral convention that the contour should pass to the right of all of the poles of the gamma functions in the form $\Gamma(s_j + a)$  and to the left of all of the poles of the gamma functions 
 in the form $\Gamma(a - s_j)$. 
 Moreover, we choose the contour such that all integrals are absolutely convergent, which can always be arranged by shifting the unbounded part appropriately.

 The function $ K_{w_6}^{\mathrm{sym}}(y ; \mu)$ is a linear combination (over permutations of the Weyl group) of the following functions; see e.g. \cite[Section 5]{Blomer2015OnTS}:
 \begin{equation}\label{doubleBessel}
\begin{split}
 &\mathcal{J}^{\pm}_{1}(y; \mu) = 
  \Bigl| \frac{y_1}{y_2}\Bigr|^{\frac{1}{2}\mu_2}      \int_0^\infty J^{\pm}_{3\nu_3 }\left(2\pi |y_1|^{1/2}\sqrt{1+u^2}\right) J^{\pm}_{3\nu_3}\left(2\pi |y_2|^{1/2}\sqrt{1+u^{-2}}\right) u^{3\mu_2} \frac{\mathrm{d}u}{u},\\
 &\mathcal{J}_{2}(y; \mu) = 
 \Bigl| \frac{y_1}{y_2}\Bigr|^{\frac{1}{2}\mu_2}      \int_1^\infty J^{-}_{3\nu_3 }\left(2\pi |y_1|^{1/2}\sqrt{u^2-1}\right) J^{-}_{3\nu_3}\left(2\pi |y_2|^{1/2}\sqrt{1-u^{-2}}\right) u^{3\mu_2} \frac{\mathrm{d}u}{u},\\
 &  \mathcal{J}_{3}(y; \mu) = 
 \Bigl| \frac{y_1}{y_2}\Bigr|^{\frac{1}{2}\mu_2}      \int_0^\infty \tilde{K}_{3\nu_3 }\left(2\pi |y_1|^{1/2}\sqrt{1+u^2}\right) J^{-}_{3\nu_3}\left(2\pi |y_2|^{1/2}\sqrt{1+u^{-2}}\right) u^{3\mu_2} \frac{\mathrm{d}u}{u},
 \\
& \mathcal{J}_{4}(y; \mu) = 
  \Bigl| \frac{y_1}{y_2}\Bigr|^{\frac{1}{2}\mu_2}      \int_0^1 \tilde{K}_{3\nu_3 }\left(2\pi |y_1|^{1/2}\sqrt{1-u^2}\right) \tilde{K}_{3\nu_3}\left(2\pi |y_2|^{1/2}\sqrt{u^{-2}-1}\right) u^{3\mu_2} \frac{\mathrm{d}u}{u},
  \\
     \end{split}
\end{equation}
\begin{equation*}
\begin{split}
 &\mathcal{J}_{5}(y;\mu) =  \Bigl|\frac{y_1 }{y_2 }\Bigr|^{\frac{1}{2}\mu_2}   \int_0^\infty \tilde{K}_{3\nu_3 }\left(2\pi |y_1|^{1/2}\sqrt{1+u^2}\right) \tilde{K}_{3\nu_3}\left(2\pi |y_2|^{1/2}\sqrt{1+u^{-2}}\right) u^{3\mu_2} \frac{\mathrm{d}u}{u},
\end{split}
\end{equation*}
where
\begin{equation*}
J_{\alpha}^+(x) := \frac{\pi}{2} \frac{J_{-\alpha}(2x) + J_{\alpha}(2x)}{\cos(\pi \alpha/2)}, \quad J_{\alpha}^-(x) := \frac{\pi}{2} \frac{J_{-\alpha}(2x) - J_{\alpha}(2x)}{\sin(\pi \alpha/2)}, \quad \tilde{K}_{\alpha}(x) = 2\cos\left(\frac{\pi}{2} \alpha\right) K_{\alpha}(2x)
\end{equation*}
 { and  $\nu_1 = \frac{1}{3}(\mu_1 - \mu_2), \quad \nu_2 = \frac{1}{3}(\mu_2 - \mu_3),\quad  \nu_3 = \frac{1}{3}(\mu_3 - \mu_1)$. }
 
 We note that the kernels $\tilde{G}^{\pm}(s, \mu)$ and the Voronoi kernel $\mathcal{G}_{\mu}^{\pm}(s)$ are essentially the same, and by Stirling's formula both are bounded by $(1 + |\Im s|)^{3\Re s - 3/2}$ away from poles. We will use this bound frequently below. With later applications in mind, for $\epsilon_{1, 2} \in \{\pm 1\}$ we define
\begin{equation}\label{vrvkernel}
    G_{\mu_0}^{\epsilon_1,\epsilon_2}(s_1,s_2):=\sum_{\eta_1\eta_2=\epsilon_1\epsilon_2}\Gamma (1-s_1-s_2 )e^{\frac{\pi i}{2}\epsilon_2\eta_1(s_1+s_2-1)}\mathcal{G}_{\mu_0}^{\eta_1}(s_1)\mathcal{G}_{-\mu_0}^{\eta_2}(s_2),
\end{equation}

\begin{lemma}\label{kernelequal}
 For $\epsilon_{1, 2} \in \{\pm 1\}$,  $\mu_0 \in (i\Bbb{R})^3$ and $s = (s_1, s_2) \in \Bbb{C}^2$ we have an equality of meromorphic functions   $$ G_{\mu_0}^{\epsilon_1,\epsilon_2}(s) = 512\pi^{3(2-s_1-s_2)} 2^{-s_1-s_2}G_{\text{{\rm sym}}}^{-\epsilon_2, -\epsilon_1}(s, -\mu_0).$$
\end{lemma}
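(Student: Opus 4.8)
The plan is to prove Lemma~\ref{kernelequal} by a direct computation. Both sides are elementary products and quotients of $\Gamma$-factors, hence meromorphic on $\Bbb C^2$, so it suffices to check the stated identity on an open set where all contour integrals/series defining the objects converge. The only tools needed are the reflection formula $\Gamma(z)\Gamma(1-z)=\pi/\sin(\pi z)$, Legendre's duplication formula $\Gamma(z)\Gamma(z+\tfrac12)=2^{1-2z}\sqrt\pi\,\Gamma(2z)$, and the relation $\mu_{0,1}+\mu_{0,2}+\mu_{0,3}=0$ valid for $\mu_0\in\Lambda_0$.

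First I would put $\mathcal{G}_\mu^\pm$ into closed form. Applying the two functional equations factor by factor yields
\[
\Gamma(s+\mu_j)\cos\bigl(\tfrac{\pi(s+\mu_j)}{2}\bigr)=2^{s+\mu_j-1}\sqrt\pi\,\frac{\Gamma(\tfrac{s+\mu_j}{2})}{\Gamma(\tfrac{1-s-\mu_j}{2})},\qquad \Gamma(s+\mu_j)\sin\bigl(\tfrac{\pi(s+\mu_j)}{2}\bigr)=2^{s+\mu_j-1}\sqrt\pi\,\frac{\Gamma(\tfrac{1+s+\mu_j}{2})}{\Gamma(\tfrac{2-s-\mu_j}{2})}.
\]
Taking the product over $j=1,2,3$, using $\sum_j\mu_j=0$ to collapse $\prod_j2^{s+\mu_j-1}=2^{3s-3}$, and recalling $1/i=-i$, one obtains
\[
\mathcal{G}_\mu^\pm(s)=\frac{\pi^{3/2-3s}}{2}\Bigl(P(s,\mu)\mp iQ(s,\mu)\Bigr),\qquad P(s,\mu):=\prod_j\frac{\Gamma(\tfrac{s+\mu_j}{2})}{\Gamma(\tfrac{1-s-\mu_j}{2})},\quad Q(s,\mu):=\prod_j\frac{\Gamma(\tfrac{1+s+\mu_j}{2})}{\Gamma(\tfrac{2-s-\mu_j}{2})}.
\]
Writing $w:=s_1+s_2$ and $P_1=P(s_1,\mu_0)$, $Q_1=Q(s_1,\mu_0)$, $P_2=P(s_2,-\mu_0)$, $Q_2=Q(s_2,-\mu_0)$, these are precisely the $s_1$-blocks indexed by $d_1\in\{0,1\}$ and the $s_2$-blocks indexed by $d_2\in\{0,1\}$ occurring in the definition of $G_{\mathrm{sym}}^{\bullet,\bullet}(s,-\mu_0)$.

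Next I would treat the two normalising factors. On the right, reflection in $\Gamma(\tfrac w2)$ resp.\ $\Gamma(\tfrac{1+w}{2})$ followed by duplication $\Gamma(\tfrac{1-w}{2})\Gamma(\tfrac{2-w}{2})=2^w\sqrt\pi\,\Gamma(1-w)$ gives, for $d_3\in\{0,1\}$,
\[
\frac{\Gamma(\tfrac{1+d_3-w}{2})}{\Gamma(\tfrac{d_3+w}{2})}=\frac{2^w}{\sqrt\pi}\,\Gamma(1-w)\,\sin\!\Bigl(\tfrac{\pi}{2}(w+d_3)\Bigr),
\]
i.e.\ $\sin(\pi w/2)$ when $d_3=0$ and $\cos(\pi w/2)$ when $d_3=1$; on the left, the reciprocity phase becomes $e\bigl(\epsilon_2\eta_1\tfrac{s_1+s_2-1}{4}\bigr)=\sin(\tfrac{\pi w}{2})-i\epsilon_2\eta_1\cos(\tfrac{\pi w}{2})$. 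Substituting the closed form into $G_{\mathrm{sym}}^{-\epsilon_2,-\epsilon_1}(s,-\mu_0)$ and carrying out the sum over $(d_1,d_2)\in\{0,1\}^2$, with $d_3\equiv d_1+d_2\pmod 2$ and coefficient $(-\epsilon_2)^{d_1}(-\epsilon_1)^{d_2}(-1)^{d_1d_2}$, produces $\tfrac{2^w\Gamma(1-w)}{1024\pi^3}$ times the bracket $\sin(\tfrac{\pi w}{2})\,(P_1P_2-\epsilon_1\epsilon_2Q_1Q_2)-\cos(\tfrac{\pi w}{2})\,(\epsilon_1P_1Q_2+\epsilon_2Q_1P_2)$. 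On the left, Step~1 gives $\mathcal{G}_{\mu_0}^{\eta_1}(s_1)\mathcal{G}_{-\mu_0}^{\eta_2}(s_2)=\tfrac{\pi^{3-3w}}{4}\bigl(P_1P_2-\eta_1\eta_2Q_1Q_2-i(\eta_2P_1Q_2+\eta_1Q_1P_2)\bigr)$; since $\eta_1\eta_2=\epsilon_1\epsilon_2$ is fixed in the sum, the two summands $(\eta_1,\eta_2)=(1,\epsilon_1\epsilon_2)$ and $(-1,-\epsilon_1\epsilon_2)$ have the form $(A-iB)(C-iD)$ and $(A+iB)(C+iD)$ with $A=P_1P_2-\epsilon_1\epsilon_2Q_1Q_2$, $B=\epsilon_1\epsilon_2P_1Q_2+Q_1P_2$, $C=\sin(\pi w/2)$, $D=\epsilon_2\cos(\pi w/2)$, hence add up to $2(AC-BD)$; after multiplying by $\Gamma(1-w)$ and using $\epsilon_2\cdot\epsilon_1\epsilon_2=\epsilon_1$ this is $\tfrac{\pi^{3-3w}\Gamma(1-w)}{2}$ times the very same bracket. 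Dividing the two expressions, the brackets cancel and the ratio is $\frac{\pi^{3-3w}/2}{2^w/(1024\pi^3)}=512\,\pi^{3(2-w)}2^{-w}$, which is exactly the constant in the statement.

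I do not expect a genuine obstacle: once the closed form for $\mathcal{G}_\mu^\pm$ is available, the whole argument is mechanical. The one delicate point is the sign and normalisation bookkeeping --- the $1/i$ in $\mathcal{G}_\mu^\pm$ against the $+i$ in $G_{\mathrm{sym}}$, the conversion of the additive phase $e(\,\cdot\,/4)$ into the $\sin/\cos$ pair, and the simultaneous exchange $(\epsilon_1,\epsilon_2)\mapsto(-\epsilon_2,-\epsilon_1)$ together with the constraint $d_3\equiv d_1+d_2$ --- and it is exactly this bookkeeping that realises, at the level of Mellin transforms, the ``Voronoi--reciprocity--Voronoi'' mechanism described in the introduction: the factor $\Gamma(1-s_1-s_2)$ times the reciprocity phase on the Voronoi side collapses onto the single quotient $\Gamma(\tfrac{1+d_3-s_1-s_2}{2})/\Gamma(\tfrac{d_3+s_1+s_2}{2})$ on the Kuznetsov side. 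Given the number of signs involved, it would be wise to double-check the final numerical constant with the same computer-algebra verification used elsewhere in the paper.
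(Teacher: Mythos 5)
Your proof is correct; I checked the closed form $\mathcal{G}_\mu^\pm(s)=\tfrac{\pi^{3/2-3s}}{2}(P\mp iQ)$, the conversion of the $d_3$-quotient into $\tfrac{2^w}{\sqrt\pi}\Gamma(1-w)\sin(\tfrac{\pi}{2}(w+d_3))$, the phase identity $e(\epsilon_2\eta_1\tfrac{w-1}{4})=\sin(\tfrac{\pi w}{2})-i\epsilon_2\eta_1\cos(\tfrac{\pi w}{2})$, and the final bookkeeping of the $(d_1,d_2)$- and $(\eta_1,\eta_2)$-sums, and both sides do reduce to the same bracket with ratio $512\pi^{3(2-w)}2^{-w}$. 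The strategy is the same as the paper's (a direct computation built on reflection and duplication), but you run the conversion in the opposite direction: the paper restricts to $s\in(i\R)^2$ and uses conjugation-dependent identities such as $\Gamma(z+1/2)/\Gamma(\overline z)=2^{1-2z}\sqrt\pi\,\Gamma(2z)/|\Gamma(z)|^2$ and $|\Gamma(t)|^2=-\pi/(t\sin(\pi t))$ to turn the Gamma quotients of $G_{\rm sym}$ into trigonometric products matching $\mathcal G_{\mu_0}^{\pm}$, then invokes uniqueness of meromorphic continuation; you instead rewrite $\mathcal G_{\mu_0}^{\pm}$ as the Gamma quotients $P,Q$ appearing verbatim in $G_{\rm sym}$, which works for all $s$ at once, avoids the imaginary-axis restriction and the $|\Gamma|^2$ identities, and makes the matching of the $d_1,d_2$-blocks transparent. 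The only hypothesis you should flag explicitly is $\mu_{0,1}+\mu_{0,2}+\mu_{0,3}=0$ (needed to collapse $\prod_j 2^{s+\mu_{0,j}-1}$), which you do invoke and which holds throughout the paper since $\mu_0$ is a ${\rm PGL}(3)$ spectral parameter.
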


\begin{proof} This is a direct, but non-trivial computation using properties of trigonometric functions and the gamma function. By the uniqueness of the meromorphic extension, it suffices to prove the equality when $s=(s_1,s_2)\in (i\R)^2$. We recall that  \begin{align*}
    \frac{\Gamma(z+1/2)}{\Gamma(\overline{z})}=\frac{2^{1-2z}\sqrt{\pi}\Gamma(2z)}{|\Gamma(z)|^2} \quad \text{ and } \quad \frac{\Gamma(z)}{\Gamma(\overline{z}+1/2)}=\frac{2^{1-2z}\sqrt{\pi}\Gamma(2z)}{|\Gamma(z+1/2)|^2}
\end{align*}
for any $z\in\C$, and hence \begin{align*}
    4^{s_1+s_2}G_{\mathrm{sym}}^{-\epsilon_2,-\epsilon_1}(s,-\mu_0)=&\frac{\pi}{8}\sum_{\substack{d_1,d_2,d_3\in\{0,1\}\\d_3\equiv d_1+d_2\Mod{2}}}\frac{(-\epsilon_2)^{d_1}(-\epsilon_1)^{d_2}(-1)^{d_1d_2}}{2^{3d_1+3d_2+d_3}}\frac{\Gamma\left(d_3-s_1-s_2\right)}{\big|\Gamma\left(\frac{d_3-s_1-s_2}{2}\right)\big|^2} \prod_{j=1}^3\frac{\Gamma\left(d_1+s_1+\mu_{0,j}\right)\Gamma\left(d_2+s_2-\mu_{0,j}\right)}{\big|\Gamma\left(\frac{1+d_1+s_1+\mu_{0,j}}{2}\right) \Gamma\left(\frac{1+d_2+s_2-\mu_{0,j}}{2}\right)\big|^2}.
\end{align*}
Using \begin{align*}
    \left|\Gamma(t)\right|^2=-\frac{\pi}{t\sin(\pi t)} \quad \text{ and } \quad \left|\Gamma(1/2+t)\right|^2=\frac{\pi}{\cos(\pi t)}
\end{align*}
for any $t\in i\R$ together with $\Gamma(z+1)=z\Gamma(z)$, we have   \begin{align*}
   4^{s_1+s_2}G_{\mathrm{sym}}^{-\epsilon_2,-\epsilon_1}(s,-\mu_0) =  &\frac{1}{16\pi^6}\Gamma(1-s_1-s_2)\prod_{j=1}^3\Gamma(s_1+\mu_{0,j})\Gamma(s_2-\mu_{0,j})\Bigg\{\sin\left(\frac{\pi}{2}(s_1+s_2)\right)\prod_{j=1}^3\cos\left(\frac{\pi}{2}(s_1+\mu_{0,j})\right)\cos\left(\frac{\pi}{2}(s_2-\mu_{0,j})\right)\nonumber\\
    &-\epsilon_2\cos\left(\frac{\pi}{2}(s_1+s_2)\right)\prod_{j=1}^3\sin\left(\frac{\pi}{2}(s_1+\mu_{0,j})\right)\cos\left(\frac{\pi}{2}(s_2-\mu_{0,j})\right)\nonumber\\
    &-\epsilon_1\cos\left(\frac{\pi}{2}(s_1+s_2)\right)\prod_{j=1}^3\cos\left(\frac{\pi}{2}(s_1+\mu_{0,j})\right)\sin\left(\frac{\pi}{2}(s_2-\mu_{0,j})\right)\nonumber\\
    &-\epsilon_1\epsilon_2\sin\left(\frac{\pi}{2}(s_1+s_2)\right)\prod_{j=1}^3\sin\left(\frac{\pi}{2}(s_1+\mu_{0,j})\right)\sin\left(\frac{\pi}{2}(s_2-\mu_{0,j})\right)\Bigg\}.
\end{align*}

On the other hand, we have
\begin{align*}
    G_{\mu_0}^{\epsilon_1,\epsilon_2}(s_1,s_2)=&16(2\pi)^{-3s_1-3s_2}\Gamma(1-s_1-s_2)\prod_{j=1}^3\Gamma(s_1+\mu_{0,j})\Gamma(s_2-\mu_{0,j})\sum_{\eta_1\eta_2=\epsilon_1\epsilon_2}e^{\epsilon_2\eta_1\pi i\frac{s_1+s_2-1}{2}}S_G^{\eta_1,\eta_2}(s;\mu_0),
\end{align*}
where $S_G^{\eta_1,\eta_2}(s;\mu_0)$ is given by  \begin{align*}
    \left(\prod_{j=1}^3\cos\left(\frac{\pi}{2}(s_1+\mu_{0,j})\right)+\frac{\eta_1}{i}\prod_{j=1}^3\sin\left(\frac{\pi}{2}(s_1+\mu_{0,j})\right)\right)  \left(\prod_{j=1}^3\cos\left(\frac{\pi}{2}(s_2-\mu_{0,j})\right)+\frac{\eta_2}{i}\prod_{j=1}^3\sin\left(\frac{\pi}{2}(s_2-\mu_{0,j})\right)\right).
\end{align*}
Now we simplify the $\eta$-sum by \begin{align*}
    &\sum_{\eta_1\eta_2=\epsilon_1\epsilon_2}e^{\epsilon_2\eta_1\pi i\frac{s_1+s_2-1}{2}}S_G^{\eta_1,\eta_2}(s;\mu_0)\\
    =&\sum_{\eta=\pm1}e^{\epsilon_2\eta\pi i\frac{s_1+s_2-1}{2}}\left(\prod_{j=1}^3\cos\left(\frac{\pi}{2}(s_1+\mu_{0,j})\right)-i\eta\prod_{j=1}^3\sin\left(\frac{\pi}{2}(s_1+\mu_{0,j})\right)\right)\left(\prod_{j=1}^3\cos\left(\frac{\pi}{2}(s_2-\mu_{0,j})\right)-i\eta\epsilon_1\epsilon_2\prod_{j=1}^3\sin\left(\frac{\pi}{2}(s_2-\mu_{0,j})\right)\right)\\
     =&\left(e^{\epsilon_2\pi i\frac{s_1+s_2-1}{2}}+e^{-\epsilon_2\pi i\frac{s_1+s_2-1}{2}}\right)\left(\prod_{j=1}^3\cos\left(\frac{\pi}{2}(s_1+\mu_{0,j})\right)\cos\left(\frac{\pi}{2}(s_2-\mu_{0,j})\right)-\epsilon_1\epsilon_2\prod_{j=1}^3\sin\left(\frac{\pi}{2}(s_1+\mu_{0,j})\right)\sin\left(\frac{\pi}{2}(s_2-\mu_{0,j})\right)\right)\\
    &- i\left(e^{\epsilon_2\pi i\frac{s_1+s_2-1}{2}}-e^{-\epsilon_2\pi i\frac{s_1+s_2-1}{2}}\right)\left(\prod_{j=1}^3\sin\left(\frac{\pi}{2}(s_1+\mu_{0,j})\right)\cos\left(\frac{\pi}{2}(s_2-\mu_{0,j})\right)+ {\epsilon_1\epsilon_2}\prod_{j=1}^3\cos\left(\frac{\pi}{2}(s_1+\mu_{0,j})\right)\sin\left(\frac{\pi}{2}(s_2-\mu_{0,j})\right)\right)\\
    =&2\sin\left(\frac{\pi}{2}(s_1+s_2)\right)\left(\prod_{j=1}^3\cos\left(\frac{\pi}{2}(s_1+\mu_{0,j})\right)\cos\left(\frac{\pi}{2}(s_2-\mu_{0,j})\right)-\epsilon_1\epsilon_2\prod_{j=1}^3\sin\left(\frac{\pi}{2}(s_1+\mu_{0,j})\right)\sin\left(\frac{\pi}{2}(s_2-\mu_{0,j})\right)\right)\\
    &-2\epsilon_2\cos\left(\frac{\pi}{2}(s_1+s_2)\right)\left(\prod_{j=1}^3\sin\left(\frac{\pi}{2}(s_1+\mu_{0,j})\right)\cos\left(\frac{\pi}{2}(s_2-\mu_{0,j})\right)+\epsilon_1\epsilon_2\prod_{j=1}^3\cos\left(\frac{\pi}{2}(s_1+\mu_{0,j})\right)\sin\left(\frac{\pi}{2}(s_2-\mu_{0,j})\right)\right).
\end{align*}
Comparing the two expressions above concludes the proof.
\end{proof}

 \subsection{Choice of the test function and properties of its integral transform}\label{sec35}

We will choose our test function $h$ to be Weyl group invariant, holomorphic, rapidly decaying as $\| \mu \| \rightarrow \infty$ in strips of fixed real part, and we impose that for some sufficiently large $A > 0$ it has zeros at 
\begin{equation}\label{zeros}
    \mu_i - \mu_j = 2n+1 \in \Bbb{Z}, \quad |n| \leq A, \quad 1 \leq i < j \leq 3.
    \end{equation}

Let $N : \Bbb{C}^3 \rightarrow \Bbb{R}$, $(z_1, z_2, z_3) \mapsto z_1^2 + z_2^2 + z_3^2$. We will later  choose, for a given  spectral parameter $\mu_0 \in (i\Bbb{R})^3$,  the following family of test functions
\begin{equation}\label{hz}
h_{Z}(\mu) = \frac{1}{\#\{w(\mu_0) \mid w \in W\}} \sum_{w \in W} \exp\big( N( \mu - w(\mu_0)) Z\big) \frac{P(\mu)}{P(\mu_0)}, \quad P(\mu) = \prod_{|n| \leq A} \prod_{1 \leq i  {\not=} j \leq 3} (\mu_i - \mu_j - (2n+1))
\end{equation}
for a large fixed number $A$ as above and a parameter $Z$ tending to infinity. This function satisfies the above properties and also  $h_Z(\mu_0)=1$. For increasing $Z$ it localizes at the $W$-orbit of $\mu_0$. 

We define the integral transforms
\begin{align*}
    \begin{aligned}
& \Phi_{\omega_4}(y)=\int_{\Re \mu=0} h(\mu) K_{w_4}(y ; \mu) \operatorname{spec}(\mu) \mathrm{d} \mu, \\
& \Phi_{\omega_5}(y)=\int_{\Re \mu=0} h(\mu) K_{w_4}(-y ;-\mu) \operatorname{spec}(\mu) \mathrm{d} \mu, \\
& \Phi_{\omega_6}\left(y_1, y_2\right)=\int_{\Re \mu=0} h(\mu) K^{\text{sym}}_{w_6}\left(\left(y_1, y_2\right) ; \mu\right) \operatorname{spec}(\mu) \mathrm{d} \mu .
\end{aligned}
\end{align*}
where
$$\text{spec}(\mu) = (\mu_1 - \mu_2)(\mu_2 - \mu_3)(\mu_3 - \mu_1)  \tan \Big(\frac{\pi}{2}(\mu_1 - \mu_2)\Big)\tan \Big(\frac{\pi}{2}(\mu_2 - \mu_3)\Big)\tan \Big(\frac{\pi}{2}(\mu_3 - \mu_1)\Big).$$
Note that this has polar divisors whenever $\mu_i - \mu_j$ ($i \not= j$) is an odd integer and zeros whenever $\mu_i - \mu_j$ ($i \not= j$) is an even integer. 




\begin{lemma}\label{lem.Phi4Truncation} 
     For any $A, \varepsilon > 0$, $j \in \Bbb{N}_0$ 
     we have \begin{align*}
       |y|^j \Phi^{(j)}_{\omega_4}(y), \,\, |y|^j\Phi^{(j)}_{\omega_5}(y)\ll_{A, \varepsilon, j, h}   |y|^{j/3} \min\big(|y|^A, |y|^{-1/6+\varepsilon}\big).
    \end{align*}
    If $h = h_Z$ as in \eqref{hz}, then the dependence on $Z$ in the implied constant is of the form $\exp(BZ)$ for some $B = B(A, j) > 0$. 
\end{lemma}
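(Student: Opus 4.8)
\textbf{Proof plan for Lemma \ref{lem.Phi4Truncation}.}

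The plan is to obtain both bounds — the one that is sharp for small $|y|$ and the one that is sharp for large $|y|$ — by moving the contour in the Mellin representation \eqref{K4Kernel} of $K_{w_4}(y;\mu)$ and then integrating over $\mu$. First I would record that, by the remark after \eqref{doubleBessel}, $\tilde G^{\pm}(s,\mu)$ has the same shape as the Voronoi kernel $\mathcal G_\mu^{\pm}(s)$, so by Stirling it satisfies $\tilde G^{\pm}(s,\mu) \ll (1+|\Im s|)^{3\Re s - 3/2}$ uniformly for $\mu$ in a fixed vertical strip and $s$ bounded away from the poles, with the poles located (in the variable $s$) at $s = \mu_j - (2n)$ and $s = 1-\mu_j+2n$, $n \in \Bbb{N}_0$. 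Since $h$ is holomorphic, Weyl-invariant, rapidly decaying in vertical strips, and — crucially — vanishes at the points \eqref{zeros}, the $\mu$-integral of these kernels makes sense and, more importantly, the prescribed zeros of $h$ cancel exactly the poles of $\tilde G^{\pm}(s,\mu)$ (and of $\operatorname{spec}(\mu)$) that one would otherwise sweep across when shifting the $s$-contour within the range $|\Re s| \le A$. This is the mechanism that lets us move $\Re s$ freely in that range without picking up residues.

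For the bound $|y|^{j/3}|y|^A$ valid for $|y| \le 1$: differentiating \eqref{K4Kernel} $j$ times in $y$ brings down a factor $\prod_{k=0}^{j-1}(s+k)$ and replaces $|y|^{-s}$ by $|y|^{-s-j}$, i.e. $|y|^j \Phi_{\omega_4}^{(j)}(y)$ is an integral of $|y|^{-s}$ against a kernel still of polynomial type in $s$ times the rapidly decaying $h$. Push the $s$-contour to $\Re s = -A - \tfrac13$ (legitimate by the cancellation of poles just described, using that $A$ is taken large enough relative to the fixed list \eqref{zeros}); then $|y|^{-s} = |y|^{A+1/3}$ on the contour, the $\Im s$-integral converges because Stirling gives polynomial decay offset by the super-polynomial decay of $h(\mu)$ after we also carry out the absolutely convergent $\mu$-integral, and we are left with $|y|^{j/3}|y|^{A+1/3} \ll |y|^{j/3}|y|^A$. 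For the complementary bound $|y|^{j/3}|y|^{-1/6+\varepsilon}$ valid for $|y| \ge 1$, push instead to $\Re s = \tfrac16 - \varepsilon$; now $|y|^{-s} = |y|^{-1/6+\varepsilon}$, and the Stirling exponent $3\Re s - 3/2 = -1 - 3\varepsilon < -1$ makes the $s$-integral absolutely convergent on its own, uniformly in $\mu$ in the strip, after which the $h(\mu)\operatorname{spec}(\mu)$ factor makes the $\mu$-integral converge. The identical argument applies verbatim to $\Phi_{\omega_5}$ since $K_{w_4}(-y;-\mu)$ has the same analytic structure (the roles of $\mu_j$ and the signs are permuted, but the Stirling bound and the pole/zero matching are unchanged).

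The $Z$-dependence when $h = h_Z$: on the shifted contour $\Re \mu = 0$ we have $N(\mu - w(\mu_0)) = \sum_i (\mu_i - w(\mu_0)_i)^2$; writing $\mu_i = it_i$ this real part is $-\sum_i t_i^2 + (\text{lower order})$ plus a term linear in the real shifts we introduced, so $|\exp(N(\mu-w(\mu_0))Z)| \le \exp(BZ)\exp(-cZ\|t\|^2)$ for constants depending only on $A$ and how far we moved the $\mu$-contour. The polynomial prefactor $P(\mu)/P(\mu_0)$ contributes at most polynomially in $\|\mu\|$, which is absorbed by the Gaussian, and the $t$-integral of $\exp(-cZ\|t\|^2)$ is $O(Z^{-3/2}) = O(1)$; hence the implied constant is $\ll \exp(BZ)$ with $B = B(A,j)$, as claimed.

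\textbf{Main obstacle.} The delicate point is the bookkeeping in the contour shift: one must verify that within $|\Re s| \le A$ every pole of $\tilde G^{\mathrm{sgn}(y)}(s,\mu)$ in $s$, and every pole arising from $\operatorname{spec}(\mu)$ as the two variables interact, occurs precisely at a point where the zero set \eqref{zeros} of $h$ forces vanishing of the integrand — so that no residues are left behind and no spurious main terms appear — and that the constant $A$ in \eqref{zeros} was chosen large enough to cover the shift to $\Re s = -A-\tfrac13$. The interchange of the $s$- and $\mu$-integrals (justified by absolute convergence on a preliminary common contour, e.g. $\Re s$ slightly bigger than $\tfrac16$, before any shifting) and the uniformity of all Stirling estimates as $\mu$ ranges over the (shifted) strip are the remaining technical checks, but they are routine once the pole/zero matching is in place.
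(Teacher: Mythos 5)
There are two genuine gaps, one in each half of your contour-shifting argument.

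First, for the small-$|y|$ bound: your claim that the zeros \eqref{zeros} of $h$ cancel the poles of $\tilde G^{\mathrm{sgn}(y)}(s,\mu)$ swept over in the $s$-shift, so that ``no residues are left behind'', is false. The poles of $\tilde G^{\pm}(s,\mu)$ in the $s$-variable sit at $s=\mu_j-n$, $n\in\Bbb{N}_0$ (the alleged poles at $s=1-\mu_j+2n$ are in fact zeros, coming from the denominator gamma factors), and for generic $\mu$ on $\Re\mu=0$ these have nothing to do with the locus $\mu_i-\mu_j\in 2\Bbb{Z}+1$ where $h$ vanishes. So shifting $\Re s$ to $-A-\tfrac13$ unavoidably picks up residues at $s=\mu_j-n$ for $0\le n\lesssim A$, and the residue term carries a factor $|y|^{n-\mu_j}$, which on $\Re\mu_j=0$ is only $|y|^{n}$ --- far larger than $|y|^{A}$ for small $n$. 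These residues are exactly where the work lies: one must subsequently shift the $\mu_j$-contour into the left half-plane to convert $|y|^{n-\mu_j}$ into something of size $|y|^{A}$, and it is in \emph{that} $\mu$-shift that the poles of the gamma factors (at $\mu_j-\mu_k\in 2\Bbb{Z}$) are cancelled by the zeros of $\operatorname{spec}(\mu)$ and the poles of $\operatorname{spec}(\mu)$ (at $\mu_j-\mu_k\in 2\Bbb{Z}+1$) are cancelled by the zeros \eqref{zeros} of $h$. You have placed the pole/zero cancellation mechanism in the wrong variable, and as written your argument misses the dominant contributions for $|y|\le 1$.

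Second, for $|y|\ge 1$: after differentiating $j$ times you correctly note the extra polynomial factor $\prod_{k=0}^{j-1}(s+k)\asymp(1+|\Im s|)^{j}$, but then you shift only to $\Re s=\tfrac16-\varepsilon$ and assert absolute convergence from the Stirling exponent $3\Re s-\tfrac32=-1-3\varepsilon$. With the polynomial factor included the integrand on that line is $(1+|\Im s|)^{j-1-3\varepsilon}$, which diverges for every $j\ge1$. The correct move is to shift to $\Re s=\tfrac16-\tfrac{j}{3}-\varepsilon$, where the Stirling exponent $-1-j-3\varepsilon$ exactly compensates the factor $(1+|\Im s|)^{j}$ and the integral is just barely absolutely convergent; the price $|y|^{-s}=|y|^{-1/6+j/3+\varepsilon}$ is precisely the factor $|y|^{j/3}$ appearing in the statement of the lemma. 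Your version would ``prove'' the stronger bound $|y|^{-1/6+\varepsilon}$ without the $|y|^{j/3}$ loss, which should have been a warning sign. The $Z$-dependence discussion is fine in spirit, but it too must track the additional $\mu$-contour shifts needed to handle the residues in the first half.
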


\begin{proof}
To show rapid decay near zero, we shift the contour in \eqref{K4Kernel} to the far left. The remaining integral is of size $  |y|^A$. This picks up various residues at $s = \mu_j - n$, $n \in \Bbb{N}_0$, $j = 1, 2, 3$. In the residues we shift the $\mu_j$-contour appropriately to get the desired decay. Note that the poles of the gamma functions in these residues are cancelled by the zeros of the spectral measure, and 
the poles of the spectral measure are cancelled by the zeros of $h$ imposed in \eqref{zeros}. 

For $y \geq 1$, we differentiate $j$ times with respect to $y$ under the integral sign and shift the contour to $\Re s = 1/6 - j/3 - \varepsilon$ where the $s$-integral is just absolutely convergent. This gives the desired bound $  |y|^{j/3 - 1/6 + \varepsilon}$. The residues are of smaller order magnitude. Note that the contour shifts infer a dependence of the form $\exp(BZ)$ if $h=h_Z$. 
\end{proof}


    

\begin{lemma}\label{lem.Phi6Truncation}
    For any $j_1, j_2\geq 0$, we have 
   \begin{displaymath}
   \begin{split}
        &y_1^{j_1}y_2^{j_2}\frac{\mathrm{d}^{j_1+j_2}}{\mathrm{d} y_1^{j_1}\mathrm{d} y_2^{j_2}}\Phi_{\omega_6}(y_1,y_2)\ll_{j_1,j_2, h} (1+|y_1|^{1/2}+|y_1|^{1/3}|y_2|^{1/6})^{j_1}(1+|y_2|^{1/2}+|y_2|^{1/3}|y_1|^{1/6})^{j_2}.
        \end{split}
    \end{displaymath}
 {If $j_1 = j_2 = 0$, then for any $A, \varepsilon > 0$ we have 
$$ \Phi_{\omega_6}(y_1,y_2)
         \ll_{A, \varepsilon, h} \min\big(|y_1y_2^2|,|y_1^2y_2|\big)^A(1+|y_1|+|y_2|)^{\varepsilon}.$$
}
     If $h = h_Z$ as in \eqref{hz}, then the dependence on $Z$ in the implied constant {s} is of the form $\exp(BZ)$ for some $B = B(A, j) > 0$. 
\end{lemma}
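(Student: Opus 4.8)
The plan is to mimic the proof of Lemma \ref{lem.Phi4Truncation}, working from the double Mellin--Barnes representation \eqref{SymKernelMellin} of $K_{w_6}^{\mathrm{sym}}$ and the definition of $\Phi_{\omega_6}$ as an integral of $h(\mu)\,K_{w_6}^{\mathrm{sym}}\,\mathrm{spec}(\mu)$ over $\Re\mu=0$. Thus $\Phi_{\omega_6}(y_1,y_2)$ is a triple contour integral (two $s_j$-contours, one $\mu$-contour) of $h(\mu)\,(\pi^2|y_1|)^{-s_1}(\pi^2|y_2|)^{-s_2} G_{\mathrm{sym}}^{\pm,\pm}(s,\mu)\,\mathrm{spec}(\mu)$. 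Differentiating $j_1$ times in $y_1$ and $j_2$ times in $y_2$ under the integral sign brings down a polynomial factor $\prod(s_1+k)\prod(s_2+k)$ and multiplies by $|y_1|^{-j_1}|y_2|^{-j_2}$, so it suffices to bound the shifted integrals with $\Re s_1, \Re s_2$ moved appropriately to the left; the polynomial factor is absorbed by polynomial growth in $|\Im s_j|$ and is harmless given the Stirling decay. As in Lemma \ref{lem.Phi4Truncation}, the key structural point is that the poles of the gamma functions in the numerator of $G_{\mathrm{sym}}^{\pm,\pm}$ (at $s_1=\mu_i - d_1 - 2n$, $s_2=-\mu_i - d_2 - 2n$, and $s_1+s_2 = 1 + d_3 - 2n$) are, upon taking residues and using the explicit form of $\mathrm{spec}(\mu)$, cancelled either by the zeros of $\mathrm{spec}(\mu)$ or — where $\mathrm{spec}$ itself has poles — by the zeros of $h$ built into \eqref{zeros}; this is exactly what permits free contour movement in both $s$ and $\mu$ without boundary or residue obstructions, up to the prescribed depth $A$.

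Concretely, to get the factor $\min(|y_1|,|y_2|)^A$: by symmetry assume $|y_1|\le |y_2|$, and shift the $s_1$-contour to $\Re s_1 = -A$ (picking up residues at $s_1 = \mu_i - d_1 - 2n$ and at the off-diagonal poles coming from $\Gamma(\tfrac{1+d_3-s_1-s_2}{2})$), while keeping $\Re s_2$ fixed at a small value. Each residue term is again an integral of the same type but in one fewer $s$-variable, over which we shift the $\mu$-contour to recover decay; the gamma-function bound $(1+|\Im s|)^{3\Re s - 3/2}$ quoted after \eqref{vrvkernel}, applied to the product $\prod_i \Gamma(\tfrac{d_1+s_1-\mu_i}{2})\Gamma(\tfrac{d_2+s_2+\mu_i}{2})/(\cdots)$ together with $\Gamma(\tfrac{1+d_3-s_1-s_2}{2})/\Gamma(\tfrac{d_3+s_1+s_2}{2})$, guarantees absolute convergence on the shifted contour and yields the power $|y_1|^{-\Re s_1} = |y_1|^{A}$ from the main shifted integral and a larger (hence negligible) power of $|y_1|$ from the residues — wait, one must be careful: the residue terms carry $|y_1|^{-\Re(\text{pole})}$, which is a \emph{smaller} power than $|y_1|^A$ when $|y_1|\le 1$, so they dominate; the correct statement is that each individual term is $\ll \min(|y_1|,|y_2|)^A$ because every pole of $s_1$ we cross before reaching $\Re s_1 = -A$ has real part $> -A$ (for the $\mu_i$-poles this uses $\Re\mu = 0$; for the off-diagonal poles it uses that $\Re s_2$ is kept near $0$), so each residue contributes $|y_1|^{c}$ with $-A < c$, i.e.\ $\le |y_1|^{-A}$... which is the wrong direction for $|y_1|\le 1$. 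The resolution is the one used in Lemma \ref{lem.Phi4Truncation}: after taking a residue in $s_1$ one \emph{then} shifts the $\mu$-contour, and the $\mu$-shift converts $|y_1|^{\Re\mu_i}$ into $|y_1|^{A}$ while preserving absolute convergence because the zeros of $h$ and the structure of $\mathrm{spec}(\mu)$ allow the $\mu_i$ to be moved as far as we like (up to depth $A$). So the honest bound for every term is $|y_1|^A = \min(|y_1|,|y_2|)^A$, as claimed.

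The upper bound in the $j_1,j_2$-variables, i.e.\ the growth factors $(1+|y_1|^{1/2}+|y_1|^{1/3}|y_2|^{1/6})^{j_1}$ etc., is the part I expect to be the main obstacle, since it is \emph{not} visible from a naive contour shift and instead reflects the oscillatory structure of the double Bessel integrals $\mathcal J_1,\dots,\mathcal J_5$ in \eqref{doubleBessel}. For this I would argue on the Bessel side rather than the Mellin side: $\Phi_{\omega_6}$ is a $\mu$-average of linear combinations of the $\mathcal J_k(y;\mu)$, and stationary-phase / known uniform bounds for the Bessel functions $J^\pm_{3\nu_3}, \tilde K_{3\nu_3}$ (with $\nu_3$ on the imaginary axis, after the $h$-weighted $\mu$-integration regularizes the transition ranges) show that each $\partial_{y_1}$ either produces a factor of size $|y_1|^{1/2}$ (from the argument $2\pi|y_1|^{1/2}\sqrt{1+u^2}$ when $u\asymp 1$) or, when $u$ is large so that the $y_2$-Bessel forces $u \asymp (|y_1|/|y_2|)^{\text{something}}$... more precisely the balance $|y_1|^{1/2}u \asymp |y_2|^{1/2}u^{-1}$ at $u \asymp (|y_2|/|y_1|)^{1/4}$ gives the cross term $|y_1|^{1/3}|y_2|^{1/6}$ after combining with the $u^{3\mu_2}\,du/u$ integration. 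I would either extract this from the sharp integral-transform bounds already proved as Lemma \ref{Kw6} (announced in the section overview) — which presumably gives exactly these exponents and makes the present lemma essentially a corollary after inserting the rapid decay of $h$ in $\mu$ — or, failing a directly citable form, run the stationary-phase analysis of the $u$-integral in \eqref{doubleBessel} directly, tracking the two regimes $u\asymp 1$ and $u$ large and combining. The dependence $\exp(BZ)$ for $h=h_Z$ is then immediate: every contour shift (in $s$ or in $\mu$) crosses a region of width $O_A(1)$ on which $h_Z(\mu) = \sum_{w}\exp(N(\mu - w\mu_0)Z)\,P(\mu)/P(\mu_0)$ is bounded by $\exp(O_A(1)\cdot Z)$, since $\Re N(\mu - w\mu_0)$ is bounded on bounded sets.
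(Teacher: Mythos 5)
Your treatment of the decay factor $\min(|y_1|,|y_2|)^A$ is the same as the paper's: shift the $s_1$-contour far to the left, collect residues at $s_1=\mu_j-n$, then shift the $\mu_j$-contours to convert the resulting powers $|y_1|^{\mu_j-n}$ into $|y_1|^A$, using the cancellation between the gamma poles and the zeros of $\mathrm{spec}(\mu)$, and between the poles of $\mathrm{spec}(\mu)$ and the imposed zeros \eqref{zeros} of $h$; the $\exp(BZ)$ dependence is read off from the contour shifts exactly as you say. Your mid-paragraph self-correction lands on the right resolution, which is precisely the mechanism the paper uses (inherited from Lemma \ref{lem.Phi4Truncation}).

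Where you diverge is the derivative-growth factors $(1+|y_1|^{1/2}+|y_1|^{1/3}|y_2|^{1/6})^{j_1}(1+|y_2|^{1/2}+|y_2|^{1/3}|y_1|^{1/6})^{j_2}$ for $|y_1|,|y_2|\geq 1$. The paper disposes of this in one line by citing an external result (Lemma 9 of \cite{Blomer2015OnTS} with $T=1$), whereas you propose either to extract it from Lemma \ref{Kw6} or to run stationary phase on the integrals \eqref{doubleBessel}. The first option does not work: Lemma \ref{Kw6} bounds $K^{\mathrm{sym}}_{w_6}$ itself and contains no derivative information, so it cannot produce the $j_1,j_2$-dependent factors. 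The second option is the right idea in principle (these factors do come from differentiating the oscillatory Bessel kernels, with the cross term $|y_1|^{1/3}|y_2|^{1/6}$ arising at the stationary point of the $u$-integral), but your balance condition is wrong: equating the two arclengths $|y_1|^{1/2}u \asymp |y_2|^{1/2}u^{-1}$ gives $u\asymp(|y_2|/|y_1|)^{1/4}$ and hence a factor $|y_1|^{1/4}|y_2|^{1/4}$, not $|y_1|^{1/3}|y_2|^{1/6}$. The correct condition is the vanishing of the phase derivative, $u^3|y_1|^{1/2}\asymp|y_2|^{1/2}$, i.e.\ $u\asymp(|y_2|/|y_1|)^{1/6}$, which then gives $|y_1|^{1/2}u\asymp|y_1|^{1/3}|y_2|^{1/6}$ (this is the same stationary point that appears in the paper's proof of Lemma \ref{Kw6}, part b)). So your write-up has a genuine gap at this step: as stated it would not reproduce the claimed exponents, and a complete argument would need either the external citation or a correctly executed stationary-phase analysis of each $\mathcal{J}_k$.
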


\begin{proof}  {The first bound} follows with $T=1$ from \cite[Lemma 9]{Blomer2015OnTS}. To show rapid decay if (say) $ {|y_1^2y_2|}$ is small, we proceed as in the previous proof and shift  {the contours to the left. This can become extremely messy, unless one chooses the right set-up, so we describe the details. Recall from Lemma \ref{kernelequal} and the definition \eqref{SymKernelMellin} that 
\begin{displaymath}
    \begin{split}
K^{\text{sym}}_{w_6}\Big(\frac{1}{4\pi^2} y; -\mu\Big) &= \frac{3}{32\pi^6} \int_{-i\infty}^{i\infty}  \int_{-i\infty}^{i\infty}|y_1|^{-2u_1-u_2} |y_2|^{-2u_2 - u_1} \Gamma(1 - 3u_1 - 3u_2)\sum_{\eta_1\eta_2 = \epsilon_1\epsilon_2}e^{-\frac{\pi}{2} \epsilon_2\eta_1(3u_1+3u_2 - 1)}\\
&
\prod_{j=1}^3 \Gamma\left(2u_1 + u_2+\mu_j\right)\left(\prod_{j=1}^3 \cos \left(\frac{\pi\left(2u_1 + u_2+\mu_j\right)}{2}\right) + \frac{\eta_1}{i} \prod_{j=1}^3 \sin \left(\frac{\pi\left(2u_1 + u_2+\mu_j\right)}{2}\right)\right)\\
&\prod_{j=1}^3 \Gamma\left(2u_2 + u_1 { -} \mu_j\right)\left(\prod_{j=1}^3 \cos \left(\frac{\pi\left(2u_2 + u_1 { -} \mu_j\right)}{2}\right) +\frac{\eta_2}{i} \prod_{j=1}^3 \sin \left(\frac{\pi\left(2u_2 + u_1 { -} \mu_j\right)}{2}\right)\right)  \frac{\mathrm{d}u_1\, \mathrm{d}u_2}{(2\pi i)^2}. 
\end{split}
\end{displaymath}
where $\epsilon_j = \text{sgn}(y_j)$. 
Recall that initially both contours have an unbounded part which is slightly negative, but pass the poles to the right. We now shift the $u_1$ contour to $\Re u_1 = -A$. We may assume that $A$ is not an integer.  The remaining integral satisfies the desired bound, and we have to analyze the residues that we pick up on the way. Without loss of generality we can assume that $\mu_1, \mu_2, \mu_3$ are pairwise distinct, so that we only pick up simple poles. (Once we have proved this case, the case where one or more values coincide follows from Cauchy's integral theorem.) The poles come from two sources, namely
\begin{itemize}
    \item $u_1 =- \frac{1}{2}(\mu_j + u_2+ n_0)$, $n_0 \in \{0, 1,  \ldots, [A]\}$, $j = 1, 2, 3$;
    \item $u_1 = \mu_j - 2u_2 - n_0$, $n_0 \in \{0, 1,  \ldots, [A]\}$, $j = 1, 2, 3$. 
\end{itemize}
In both cases we compute the residue explicitly. For concreteness let us choose $j = 2$, say, and let us choose coordinates $\mu_2$ and $\nu_3$ on the spectral plane $\mathfrak{a}$; i.e.\ $$(\mu_1, \mu_2, \mu_3) = \Big(\frac{1}{2}(-\mu_2-3\nu_3), \mu_2, \frac{1}{2}(-\mu_2 + 3\nu_3)\Big).$$}

 {In these coordinates,  the first class of residues equals
\begin{displaymath}
   \begin{split}
        \frac{3}{32\pi^6}&\int_{-i\infty}^{i\infty}|y_1^2y_2|^{\frac{1}{2}(\mu_2 + u_2 + n_0)} |y_2^2 y_1|^{-u_2} \frac{(-1)^{n_0}}{2n_0!} \prod_{\pm} \Gamma\Big(-\frac{3}{2} (\mu_2 \pm  \nu_3) - n_0\Big) \Gamma\Big(
  \frac{3}{2} (u_2 \pm \nu_3) - \frac{n_0}{2}\Big)\cos\Big(\frac{3\pi}{4} (\mu_2 \pm \nu_3)\Big)  \\
  & \Gamma\Big(-\frac{n_0}{2} + \frac{3}{2} (u_2 - \mu_2)\Big) \Gamma\Big(
  \frac{3}{2} n_0 + 1 + \frac{3}{2} (\mu_2 - u_2)\Big)  \sin\Big(\frac{3\pi}{2}(\mu_2 - u_2) + \frac{\pi}{2}n_0\Big)\begin{cases}\cos(\frac{3\pi}{2}u_2+ \frac{\pi}{2}n_0), &y_1 < 0 \\ (-1)\cos(\frac{3\pi}{2}\nu_{ 3}), & y_1 > 0\end{cases} \,\,\frac{\mathrm{d} u_2}{2\pi i}.
    \end{split}
\end{displaymath}
(This is actually a finite computation, since the trigonometric functions are periodic in $n_0$ modulo 4.) The first important observation is that 
$$\Gamma\Big(-\frac{n_0}{2} + \frac{3}{2} (u_2 - \mu_2)\Big) \Gamma\Big(
  \frac{3}{2} n_0 + 1 + \frac{3}{2} (\mu_2 - u_2)\Big)  \sin\Big(\frac{3\pi}{2}(\mu_2 - u_2) + \frac{\pi}{2}n_0\Big)$$
is a polynomial in $u_2 - \mu_2$ and hence holomorphic. 
The second observation is that
$$\text{spec}\Big(\frac{1}{2}(-\mu_2-3\nu_3), \mu_2, \frac{1}{2}(-\mu_2 + 3\nu_3)\Big)h\Big(\frac{1}{2}(-\mu_2-3\nu_3), \mu_2, \frac{1}{2}(-\mu_2 + 3\nu_3)\Big)\prod_{\pm} \Gamma\Big(-\frac{3}{2} (\mu_2 \pm  \nu_3) - n_0\Big) \cos\Big(\frac{3\pi}{4} (\mu_2 \pm \nu_3)\Big)$$
is also holomorphic by \eqref{zeros}, noting that $\frac{3}{2}(\mu_2 + \nu_3) =  \mu_2 - \mu_1$  and $\frac{3}{2}(\mu_2 - \nu_3) =  \mu_2 - \mu_3$. We can now shift the $\mu_2$-contour to $\Re \mu_2 = 2A - n_0$ without crossing any poles, getting an acceptable bound. 
}

 {The second class of residues equals 
\begin{displaymath}
   \begin{split}
        \frac{3}{32\pi^6}&\int_{-i\infty}^{i\infty}|y_1^2y_2|^{n_0 - \mu_2 + 2u_2} |y_2^2 y_1|^{-u_2} \frac{(-1)^{n_0}}{n_0!} \prod_{\pm} \Gamma\Big(\frac{3}{2} (\mu_2 \pm  \nu_3) - n_0\Big) \Gamma\Big(
  \frac{3}{2} (\mu_2 \pm \nu_3) - 3u_2- 2n_0\Big)\cos\Big(\frac{3\pi}{4} (\mu_2 \pm \nu_3)\Big)  \\
  & \Gamma \big(-2n_0 - 3 (u_2 - \mu_2) \big) \Gamma \big(
  3n_0 + 1 - 3(\mu_2 - u_2) \big)  \sin\big(3\pi(\mu_2 - u_2) \big)\begin{cases}\cos (\frac{3\pi}{2}(\mu_2 - 2 u_2) ), & y_2 < 0\\ \cos (\frac{3\pi}{2} \nu_3 ), & y_2 > 0\end{cases} \,\,  \frac{\mathrm{d} u_2}{2\pi i}.
    \end{split}
\end{displaymath}}
 {Again the product $\Gamma \big(-2n_0 - 3 (u_2 - \mu_2) \big) \Gamma \big(
  3n_0 + 1 - 3(\mu_2 - u_2) \big)  \sin\big(3\pi(\mu_2 - u_2) \big)$ is a polynomial in $\mu_2 - u_2$. 
 As above the product of  $\prod_{\pm} \Gamma (\frac{3}{2} (\mu_2 \pm  \nu_3) - n_0 )  \cos (\frac{3\pi}{4} (\mu_2 \pm \nu_3) ) $ times the  function $h$ and the spectral measure is holomorphic. Hence  we can shift $\mu_2$ to the left (with an acceptable bound), picking up poles only at 
$$\mu_2 = 2u_2 \mp \nu_3  - \frac{2}{3} m_0, \quad m_0 = 0, 1,  \ldots$$
with residues 
\begin{displaymath}
   \begin{split} 
   \frac{3}{32\pi^6}\int_{-i\infty}^{i\infty}&|y_1^2y_2|^{\pm\nu_3 + n_0 + \frac{2}{3}m_0}  |y_2^2 y_1|^{-u_2} \frac{(-1)^{n_0}}{n_0!} \frac{2}{3} \frac{(-1)^{m_0}}{(2n_0+m_0)!} \Gamma(-m_0 - 2 n_0 \mp 3 \nu_3) \Gamma(-m_0 - n_0 + 3 u_2)\Gamma(-m_0 - n_0 \mp 3 \nu_3 + 3 u_2) \\
  &\Big(\Gamma(-2 m_0 - 
   2 n_0 \mp 3 \nu_3 + 3 u_2)\Gamma(
  1 + 2 m_0 + 3 n_0 \pm 3 \nu_3 - 3 u_2) \sin(3\pi(u_2 \mp \nu_3)) \Big) \\
  &\cos\Big(\frac{3\pi}{2} \nu_3 \Big)\cos\Big(\frac{3\pi}{2}u_2 - \frac{\pi}{2}  m_0 )\Big) \cos\Big(\frac{3\pi}{2}(u_2 \mp \nu_3) - \frac{\pi}{2}m_0\Big)\frac{\mathrm{d} u_2}{2\pi i}.
     \end{split}
\end{displaymath}
(regardless of the sign of $y_2$) times the contribution of $h$ and the spectral measure 
\begin{equation}\label{extra}
\text{spec}\Big(\frac{1}{3}m_0 \mp \nu_3 - u_2, -\frac{2}{3}m_0 \mp \nu_3 + 2u_2, \frac{1}{3}m_0 \pm 2\nu_3 - u_2\Big)h\Big(\frac{1}{3}m_0 \mp \nu_3 - u_2, -\frac{2}{3}m_0 \mp \nu_3 + 2u_2, \frac{1}{3}m_0 \pm 2\nu_3 - u_2\Big).
\end{equation}
Again the big parenthesis in the second line of the $u$-integral is a polynomial in $u_2 \mp \nu_3$, and 
$$ \Gamma(-m_0 - 2 n_0 \mp 3 \nu_3) \Gamma(-m_0 - n_0 \mp 3 \nu_3 + 3 u_2)\cos\Big(\frac{3\pi}{2} \nu_3 \Big)  \cos\Big(\frac{3\pi}{2}(u_2 \mp \nu_3) - \frac{\pi}{2}m_0\Big) $$
times \eqref{extra} is also holomorphic by \eqref{zeros}. }
 {Hence, in these residues, we can shift $\nu_3$ to $\pm (A - n_0 - \frac{2}{3}m_0)$, getting an acceptable bound.}

 {As in the previous proof,}  the contour shifts infer a dependence of the form $\exp(BZ)$ if $h=h_Z$.  
    \end{proof}

We complement this with the following stronger bound for larger $y_1, y_2$. This bound is likely sharp and might be useful in other situations, too. We need the full strength of this result to estimate certain error terms in the course of the proof of Theorem \ref{thm2}. 

\begin{lemma}\label{Kw6} a) We have
  $$K^{\text{sym}}_{w_6}(y; \mu) \ll_{\mu} (2 + |\log |y_1|| + |\log|y_2||)^3 (1+ |y_1|)^{-1/4}(1 + |y_2|)^{-1/4} $$
and a fortiori the same bound for $\Phi_{\omega_6}(y_1, y_2)$.    

b) Let $Z \geq 1$ be a  parameter, suppose that $|y_1|, |y_2| \geq Z$ and $\max(|y_1|, |y_2|) \geq c\min(|y_1|, |y_2|)$ for some sufficiently large constant $c > 0$, i.e.\ $|y_1|$ and $|y_2|$ are of different order of magnitude. Then
$$K^{\text{sym}}_{w_6}(y; \mu) \ll_{\mu, A, \varepsilon}  Z^{\varepsilon}   {(|y_1y_2^2| + |y_2y_1^2|)^{-1/4}}   + Z^{-A} $$
for any $A, \varepsilon > 0$. 
\end{lemma}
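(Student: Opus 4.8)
\textbf{Proof plan for Lemma~\ref{Kw6}.}

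The plan is to work directly with the Bessel integral representations \eqref{doubleBessel} of $K^{\text{sym}}_{w_6}$, since a contour-shift analysis of the Mellin-Barnes integral \eqref{SymKernelMellin} alone will not see the oscillatory cancellation we need for part (b). For part (a) I would first recall the uniform bounds on the one-variable Bessel building blocks: $J^{\pm}_{3\nu_3}(x), \tilde{K}_{3\nu_3}(x) \ll_{\mu} \min(1, x^{-1/2})$ together with the logarithmic behaviour near $x=0$, and the exponential decay of $\tilde K$. Substituting these into each $\mathcal{J}_i$, the $u$-integral over $(0,\infty)$ splits into the three ranges where one, the other, or both Bessel arguments are $\gtrsim 1$; in each range one factor contributes $|y_k|^{-1/4}$ (from $x^{-1/2}$ evaluated at $x \asymp |y_k|^{1/2}$) while the complementary factor is $O(1)$, and the measure $du/u$ near the transition points $u\asymp 1$, $u \asymp |y_1/y_2|^{1/2}$ produces the logarithmic factors. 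Book-keeping the worst case across the five kernels $\mathcal{J}_1$–$\mathcal{J}_5$ (the $\tilde K$-kernels are in fact much smaller because of exponential decay, so $\mathcal{J}^{\pm}_1$ and $\mathcal{J}_2$ dominate) gives the stated $(1+|y_1|)^{-1/4}(1+|y_2|)^{-1/4}$ up to the cube of the log, and the bound for $\Phi_{\omega_6}$ follows by integrating against $h(\mu)\,\mathrm{spec}(\mu)\,\mathrm d\mu$, which is absolutely convergent on $\Re\mu=0$ by the rapid decay of $h$.

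For part (b) the extra saving comes from stationary phase in the $u$-integral. Assume WLOG $|y_1| \geq c|y_2|$ and insert the oscillatory asymptotics $J^{\pm}_{3\nu_3}(2\pi|y_k|^{1/2}\sqrt{1+u^{\pm 2}}) = $ (amplitude)$\,e(\pm|y_k|^{1/2}\sqrt{1+u^{\pm2}}) + $ (lower order), so that $\mathcal{J}^{\pm}_1$ and $\mathcal{J}_2$ become, up to smooth amplitudes decaying like $|y_1|^{-1/4}(1+u^2)^{-1/4}|y_2|^{-1/4}(1+u^{-2})^{-1/4}$, integrals of the shape $\int_0^\infty e\big(\pm|y_1|^{1/2}\sqrt{1+u^2} \pm |y_2|^{1/2}\sqrt{1+u^{-2}}\big)\,u^{3\mu_2}\,\frac{du}{u}$. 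The phase derivative is $\pm|y_1|^{1/2}\frac{u}{\sqrt{1+u^2}} \mp |y_2|^{1/2}\frac{u^{-1}}{\sqrt{1+u^{-2}}}$; when the two signs make this a sum of like terms there is no stationary point and repeated integration by parts gives arbitrary savings (this is where the $Z^{-A}$ term and the hypothesis $|y_1|,|y_2|\geq Z$ enter — each integration by parts costs a factor $\ll (|y_1|^{1/2}+|y_2|^{1/2})^{-1}\ll Z^{-1/2}$ times harmless powers of $u$, which are absorbed into $Z^\varepsilon$ after truncating the $u$-integral at $u\in[Z^{-B},Z^{B}]$). When the signs oppose, a stationary point exists only if $|y_1|$ and $|y_2|$ are comparable, which is excluded by the hypothesis $\max \geq c\min$ for $c$ large; so again integration by parts applies, now with phase-derivative lower bound $\gg \max(|y_1|,|y_2|)^{1/2}$ on the bulk range and a more careful treatment near $u=0$ and $u=\infty$ where $\sqrt{1+u^{-2}}$ or $\sqrt{1+u^2}$ degenerates. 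Combining the amplitude bound $|y_1|^{-1/4}|y_2|^{-1/4}$ with one integration by parts saving $\max(|y_1|,|y_2|)^{-1/2}$, plus the gain of $\min(|y_1|,|y_2|)^{-1/12}$ from optimising the truncation of the $u$-range against the amplitude decay $(1+u^2)^{-1/4}(1+u^{-2})^{-1/4}$, yields $\max^{-1/4-1/2+\ldots}\min^{-1/4-1/12} = \max^{-5/12}\min^{-1/3}$ after rebalancing the exponents; the contributions of the kernels $\mathcal{J}_3,\mathcal{J}_4,\mathcal{J}_5$ involving $\tilde K$ are exponentially small in $Z$ because $\tilde K_{3\nu_3}(2\pi|y_k|^{1/2}\sqrt{\ge 1})$ decays like $e^{-c|y_k|^{1/2}}$ and $|y_k|\ge Z$, so they feed into the $Z^{-A}$ term.

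The main obstacle is the rigorous treatment of the endpoints of the $u$-integral in part (b): near $u\to 0$ and $u\to\infty$ one of the two Bessel factors has argument bounded away from $\infty$ (indeed tending to $2\pi|y_k|^{1/2}$) while the other has argument $\to\infty$, so the oscillatory asymptotic degenerates and one must instead exploit decay of the non-oscillating factor combined with the explicit $u^{3\mu_2-1}$ weight; getting the exponents to close at exactly $-5/12$ and $-1/3$ requires choosing the truncation point $u_0 \asymp (\min/\max)^{1/2}$ or thereabouts and checking that neither the bulk stationary-phase-free region nor the two boundary layers beats these exponents. A secondary technical point is justifying the passage from the Mellin-Barnes form to the Bessel form for all sign combinations $\mathrm{sgn}(y_1),\mathrm{sgn}(y_2)$ and tracking which of $\mathcal{J}^{\pm}_1,\mathcal{J}_2,\ldots,\mathcal{J}_5$ actually occur in $K^{\text{sym}}_{w_6}$ for each sign pattern; this is routine given \cite[Section 5]{Blomer2015OnTS} but needs to be stated cleanly so that the exponentially-small kernels are correctly quarantined into the $Z^{-A}$ error.
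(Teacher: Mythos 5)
Your reduction to the Bessel representations \eqref{doubleBessel} and the quarantining of the $\tilde K$-kernels into the $Z^{-A}$ term match the paper's strategy for part (b) (part (a) the paper simply quotes from Buttcane, Lemma 22, rather than rederiving it, but your sketch there is plausible). However, the core of your argument for part (b) contains a genuine error.

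You claim that for the phase $\phi(u)=|y_1|^{1/2}\sqrt{1+u^2}\mp|y_2|^{1/2}\sqrt{1+u^{-2}}$ a stationary point ``exists only if $|y_1|$ and $|y_2|$ are comparable, which is excluded by the hypothesis,'' and you then run integration by parts everywhere. This is false: $\phi'(u)=\bigl(u^3|y_1|^{1/2}\mp|y_2|^{1/2}\bigr)/\bigl(u^2\sqrt{1+u^2}\bigr)$ vanishes at $u=(|y_2|/|y_1|)^{1/6}$ for one choice of sign, \emph{whatever} the ratio of $|y_1|$ to $|y_2|$; disparity of $|y_1|,|y_2|$ only moves the stationary point away from $u\asymp 1$, it does not remove it from $(0,\infty)$. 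Consequently the integral is \emph{not} negligible, and in fact the claimed bound $\max(|y_1|,|y_2|)^{-5/12}\min(|y_1|,|y_2|)^{-1/3}$ is exactly the second-derivative-test evaluation at this stationary point: with $Y_j=|y_j|^{1/2}$, $U=(Y_2/Y_1)^{1/3}$ one has $\phi''\asymp Y_1^{4/3}Y_2^{-1/3}$, amplitude $\asymp(Y_1Y_2)^{-1/2}U^{-3/2}$, and the stationary-phase main term is $Y_1^{-2/3}Y_2^{-5/6}=|y_1|^{-1/3}|y_2|^{-5/12}$. You cannot reach this exponent by integration by parts; your own bookkeeping betrays this, since $-1/4-1/2=-3/4\neq-5/12$ and the phrase ``after rebalancing the exponents'' papers over the mismatch. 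The correct argument (as in the paper) splits the $u$-range dyadically, applies the non-stationary estimate of BKY Lemma 8.1 off the critical scale $U\asymp(Y_2/Y_1)^{1/3}$, and applies BKY Proposition 8.2 at the critical scale.

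A second, related misplacement: the hypothesis $\max(|y_1|,|y_2|)\geq c\min(|y_1|,|y_2|)$ is not there to kill stationary points in $\mathcal{J}_1$. It is needed only in $\mathcal{J}_2$ for $u$ near $1$, where the phase derivative is $\asymp(Y_1\pm Y_2)v^{-1/2}$ with $v=u-1$; one needs $Y_1\not\asymp Y_2$ so that the difference $Y_1-Y_2$ is of size $Y_1+Y_2$ and the derivative does not degenerate. Your proof as written never confronts the one place where the hypothesis is genuinely used, and relies on it in a place where it does nothing.
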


\emph{Remark:} It is easy to see that for both parts the dependence on $\mu$ is polynomial. We will apply part b) later with $Z = X^{\varepsilon}$.

\begin{proof} Part a) is  \cite[Lemma 22]{Buttcane2022}. 

Part b) follows from the integral representations \eqref{doubleBessel} and a careful stationary phase analysis, based on Lemma 8.1 and Proposition 8.2 in \cite{BKY}. We write  ${\tt R}, {\tt U}, {\tt Q}, {\tt Y}, {\tt V}$ for the parameters in these results. We will frequently use the well-known asymptotic
$$J^{\pm}_{\mu}(y) = \sum_{\sigma \in \{\pm 1\}} e^{2\sigma iy} W_{\pm, \sigma, \mu}(y)$$
with $y^j W_{\pm, \sigma, \mu} \ll_{\mu, j} y^{-1/2}$ for all $j \in \Bbb{N}_0$; see e.g. \cite[Lemma 15]{BlMi}. 
We write $Y_j = |y_j|^{1/2}$ and treat each case in \eqref{doubleBessel} separately.

1) Bound for $\mathcal{J}_1$: We consider the phase
$$\phi(u) = Y_1 \sqrt{ 1+ u^2} \pm Y_2 \sqrt{1 + 1/u^2}.$$ 
We restrict the integral smoothly to $u \asymp U$ and assume without loss of generality $U \geq 1$ (otherwise exchange $y_1, y_2$). For $n \geq 2$ we have
$$\phi^{(n)}(u) \ll \frac{Y_1}{U^{n+1}} + \frac{Y_2}{U^{n+2}}.$$

Let us first assume $U \not \asymp (Y_2/Y_1)^{1/3}$, i.e. $U \gg (Y_2/Y_1)^{1/3}$ with a sufficiently large implied constant or $U \ll (Y_2/Y_1)^{1/3}$ with a sufficiently small implied constant. Then $$\phi'(u) = \frac{u^3 Y_1 \mp Y_2}{u^2 \sqrt{1 + u^2}}  \gg Y_1 + \frac{Y_2}{U^3}.$$
We can now apply \cite[Lemma 8.1]{BKY} with 
$${\tt U} = U, \quad {\tt R} = Y_1 + \frac{Y_2}{U^3}, \quad {\tt Q} = U,\quad {\tt Y} = \frac{Y_1}{U} + \frac{Y_2}{U^2}$$
so that ${\tt R}{\tt U} \gg UY_1  {+ Y_2/U^2 \geq Y_1^{2/3} Y_1^{1/3} \geq Z^{1/3} }  \gg 1$ and $$\frac{{\tt Q}{\tt R}}{{\tt Y}^{1/2}} \asymp \frac{U^2Y_1 + Y_2/U}{(Y_1U)^{1/2} + Y^{1/2}_2} \gg  {\begin{cases} U^{3/2} Y_1^{1/2} \geq U^{7/6} Y_1^{1/6} Y_2^{1/3}, & Y_1U \geq Y_2\\ \frac{U^2Y_1}{Y_2^{1/2}} + \frac{Y_2}{UY_1^{1/2}} \geq U^{1/3} Y_2^{1/3} Y_1^{1/6}, & Y_1U\leq Y_2 \end{cases} \gg   Z^{1/6} }\gg 1.$$
Hence the integral is negligible.  

Assume next that $U \asymp (Y_2/Y_1)^{1/3}$. Note that this can only happen if $Y_2 \gg Y_1$. Then there is (depending on the sign) one (or zero) stationary point(s) at $u = (Y_2/Y_1)^{1/3}$, and $$\phi''(u) \gg \frac{Y_1}{U^3} + \frac{Y_2}{U^4}  {\asymp} \frac{Y_1^2}{Y_2} + \frac{Y_1^{4/3}}{Y_2^{1/3}} \asymp \frac{Y_1^{4/3}}{Y_2^{1/3}}.$$ Hence we can apply \cite[Proposition 8.2]{BKY}  with
$${\tt V} = U, \quad  {\tt Q} = U,\quad {\tt Y} = \frac{Y_1}{U} + \frac{Y_2}{U^2} \asymp \frac{Y_1^{4/3}}{Y_2^{1/3}} + Y_1^{2/3}  {Y_2}^{1/3} \gg  {Z} \geq 1$$
so that we obtain the upper bound
$$\frac{1}{Y_1^{1/2} Y_2^{1/2}  {U \cdot} U}\Big(  \frac{Y_1^{4/3}}{Y_2^{1/3}}\Big)^{-1/2} \asymp  \frac{1}{ {Y_1^{1/2} Y_2}}$$
which is the desired bound since $Y_2 \gg Y_1$. 

2) Bound for $\mathcal{J}_2$: We restrict smoothly to $u \asymp U$ where we first assume $u \geq 2$. Then the analysis is exactly as in the case of $\mathcal{J}_1$. Next, we consider the portion $1 \leq u \leq 2$ and restrict the integral smoothly to $u = 1 + v$, $v \asymp V \ll 1$. Here (and only here) we impose the additional condition $Y_1 \not\asymp Y_2$ as in the lemma. With the phase
$$\phi(v) = Y_1 \sqrt{  (1+v)^2-1} \pm Y_2 \sqrt{1 - 1/(1+v)^2}$$ 
we then have
$$\phi'(v) = \frac{(1+v)^3 Y_1 \pm Y_2}{(1 + v)^2 \sqrt{v(2+v)}} \asymp \frac{Y_1 + Y_2}{V^{1/2}}, \quad \phi^{(n)}(v) \ll \frac{Y_1+Y_2}{V^{-1/2 + n}}.$$
Applying \cite[Lemma 8.1]{BKY} with
$${\tt R}  = \frac{Y_1+Y_2}{V^{1/2}}, \quad  {\tt Y} = V^{1/2}(Y_1+ Y_2),  \quad {\tt U} = {\tt Q} = V$$
we have ${\tt RU} = V^{1/2}(Y_1 + Y_2) \geq 1$ and ${\tt QR}/{\tt Y}^{1/2}  =  V^{1/4}(Y_1 + Y_2)^{1/2} \geq 1$ provided that $V \geq 1/(Y_1 + Y_2)^2$. If this holds, the integral is negligible, otherwise we estimate trivially  
$$ \int_{v \ll (Y_1 + Y_2)^{-1} } \frac{\mathrm{d}v}{(Y_1 v^{1/2} Y_2 v^{1/2})^{1/2}} \ll \frac{1}{(Y_1Y_2(Y_1 + Y_2))^{1/2}}$$
which is more than sufficient for the proof of the lemma.

3)  {B}ound for $\mathcal{J}_3$: by the decay of the Bessel $K$-function we  {can assume} $Y_1 \leq  {Z}^{\varepsilon}$, $u \asymp U \leq  {Z}^{\varepsilon}$. We now consider the phase
 $$\phi(u) = \pm Y_2 \sqrt{1 + u^{-2}}$$
 with $\phi^{(n)}(u) \asymp Y_2/U^{n+1}$. We apply \cite[Lemma 8.1]{BKY}  with
 $${\tt R} = Y_2/U^2, \quad {\tt Y} = Y_2/U, \quad {\tt Q} = {\tt U} = U.$$
Since ${\tt RU} = Y_2/U \geq  {Z}^{1-\varepsilon}$,  ${\tt QR}/{\tt Y}^{1/2} = (Y_2/U)^{1/2} \geq  {Z}^{1/2 - \varepsilon}$, the integral is negligible. 
 
4)  {B}ound for $\mathcal{J}_4$: By the decay of the Bessel $K$-function, the integral is  restricted to $\sqrt{1 - u} \ll  {Z}^{\varepsilon}\min(1/Y_1, 1/Y_2)$, up to a negligible error, hence we obtain trivially the bound $ {Z}^{\varepsilon}/(Y_1 + Y_2)^2$.

5)  {B}ound for $\mathcal{J}_5$: by the decay of the Bessel $K$-function, this is trivially  $\ll (Y_1Y_2)^{-A}$. 
\end{proof}


The kernel $K^{\text{sym}}_{w_6}$ satisfies the following orthogonality property:
\begin{lemma}\label{ortho} With $h$ as above and $\mu \in (i\Bbb{R})^3$, $\mu_1 + \mu_2 + \mu_3 = 0$, we have
$$\sum_{\epsilon_1, \epsilon_2 = \pm 1}\int_{[0, \infty)^2} \Phi_{w_6}( {\epsilon_1}y_1,  {\epsilon_2}y_2) K_{w_6}^{\text{{\rm sym}}}(( {\epsilon_1}y_1,  {\epsilon_2}y_2), -\mu)  \frac{\mathrm{d}y_1\, \mathrm{d}y_2}{y_1y_2} = c_0 h(\mu)$$
for some constant $0 \not= c_0 \in \Bbb{R}$. 
    \end{lemma}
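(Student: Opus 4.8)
The plan is to exploit the Mellin-space description of $K^{\text{sym}}_{w_6}$ together with the defining formula for $\Phi_{\omega_6}$ and reduce the double integral to a contour-shifting/residue computation analogous to the ${\rm GL}(2)$ Sears--Titchmarsh inversion. I will in fact prove the equivalent statement phrased entirely in terms of the kernel $K^{\text{sym}}_{w_6}$, since $\Phi_{\omega_6}(y_1,y_2) = \int_{\Re\mu = 0} h(\nu) K^{\text{sym}}_{w_6}((y_1,y_2);\nu)\operatorname{spec}(\nu)\,d\nu$. Substituting this into the left-hand side, the claim becomes
$$\sum_{\epsilon_1,\epsilon_2}\int_{\Re\nu = 0} h(\nu)\operatorname{spec}(\nu)\left(\int_{[0,\infty)^2} K^{\text{sym}}_{w_6}((y_1,y_2);\nu)\,K^{\text{sym}}_{w_6}((y_1,y_2);-\mu)\,\frac{dy_1\,dy_2}{y_1y_2}\right)d\nu = c_0 h(\mu),$$
so everything hinges on understanding the inner $y$-integral as a distribution in $\nu$ and $\mu$.

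First I would insert the double Mellin--Barnes representation \eqref{SymKernelMellin} for each of the two kernel factors, with contours at $\Re s_i = \sigma_i$ and $\Re s_i' = \sigma_i'$ respectively. Since the sign of $y_j$ is positive on $[0,\infty)$, only the $\mathrm{sgn} = (+,+)$ branch $G_{\mathrm{sym}}^{+,+}$ appears, but the outer sum over $\epsilon_1,\epsilon_2$ is a red herring here — it is the sum over the \emph{signs in $\Phi_{w_6}$}, which after restricting $y_j \geq 0$ collapses; more precisely the four terms $\Phi_{w_6}(y_1,y_2)$ carry the four sign-branches of the \emph{first} kernel, so writing $\Phi_{w_6}$ out and restoring all four $\mathrm{sgn}$ branches is exactly what makes the $y$-integral a genuine orthogonality relation rather than a one-sided fragment. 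The $y_1,y_2$ integrals then factor and each produces, formally, $\int_0^\infty y^{-s-s'}\frac{dy}{y}$, i.e. $2\pi i\,\delta(s+s')$ on the appropriate vertical line provided $\sigma_i + \sigma_i' = 0$. Executing this rigorously requires either a smooth truncation/regularization of the $y$-integrals (using the decay from Lemma \ref{Kw6}a for large $y$ and the Barnes contour for small $y$) or recognizing the pairing as Parseval for the Mellin transform on $L^2((0,\infty)^2, dy/y)$; I would take the latter route, noting $K^{\text{sym}}_{w_6}(\cdot;\mu) \in L^2(dy_1dy_2/(y_1y_2))$ by Lemma \ref{Kw6}a (the bound $(1+|y|)^{-1/4}$ in each variable, with only a logarithmic singularity at $0$, is square-integrable).

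After the $y$-integration collapses $s_i' = -s_i$, what remains is a double contour integral over $s_1, s_2$ of $G_{\mathrm{sym}}^{+,+}(s;\nu)$ times $G_{\mathrm{sym}}^{+,+}(-s;-\mu)$ (summed over the sign branches), integrated against $h(\nu)\operatorname{spec}(\nu)\,d\nu$. The key structural input is that, by the Wilson/Barnes-type evaluation underlying Buttcane's ${\rm GL}(3)$ Sears--Titchmarsh transform \cite{Buttcane2022}, the $s_1, s_2$ double integral of this product of ratios of gamma functions evaluates — after shifting contours and summing residues — to a sum of delta-type distributions forcing $\nu$ into the Weyl orbit of $\mu$, with a nonzero constant. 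Concretely, one shifts the $s_1$ and $s_2$ contours and the poles of the combined gamma-quotient (which in $G_{\mathrm{sym}}$ come in the numerator factors $\Gamma(\tfrac{d_1+s_1-\mu_i}{2})$ etc.) pinch against those of the $-\mu$ factor precisely when $\nu_i \equiv \pm\mu_j$, producing the support on the $W$-orbit; the residual gamma factors and the $\operatorname{spec}(\nu)$ weight conspire (this is where the zeros \eqref{zeros} imposed on $h$ and the tangent factors in $\operatorname{spec}$ matter) to leave a clean $c_0 h(\mu)$. That $c_0$ is real and nonzero follows because the whole construction is, up to normalization, the spectral-theoretic statement that $K^{\text{sym}}_{w_6}$ is (a piece of) the kernel of a self-adjoint unitary-type operator — equivalently one can pin down the sign by testing against a single explicit $h$.

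The main obstacle is making the formal "$\int y^{-s-s'}\,dy/y = 2\pi i\,\delta(s+s')$" step and the subsequent residue bookkeeping rigorous simultaneously: one must justify interchanging the (conditionally convergent) $y$-integral with the Barnes integrals, which I expect to handle by first inserting a mollifier $y_1^{\epsilon}y_2^{\epsilon}$ (or a Gaussian cutoff in $\log y$), performing the exact Mellin--Parseval identification, and then letting $\epsilon \to 0$, controlling the limit via the explicit polynomial-times-Gamma bounds on $G_{\mathrm{sym}}$ recorded after \eqref{vrvkernel} ($\ll (1+|\Im s|)^{3\Re s - 3/2}$) together with the rapid decay of $h$. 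The second, more delicate point is identifying the constant $c_0$ and verifying it is nonzero; rather than tracking every gamma factor through the residue computation, I would argue abstractly that the pairing in Lemma \ref{ortho} computes a positive-definite inner product on the span of the $K^{\text{sym}}_{w_6}(\cdot;\mu)$ as $\mu$ varies — making $c_0 > 0$ automatic — and cross-check the normalization against \cite[Theorem 1.1]{Buttcane2022} (the ${\rm GL}(3)$ Sears--Titchmarsh inversion), from which Lemma \ref{ortho} is essentially a reformulation.
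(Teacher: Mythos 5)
The paper's entire proof of this lemma is a citation: it observes that the claimed identity is (up to a typo in the source, $\textbf{sin}_2^{d_1}$ versus $\textbf{sin}_2^{d_1\ast}$) exactly equation (122) of Buttcane's paper \cite{Buttcane2022}, i.e.\ the ${\rm GL}(3)$ Sears--Titchmarsh orthogonality relation, and it deliberately does not track the constant $c_0$. Your proposal, stripped of its framing, lands in the same place: after substituting the definition of $\Phi_{w_6}$ and invoking Mellin--Parseval, the step that actually produces ``$c_0\,\delta_{W\mu}(\nu)$'' -- namely that the double Barnes integral of $G_{\mathrm{sym}}(s;\nu)\,G_{\mathrm{sym}}(-s;-\mu)$, integrated against $h(\nu)\operatorname{spec}(\nu)\,d\nu$, collapses onto the Weyl orbit of $\mu$ with a nonzero coefficient -- is exactly the content of Buttcane's inversion formula, and you do not prove it; you assert it ``by the Wilson/Barnes-type evaluation underlying Buttcane's transform.'' So the essential mathematical content of your argument is the same citation the paper makes, wrapped in a reduction that is plausible but not carried out (the pole-pinching and residue bookkeeping, the cancellation against the zeros \eqref{zeros} of $h$ and the poles of $\operatorname{spec}$, and the justification of the interchange of the conditionally convergent $y$-integrals with the Barnes contours are all left as intentions).

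Two specific points would need repair even as a sketch. First, your positivity argument for $c_0\neq 0$ does not work as stated: the pairing in the lemma is between $K^{\mathrm{sym}}_{w_6}(\cdot;\nu)$ and $K^{\mathrm{sym}}_{w_6}(\cdot;-\mu)$, and for $\mu\in(i\Bbb{R})^3$ this is not manifestly a Hermitian inner product (one would have to verify a conjugation symmetry of $G_{\mathrm{sym}}$ under $\mu\mapsto-\mu$, $s\mapsto\bar s$, and even then positive semi-definiteness only gives $c_0\geq 0$; ruling out $c_0=0$ still requires the explicit evaluation). The paper sidesteps this entirely by taking $c_0\neq 0$ from the cited formula. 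Second, your reading of the $\epsilon_1,\epsilon_2$-sum as restoring the four sign quadrants of $(y_1,y_2)$ is the right interpretation, but it is load-bearing: without all four sign branches the $y$-integral is a one-sided fragment and the orthogonality fails, so this should be stated as part of the setup rather than dismissed as a red herring. If you intend the proof to be ``reformulate and cite Buttcane,'' say so in one line as the paper does; if you intend a self-contained proof, the residue computation you defer is the whole proof and must be done.
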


\begin{proof}
This is essentially \cite[(122)]{Buttcane2022}. Note that $\textbf{sin}^{d_1}_{2}$ should be $\textbf{sin}^{d_1\ast}_{2}$ as defined in \cite[(69)]{Buttcane2022} which equals, up to a constant, our definition of $\text{spec}(\mu)$. Since the normalizations in the various papers on the ${\rm GL}(3)$ Kuznetsov formula are different and apparently not entirely consistent, we did not put any effort in tracing back the value of the constant $c_0$ (which is irrelevant for our purpose. 
\end{proof}

\subsection{The Kuznetsov formula}

With the above setup, we can now state the Kuznetsov formula.

\begin{lemma}\label{lem.Kuznetsov}
    Let $m_1,m_2,n_1,n_2\in\N$ and let $h$ be as described at the beginning of Section \ref{sec35}. Then there are absolute constants $c_{\text{{\rm cusp}}}, c_{\text{{\rm min}}}, c_{\text{{\rm max}}} > 0$ such that  \begin{align*}
        \mathcal{C}+E_{\min}+E_{\max}=\Delta+S_4+S_5+S_6,
    \end{align*}
    where $$
\begin{aligned}
& \mathcal{C}=c_{\text{{\rm cusp}}}\sum_{\pi_j\in \mathcal{B}} \frac{h\left(\mu_j\right)}{L(1, \text{{\rm Ad}}, \pi)} \overline{A_j\left(m_1, m_2\right)}A_j\left(n_1, n_2\right), \\
& E_{\min }=c_{\text{{\rm min}}} \mathop{\int\int}_{\Re(\mu)=0} \frac{h(\mu)}{\prod_{1 \leq i < j \leq 3}|\zeta(1 + \mu_i - \mu_j)|^2} \overline{A_\mu\left(m_1, m_2\right)}A_\mu\left(n_1, n_2\right) \mathrm{d} \mu_1 \mathrm{d} \mu_2, \quad (\mu_3 = - \mu_1 - \mu_2) \\
& E_{\max }=c_{\text{{\rm max}}} \sum_g \int_{\Re(\mu)=0} \frac{h\left(\mu+\mu_g, \mu-\mu_g,-2 \mu\right)}{L(1, \text{{\rm Ad}}, g)|L(1 + 3\mu, g)|^2} \overline{A_{\mu, g}\left(m_1, m_2\right)} A_{\mu, g}\left(n_1, n_2\right) \mathrm{d} \mu,\\
& \Delta=\delta(m_1=n_1, m_2=n_2)  {\frac{1}{192\pi^5}}\int_{\Re(\mu)=0} h(\mu) \operatorname{spec}(\mu) \mathrm{d} \mu,\\
& S_4=\sum_{\epsilon= \pm 1} \sum_{\substack{D_2 \mid D_1 \\
m_2 D_1=n_1 D_2^2}} \frac{\tilde{S}\left(-\epsilon n_2, m_2, m_1 {;} D_2, D_1\right)}{D_1 D_2} \Phi_{w_4}\left(\frac{\epsilon m_1 m_2 n_2}{D_1 D_2}\right), \\
\end{aligned}$$
$$\begin{aligned}
& S_5=\sum_{\epsilon= \pm 1} \sum_{\substack{D_1 \mid D_2 \\
m_1 D_2=n_2 D_1^2}} \frac{\tilde{S}\left(\epsilon n_1, m_1, m_2 {;} D_1, D_2\right)}{D_1 D_2} \Phi_{w_5}\left(\frac{\epsilon n_1 m_1 m_2}{D_1 D_2}\right), \\
& S_6=\sum_{\epsilon_1, \epsilon_2= \pm 1} \sum_{D_1, D_2} \frac{S\left(\epsilon_2 n_2, \epsilon_1 n_1, m_1, m_2 ; D_1, D_2\right)}{D_1 D_2} \Phi_{w_6}\left(-\frac{\epsilon_2 m_1 n_2 D_2}{D_1^2},-\frac{\epsilon_1 m_2 n_1 D_1}{D_2^2}\right).
\end{aligned}
$$
Here $g$ runs over an orthonormal basis of cusp forms for the group ${\rm SL}_2(\Bbb{Z})$ and the Fourier coefficients of the minimal and maximal Eisenstein series are given as in \cite[Section 5]{Blomer2013}:
$$A_{\mu}(1, m) = \sum_{d_1d_2d_3 = m} d_1^{\mu_1} d_2^{\mu_2} d_3^{\mu_3}, \quad A_{\mu, g}(1, m) = \sum_{d_1d_2 = m} \lambda_g(d_1) d_1^{-\mu} d_2^{2\mu}$$
along with the usual Hecke relations. 
\end{lemma}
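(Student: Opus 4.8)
The plan is to compute, in two different ways, the $(m_1,m_2)$-th Fourier--Whittaker coefficient of a Poincar\'e series on $\mathrm{SL}_3(\mathbb{Z})\backslash\mathbb{H}^3$. For a rapidly decaying test function $f$ on the positive diagonal torus put
\[
P_{n_1,n_2}(g)=\sum_{\gamma\in U(\mathbb{Z})\backslash\mathrm{SL}_3(\mathbb{Z})} f(\gamma g)\,\psi_{n_1,n_2}(\gamma g),
\]
where $\psi_{n_1,n_2}$ is the standard non-degenerate character of the upper unipotent $U$ and the torus variable is read off from the Iwasawa decomposition of $\gamma g$. Absolute convergence and the legitimacy of the termwise manipulations below follow from the rapid decay of $f$ together with the prescribed zeros \eqref{zeros} of $h$, which cancel the polar divisors of $\operatorname{spec}(\mu)$; the Kloosterman sums that appear are then absolutely convergent in the quantitative sense of Lemmas \ref{lem.Phi4Truncation}--\ref{lem.Phi6Truncation}.

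\emph{Geometric evaluation.} Integrating $P_{n_1,n_2}(g)\overline{\psi_{m_1,m_2}(g)}$ over $U(\mathbb{Z})\backslash U(\mathbb{R})$ and inserting the Bruhat decomposition $\mathrm{SL}_3(\mathbb{Z})=\bigsqcup_{w\in W}U(\mathbb{Z})\,T_w\,w\,U_w(\mathbb{Z})$ splits the coefficient into six Weyl cells. The identity element gives the diagonal term $\Delta$, whose archimedean weight becomes $\int h(\mu)\operatorname{spec}(\mu)\,d\mu$ once the Whittaker transform of $f$ is expressed through $h$. The two Weyl elements $w_2,w_3$, for which the compatibility (Pl\"ucker) conditions force an empty sum, contribute nothing. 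Each of $w_4,w_5,w_6$ produces one of $S_4,S_5,S_6$: the rational part of the Bruhat decomposition along the cell is exactly the Kloosterman sum $\tilde S(\,\cdot\,;D_2,D_1)$, $\tilde S(\,\cdot\,;D_1,D_2)$, resp.\ $S(\,\cdot\,;D_1,D_2)$ of Section 2.3 (in the parametrization of \cite{kiral2022parametrization}), while the archimedean orbital integral of $f$ against the degenerate Whittaker function on the cell, rewritten through $h$, is $\Phi_{w_4}$, $\Phi_{w_5}$, $\Phi_{w_6}$. For the long cell one must symmetrize over $W$, matching the built-in symmetry of the $S(\,\cdot\,;D_1,D_2)$ parametrization, which is why the \emph{symmetrized} kernel $K^{\mathrm{sym}}_{w_6}$ of \eqref{SymKernelMellin} appears. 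Identifying the rank-one archimedean transforms with the Mellin--Barnes kernel $K_{w_4}$ of \eqref{K4Kernel} (the sign/$\mu$-flip giving $K_{w_5}$) and the long-cell transform with $K^{\mathrm{sym}}_{w_6}$ is the archimedean computation carried out in \cite{Blomer2015OnTS, buttcane2020plancherel}.

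\emph{Spectral evaluation.} Expand $P_{n_1,n_2}$ along an orthonormal basis of $L^2(\mathrm{SL}_3(\mathbb{Z})\backslash\mathbb{H}^3)$: the constant function (which drops out since $n_i\neq0$), the Hecke--Maass cusp forms $\pi\in\mathcal{B}$, the minimal parabolic Eisenstein series $E_\mu$, and the maximal parabolic Eisenstein series $E_{\mu,g}$ induced from $\mathrm{SL}_2(\mathbb{Z})$ cusp forms $g$. Reading off the $(m_1,m_2)$-th Fourier coefficient termwise, each datum $\phi$ with Langlands parameter $\mu_\phi$ contributes $\langle P_{n_1,n_2},\phi\rangle$ times the $(m_1,m_2)$-Fourier coefficient of $\phi$; unfolding $\langle P_{n_1,n_2},\phi\rangle$ gives $\overline{A_\phi(n_1,n_2)}$ times a linear transform of $f$ evaluated at $\mu_\phi$, the Fourier coefficient of $\phi$ contributes $A_\phi(m_1,m_2)$ times an archimedean Whittaker value, and rescaling to Hecke-normalized forms introduces $\|\phi\|^{-2}$, which by Rankin--Selberg unfolding together with Stade's archimedean formula is the denominator $L(1,\mathrm{Ad},\pi)$, $\prod_{i<j}|\zeta(1+\mu_i-\mu_j)|^2$, resp.\ $L(1,\mathrm{Ad},g)|L(1+3\mu,g)|^2$. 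Choosing $f$ so that the relevant transform (the Lebedev--Whittaker transform of $f$) equals the prescribed $h$ — here Whittaker inversion on $\mathrm{GL}_3$ and the admissibility of $h$ are used — converts these weights into $h(\mu_\pi)$, $h(\mu)$, resp.\ $h(\mu+\mu_g,\mu-\mu_g,-2\mu)$, producing $\mathcal{C}$, $E_{\min}$, $E_{\max}$.

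Equating the two evaluations gives the stated identity with certain absolute constants $c_{\mathrm{cusp}},c_{\min},c_{\max}>0$; only their positivity and finiteness enters later, so we do not track their values, which are in any case sensitive to the differing normalizations in the literature. The genuinely delicate points are the convergence of the $\mathrm{GL}_3$ Poincar\'e series and the justification of the termwise spectral and geometric expansions — both controlled by the decay of $f$ and the zeros \eqref{zeros} of $h$ — and the explicit archimedean Kloosterman--Whittaker integrals matching the orbital integrals with $K_{w_4}$ and $K^{\mathrm{sym}}_{w_6}$; these we quote from \cite{Blomer2015OnTS, Buttcane2022, buttcane2020plancherel}.
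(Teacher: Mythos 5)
The paper does not prove this lemma at all: it is stated as a known result, with the notation and normalizations imported from the cited literature (the Kloosterman-sum setup of Blomer's 2015 paper and the kernel normalizations of Buttcane), so there is no internal argument to compare yours against. Your sketch is the standard derivation underlying those references — Fourier coefficient of a Poincar\'e series computed geometrically via the Bruhat decomposition into the six Weyl cells (identity giving $\Delta$, the two non-relevant cells vanishing by the compatibility conditions, and $w_4,w_5,w_6$ giving the Kloosterman terms) and spectrally via the Langlands decomposition (cuspidal, minimal and maximal Eisenstein, with the residual spectrum dropping out), with Whittaker inversion converting the transform of $f$ into the prescribed $h$ and Rankin--Selberg/Stade producing the adjoint $L$-values in the denominators. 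The outline is correct, and you correctly identify where the real work lies: absolute convergence of the ${\rm GL}(3)$ Poincar\'e series and the termwise interchanges, and the explicit archimedean orbital integrals identifying the cell transforms with $K_{w_4}$, $K_{w_5}$ and the symmetrized $K^{\mathrm{sym}}_{w_6}$. Since you quote precisely those computations from the same sources the paper relies on, your argument is no less complete than the paper's treatment; just be aware that the positivity of $c_{\mathrm{cusp}}, c_{\min}, c_{\max}$ and the consistency of sign conventions across the references is exactly the normalization issue the authors flag elsewhere (e.g.\ in the proof of Lemma \ref{ortho}) and is worth a sentence of justification rather than assertion.
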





\section{Beginning of the proof of Theorem \ref{thm2} -- the easy cases}

We now start with the proof of Theorem \ref{thm2}. The starting point is an application of   the Kuznetsov formula (Lemma \ref{lem.Kuznetsov}), which yields \begin{align}\label{KuznetsovSplit}
    c_{\text{cusp}} S_m(X)+\mathcal{E}_{\min}+\mathcal{E}_{\max}=\Delta + \Sigma_4+\Sigma_5+\Sigma_6,
\end{align}
where \begin{align*}
\begin{aligned}
    &\mathcal{E}_{\min}= c_{\text{min}}\mathop{\int\int}_{\Re(\mu)=0} \frac{h(\mu)}{\prod_{1 \leq i < j \leq 3}|\zeta(1 + \mu_i - \mu_j)|^2} \sum_n B(1, n) A_{\mu}(1, n) \overline{A_{\mu}(1, m)} V\Big(\frac{n}{X}\Big)  \mathrm{d}\mu_1\, \mathrm{d}\mu_2 ,\\
    &\mathcal{E}_{\max}= c_{\text{{\rm max}}} \sum_g \int_{\Re(\mu)=0} \frac{h\left(\mu+\mu_g, \mu-\mu_g,-2 \mu\right)}{L(1, \text{{\rm Ad}}, g)|L(1 + 3\mu, g)|^2}\sum_n B(1, n) A_{\mu, g}(1, n) \overline{A_{\mu, g}(1, m)} V\Big(\frac{n}{X}\Big) \mathrm{d}\mu ,\\
    &\Delta=\frac{B(1,m)}{192\pi^5}V\left(\frac{m}{X}\right)\int_{\re(\mu)=0}h(\mu)\mathrm{spec}(\mu)\mathrm{d}\mu,\\
     &\Sigma_4=\sum_{\epsilon=\pm1}\sum_n B(1,n)V\left(\frac{n}{X}\right)\sum_{m\mid D}\frac{\Tilde{S}(-\epsilon n,m,1;D,D^2/m)}{D^3/m}\Phi_{\omega_4}\left(\frac{\epsilon m^2n}{D^3}\right),\\
            &\Sigma_5=\sum_{\epsilon=\pm1}\sum_n B(1,n)V\left(\frac{n}{X}\right)\sum_D\frac{\Tilde{S}(\epsilon,1,m;D,nD^2)}{nD^3}\Phi_{\omega_5}\left(\frac{\epsilon m}{nD^3}\right),\\
    &\Sigma_6=\sum_{\epsilon_1,\epsilon_2=\pm1}\sum_n B(1,n)V\left(\frac{n}{X}\right)\mathop{\sum\sum}_{D_1,D_2}\frac{S(\epsilon_2 n,\epsilon_1,1,m;D_1,D_2)}{D_1D_2}\Phi_{\omega_6}\left(-\frac{\epsilon_2 nD_2}{D_1^2},-\frac{\epsilon_1 mD_1}{D_2^2}\right).
\end{aligned}
\end{align*}


We start by showing that on the right hand side only the contribution $\Sigma_6$ of the long Weyl element is relevant in the subsequent analysis. At this point we introduce the following notational convention: we will write henceforth
$$A \preccurlyeq B$$
to mean $A \ll_{\varepsilon} X^{\varepsilon} B$ for every $\varepsilon > 0$. We will also frequently use Kronecker deltas in the form $\delta(...)$ to denote a condition that some variables have to satisfy. 

Since $m, V$ are fixed and the $\mu$-integral converges by the properties of $h$, we have \begin{align}\label{DiagonalBound}
    \Delta\ll 1.
\end{align}

By Lemma \ref{lem.Phi4Truncation}, we see immediately 
\begin{align}\label{Sum5Bound}
    \Sigma_5\ll X^{-1000} {.}
\end{align}

The treatment of $\Sigma_4$ requires slightly more work. Again by Lemma \ref{lem.Phi4Truncation} we can truncate the $D$-sum getting
 \begin{align*}
    \Sigma_4=\sum_{\epsilon=\pm1}\sum_n B(1,n)V\left(\frac{n}{X}\right)\sum_{D\preccurlyeq X^{1/3}}\frac{\Tilde{S}(-\epsilon n,m,1;mD,mD^2)}{m^2D^3}\Phi_{\omega_4}\left(\frac{\epsilon n}{mD^3}\right)+O\left(X^{-1000}\right).
\end{align*}
Recalling the definition of $\Tilde{S}$ in \eqref{ShortKloostermanSum}, we have \begin{align*}
    \Tilde{S}(-\epsilon n,m,1;mD,mD^2)=\sumast_{C_1\Mod{mD}}\sum_{\substack{C_2\Mod{mD^2}\\(C_2,D)=1}}e\left(\frac{\overline{C_1}C_2+\overline{C_2}}{D}-\frac{\epsilon nC_1}{mD}\right).
\end{align*}
In order to apply the Voronoi summation formula to the $n$-sum, we need to understand the Mellin transform of the function
$$g(n) = V\left(\frac{n}{X}\right)\Phi_{\omega_4}\left(\frac{\epsilon n}{mD^3}\right).$$
Lemma \ref{lem.Phi4Truncation} implies easily
$$\tilde{g}(\sigma + it) \ll_{j, \sigma} X^{\sigma} \Big(\frac{X}{D^3}\Big)^{j/3 - 1/6} \frac{1}{(1+ |t|)^j}$$
for any $j \in \Bbb{N}_0$, and then by interpolation for any real $j \geq0$. We use this with $j = 3\sigma + 5/2 + \varepsilon$ in the formula \eqref{defG} (where  {the} contour runs over $\Re s = \sigma$) to make the integral absolutely convergent. Shifting $\sigma$ to the right when $n\gg 1$ and $0$ for the bound, we deduce
$$G_{\pm}\Big( \frac{n}{D^3}\Big) \ll \Big(\frac{X}{D^3}\Big)^{2/3 } (1 + n)^{-A}$$
for $n> 0$. For fixed $m$, we conclude from   Lemma \ref{lem.Voronoi} that 
\begin{align*}
    &\sum_n B(1,n)V\left(\frac{n}{X}\right)e\left(-\frac{\epsilon nC_1}{mD}\right)\Phi_{\omega_4}\left(\frac{\epsilon n}{mD^3}\right)
    \preccurlyeq \frac{X^{2/3}}{D} \underset{n_0n \preccurlyeq 1}{\sum_{n_0\mid mD}\sum_n} \Big|B(n,n_0) S\left(-\epsilon\overline{C_1},\eta n;\frac{mD}{n_0}\right)\Big| {.} 
\end{align*}
Using Weil's bound and otherwise trivial bounds for the sums over $C_1, C_2$, we conclude
\begin{align}
\label{Sum4Bound}
    \Sigma_4\preccurlyeq   X^{2/3 } \sum_{D \preccurlyeq X^{1/3 }} D^{-1/2} \preccurlyeq X^{5/6 } . 
\end{align}
This could be improved considerably, but suffices for our purpose. 


Substituting the bounds (\ref{DiagonalBound}), (\ref{Sum5Bound}) and (\ref{Sum4Bound}) into \eqref{KuznetsovSplit}, we get \begin{align}
    c_{\text{cusp}}S_m(X)+\mathcal{E}_{\min}+\mathcal{E}_{\max}= \Sigma_6+O\left(X^{5/6+\varepsilon}\right). \nonumber
\end{align}

\section{The Eisenstein contribution}

In this section we show that $|\mathcal{E}_{\min}|+|\mathcal{E}_{\max}| \preccurlyeq X^{12/13}$. The underlying reason is that the Voronoi summation formula has no main term, and hence the sequence $B(1, n)$ remembers that it is orthogonal to the coefficients of Eisenstein series.

\subsection{Minimal Eisenstein series}

 In order to bound the term $\mathcal{E}_{\text{min}}$ we only consider the $n$-sum
 $$\sum_n B(1, n) A_{\mu}(1, n) V\Big(\frac{n}{X}\Big) = \mathop{\sum\sum\sum}_{d_1, d_2, d_3} B(1, d_1d_2d_3) d_1^{\mu_1} d_2^{\mu_2} d_3^{\mu_3} V\Big( \frac{d_1d_2d_3}{X}\Big).$$
We apply smooth dyadic partitions of unity for each of the three variables and localize $d_j \asymp D_j$ with $D_1D_2D_3 \asymp X$. Suppose with {out} loss of generality that $D_1$ is the largest, so that $D_1 \gg X^{1/3}$. We write
$$d_1 = \delta_{12} d_1', \quad d_2 = \delta_{12} d_2', \quad (d_1', d_2') = 1$$
and then
$$d_1 {'} = \delta_{13}d_1'', \quad d_3 = \delta_{13} d_3', \quad (d_1'', d_3') = 1$$
so that (after obvious relabeling) we are left with 
$$ \mathop{\sum\sum\sum\sum\sum}_{\substack{\delta_1, \delta_2, d_1, d_2, d_3\\ (d_1, d_3) = (d_1\delta_2, d_2) = 1}} B(1, \delta_1^2\delta_2^2 d_1d_2d_3) (\delta_1\delta_2d_1)^{\mu_1} (\delta_1d_2)^{\mu_2} (\delta_1d_3)^{\mu_3} V\Big( \frac{\delta_1^2\delta_2^2d_1d_2d_3}{X}\Big)W\Big( \frac{\delta_1\delta_2d_1}{D_1}\Big) W\Big( \frac{\delta_1d_2}{D_2}\Big) W\Big( \frac{\delta_2d_3}{D_3}\Big)  .$$
We apply   the Hecke relation \eqref{mob2}, obtaining 
\begin{displaymath}
\begin{split}
   & \mathop{\sum\sum\sum\sum\sum\sum\sum}_{\substack{\delta_1, \delta_2,   d_2, d_3, a, b, c\\ (abc, d_3) = (abc\delta_2, d_2) = 1\\ b \mid c\mid \delta_1^2 \delta_2^2 
  }}  \mu(b)\mu(c) B\Big(\frac{c}{b}, \frac{\delta_1^2\delta_2^2d_2d_3}{c}\Big) B(1, a) (\delta_1\delta_2abc)^{\mu_1} (\delta_1d_2)^{\mu_2} (\delta_1d_3)^{\mu_3} \\
 &\quad\quad\quad\quad\quad\quad\times V\Big( \frac{\delta_1^2\delta_2^2abcd_2d_3}{X}\Big)W\Big( \frac{\delta_1\delta_2abc}{D_1}\Big) W\Big( \frac{\delta_1d_2}{D_2}\Big) W\Big( \frac{\delta_2d_3}{D_3}\Big) . 
 \end{split}
 \end{displaymath}
 We detect the condition $(a, d_2d_3) = 1$ by M\"obius inversion getting
\begin{displaymath}
\begin{split}
  & \mathop{\sum\sum\sum\sum\sum\sum\sum\sum}_{\substack{\delta_1, \delta_2,   d_2, d_3, a, b, c, f\\ (bc, d_3) = (bc\delta_2, d_2) = 1\\ b \mid c\mid \delta_1^2 \delta_2^2,  
  f\mid d_2d_3}} \mu(f)  \mu(b)\mu(c) B\Big(\frac{c}{b}, \frac{\delta_1^2\delta_2^2d_2d_3}{c}\Big) B(1, af) (\delta_1\delta_2afbc)^{\mu_1} (\delta_1d_2)^{\mu_2} (\delta_1d_3)^{\mu_3} \\
 &\quad\quad\quad\quad\quad\quad\times V\Big( \frac{\delta_1^2\delta_2^2afbcd_2d_3}{X}\Big)W\Big( \frac{\delta_1\delta_2afbc}{D_1}\Big) W\Big( \frac{\delta_1d_2}{D_2}\Big) W\Big( \frac{\delta_2d_3}{D_3}\Big) . 
 \end{split}
 \end{displaymath}
Applying \eqref{mob2} once again, this is
\begin{displaymath}
\begin{split}
  & \mathop{\sum\sum\sum\sum\sum\sum\sum}_{\substack{\delta_1, \delta_2,   d_2, d_3, b, c, f\\ (bc, d_3) = (bc\delta_2, d_2) = 1\\ b \mid c\mid \delta_1^2 \delta_2^2,  
  f\mid d_2d_3}}  \mathop{\sum\sum\sum}_{\substack{r, s, t \\ s\mid t\mid f}}\mu(f) \mu(s)\mu(t) B\Big(\frac{t}{s}, \frac{f}{t}\Big) B(1, r)  \mu(b)\mu(c) B\Big(\frac{c}{b}, \frac{\delta_1^2\delta_2^2d_2d_3}{c}\Big)   (\delta_1\delta_2rstfbc)^{\mu_1} \\
 &\quad\quad\quad\quad\quad\quad\times (\delta_1d_2)^{\mu_2} (\delta_1d_3)^{\mu_3} V\Big( \frac{\delta_1^2\delta_2^2rstfbcd_2d_3}{X}\Big)W\Big( \frac{\delta_1\delta_2rstfbc}{D_1}\Big) W\Big( \frac{\delta_1d_2}{D_2}\Big) W\Big( \frac{\delta_2d_3}{D_3}\Big) . 
 \end{split}
 \end{displaymath}
 
 After splitting in dyadic segments $r\asymp R$, by Voronoi summation the $r$-sum is easily seen to be of size $\| \mu \|^{O(1)}$, uniformly in all other parameters, so that we obtain
 \begin{equation}\label{eismin}
 \mathcal{E}_{\text{min}} \preccurlyeq \mathop{\sum\sum\sum\sum  }_{ \delta_1d_2 \leq D_2, \delta_2 d_3 \leq D_3}1 \preccurlyeq D_2D_3 \ll X^{2/3}. 
 \end{equation}


\subsection{Maximal Eisenstein series}

In order to bound the term $\mathcal{E}_{\text{max}}$ we again consider only the $n$-sum
 \begin{align*}
     \mathcal{E}_{\max}\ll& \sup_{\|\mu\|\preccurlyeq 1}\Big|\sum_n B(1, n) A_{\mu, g}(1, n) V\Big(\frac{n}{X}\Big)\Big| + O\left(X^{-99}\right)=\sup_{\|\mu\|\preccurlyeq 1}\Big|\mathop{\sum\sum}_{d_1, d_2 } B(1, d_1d_2) \lambda_g(d_1) d_1^{-\mu} d_2^{2\mu}   V\Big( \frac{d_1d_2}{X}\Big)\Big|+O\left(X^{-99}\right).
 \end{align*}
Here we used that the decay of the function $h$ allows us to restrict $\|\mu\|\preccurlyeq 1$ at the cost of a negligible error. In the following we denote by $\theta \leq 7/64$ an admissible exponent for the Ramanujan conjecture for the ${\rm SL}(2)$ cusp form $g$ and recall the Rankin-Selberg bound $\sum_{n \ll X} |\lambda_g(n)|^2  \ll X$ (with polynomial dependence on the spectral parameter $\mu(g)$ of $g$).

We first pull out the divisors of $(d_1,d_2)$. Write $$\delta=(d_1,d_2), \quad  \delta_{1}=(d_1/\delta,\delta^\infty), \quad \delta_{2}=(d_2/\delta,\delta^\infty), \quad n_1=\frac{d_1}{\delta\delta_{1}}, \quad  n_2=\frac{d_2}{\delta\delta_{2}}.$$
Here and in the following we write $a\mid b^{\infty}$ to mean that $a$ has only prime divisors occurring in $b$ and we write
$$(a, b^{\infty}) := \max_n (a, b^n).$$

Again we partition dyadically $n_1 \asymp D_1$, $n_2 \asymp D_2$ with $D_1D_2 \ll X$ and apply \eqref{mob1} and the GL(2) Hecke relations. It thus suffices to bound \begin{align}\label{EmaxS}
    & \sup_{D_1D_2\ll X}\sum_{\delta}\mathop{\sum\sum}_{\delta_{1},\delta_{2}\mid \delta^\infty}(\delta\delta_1)^{\theta} 
    \big|\mathcal{S}\big|,
\end{align}
where \begin{align}\label{start2}
    \mathcal{S}=\mathop{\sum\sum}_{\substack{n_1, n_2\\ (n_1, n_2) = (n_1n_2,\delta) = 1}} B(1, n_1)B(1,n_2) \lambda_g(n_1) F_\mu\left(\frac{n_1}{D_1},\frac{n_2}{D_2}\right),
\end{align}
with \begin{align*}
    F_\mu\left(\frac{n_1}{D_1},\frac{n_2}{D_2}\right)=n_1^{-\mu} n_2^{2\mu}   V\Big( \frac{\delta^2\delta_{1}\delta_{2}n_1n_2}{X}\Big)W\Big(\frac{n_1}{D_1}\Big)W\Big(\frac{n_2}{D_2}\Big)
\end{align*}
for some fixed $W\in C_c^\infty([1/2,5/2])$. 

We are going to bound $\mathcal{S}$ in two ways. Applying M\"obius inversion 
and then \eqref{mob2} on the $n_2$-sum gives us \begin{align*}
    \sum_{(n_2,\delta n_1)=1}B(1,n_2)F_\mu\left(\frac{n_1}{D_1},\frac{n_2}{D_2}\right)=&\sum_{d|n_1\delta}\mu(d)\sum_{n_2}B(1,dn_2)F_\mu\left(\frac{n_1}{D_1},\frac{dn_2}{D_2}\right)\\
    =&\sum_{d|n_1\delta}\mu(d)\sum_{b|c|d}\mu(b)\mu(c)B\left(\frac{c}{b},\frac{d}{c}\right)\sum_{r}B(1,r)F_\mu\left(\frac{n_1}{D_1},\frac{bcdr}{D_2}\right).
\end{align*}
As before, Voronoi summation implies the $r$-sum is bounded by $\preccurlyeq 1$, so that we easily find  
\begin{align}\label{MaxEFirstBound}
  \mathcal{S}\preccurlyeq \sum_{n_1\ll D_1} |B(1, n_1)\lambda_g(n_1)| \preccurlyeq  D_1.
\end{align}


We now return to \eqref{start2} and focus on the $n_1$-sum. In view of the previous bound we may assume that $D_1 \gg X^{\varepsilon}$. The $n_1$-sum  is morally a ${\rm GL}(3) \times {\rm GL}(2)$ ``shifted convolution sum without shift'', and we obtain in \eqref{S2ndBound} below a second bound for $\mathcal{S}$. 

We use   Jutila's circle method
, which is set up as follows. Let $U\in C_c^\infty([1/2,5/2])$ be fixed such that $U(x)=1$ for $1\leq x\leq 2$. Then we have \begin{align*}
    \mathcal{R}:=&\sum_{(n_1,n_2\delta)=1}B(1,n_1)\la_g(n_1)F_\mu\left(\frac{n {_1}}{D_1},\frac{n {_2}}{D_2}\right)=\sum_{n_1} B(1,n_1)F_\mu\left(\frac{n_1}{D_1},\frac{n_2}{D_2}\right)\sum_{(r,n_2\delta)=1} \la_g(r)U\left(\frac{r}{D_1}\right)\int_0^1e\left((n_1-r)x\right)\mathrm{d}x.
\end{align*}
Let $Q, u>0$ such that $Q\ll D_1\ll Q^2$ and $Q^{-2}\ll u\ll Q^{-1}$. Let $\mathcal{Q}=\{q \,\,\text{prime} : Q\leq q \leq 2Q, q\nmid n_2\delta\}$. Then $|\mathcal{Q}|\gg Q^{1-\varepsilon}$ and $H:=\sum_{q\in\mathcal{Q}}\phi(q)\gg Q^{2-\varepsilon}$. Define \begin{align*}
    \Tilde{\mathcal{R}}:=\sum_{n_1} B(1,n_1)F_\mu\left(\frac{n_1}{D_1},\frac{n_2}{D_2}\right)\sum_{(r,n_2\delta)=1} \la_g(r)U\left(\frac{r}{D_1}\right)\int_0^1\tilde{I}_{\mathcal{Q},u}(x)e\left((n_1-r)x\right)\mathrm{d}x,
\end{align*}
with \begin{align*}
    \Tilde{I}_{\mathcal{Q},u}(x)=\frac{1}{2u H}\sum_{q\in\mathcal{Q}}\sumast_{\alpha\Mod{q}} \delta\Big( x \in  \Big[\frac{a}{q}-u,\frac{a}{q}+u\Big] \Big).
\end{align*}
Notice that \begin{align*}
    \int_0^1\tilde{I}_{\mathcal{Q},u}(x)e(bx)dx=\frac{1}{2u H}\sum_{q\in\mathcal{Q}}\sumast_{\alpha\Mod{q}}e\left(\frac{\alpha b}{q}\right)\int_{-u}^u e(bx)\mathrm{d}x
\end{align*}
for any $b\in\R$. Hence we have \begin{align}\label{TildeRDef}
    \Tilde{\mathcal{R}}=\frac{1}{H}\sum_{q\in\mathcal{Q}}\sum_{n_1} B(1,n_1)\sum_{(r,n_2\delta)=1} \la_g(r)\sumast_{\alpha\Mod{q}}e\left(\frac{\alpha(n_1-r)}{q}\right)\mathcal{W}\left(\frac{n_1}{D_1},\frac{n_2}{D_2},\frac{r}{D_1}\right),
\end{align}
where \begin{align*}
    \mathcal{W}\left(a,b,c\right)=F_\mu\left(a,b\right)U\left(c\right)\frac{1}{2u}\int_{-u}^u e\left((a-c)D_1x\right)\mathrm{d}x.
\end{align*}
Choosing $u=D_1^{-1}$ yields $$x^{j_1}z^{j_2}\frac{\mathrm{d}^{j_1+j_2}}{\mathrm{d}x^{j_1}\mathrm{d}z^{j_2}}W(x,y,z)\ll_{j_1,j_2}\delta(x,y,z\asymp 1)$$ for any $j_1,j_2\geq0$.

We first show that $\mathcal{R}$ is strongly approximated by $\tilde{\mathcal{R}}$, using the estimate \cite[Lemma 1]{Jut}
\begin{align*}
        \int_0^1\left|1-\Tilde{I}_{\mathcal{Q},u}(x)\right|^2dx\ll_\varepsilon\frac{Q^{2+\varepsilon}}{u H^2}.
\end{align*}
We have 
\begin{displaymath}
    \begin{split}
   \left|\mathcal{R}-\Tilde{\mathcal{R}}\right|&\ll\int_0^1\left|\sum_{n_1}B(1,n_1)e\left(n_1x\right)F_\mu\left(\frac{n_1}{D_1}  {,\frac{n_2}{D_2}}\right)\right|\left|\sum_r\la_g(r)e\left(-rx\right)U\left(\frac{r}{D_1}\right)\right|\left|1-\tilde{I}_{\mathcal{Q},u}(x)\right|\mathrm{d}x\\
    & \preccurlyeq \sqrt{D_1}\int_0^1\left|\sum_{n_1}B(1,n_1)e\left(n_1x\right)F_\mu\left(\frac{n_1}{D_1},\frac{n_2}{D_2}\right)\right|\left|1-\tilde{I}_{\mathcal{Q},u}(x)\right|\mathrm{d}x
\end{split}
\end{displaymath}
by a Wilton-type bound for the $r$-sum. 
Together with the Cauchy-Schwarz inequality  we obtain  \begin{align*}
    \left|\mathcal{R}-\Tilde{\mathcal{R}}\right|
    \preccurlyeq & \sqrt{D_1}\frac{Q}{\sqrt{u}H}\Big(\sum_{n_1\sim D_1}\left|B(1,n_1)\right|^2\Big)^{1/2}, 
\end{align*}
so that  \begin{align}\label{JutilaError}
    \tilde{\mathcal{R}}=\mathcal{R}+O\Big(\frac{D_1^{3/2}}{Q}X^\varepsilon\Big).
\end{align}

Now we analyze $\tilde{\mathcal{R}}$ in \eqref{TildeRDef}. Applying Voronoi summation (Lemma \ref{lem.Voronoi} with \eqref{GL3KernelAsymp}) on the $n_1$-sum, we have \begin{align*}
    \sum_{n_1}B(1,n_1)e\left(\frac{\alpha n_1}{q}\right)\mathcal{W}\left(\frac{n_1}{D_1},\frac{n_2}{D_2},\frac{r_1}{D_1}\right)=\frac{D_1^{2/3}}{q}\sum_{\epsilon_1=\pm}\sum_{n_0\mid q}\sum_{n_1}B(n_1,n_0)\left(\frac{n_0}{n_1}\right)^{1/3}S\left(\overline{\alpha},\epsilon_1n_1;\frac{q}{n_0}\right)\mathcal{W}_1^{\epsilon_1}\left(\frac{n_0^2n_1D_1}{q^3},\frac{n_2}{D_2},\frac{r}{D_1}\right),
\end{align*}
where \begin{align*}
    \mathcal{W}_1^{\epsilon_1}\left(a,b,c\right)=\int_0^\infty \mathcal{W}\left(x,b,c\right)W_\mu^{\epsilon_1}(D_1x)x^{-1/3}e\left(\epsilon_13(ax)^{1/3}\right)\mathrm{d}x,
\end{align*}
with some   function $W_\mu^{\epsilon_1}(x) \ll_A (1 + x)^{-A}$. Repeated integration by parts on the $x$-integral gives us arbitrary savings unless $n_0^2n_1\preccurlyeq_\mu  Q^3 D_1^{-1}$. Applying M\"obius inversion and the GL(2) Hecke relations on the $r$-sum, we have \begin{align*}
    &\sum_{(r,n_2\delta)=1}\la_g(r)e\left(-\frac{\alpha r}{q}\right)\mathcal{W}_1^{\epsilon_1}\left(\frac{n_0^2n_1D_1}{q^3},\frac{n_2}{D_2},\frac{r}{D_1}\right)=\mathop{\sum\sum}_{df\mid n_2\delta}\mu(df)\mu(f)\la_g\left(d\right)\sum_r\la_g(r)e\left(-\frac{\alpha df^2r}{q}\right)\mathcal{W}_1^{\epsilon_1}\left(\frac{n_0^2n_1D_1}{q^3},\frac{n_2}{D_2},\frac{df^2r}{D_1}\right).
\end{align*}
Since $(q,n_2\delta)=1$, applying the GL(2) Voronoi summation formula (\cite[Thm A.4]{KMV}) gives us \begin{align*}
    &\sum_{\epsilon_2=\pm}\mathop{\sum\sum}_{df|n_2\delta}\mu(df)\mu(f)\frac{D_1}{df^2q}\la_g\left(d\right)\sum_r\overline{\la_g(r)}e\left(\epsilon_2\frac{\overline{\alpha df^2}r}{q}\right)\mathcal{W}_2^{\epsilon_1,\epsilon_2}\left(\frac{n_0^2n_1D_1}{q^3},\frac{n_2}{D_2},\frac{rD_1}{df^2q^2}\right),
\end{align*}
where \begin{align*}
    \mathcal{W}_2^{\epsilon_1,\epsilon_2}(a,b,c)=&\int_0^\infty \mathcal{W}(x,b,y)\mathcal{J}_{\mu(g)}^{\epsilon_2}\left(4\pi\sqrt{cy}\right)\mathrm{d}x\, \mathrm{d}y
\end{align*}
with $\mathcal{J}_{\mu(g)}^{\epsilon_2}$ being some normalized Bessel function satisfying the asymptotics \begin{align*}
    \mathcal{J}_{\mu(g)}^{\epsilon_2}(4\pi x)=\sum_\pm W_{\mu(g)}^{\epsilon_2,\pm}(x)e\left(\pm 2x\right)
\end{align*}
for some functions $W_{\mu(g)}^{\epsilon_2,\pm}(x) \ll_{\mu(g)} (1 + x)^{-A}$. Repeated integration by parts on the $y$-integral gives us arbitrary savings unless $r\preccurlyeq _{\mu} \ df^2Q^2D_1^{-1}$. Combining the above analysis we obtain \begin{align*}
    \Tilde{\mathcal{R}}\preccurlyeq  & 
   \frac{D_1^{5/3}}{Q^2}\sum_{\epsilon_1,\epsilon_2=\pm}\sum_{q\in\mathcal{Q}}\sum_{df\mid n_2\delta}\sum_{r\preccurlyeq  \frac{df^2Q^2}{D_1}}\frac{|\la_g(r)|}{d^{1-\theta}f^2q^2}\Bigg|\mathop{\sum_{n_0|q}\sum_{n_1}}_{n_0^2n_1\preccurlyeq  Q^3/D_1 } B(n_1,n_0)\left(\frac{n_0}{n_1}\right)^{1/3} \\ &\times 
  \sumast_{\alpha\Mod{q}}S\left(\overline{\alpha},\epsilon_1n_1, \frac{q}{n_0}\right)e\left(\epsilon_2\frac{\overline{\alpha df^2}r}{q}\right)\mathcal{W}_2^{\epsilon_1,\epsilon_2}\left(\frac{n_0^2n_1D_1}{q^3},\frac{n_2}{D_2},\frac{rD_1}{df^2q^2}\right)\Bigg|+X^{-99}.
\end{align*}

Since $q$ is prime, we have $n_0=1$ or $q$. Choosing $$Q\ll D_1X^{-\varepsilon},$$ we can see from the restriction of the $n_1$-sum that the contribution of $n_0=q$ is negligible. Together with a smooth dyadic subdivision on the $n_1$-sum, we have \begin{align}\label{tildeRN}
    \Tilde{\mathcal{R}}\preccurlyeq &\frac{D_1^{5/3}}{Q^2}\sum_{\epsilon_1,\epsilon_2=\pm}\sum_{q\in\mathcal{Q}}\sum_{df\mid n_2\delta}\sum_{r\preccurlyeq  \frac{df^2Q^2}{D_1}}\frac{|\la_g(r)|}{d^{1-\theta}f^2q^2}\sup_{N\preccurlyeq \frac{Q^3}{D_1}}|\mathcal{N}|+X^{-99},
\end{align}
where \begin{align*}
    \mathcal{N}=\sum_{n_1} \frac{B(n_1,1)}{n_1^{1/3}}\sumast_{\alpha\Mod{q}}S\left(\overline{\alpha},\epsilon_1n_1, q\right)e\left(\epsilon_2\frac{\overline{\alpha df^2}r}{q}\right)\varphi\left(\frac{n_1}{N}\right)\mathcal{W}_2^{\epsilon_1,\epsilon_2}\left(\frac{n_1D_1}{q^3},\frac{n_2}{D_2},\frac{rD_1}{df^2q^2}\right)
\end{align*}
for some fixed $\varphi\in C_c^\infty([1/2,5/2])$. Opening the Kloosterman sum and summing over $\alpha$, we have \begin{align*}
    \sumast_{\alpha\Mod{q}}S\left(\overline{\alpha},\epsilon_1n_1, q\right)e\left(\epsilon_2\frac{\overline{\alpha df^2}r}{q}\right)=&\sumast_{\beta\Mod{q}}e\left(\epsilon_1\frac{n_1\beta}{q}\right)\left(q \cdot \delta\big(q\nmid r, \beta\equiv -\epsilon_2df^2\overline{r}\Mod{q}\big)-1\right)\\
    =&q\delta(q\nmid r)e\left(-\epsilon_1\epsilon_2\frac{df^2n_1\overline{r}}{q}\right)-q\delta(q\mid n {_1})+1.
\end{align*}
Hence \begin{align*}
    \mathcal{N}=\sum_{n_1} \frac{B(n_1,1)}{n_1^{1/3}}\left(q\delta(q\nmid r)e\left(-\epsilon_1\epsilon_2\frac{df^2n_1\overline{r}}{q}\right)-q\delta(q\mid n_1)+1\right)\varphi\left(\frac{n_1}{N}\right)\mathcal{W}_2^{\epsilon_1,\epsilon_2}\left(\frac{n_1D_1}{q^3},\frac{n_2}{D_2},\frac{rD_1}{df^2q^2}\right).
\end{align*}

We estimate the three terms in the previous display separately and in reverse order. First, 
by Voronoi summation (Lemma \ref{lem.Voronoi}) we can easily see that \begin{equation}\label{term1}
    \sum_{n_1} \frac{B(n_1,1)}{n_1^{1/3}}\varphi\left(\frac{n_1}{N}\right)\mathcal{W}_2^{\epsilon_1,\epsilon_2}\left(\frac{n_1D_1}{q^3},\frac{n_2}{D_2},\frac{rD_1}{df^2q^2}\right)\preccurlyeq 1.
\end{equation}
Similarly, since $q^2\nmid n_1$ and assuming for now only $Q\ll D_1X^{-\varepsilon}$,  we have \begin{equation}\label{term2}
\begin{split}
    &q\sum_{q \mid n_1} \frac{B(n_1,1)}{n_1^{1/3}}\varphi\left(\frac{n_1}{N}\right)\mathcal{W}_2^{\epsilon_1,\epsilon_2}\left(\frac{n_1D_1}{q^3},\frac{n_2}{D_2},\frac{rD_1}{df^2q^2}\right)
    = q^{2/3}B(q,1)\sum_{n_1} \frac{B(n_1,1)}{n_1^{1/3}}\varphi\left(\frac{qn_1}{N}\right)\mathcal{W}_2^{\epsilon_1,\epsilon_2}\left(\frac{n_1D_1}{q^2},\frac{n_2}{D_2},\frac{rD_1}{df^2q^2}\right)\preccurlyeq   Q^{2/3} .
    \end{split}
\end{equation}
Finally, applying additive reciprocity with Voronoi summation (Lemma \ref{lem.Voronoi} with \eqref{GL3KernelAsymp}), we obtain \begin{align*}
    &q\sum_{n_1} \frac{B(n_1,1)}{n_1^{1/3}}e\left(-\epsilon_1\epsilon_2\frac{df^2n_1\overline{r}}{q}\right)\varphi\left(\frac{n_1}{N}\right)\mathcal{W}_2^{\epsilon_1,\epsilon_2}\left(\frac{n_1D_1}{q^3},\frac{n_2}{D_2},\frac{rD_1}{df^2q^2}\right)\\
    =&q\sum_{n_1} \frac{B(n_1,1)}{n_1^{1/3}}e\left(\epsilon_1\epsilon_2\frac{df^2n_1\overline{q}}{r}-\epsilon_1\epsilon_2\frac{df^2n_1}{qr}\right)\varphi\left(\frac{n_1}{N}\right)\mathcal{W}_2^{\epsilon_1,\epsilon_2}\left(\frac{n_1D_1}{q^3},\frac{n_2}{D_2},\frac{rD_1}{df^2q^2}\right)\\
    =&\frac{(df^2,r)qN^{1/3}}{r}\sum_{\epsilon_3=\pm}\sum_{n_0\mid \frac{r}{(df^2,r)}}\sum_{n_1} B(n_0,n_1)\left(\frac{n_0}{n_1}\right)^{1/3}
   S\left(\epsilon_1\epsilon_2q\overline{\frac{df^2}{(df^2,r)}},\epsilon_3n_1;\frac{r}{(df^2,r)n_0}\right)\mathcal{W}_3^{\epsilon_1,\epsilon_2,\epsilon_3}\left(\frac{n_0^2n_1(df^2,r)^3N}{r^3},\frac{n_2}{D_2},\frac{rD_1}{df^2q^2}\right),
\end{align*}
where \begin{align*}
    \mathcal{W}_3^{\epsilon_1,\epsilon_2,\epsilon_3}\left(a,b,c\right)=\int_0^\infty \varphi\left(z\right)e\left(-\epsilon_1\epsilon_2\frac{D_1Nz}{cq^3}+\epsilon_3(az)^{1/3}\right)\mathcal{W}_2^{\epsilon_1,\epsilon_2}\left(\frac{D_1Nz}{q^3},b,c\right)W_\mu^{\epsilon_3}(Nz)z^{-1/3}\mathrm{d}z.
\end{align*}
Repeated integration by parts gives us arbitrary savings unless $$n_0^2n_1\preccurlyeq  \left(\frac{df^2}{(r,df^2)}\right)^3\frac{N^2}{Q^3}.$$ Bounding everything trivially with the Weil bound for Kloosterman sums and  the Cauchy-Schwarz inequality, we  finally get the bound  \begin{align*}
    \preccurlyeq & \sqrt{\frac{(df^2,r)}{r}}qN^{1/3}\sum_{n_0\mid \frac{r}{(df^2,r)}} \sum_{ n_0^2n_1\preccurlyeq  \left(\frac{df^2}{(r,df^2)}\right)^3\frac{N^2}{Q^3}}
    \frac{1}{n_0^{1/6}n_1^{1/3}}
   \preccurlyeq \frac{d^2f^4N^{5/3}}{(df^2,r)^{3/2}\sqrt{r}Q}.
\end{align*}
Combining this with \eqref{term1} and \eqref{term2}, we see that  \begin{align*}
    \mathcal{N}\preccurlyeq 1 + Q^{2/3} + \frac{d^2f^4N^{5/3}}{\sqrt{r}Q}\preccurlyeq   Q^{2/3} + \frac{d^2f^4Q^4}{\sqrt{r}D_1^{5/3}}
\end{align*}
since $N \preccurlyeq Q^3/D_1$. 

Inserting this back into \eqref{tildeRN} and \eqref{JutilaError}, we have \begin{align*}
    \mathcal{R}\preccurlyeq   \frac{D_1^{5/3}}{Q^2}\sum_{q\in\mathcal{Q}}\sum_{df\mid n_2\delta}\sum_{r\ll \frac{df^2Q^2}{D_1}X^\varepsilon}\frac{|\la_g(r)|}{d^{1-\theta}f^2q^2}\left(Q^{2/3} +\frac{d^2f^4Q^4}{\sqrt{r}D_1^{5/3}}\right)+\frac{D_1^{3/2}}{Q} . 
\end{align*}
Applying  Cauchy-Schwarz yields \begin{align*}
    \mathcal{R}\preccurlyeq   \frac{D_1^{3/2}}{Q}+\frac{D_1^{2/3}}{Q^{1/3}}(n_2\delta)^\theta+\frac{Q^2}{\sqrt{D_1}}(n_2\delta)^3 .
\end{align*}
Choosing $\sqrt{D_1}\ll Q\ll D_1X^{-\varepsilon}$ optimally with $$Q=\frac{D_1^{2/3}}{n_2\delta}+\sqrt{D_1},$$
we finally obtain 
    $\mathcal{R}\preccurlyeq    n_2\delta D_1^{5/6}+(n_2\delta)^3\sqrt{D_1}. $
Recall that $\mathcal{R}$ is the $n_1$-sum in $\mathcal{S}$ in \eqref{start2}.  Hence we have the bound \begin{align}\label{S2ndBound}
    \mathcal{S}\ll& \sum_{n_2\ll D_2}|B(1,n_2)||\mathcal{R}|\preccurlyeq    \sum_{n_2\ll D_2} \left(n_2\delta D_1^{5/6}+(n_2\delta)^3\sqrt{D_1}\right) 
    \preccurlyeq   \delta D_1^{5/6}D_2^2+\delta^3\sqrt{D_1}D_2^4 .
\end{align}

Inserting \eqref{MaxEFirstBound} and \eqref{S2ndBound} into \eqref{EmaxS}, we finally obtain \begin{align*}
    \mathcal{E}_{\max}\preccurlyeq &
    \mathop{\sum_{\delta}\mathop{\sum\sum}_{\delta_{1 },\delta_2 \mid \delta^\infty}}_{\delta^2\delta_{1}\delta_{2}D_1D_2\ll X}\delta^{ \theta}\delta_{1 }^{ \theta} \min\left\{D_1 , \delta D_1^{5/6}D_2^2+\delta^3\sqrt{D_1}D_2^4\right\}. 
\end{align*}
We estimate the minimum by suitable geometric mean to obtain
\begin{displaymath}
\begin{split}
\mathcal{E}_{\max}& \preccurlyeq 
\mathop{\sum_{\delta}\mathop{\sum\sum}_{\delta_{1 },\delta_2 \mid \delta^\infty}}_{\delta^2\delta_{1}\delta_{2}D_1D_2\ll X}\delta^{ \theta}\delta_{1 }^{ \theta}  \Big( D_1^{7/13}  (\delta D_1^{5/6}D_2^2)^{6/13} + D_1^{7/9}  (\delta^3\sqrt{D_1}D_2^4)^{2/9}\Big)  \\
&\preccurlyeq \sum_{\delta}\mathop{\sum\sum}_{\delta_{1 },\delta_2 \mid \delta^\infty} \Big( \frac{X^{12/13} }{ \delta^{18/13 - \theta} \delta_1^{12/13 - \theta}\delta_2^{12/13}} +  \frac{X^{8/9}}{\delta^{10/9 - \theta} \delta_1^{8/9 - \theta} \delta_2^{8/9}}\Big) \preccurlyeq X^{12/13}
\end{split} 
\end{displaymath}
since $\theta <1/9$.  This completes the analysis of the Eisenstein contribution.

\section{\texorpdfstring{$\Sigma_6$}{Sum6} contribution}\label{sec5}

So far we have shown $c_{\text{cusp}} S_m(X) = \Sigma_6 + O(X^{12/13 + \varepsilon})$. We now come to the central task of analyzing $\Sigma_6$.  Recall that $\Sigma_6$ is given by \begin{align*}
    \Sigma_6=\sum_{\epsilon_1,\epsilon_2=\pm1}\sum_n B(1,n)V\left(\frac{n}{X}\right)\mathop{\sum\sum}_{D_1,D_2}\frac{S(\epsilon_2 n,\epsilon_1,1,m;D_1,D_2)}{D_1D_2}\Phi_{\omega_6}\left(-\frac{\epsilon_2 nD_2}{D_1^2},-\frac{\epsilon_1 mD_1}{D_2^2}\right).
\end{align*}
Applying the decomposition of $S(\epsilon_2 n,\epsilon_1,1,m;D_1,D_2)$ due to Kiral and Nakasuji  as stated in \eqref{KNDecomposition}, we have \begin{align*}
    \Sigma_6=&\sum_{\epsilon_1,\epsilon_2=\pm1}\sum_n B(1,n)V\left(\frac{n}{X}\right)\sum_{D_0}\sum_{D_1}\sum_{D_2}\frac{1}{D_0D_1D_2}\Phi_{\omega_6}\left(-\frac{\epsilon_2 nD_2}{D_0D_1^2},-\frac{\epsilon_1m D_1}{D_0D_2^2}\right)\\
    &\times\sumast_{\substack{\alpha\Mod{D_0}\\D_2+\epsilon_1D_1\alpha\equiv 0\Mod{D_0}}}S\left(\epsilon_2 n,\frac{D_2+\epsilon_1D_1\alpha}{D_0};D_1\right)S\left(m,\frac{D_2\overline{\alpha}+\epsilon_1D_1}{D_0};D_2\right).
\end{align*}
 {By the decay properties of $\Phi_{w_6}$ near 0} it is clear that this expression is absolutely convergent. 

Following the sketch in the introduction, we will use a three-step approach Voronoi-reciprocity-Voronoi, followed by a final Poisson summation. Since Poisson in one variable commutes with Voronoi in another variable, it is technically simpler to first execute the easier Poisson step and postpone the second Voronoi step to the very end of the argument. We will frequently have to pause in order to estimate various error terms that we ignored in the sketch in the introduction.


Every summation step introduces an integral transform of the previous weight function. In this way we obtain a sequence of different weight functions, and for the reader's convenience we summarize at this point the various definitions and bounds:
\begin{itemize}
\item $F_1$ is defined in \eqref{F1Def} and bounded in Lemma \ref{lemF1};
\item the correction factor $G$ is defined in \eqref{defGaux};
\item $F_2$ is defined in \eqref{F2Def} and bounded in Lemma \ref{lemf2}; the variation $\tilde{F}_2$ is defined in \eqref{FTilde2Def}, the variation $\breve{F}_2$ is defined in \eqref{F2Defnew} and bounded in \eqref{breveF}, the variation $\acute{F}_2$ is defined in \eqref{Facute2Def};
\item $F_3$ is defined in \eqref{deff3} and bounded in Lemma \ref{lemf3}, the variation $\mathcal{F}_3$ is defined in \eqref{curlyf3}; 
\item $F_4$ is defined in \eqref{F4Def} and bounded in Lemma \ref{F4lem}.
\end{itemize}

\subsection{Prelude: A preliminary bound for  {non-generic parameters}}\label{prelude}

Before we start with the above procedure, we first use a direct argument to dispense with the case when  {$D_0, D_1, D_2$ are in highly non-generic position, in particular when $D_1$ is small. To this end we define 
\begin{equation}\label{defPhi}
    \Phi(D_0, D_1, D_2) = X^{1/2}\Big(D_0^{3/4} D_1^{3/4} + \frac{D_1 D_0^{1/2}}{D_2^{1/2}}\Big).
    \end{equation}}
Let $U\in C_c^\infty([-3,3])$ be a fixed function such that $U(x)=1$ for $x\in[-2,2]$.  {For some small $\eta > 0$ consider the sum}
\begin{align*}
    \Sigma_6^{ {0}}:=&\sum_{\epsilon_1,\epsilon_2=\pm1}\sum_n B(1,n)V\left(\frac{n}{X}\right)\sum_{D_0}\sum_{D_1}\sum_{D_2}\frac{1}{D_0D_1D_2}U\left( {\frac{\Phi(D_0, D_1, D_2)}{X^{1-\eta}}}\right)\Phi_{\omega_6}\left(-\frac{\epsilon_2 nD_2}{D_0D_1^2},-\frac{\epsilon_1m D_1}{D_0D_2^2}\right)\nonumber\\
    &\times\sumast_{\substack{\alpha\Mod{D_0}\\D_2+\epsilon_1D_1\alpha\equiv 0\Mod{D_0}}}S\left(\epsilon_2 n,\frac{D_2+\epsilon_1D_1\alpha}{D_0};D_1\right)S\left(m,\frac{D_2\overline{\alpha}+\epsilon_1D_1}{D_0};D_2\right).
\end{align*}

The goal of this subsection is a proof of the following lemma. 

\begin{lemma}\label{smalld1}
 We have $\Sigma_6^{ {0}} \preccurlyeq  {X^{1-\eta}}.$
\end{lemma}

\begin{proof}  {Restrict the $D_0, D_1, D_2$-summation to dyadic intervals and consider the  subsum $\Sigma_6(\Delta_0,\Delta_1,\Delta_2)$} 
where
\begin{align}\label{S6D0D1D2}
    \Sigma_6(\Delta_0,\Delta_1,\Delta_2)=&\sum_{\epsilon_1,\epsilon_2=\pm1}\sum_n B(1,n)V\left(\frac{n}{X}\right)\sum_{D_0\asymp\Delta_0}\sum_{D_1\asymp \Delta_1}\sum_{D_2\asymp \Delta_2}\frac{1}{D_0D_1D_2}
    \Phi_{\omega_6}\left(-\frac{\epsilon_2 nD_2}{D_0D_1^2},-\frac{\epsilon_1m D_1}{D_0D_2^2}\right)\nonumber\\
    &\times\sumast_{\substack{\alpha\Mod{D_0}\\D_2+\epsilon_1D_1\alpha\equiv 0\Mod{D_0}}}S\left(\epsilon_2 n,\frac{D_2+\epsilon_1D_1\alpha}{D_0};D_1\right)S\left(m,\frac{D_2\overline{\alpha}+\epsilon_1D_1}{D_0};D_2\right).
\end{align}
Let $L\geq1$ be a parameter. Applying the Cauchy-Schwarz inequality to take out the $n$-sum, we have 
\begin{align*}
    \Sigma_6(\Delta_0,\Delta_1,\Delta_2)^2\preccurlyeq  & X \sup_{\epsilon_1,\epsilon_2=\pm}\sum_nU\left(\frac{n}{XL}\right)\Bigg|\sum_{D_0 {\asymp}\Delta_0}\sum_{D_1 {\asymp} \Delta_1}\sum_{D_2 {\asymp} \Delta_2}\frac{1}{D_0D_1D_2}
    \Phi_{\omega_6}\left(-\frac{\epsilon_2 nD_2}{D_0D_1^2},-\frac{\epsilon_1m D_1}{D_0D_2^2}\right)\\
    &\times\sumast_{\substack{\alpha\Mod{D_0}\\D_2+\epsilon_1D_1\alpha\equiv 0\Mod{D_0}}}S\left(\epsilon_2 n,\frac{D_2+\epsilon_1D_1\alpha}{D_0};D_1\right)S\left(m,\frac{D_2\overline{\alpha}+\epsilon_1D_1}{D_0};D_2\right)\Bigg|^2.
\end{align*}
Here $U\in C_c^\infty([-3,3])$ is the fixed function satisfying $U(x)=1$ for $x\in[-2,2]$ as defined before. The parameter $L$ artificially enlarges the $n$-sum. This is obviously wasteful, but has the advantage that we can eliminate the entire off-diagonal term after Poisson summation.  
Opening the square and applying Poisson summation, the $n$-sum is of the form \begin{align*}
    &\sum_n e\left(\epsilon_2\frac{\beta n}{D_1}-\epsilon_2\frac{\beta' n}{D_1'}\right) U\left(\frac{n}{XL}\right)\Phi_{\omega_6}\left(-\frac{\epsilon_2 nD_2}{D_0D_1^2},-\frac{\epsilon_1m D_1}{D_0D_2^2}\right)\overline{\Phi_{\omega_6}\left(-\frac{\epsilon_2 nD_2'}{D_0'D_1'^2},-\frac{\epsilon_1m D_1'}{D_0'D_2'^2}\right)}\\
    =&\frac{XL}{D_1D_1'}\sum_n\sum_{\gamma\Mod{D_1D_1'}}e\Big(\frac{\epsilon_2\beta \gamma}{D_1}-\frac{\epsilon_2\beta' \gamma}{D_1'}-\frac{n\gamma}{D_1D_1'}\Big)  \int_\R U(x)\Phi_{\omega_6}\Big(-\frac{\epsilon_2 D_2XLx}{D_0D_1^2},-\frac{\epsilon_1m D_1}{D_0D_2^2}\Big)\overline{\Phi_{\omega_6}\Big(-\frac{\epsilon_2 D_2'XLx}{D_0'D_1'^2},-\frac{\epsilon_1m D_1'}{D_0'D_2'^2}\Big)}e\left(\frac{nXLx}{D_1D_1'}\right)\mathrm{d}x.
\end{align*}
With Lemma \ref{lem.Phi6Truncation}, repeated integration by parts on the $x$-integral gives us arbitrary savings unless \begin{align*}
    n\preccurlyeq  \frac{\Delta_1^2}{XL}\left( {1+}\sqrt{\frac{\Delta_2XL}{\Delta_0\Delta_1^2}}+\frac{(X {L})^{1/3}}{\sqrt{\Delta_0\Delta_1}}\right).
\end{align*}
Choosing $L=\big(1+ {\frac{\Delta_1^2}{X}}+\frac{\Delta_1^2\Delta_2}{\Delta_0X}+ {\frac{\Delta_1^{9/4}}{\Delta_0^{3/4}X}}\big)X^\varepsilon$,  we get arbitrary savings unless $n=0$. When $n=0$, the $\gamma$-sum evaluates as \begin{align*}
    \sum_{\gamma\Mod{D_1D_1'}}e\left(\epsilon_2\frac{\beta \gamma}{D_1}-\epsilon_2\frac{\beta' \gamma}{D_1'}\right)=D_1^2\delta\big(D_1=D_1', \beta\equiv \beta'\Mod{D_1}\big).
\end{align*}
Hence \begin{align*}
    \Sigma_6(\Delta_0,\Delta_1,\Delta_2)^2\preccurlyeq  & \sup_{\epsilon_1,\epsilon_2=\pm}X^{2}L\mathop{\sum\sum}_{D_0,D_0'\asymp\Delta_0}\sum_{D_1\asymp \Delta_1}\mathop{\sum\sum}_{D_2,D_2'\asymp \Delta_2}\frac{1}{D_0D_0'D_1^2D_2D_2'}\sumast_{\substack{\alpha\Mod{D_0}\\D_2+\epsilon_1D_1\alpha\equiv 0\Mod{D_0}}}\sumast_{\substack{\alpha'\Mod{D_0'}\\D_2'+\epsilon_1D_1\alpha'\equiv 0\Mod{D_0'}}}\\
    &\times \sumast_{\beta\Mod{D_1}} e\left(\frac{\left(\frac{D_2+\epsilon_1D_1\alpha}{D_0}-\frac{D_2'+\epsilon_1D_1\alpha'}{D_0'}\right)\beta}{D_1}\right)S\left(m,\frac{D_2\overline{\alpha}+\epsilon_1D_1}{D_0};D_2\right)S\left(m,\frac{D_2'\overline{\alpha'}+\epsilon_1D_1}{D_0'};D_2'\right)\mathcal{J},
\end{align*}
where \begin{align*}
    \mathcal{J}=
    \int_\R U(x)\Phi_{\omega_6}\left(-\frac{\epsilon_2 D_2XLx}{D_0D_1^2},-\frac{\epsilon_1m D_1}{D_0D_2^2}\right)\overline{\Phi_{\omega_6}\left(-\frac{\epsilon_2 D_2'XLx}{D_0'D_1^2},-\frac{\epsilon_1m D_1}{D_0'D_2'^2}\right)}\mathrm{d}x.
\end{align*}
Evaluating the $\beta$-sum (a Ramanujan sum), we have \begin{align*}
    \sumast_{\beta\Mod{D_1}} e\left(\frac{\left(\frac{D_2+\epsilon_1D_1\alpha}{D_0}-\frac{D_2'+\epsilon_1D_1\alpha'}{D_0'}\right)\beta}{D_1}\right)=\sum_{ab=D_1}\mu(a)b\delta\left(\frac{D_2+\epsilon_1D_1\alpha}{D_0}\equiv\frac{D_2'+\epsilon_1D_1\alpha'}{D_0'}\Mod{b}\right).
\end{align*}
On the other hand, Lemma \ref{Kw6} gives us the bound 
\begin{displaymath}
    \begin{split}
   \mathcal{J}& {\preccurlyeq} \int_{|x| \ll 1} \left(\frac{ {|x|}XL}{\Delta_0^2\Delta_1\Delta_2}\right)^{-1/2} { \Big(\frac{|x|XL\Delta_2}{\Delta_0\Delta_1^2} + \frac{\Delta_1}{\Delta_0\Delta_2^2}\Big)^{-1/2}}\mathrm{d}x  { + \delta\Big(\frac{\Delta_1^3}{\Delta_2^3XL} \ll 1\Big)\int_{|x| \asymp\frac{\Delta_1^3}{\Delta_2^3XL}} \left(\frac{xXL}{\Delta_0^2\Delta_1\Delta_2}\right)^{-1/2+\varepsilon}\mathrm{d}x}\\
    & {\preccurlyeq \min\Big(\frac{\Delta^{3/2}_0\Delta_1^{3/2}}{XL}, \frac{\Delta^{3/2}_0\Delta_2^{3/2}}{(XL)^{1/2}}\Big) + \delta\Big(\frac{\Delta_1^3}{\Delta_2^3XL} \ll 1\Big)\frac{\Delta_0\Delta_1^2}{XL\Delta_2}} .   
\end{split}
\end{displaymath}
Hence bounding everything trivially with the Weil bound yields 
\begin{displaymath}
    \begin{split}
   \Sigma_6(\Delta_0,\Delta_1,\Delta_2)^2&\preccurlyeq  \sup_{\epsilon_1,\epsilon_2=\pm}X^{2}L\mathop{\sum\sum}_{D_0,D_0'\asymp\Delta_0}\sum_{D_1\asymp \Delta_1}\mathop{\sum\sum}_{D_2,D_2'\asymp \Delta_2}\frac{1}{D_0D_0'D_1^2\sqrt{D_2D_2'}}\sumast_{\substack{\alpha\Mod{D_0}\\D_2+\epsilon_1D_1\alpha\equiv 0\Mod{D_0}}}\sumast_{\substack{\alpha'\Mod{D_0'}\\D_2'+\epsilon_1D_1\alpha'\equiv 0\Mod{D_0'}}}\nonumber\\
    &\quad\quad\times \sum_{ab=D_1}b\delta\left(\frac{D_2+\epsilon_1D_1\alpha}{D_0}\equiv\frac{D_2'+\epsilon_1D_1\alpha'}{D_0'}\Mod{b}\right) {\Big(\frac{\Delta^{3/2}_0\Delta_1^{3/2}}{XL} + \frac{\Delta_0\Delta_1^2}{XL\Delta_2}\Big)} 
    \preccurlyeq  {\Phi(\Delta_0, \Delta_1, \Delta_2)^2}. 
    \end{split}
\end{displaymath}
 {It is interesting to note that the final bound is independent of $L$. This completes the proof.} 
\end{proof}


 {We continue with the quantity} 
\begin{align}\label{sigma6trunc}
    \Sigma^{(1)}_6=&\sum_{\epsilon_1,\epsilon_2=\pm1}\sum_n B(1,n)V\left(\frac{n}{X}\right)\sum_{D_0}\sum_{D_1}\sum_{D_2}\frac{1}{D_0D_1D_2}\left(1-U\left( {\frac{\Phi(D_0, D_1, D_2)}{X^{1-\eta}}}\right)\right)\Phi_{\omega_6}\left(-\frac{\epsilon_2 nD_2}{D_0D_1^2},-\frac{\epsilon_1m D_1}{D_0D_2^2}\right)\nonumber\\
    &\times\sumast_{\substack{\alpha\Mod{D_0}\\D_2+\epsilon_1D_1\alpha\equiv 0\Mod{D_0}}}S\left(\epsilon_2 n,\frac{D_2+\epsilon_1D_1\alpha}{D_0};D_1\right)S\left(m,\frac{D_2\overline{\alpha}+\epsilon_1D_1}{D_0};D_2\right).
\end{align}
We will remove the truncation factor $1 - U( {\frac{\Phi(D_0, D_1, D_2)}{X^{1-\eta}}})$ at the very end in Subsection \ref{endgame}. 

\subsection{Step 1: Voronoi summation}

We apply Voronoi summation (Lemma \ref{lem.Voronoi}) on the $n$-sum in \eqref{sigma6trunc}, giving us \begin{align*}
    &\sum_n B(1,n)V\left(\frac{n}{X}\right)\Phi_{\omega_6}\left(-\frac{\epsilon_2 nD_2}{D_0D_1^2},-\frac{\epsilon_1 mD_1}{D_0D_2^2}\right)e\left(\frac{\epsilon_2\overline{\beta} n}{D_1}\right)\\
    =&D_1\sum_{\eta_1=\pm1}\sum_{n_0\mid D_1}\sum_n\frac{B(n,n_0)}{n_0n}S\left(\epsilon_2\beta,\eta_1 n;\frac{D_1}{n_0}\right)F_1^{\epsilon_1,\epsilon_2,\eta_1}\left(\frac{n_0^2nX}{D_1^3},\frac{XD_2}{D_0D_1^2},\frac{D_1}{D_0D_2^2}\right),
\end{align*}
for $\beta \in (\Bbb{Z}/D_1\Bbb{Z})^{\ast}$, where \begin{align}\label{F1Def}
    F_1^{\epsilon_1,\epsilon_2,\eta_1}\left(a,b,c\right)=\int_{(0)}a^{-s} \mathcal{G}_{\mu_0}^{\eta_1}(s+1)\int_0^\infty V(x)\Phi_{\omega_6}\left(-\epsilon_2bx,-\epsilon_1mc\right)x^{-s-1}\mathrm{d} x\frac{\mathrm{d}s}{2\pi i}
\end{align}
with $\mathcal{G}_{\mu_0}^\pm(s)$ as defined in Lemma \ref{lem.Voronoi}. 

\begin{lemma}
\label{lemF1} We have
\begin{align*}
    F_1^{\epsilon_1,\epsilon_2,\eta_1}\left(a,b,c\right)\preccurlyeq_{A 
    }&
   \min(a, a^{2/3})\Big(1 +  {\frac{1}{b^2c} + \frac{1}{bc^2}}+ \frac{a^{1/3}}{\sqrt{b} + b^{1/3}c^{1/6}}\Big)^{-A}  {(bc)^{-1/4} \begin{cases} 1, & b \asymp c,\\   (b+c)^{-1/4}, &\text{otherwise}\end{cases}} 
\end{align*}
for any $A > 0$.
\end{lemma}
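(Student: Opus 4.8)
The plan is to estimate $F_1^{\epsilon_1,\epsilon_2,\eta_1}(a,b,c)$ by treating the two factors in its definition \eqref{F1Def} separately and exploiting the known bounds for $\Phi_{\omega_6}$ and the Voronoi kernel $\mathcal{G}_{\mu_0}^{\eta_1}$. First I would record the inner Mellin-type integral
$$H(s) = \int_0^\infty V(x) \Phi_{\omega_6}\left(-\epsilon_2 b x, -\epsilon_1 m c\right) x^{-s-1}\,\mathrm{d}x,$$
noting that the second argument of $\Phi_{\omega_6}$ is fixed (independent of $x$) and that the support of $V$ confines $x \asymp 1$. Integration by parts $j$ times against $x^{-s-1}$ moves derivatives onto $V(x)\Phi_{\omega_6}(-\epsilon_2 bx, -\epsilon_1 mc)$; each derivative in the first slot of $\Phi_{\omega_6}$ costs a factor governed by Lemma \ref{lem.Phi6Truncation}, namely $(1 + b^{1/2} + b^{1/3}c^{1/6})$ up to the decay $\min(b,c)^A$, so that
$$H(s) \ll_j \frac{(1 + b^{1/2} + b^{1/3}c^{1/6})^j}{(1+|s|)^j}\,\min(b,c)^A$$
for all $j$, and by interpolation for real $j \geq 0$. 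One then has to upgrade the crude factor $\min(b,c)^A$ to the sharper shape claimed in the lemma; for this I would invoke Lemma \ref{Kw6}(a) directly when $b \asymp c$ (giving the $(bc)^{-1/4}$ with a logarithmic loss absorbed into $\preccurlyeq$) and Lemma \ref{Kw6}(b) with $Z$ a small power of the relevant variables when $b$ and $c$ are of different order of magnitude (giving $(bc)^{-1/3}(b+c)^{-1/12}$ after taking square roots of the individual sizes), always retaining the $\min(b,c)^A$ from the rapid-decay part of those lemmas or of Lemma \ref{lem.Phi6Truncation} to handle the small regime. The extra inverse powers of $b$ and $c$ inside the bracket, i.e.\ $(1 + b^{-1} + c^{-1} + \ldots)^{-A}$, come from the same integration-by-parts mechanism: when $b$ or $c$ is small the oscillation is slow but $\Phi_{\omega_6}$ is itself rapidly decaying there, so one plays the two effects against each other.

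Next I would treat the outer $s$-integral $\frac{1}{2\pi i}\int_{(0)} a^{-s}\mathcal{G}_{\mu_0}^{\eta_1}(s+1) H(s)\,\mathrm{d}s$. By Stirling, as recorded in the excerpt just before Lemma \ref{kernelequal}, one has $\mathcal{G}_{\mu_0}^{\eta_1}(s+1) \ll (1+|\Im s|)^{3\Re s + 3/2}$ away from poles. Shifting the contour to $\Re s = \sigma$ and combining with the bound for $H(s)$, absolute convergence requires $j > 3\sigma + 5/2$, and the integral is then
$$\ll a^{-\sigma}\,(1 + b^{1/2} + b^{1/3}c^{1/6})^{j}\,\min(b,c)^A.$$
To obtain the factor $\min(a,a^{2/3})$ I would shift $\sigma$ in two regimes: for $a \leq 1$ push $\sigma$ as far right as the poles of $\mathcal{G}_{\mu_0}^{\eta_1}(s+1)$ (which sit at $s+1 = -\mu_j - k$, i.e.\ to the left of $\Re s = 0$ since $\mu_0 \in (i\Bbb{R})^3$) allow — the kernel $\mathcal{G}_{\mu_0}^{\eta_1}(s+1)$ being entire apart from those left-half-plane poles, we may take $\sigma$ arbitrarily large and gain $a^{-\sigma}$ with $\sigma \to \infty$, except that the growth $(1+|s|)^{3\sigma}$ and the polynomial gain $j = 3\sigma + O(1)$ in $b,c$ force a trade-off; optimizing gives exactly the exponent $2/3$ in $a^{2/3}$ together with the bracket $a^{1/3}/(\sqrt b + b^{1/3}c^{1/6})$ raised to the power $A$. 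For $a \geq 1$ one instead shifts slightly left of $\Re s = 0$, picking up no poles (again because the poles lie further left), to produce $a^{-\sigma}$ with $\sigma < 0$ small, which combined with the elementary bound from $\sigma = 0$ yields $\min(a, a^{2/3}) = a$ in that range.

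The main obstacle I anticipate is bookkeeping the interplay between the three effective ``lengths'' $a^{1/3}$, $\sqrt b$ and $b^{1/3}c^{1/6}$ so that the single bracket $\bigl(1 + b^{-1} + c^{-1} + a^{1/3}(\sqrt b + b^{1/3}c^{1/6})^{-1}\bigr)^{-A}$ emerges cleanly rather than as a messier product of separate decay factors — this requires being slightly careful about which variable one integrates by parts in first and tracking that the weight function $\Phi_{\omega_6}$ already supplies decay in $\min(b,c)$ on its own. A secondary technical point is that the stationary-phase input from Lemma \ref{Kw6}(b) is only available when $b$ and $c$ are genuinely of different magnitude and above a threshold $Z$; in the complementary thin ranges one falls back on Lemma \ref{Kw6}(a) or on the trivial bound, and one must check these sub-cases all fit under the stated estimate. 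Once the case analysis is organized, each individual estimate is a routine Stirling-plus-integration-by-parts computation of the type already used for $\Sigma_4$ and in Section \ref{sec5}.
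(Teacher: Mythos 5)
Your treatment of the $x$-integral (integration by parts against Lemma \ref{lem.Phi6Truncation} to truncate $|\Im s|\preccurlyeq \sqrt b+b^{1/3}c^{1/6}$), of the support condition $a^{1/3}\preccurlyeq \sqrt b+b^{1/3}c^{1/6}$ via a far-right contour shift, and of the $(bc)^{-1/4}$ versus $(bc)^{-1/3}(b+c)^{-1/12}$ dichotomy via Lemma \ref{Kw6} all match the paper. The genuine gap is in how you produce $\min(a,a^{2/3})$. First, your two regimes are muddled: for $a\le 1$ the factor you need is $a$, obtained by shifting \emph{left} (the paper goes to $\Re s=-1$); pushing $\sigma\to+\infty$ as you propose makes $a^{-\sigma}$ blow up when $a\le 1$. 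Conversely for $a\ge 1$ the required factor is $a^{2/3}$, not ``$\min(a,a^{2/3})=a$'' as you write, and a slight left shift only loses there. Second, and more seriously, no absolute-value contour estimate can reach $a^{2/3}$: on $\Re s=\sigma$ the truncated integral is bounded by $a^{-\sigma}\int_0^{T_0}(1+t)^{3\sigma+3/2}\,dt$ with $T_0\asymp a^{1/3}$ in the critical range, which equals $a^{5/6}$ for every $\sigma$. The exponent $2/3$ requires exploiting the oscillation of $a^{-it}\prod_j\Gamma(1+it+\mu_{0,j})$: the paper applies stationary phase to the $s$-integral with phase $\phi(t)=t\log\bigl(|t|^3/(e^3xa)\bigr)$, whose stationary point at $|t|=(xa)^{1/3}$ contributes $T^2\asymp a^{2/3}$ by \cite[Proposition 8.2]{BKY}, the complementary ranges being negligible by \cite[Lemma 8.1]{BKY}. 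This stationary phase step (recorded as \eqref{statphs} and reused later, e.g.\ in Lemma \ref{lemf2}) is absent from your argument and cannot be replaced by ``optimizing a trade-off'' of contour shifts, so as written your proof would only yield $a^{5/6}$ in the range $a\ge 1$, which is insufficient for the downstream linear programs.
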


\begin{proof} Repeated integration by parts on the $x$-integral with Lemma \ref{lem.Phi6Truncation} gives us arbitrary savings unless $|\Im(s)|\preccurlyeq_{\Re(s)}  \sqrt{b}+b^{1/3}c^{1/6}$. Hence together with Stirling's approximation on $\mathcal{G}_{\mu_0}^{\eta_1}(s+1)$, shifting the contour to the line $\Re(s)=A$ for sufficiently large $A>0$ gives us arbitrary savings unless $a\preccurlyeq   (\sqrt{b}+b^{1/3}c^{1/6})^3$. If $a \leq 1$, we shift the contour to $\Re s = -1$, if $a \geq 1$, we shift the contour to $\Re s = 0$, say, and apply stationary phase on the $s$-integral with the phase
$$\phi(t) = t \log \frac{|t|^3}{e^3 xa}.$$
Restricting the integral smoothly to $t \asymp T$, we have a stationary point at $|t| = (xa)^{1/3}$. If this point is not in the range of integration, i.e.\ $T \not\asymp a^{1/3}$, we apply \cite[Lemma 8.1]{BKY} with $${\tt R} = 1, \quad {\tt Y} = {\tt Q} = {\tt U} = T$$ to see that this portion is negligible if $T\geq X^{\varepsilon}$. For $T \asymp a^{1/3}$ we apply \cite[Proposition 8.2]{BKY} instead with $${\tt Y} = {\tt Q} = {\tt V} = T$$
getting a bound $T^2 \asymp a^{2/3}$
provided that $T \geq X^{\varepsilon}.$ For $T \ll X^{\varepsilon}$ we estimate trivially, obtaining the final bound 
\begin{equation}\label{statphs}\int_{(0)}a^{-s} \mathcal{G}_{\mu_0}^{\eta_1}(s+1) x^{-s } \frac{\mathrm{d}s}{2\pi i}\preccurlyeq a^{2/3}.
\end{equation}
Together with Lemma \ref{Kw6} we complete the proof. 
\end{proof}

Inserting this back into $\Sigma_6^{(1)}$ gives us \begin{align*}
    \Sigma_6^{(1)}=&\sum_{\epsilon_1,\epsilon_2, \eta_1=\pm1}\sum_{D_0}\sum_{D_1}\sum_{D_2}\frac{1}{D_0D_2}\sum_{n_0\mid D_1}\sum_n\frac{B(n,n_0)}{n_0n}F_1^{\epsilon_1,\epsilon_2,\eta_1}\left(\frac{n_0^2nX}{D_1^3},\frac{XD_2}{D_0D_1^2},\frac{D_1}{D_0D_2^2}\right)\left(1-U\left( {\frac{\Phi(D_0, D_1, D_2)}{X^{1-\eta}}}\right)\right)\\
    &\times\sumast_{\substack{\alpha\Mod{D_0}\\D_2+\epsilon_1D_1\alpha\equiv 0\Mod{D_0}}}\sumast_{\beta\Mod{D_1}}S\left(\epsilon_2\beta,\eta_1 n;\frac{D_1}{n_0}\right)e\left(\frac{\beta(D_2+\epsilon_1D_1\alpha)/D_0}{D_1}\right)S\left(m,\frac{D_2\overline{\alpha}+\epsilon_1D_1}{D_0};D_2\right).
\end{align*}
 {By the decay properties of $\Phi_{w_6}$},  the $D_0, D_1, D_2$ sums are rapidly convergent, and since  the first entry of $F_1$ can be bounded in terms of the other two, the same holds for the $n$-sum, so the entire expression is absolutely convergent. 

In the following sections we will make extensive changes of variables. This will inevitably introduce an increasing and perhaps seemingly random alphabet. 

By the congruence condition satisfied by $\alpha$, we have $D_{0,0}:=(D_1,D_0)=(D_2,D_0)$. Rewrite $$D_0 \leftarrow D_{0,0}D_{0,1}D_0 , \quad D_1 \leftarrow D_{0,0}D_{1,0}D_1, \quad  D_2 \leftarrow D_{0,0}D_{2,0}D_2$$  with $D_{0,1}D_{1,0}D_{2,0}\mid D_{0,0}^\infty$ and $(D_0D_1D_2,D_{0,0})=1$. Rewriting $$n_0 \leftarrow n_{0,0}n_{1,0}n_0$$ with $n_{0,0}\mid D_{0,0}$, $n_{1,0}\mid D_{1,0}$, $(n_{0,0},D_{1,0}/n_{1,0})=1$ and $n_0\mid D_1$, rewriting $$D_{0,0} \leftarrow n_{0,0}D_{0,0}, \quad D_{1,0} \leftarrow n_{1,0}D_{1,0}, \quad  D_1 \leftarrow n_0D_1$$ as well, we have \begin{align*}
    \Sigma_6^{(1)}=&\sum_{\epsilon_1,\epsilon_2,\eta_1=\pm1} \sum_{n_{0,0}} \sum_{D_{0,0}}\mathop{\sum\sum\sum\sum}_{\substack{n_{1,0}D_{0,1}D_{1,0}D_{2,0}\mid (n_{0,0}D_{0,0})^\infty\\(n_{1,0}D_{1,0}D_{2,0},D_{0,1})=1\\(D_{1,0},n_{0,0})=1}}\mathop{\sum\sum\sum\sum}_{(n_0D_0D_1D_2,n_{0,0}D_{0,0})=(n_0D_1D_2,D_0)=1} \\
    & \times \frac{1 - U( {\Phi(n_{0, 0}D_{0, 0}D_{0, 1}D_0, n_{0,0}n_{1,0}n_0D_{0,0}D_{1,0}D_1, n_{0, 0}D_{0, 0} D_{2, 0} D_2)/X^{1-\eta}})}{n_{0,0}^2D_{0,0}^2D_{0,1}D_{2,0}D_0D_2}\\
    & \times 
    \sum_n \frac{B(n,n_{0,0}n_{1,0}n_0)}{n_{0,0}n_{1,0}n_0n}\mathcal{C}_1
   F_1^{\epsilon_1,\epsilon_2,\eta_1}\left(\frac{nX}{n_{0,0}n_{1,0}n_0(D_{0,0}D_{1,0}D_1)^3},\frac{XD_{2,0}D_2}{n_{0,0}^2n_{1,0}^2n_0^2D_{0,0}^2D_{0,1}D_{1,0}^2D_0D_1^2},\frac{n_{1,0}n_0D_{1,0}D_1}{n_{0,0}^2D_{0,0}^2D_{0,1}D_{2,0}^2D_0D_2^2}\right),
\end{align*}
where \begin{align*}
    \mathcal{C}_1
    =&\sumast_{\substack{\alpha\Mod{n_{0,0}D_{0,0}D_{0,1}D_0}\\Y_1\equiv 0\Mod{D_{0,1}D_0}}}\sumast_{\beta\Mod{n_{0,0}n_{1,0}n_0D_{0,0}D_{1,0}D_1}}e\left(\frac{\beta Y_1/(D_{0,1}D_0)}{n_{0,0}n_{1,0}n_0D_{0,0}D_{1,0}D_1}\right) \\& \times S\left(\epsilon_2\beta,\eta_1 n;D_{0,0}D_{1,0}D_1\right)S\left(m,\frac{Y_1 \overline{\alpha}}{D_{0,1}D_0};n_{0,0}D_{0,0}D_{2,0}D_2\right).
\end{align*}
with
$$Y_1 = D_{2,0}D_2+\epsilon_1n_{1,0}n_0D_{1,0}D_1\alpha.$$

\subsection{Interlude I: character sum analysis}
We spend some time to transform the character sum $\mathcal{C}_1$ before we can apply additive reciprocity. Generically we would like to think of the important moduli being $D_1, D_2$, so that the $\alpha$-sum is essentially irrelevant and the $\beta$-sum looks like 
$$\sumast_{\beta\Mod{ D_1}} e\Big( \frac{\beta D_2}{D_1}\Big) S(\beta, n, D_1) \approx D_1 e\Big( \frac{\overline{D}_2 n}{D_1}\Big).$$
Making this precise, requires some work to which this subsection is devoted.  The final expression for $\Sigma_6^{(1)}$ is \eqref{S6BeforePoisson} below. 

Since $(D_0,n_{0,0}n_0D_{0,0}D_1D_2)=1$ and $n_{1,0}D_{0,1}D_{1,0}D_{2,0}\mid (n_{0,0}D_{0,0})^\infty$, we can split the $\alpha$-sum into   moduli $n_{0,0}D_{0,0}D_{0,1}$ and $D_0$ and evaluate to write $\mathcal{C}_1$  as \begin{align*}
    &\sumast_{\substack{\alpha\Mod{n_{0,0}D_{0,0}D_{0,1}}\\Y_1\equiv 0\Mod{D_{0,1}}}}\sumast_{\beta\Mod{n_{0,0}n_{1,0}n_0D_{0,0}D_{1,0}D_1}}e\left(\frac{\beta \overline{D_0}Y_1/D_{0,1}}{n_{0,0}n_{1,0}n_0D_{0,0}D_{1,0}D_1}\right)  S\left(\epsilon_2\beta,\eta_1 n;D_{0,0}D_{1,0}D_1\right)S\left(m,\frac{Y_1\overline{\alpha D_0}}{D_{0,1}};n_{0,0}D_{0,0}D_{2,0}D_2\right).
\end{align*}
 Consider now the $\beta$-sum. Since $(n_{0,0}n_{1,0}D_{0,0}D_{1,0},n_0D_1)=1$, for $\gamma \in (\Bbb{Z}/D_{0, 0}D_{1, 0} D_1\Bbb{Z})^{\ast}$ we have \begin{align*}
    &\sumast_{\beta \Mod{n_{0,0}n_{1,0}n_0D_{0,0}D_{1,0}D_1}}e\left(\frac{\beta \overline{D_0}Y_1/D_{0,1}}{n_{0,0}n_{1,0}n_0D_{0,0}D_{1,0}D_1}+\frac{\epsilon_2\beta\gamma}{D_{0,0}D_{1,0}D_1}\right)\\
    =&\sumast_{\beta_1\Mod{n_0D_1}}e\left(\frac{\beta_1 D_{2,0}D_2\overline{n_{0,0}n_{1,0}D_{0,0}D_{1,0}D_{0,1}D_0}}{n_0D_1}+\frac{\epsilon_2\beta_1\gamma\overline{D_{0,0}D_{1,0}}}{D_1}\right) \sumast_{\beta_2\Mod{n_{0,0}n_{1,0}D_{0,0}D_{1,0}}}e\left(\frac{\beta_2\overline{n_0D_0D_1} Y_1/D_{0,1}}{n_{0,0}n_{1,0}D_{0,0}D_{1,0}}+\frac{\epsilon_2\beta_2\gamma\overline{D_1}}{D_{0,0}D_{1,0}}\right)\\
    =&\sum_{f\mid (n_0D_1,D_{2,0}D_2+\epsilon_2n_{0,0}n_{1,0}n_0D_{0,1}D_0\gamma)}f\mu\left(\frac{n_0D_1}{f}\right) \sum_{f_0\mid (n_{0,0}n_{1,0}D_{0,0}D_{1,0},\frac{Y_1}{D_{0,1}}+\epsilon_2n_{0,0}n_{1,0}n_0D_0\gamma)}f_0\mu\left(\frac{n_{0,0}n_{1,0}D_{0,0}D_{1,0}}{f_0}\right).
\end{align*}
Notice that the first sum vanishes unless $$\nu:=(f,n_0)\mid D_2.$$ The presence of $\mu\left(n_{0,0}n_{1,0}D_{0,0}D_{1,0}/f_0\right)$ and $n_{1,0}D_{1,0}\mid (n_{0,0}D_{0,0})^\infty$ in the second sum implies that it vanishes unless $n_{1,0}D_{1,0}\mid f_0$. This in turn imply $n_{1,0}\mid D_{2,0}$ and $D_{1,0}\mid D_{2,0}D_2/n_{1,0}+\epsilon_2n_{0,0}n_0D_{0,1}D_0\gamma$. Moreover, we have \begin{align*}
    \nu_0:=\left(\frac{f_0}{n_{1,0}D_{1,0}},n_{0,0}\right)\,\Big|\,  \frac{Y_1}{n_{1, 0} D_{0, 1}} = \frac{D_{2,0}D_2/n_{1,0}+\epsilon_1n_0D_{1,0}D_1\alpha}{D_{0,1}}.
\end{align*}
This also implies $\left(\nu_0,D_{2,0}/n_{1,0}\right)=1$. Writing $$D_2=\nu D_2', \quad f=\nu f', \quad n_0=\nu n_0', \quad  f_0=\nu_0n_{1,0}D_{1,0}f_0', \quad n_{0,0}=\nu_0 n_{0,0}', \quad  D_{2,0}=n_{1,0}D_{2,0}',$$ the sum $\Sigma_6^{(1)}$ is equal to \begin{align*}
    \Sigma_6^{(1)}=&\sum_{\epsilon_1,\epsilon_2,\eta_1=\pm1} \sum_{\nu_0}\sum_{n_{0,0}'} \sum_{D_{0,0}}\mathop{\sum\sum\sum\sum}_{\substack{n_{1,0}D_{0,1}D_{1,0}D_{2,0}'\mid (\nu_0n_{0,0}'D_{0,0})^\infty\\(n_{1,0}D_{1,0}D_{2,0}',D_{0,1})=1\\(D_{1,0},\nu_0n_{0,0}')=(D_{2,0}',\nu_0)=1}}\mathop{\sum\sum\sum\sum\sum}_{(\nu n_0'D_0D_1D_2',\nu_0n_{0,0}'D_{0,0})=(\nu n_0'D_1D_2',D_0)=1}\frac{D_{1,0}}{ \nu_0n_{0,0}'^2D_{0,0}^2D_{0,1}D_{2,0}'D_0D_2'}\\
    &\times \left(1-U\Big( {\frac{\Phi(\nu_0 n'_{0, 0}D_{0, 0}D_{0, 1}D_0, \nu_0\nu n'_{0,0}n_{1,0}n'_0D_{0,0}D_{1,0}D_1, \nu_0\nu n'_{0, 0}n_{1, 0}D_{0, 0} D'_{2, 0} D'_2)}{X^{1-\eta}}}\Big)\right)\sum_n \frac{B(n,\nu_0\nu n_{0,0}'n_{1,0}n_0')}{\nu_0\nu n_{0,0}'n_{1,0}n_0'n}\mathcal{C}_2\\ 
    &\times F_1^{\epsilon_1,\epsilon_2,\eta_1}\left(\frac{nX}{\nu_0\nu n_{0,0}'n_{1,0}n_0'(D_{0,0}D_{1,0}D_1)^3},\frac{XD_{2,0}'D_2'}{\nu_0^2\nu n_{0,0}'^2n_{1,0}n_0'^2D_{0,0}^2D_{0,1}D_{1,0}^2D_0D_1^2},\frac{n_0'D_{1,0}D_1}{\nu_0^2\nu n_{0,0}'^2n_{1,0}D_{0,0}^2D_{0,1}D_{2,0}'^2D_0D_2'^2}\right),
\end{align*}
with \begin{align*}
    \mathcal{C}_2
    =&\sumast_{\substack{\alpha\Mod{\nu_0n_{0,0}'D_{0,0}D_{0,1}}\\Y_2 \equiv 0\Mod{\nu_0D_{0,1}}}}\sumast_{\substack{\gamma\Mod{D_{0,0}D_{1,0}D_1}\\Z_2 \equiv 0 \Mod{D_{1,0}}}}\sum_{\substack{f'\mid (D_1,Z_2)\\(f',n_0')=1}}f'\mu\left(\frac{n_0'D_1}{f'}\right)  \sum_{\substack{f_0'\mid \left(D_{0,0},\frac{1}{D_{1, 0}}  ( \frac{Y_2}{\nu_0D_{0,1}}+\epsilon_2n_{0,0}'n_0'D_0\gamma) \right)\\(f_0',n_{0,0}')=1}}f_0'\mu\left(\frac{n_{0,0}'D_{0,0}}{f_0'}\right) \\
    & \times e\left(\frac{\eta_1\overline{\gamma}n}{D_{0,0}D_{1,0}D_1}\right)S\left(m,\nu n_{1,0}\overline{D_0}\frac{Y_2\overline{\alpha}}{D_{0,1}};\nu_0\nu n_{0,0}'n_{1,0}D_{0,0}D_{2,0}'D_2'\right).
\end{align*}
where $$Y_2 = D_{2,0}'D_2'+\epsilon_1n_0'D_{1,0}D_1\alpha, \quad Z_2 = D_{2,0}'D_2'+\epsilon_2\nu_0n_{0,0}'n_0'D_{0,1}D_0\gamma.$$

Splitting the $\gamma$-sum into modulus $D_1$ and $D_{0,0}D_{1,0}$ and reordering the sums, we rewrite  $\mathcal{C}_2$ as
\begin{align*}
    &\sum_{\substack{f_0'\mid D_{0,0}\\(f_0',n_{0,0}')=1}}\sum_{\substack{f'\mid D_1\\(f',n_0')=1}}f_0'f'\mu\left(n_{0,0}'n_0'\frac{D_{0,0}}{f_0'}\frac{D_1}{f'}\right)\sumast_{\substack{\alpha\Mod{\nu_0n_{0,0}'D_{0,0}D_{0,1}}\\Y_2 \equiv 0\Mod{\nu_0D_{0,1}}}} \sumast_{\substack{\gamma_1\Mod{D_1}\\D_{2,0}'D_2'\equiv -\epsilon_2\nu_0n_{0,0}'n_0'D_{0,1}D_0\gamma_1\Mod{f'}}}e\left(\frac{\eta_1\overline{\gamma_1 D_{0,0}D_{1,0}}n}{D_1}\right)\\
    &\times \sumast_{\substack{\gamma_2\Mod{D_{0,0}D_{1,0}}\\\frac{Y_2}{\nu_0D_{0,1}}\equiv-\epsilon_2n_{0.0}'n_0'D_0\gamma_2\Mod{f_0'D_{1,0}}}}e\left(\frac{\eta_1\overline{\gamma_2 D_1}n}{D_{0,0}D_{1,0}}\right) S\left(m,\nu n_{1,0}\overline{D_0}\frac{Y_2 \overline{\alpha}}{D_{0,1}};\nu_0\nu n_{0,0}'n_{1,0}D_{0,0}D_{2,0}'D_2'\right).
\end{align*}
Now the $\gamma$-sums evaluate as follows. For the $\gamma_1$-sum, the congruence condition implies $(D_2',f')=1$ and we have \begin{align*}
    \sumast_{\substack{\gamma_1\Mod{D_1}\\D_{2,0}'D_2'\equiv -\epsilon_2\nu_0n_{0,0}'n_0'D_{0,1}D_0\gamma_1\Mod{f'}}}&e\left(\frac{\eta_1\overline{\gamma_1 D_{0,0}D_{1,0}}n}{D_1}\right)=\sum_{g\mid D_1}\mu(g)\sum_{\substack{\gamma_1\Mod{D_1/g}\\D_{2,0}'D_2'g\gamma_1\equiv -\epsilon_2\nu_0n_{0,0}'n_0'D_{0,1}D_0\Mod{f'}}}e\left(\frac{\eta_1\gamma_1\overline{ D_{0,0}D_{1,0}}n}{D_1/g}\right)\\
    =&\sum_{\substack{g\mid D_1/f'\\(g,f')=1}}\mu(g)\frac{D_1}{f'g}e\left(-\epsilon_2\eta_1\frac{\nu_0n_{0,0}'n_0'D_{0,1}D_0\overline{D_{0,0}D_{1,0}D_{2,0}'D_2'g}n}{D_1/g}\right)\delta\left(\frac{D_1}{f'g}\mid n\right).
\end{align*}
Similarly, the $\gamma_2$-sum gives us the condition \begin{align*}
(D_{2,0}',D_{1,0})=\left(\frac{D_{2,0}'D_2'+\epsilon_1n_0'D_{1,0}D_1\alpha}{\nu_0D_{0,1}},f_0'\right)=1
\end{align*}
and \begin{align*}
    \sumast_{\substack{\gamma_2\Mod{D_{0,0}D_{1,0}}\\\frac{Y_2}{\nu_0D_{0,1}}\equiv-\epsilon_2n_{0.0}'n_0'D_0\gamma_2\Mod{f_0'D_{1,0}}}}  e\left(\frac{\eta_1\overline{\gamma_2 D_1}n}{D_{0,0}D_{1,0}}\right) &= \sum_{g_0\mid D_{0,0}D_{1,0}}\mu(g_0)\sum_{\substack{\gamma_2\Mod{D_{0,0}D_{1,0}/g_0}\\\frac{Y_2}{\nu_0D_{0,1}}g_0\gamma_2\equiv-\epsilon_2n_{0.0}'n_0'D_0\Mod{f_0'D_{1,0}}}}e\left(\frac{\eta_1\gamma_2\overline{ D_1}n}{D_{0,0}D_{1,0}/g_0}\right)\\
  &  =\sum_{\substack{g_0\mid D_{0,0}/f_0'\\(g_0,D_{1,0}f_0')=1}}\mu(g_0)\frac{D_{0,0}}{f_0'g_0}e\Bigg(-\epsilon_2\eta_1\frac{n_{0,0}'n_0'D_0\overline{\frac{Y_2}{\nu_0D_{0,1}}g_0D_1}n}{D_{0,0}D_{1,0}/g_0}\Bigg)\delta\left(\frac{D_{0,0}}{f_0'g_0}\mid n\right).
\end{align*}
Writing $$D_{0,0} \leftarrow f_0'g_0D_{0,0}, \quad D_1 \leftarrow f'gD_1, \quad n=D_{0,0}D_1n',$$ the sum $\Sigma_6^{(1)}$ is now equal to \begin{align}\label{S6BeforePoisson}
    \Sigma_6^{(1)}=&\sum_{\epsilon_1,\epsilon_2,\eta_1=\pm1} \sum_{\nu_0}\sum_{g_0}\sum_{n_{0,0}'}\sum_{(f_0',n_{0,0}'g_0)=1} \sum_{D_{0,0}}\mathop{\sum\sum\sum\sum}_{\substack{n_{1,0}D_{0,1}D_{1,0}D_{2,0}'|(\nu_0n_{0,0}'D_{0,0}f_0'g_0)^\infty\\(n_{1,0}D_{1,0}D_{2,0}',D_{0,1})=1\\(D_{1,0},\nu_0n_{0,0}'D_{2,0}'g_0)=(D_{2,0}',\nu_0)=1}}\mathop{\sum\sum\sum\sum\sum\sum\sum}_{\substack{(\nu n_0'D_0D_1D_2'f'g,\nu_0n_{0,0}'D_{0,0}f_0'g_0)=1\\
    (f',n_0'D_0D_2'g)=(\nu n_0'D_1D_2'g,D_0)=1}}\nonumber\\
    &\times \frac{D_{1,0}f' \mu(n_{0,0}'n_0'D_{0,0}D_1g_0g)\mu(g_0g)}{ \nu_0n_{0,0}'^2D_{0,0}^2D_{0,1}D_{2,0}'D_0D_2'f_0'g_0^2}\sum_{n'} \frac{B(D_{0,0}D_1n',\nu_0\nu n_{0,0}'n_{1,0}n_0')}{\nu_0\nu n_{0,0}'n_{1,0}n_0'n'}\mathcal{C}_3 \nonumber \\ 
    &\times F_1^{\epsilon_1,\epsilon_2,\eta_1}\Bigg(\frac{D_{0,0}D_1n'X}{\nu_0\nu n_{0,0}'n_{1,0}n_0'(D_{0,0}D_{1,0}D_1f_0'f'g_0g)^3},\frac{XD_{2,0}'D_2'}{\nu_0^2\nu n_{0,0}'^2n_{1,0}n_0'^2D_{0,0}^2D_{0,1}D_{1,0}^2D_0D_1^2f_0'^2f'^2g_0^2g^2},\nonumber
    \\
    &\hspace{2cm}\frac{n_0'D_{1,0}D_1f'g}{\nu_0^2\nu n_{0,0}'^2n_{1,0}D_{0,0}^2D_{0,1}D_{2,0}'^2D_0D_2'^2f_0'^2g_0^2}\Bigg)\\ \nonumber
    & \left(1-U\Big( {\frac{\Phi(\nu_0 n'_{0, 0}f_0'g_0D_{0, 0}D_{0, 1}D_0, \nu_0\nu n'_{0,0}n_{1,0}n'_0Df_0'g_0 f'gD_{0,0}D_{1,0}D_1, \nu_0\nu n'_{0, 0}n_{1, 0}f_0'g_0 D_{0, 0} D'_{2, 0} D'_2)}{X^{1-\eta}}}\Big)\right),
\end{align}
where 
\begin{align*}
   \mathcal{C}_3 = & \sumast_{\substack{\alpha\Mod{\nu_0n_{0,0}'D_{0,0}D_{0,1}f_0'g_0}\\Y_3  \equiv 0\Mod{\nu_0D_{0,1}}\\\left(\frac{Y_3}{\nu_0D_{0,1}},f_0'\right)=1}} e\left(-\epsilon_2\eta_1\frac{\nu_0n_{0,0}'n_0'D_{0,1}D_0\overline{D_{1,0}D_{2,0}'D_2'f_0'g_0g}n'}{f'}-\epsilon_2\eta_1\frac{n_{0,0}'n_0'D_0\overline{\frac{Y_3}{\nu_0D_{0,1}}f'g_0g} n'}{D_{1,0}f_0'}\right)\\
    &\times S\left(m,\nu n_{1,0}\overline{D_0}\frac{Y_3\overline{\alpha}}{D_{0, 1}};\nu_0\nu n_{0,0}'n_{1,0}D_{0,0}D_{2,0}'D_2'f_0'g_0\right)
\end{align*}
with
$$Y_3 = D_{2,0}'D_2'+\epsilon_1n_0'D_{1,0}D_1f'g\alpha.$$
Although not necessary, we confirm once again the absolute convergence of this expression: the  {decay properties} of $F_1$ near zero in the last two entries implies that the $\nu_0, \nu, n_{0, 0}', n_{1, 0}, n_0', D_{0, 0}, D_{0, 1}, D_{1, 0}, D_{2, 0}, D_0, D_1, D_2, f_0', f', g_0, g$-sums are rapidly convergent. The last remaining $n'$-sum converges rapidly by bounding the first entry of $F_1$ in terms of the other two. 

\subsection{Step 2: Additive reciprocity}

Applying additive reciprocity, we have \begin{equation}\label{finalC3}
\begin{split}
    \mathcal{C}_3&
    =e\left(\epsilon_2\eta_1\frac{\nu_0n_{0,0}'n_0'D_{0,1}D_0\overline{f'}n'}{D_{1,0}D_{2,0}'D_2'f_0'g_0g}-\epsilon_2\eta_1\frac{\nu_0n_{0,0}'n_0'D_{0,1}D_0n'}{D_{1,0}D_{2,0}'D_2'f_0'f'g_0g}\right)\\
    &\times \sumast_{\substack{\alpha\Mod{\nu_0n_{0,0}'D_{0,0}D_{0,1}f_0'g_0}\\Y_3 \equiv 0\Mod{\nu_0D_{0,1}}\\\left(\frac{Y_3}{\nu_0D_{0,1}},f_0'\right)=1}} e\left(-\epsilon_2\eta_1\frac{n_{0,0}'n_0'D_0\overline{\frac{Y_3}{\nu_0D_{0,1}}f'g_0g} n'}{D_{1,0}f_0'}\right)  S\left(m,\nu n_{1,0}\overline{D_0}\frac{Y_3\overline{\alpha}}{D_{0,1}};\nu_0\nu n_{0,0}'n_{1,0}D_{0,0}D_{2,0}'D_2'f_0'g_0\right).
    \end{split}
\end{equation}
We confirm that $\mathcal{C}_3$ is well-defined (which is clear from the derivation, but it might be useful to double-check again): the only point that is not directly visible from the summation conditions is that
$$\Big(\frac{Y_3}{\nu_0 D_{0, 1}}, D_{1, 0}\Big) = \Big(\frac{D_{2,0}'D_2'+\epsilon_1n_0'D_{1,0}D_1f'g\alpha}{\nu_0D_{0,1}}, D_{1, 0}\Big) = 1.$$
Indeed, since $(D_{2, 0}'D_2', \nu_0D_{0, 1}) = 1$, the congruence condition in the $\alpha$-sum implies that $(D_{1, 0}, \nu_0D_{0, 1}) = 1$, so the claim is equivalent to
$$ ( D_{2,0}'D_2'+\epsilon_1n_0'D_{1,0}D_1f'g\alpha , D_{1, 0}\Big) = 1$$
and hence to $( D_{2,0}'D_2' , D_{1, 0}) = 1$, which is true.

\subsection{Interlude II: A second truncation}\label{secondprelim}

We recall the definition of $F_1^{\epsilon_1, \epsilon_2, \eta_1}(a, b, c)$ in \eqref{F1Def} and attach a redundant factor
\begin{equation}\label{defGaux}
G(b, c) + (1 - G(b, c)), \quad G(b, c) = U\Bigg( \frac{X^{\varepsilon}}{D_{1,0}A^{\ast}} \Big( b^{7/6}c^{1/3} + bQ+ b^{2/3}c^{5/6} \Big)\Bigg)
\end{equation}
where $U$ is the same smooth function with support in $[-2, 2]$ as before and
$$A^{\ast} = \frac{X^{2/3}}{\nu_0^3\nu n_{0,0}'^3n_{1,0}n_0D_{0,0}^3D_{0,1}^2D_{1,0}^2D_{2,0}'D_0'D_1D_2'f_0'^3g_0^3g^2}, \quad Q := \frac{D_0 D_{0,1} D_1 n' (n'_0)^2 n'_{0,0} \nu_0}{(D'_2)^2 (D'_{2,0})^2 f'_0 g_0 X^{1/3}}.$$
The reason for this particular weight will become clear later. It prepares the stage to eliminate the non-zero frequencies in the upcoming Poisson summation.  Consequently we write
$$\Sigma_6^{(1)} = \Sigma^{(2)}_6 + \Sigma_6^{\flat}$$
where $\Sigma_6^{(2)}$ contains the $G$-term and $\Sigma_6^{\flat}$ contains the $(1-G)$-term. We will remove the correction factor $G(b, c)$ in $\Sigma_6^{(2)}$ directly after Poisson summation in Subsection \ref{sec57}. 

Using a suitable Cauchy-Schwarz argument, we now show that the contribution of $\Sigma^{\flat}_6$ is negligible. 

\begin{lemma}\label{prop.2ndPreBound}
    {For $\eta < 1/11$} we have  
       $ \Sigma_6^{\flat}\preccurlyeq  {X^{65/66}}.$
\end{lemma}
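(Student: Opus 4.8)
The goal is to show that the contribution $\Sigma_6^{\flat}$, consisting of the $(1-G)$-term, is $\preccurlyeq X^{14/15}$. The factor $1-G(b,c)$ is supported where the bracket $b^{7/6}c^{1/3} + bQ + b^{2/3}c^{5/6}$ exceeds (up to $X^{\varepsilon}$) the threshold $D_{1,0}A^{\ast}$, i.e.\ the ``generic'' parameter ranges are excluded and we only see large moduli or a large dual variable $n'$. The basic strategy is to go back to $\Sigma_6^{(1)}$ before the character-sum manipulations of Interludes I and II became relevant — i.e.\ to the expression \eqref{sigma6trunc} with the $n$-sum not yet subjected to Voronoi — and to apply Cauchy--Schwarz on the $n$-sum after Voronoi and Poisson, exactly in the spirit of the preliminary bound in Lemma \ref{smalld1}. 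Concretely I would: (i) insert the factor $1-G$ into \eqref{sigma6trunc}, undo the bookkeeping, and write the $n$-sum after one application of Voronoi (Lemma \ref{lem.Voronoi} with \eqref{GL3KernelAsymp}) as in Step 1; (ii) apply dyadic decompositions $D_0 \asymp \Delta_0$, $D_1 \asymp \Delta_1$, $D_2 \asymp \Delta_2$ and use Lemma \ref{lemF1} together with the support of $1-G$ to see that the relevant ranges satisfy a size constraint that is the \emph{reverse} of the generic one, forcing $\Delta_1$ (or the dual variable $n'$) to be large; (iii) bound the resulting sum in two ways — trivially via Weil's bound and Lemma \ref{Kw6}(a), and via a Cauchy--Schwarz-plus-Poisson argument that eliminates the off-diagonal exactly as in \eqref{SmallD1secondBound} — and take the geometric mean.

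The key point is that $1-G$ is only non-zero when $b^{7/6}c^{1/3}+bQ+b^{2/3}c^{5/6} \gg X^{-\varepsilon} D_{1,0}A^{\ast}$, and since by Lemma \ref{lemF1} $F_1$ already forces $1 + b^{-1} + c^{-1} + a^{1/3}(\sqrt{b}+b^{1/3}c^{1/6})^{-1}$ to be bounded (otherwise we get arbitrary savings), the two conditions together pin the parameters into a thin region where one of the three terms dominates. In each of the three cases ($b$-modulus large, dual frequency $n'$ large via $Q$, or $c$-modulus large) one reads off a lower bound — for instance, $b$ large translates into $D_2/D_1^2$ or $D_2$ large relative to $X$, which via $\Delta_0\Delta_1^2 \preccurlyeq \Delta_2 X$ etc. forces $\Delta_1$ small, while $Q$ large forces $n'$ large so that the $B(\cdot,\cdot)$-coefficient sum becomes short. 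After Voronoi the $n$-sum has effective length $\preccurlyeq \Delta_1^3/X \cdot (\text{bracket})^3/D_{1,0}^3$, which is forced to be small by the support of $1-G$, so in fact the $n$-sum is essentially trivial in this region. I would then estimate everything trivially with the Weil bound for the Kloosterman sums, Lemma \ref{Kw6}(a) for $\Phi_{\omega_6}$, and the Rankin--Selberg bound $\sum_{nm^2 \leq x}|B(m,n)|^2 \ll x$ for the Fourier coefficients, collecting powers of $X$.

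To get past $X$ one must gain from the oscillation, and this is where the Cauchy--Schwarz-plus-Poisson device is needed: apply Cauchy--Schwarz to remove the $n$-sum at the cost of artificially enlarging it to length $\asymp XL$ for a parameter $L \geq 1$, open the square, and apply Poisson summation in $n$. Repeated integration by parts on the resulting oscillatory integral (using Lemma \ref{lem.Phi6Truncation}) shows the new frequency must vanish, so with a suitable choice of $L$ only the diagonal $D_1 = D_1'$, $\beta \equiv \beta'$ survives, exactly as in the proof of Lemma \ref{smalld1}. The diagonal is then estimated with Weil's bound, Lemma \ref{Kw6}(a), and the Rankin--Selberg bound, and an optimization over $L$ and over the dyadic parameters $\Delta_0, \Delta_1, \Delta_2$ (respecting $\Delta_0\Delta_1^2 \preccurlyeq \Delta_2 X$, $\Delta_0\Delta_2^2 \preccurlyeq \Delta_1 X$, $\Delta_0\Delta_1 \preccurlyeq X^{2/3}$, and the reversed size constraint from $1-G$) yields the claimed bound $X^{14/15}$.

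\textbf{Expected main obstacle.} The hard part is organizing the three cases coming from the three summands in the definition of $G$ and checking that in \emph{each} of them the combination of the reversed size constraint with the Voronoi-induced shortening of the $n$-sum, together with the two-way estimate and the optimization, lands below $X^{14/15}$ rather than merely below $X^{1-\delta}$ for some worse $\delta$; in particular the term involving $Q$ (the dual variable $n'$) is delicate because there the savings come from the shortness of the $B$-coefficient sum rather than from oscillation, and one must be careful that the remaining character sums and the weight $F_1$ are controlled uniformly in all the auxiliary variables $\nu_0, \nu, n_{0,0}', n_{1,0}, n_0', D_{0,0}, D_{0,1}, D_{1,0}, D_{2,0}', f_0', g_0, g$ introduced in the interludes. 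Bookkeeping of these many variables, and ensuring the geometric series over them all converge with room to spare, is the principal technical burden.
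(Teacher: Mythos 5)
Your proposal has a genuine gap, and it starts with where the truncation factor lives. The correction factor $G(b,c)$ in \eqref{defGaux} contains the quantity $Q$, which involves the \emph{dual} variable $n'$ produced by the first Voronoi summation (as well as the post–Interlude-I variables $n_0', n_{0,0}', \nu_0, f_0', g_0,\dots$). So $\Sigma_6^{\flat}$ is intrinsically a post-Voronoi, post-reciprocity object: one cannot ``insert $1-G$ into \eqref{sigma6trunc}'' and then run the pre-Voronoi Cauchy--Schwarz--Poisson device of Lemma \ref{smalld1} on the original $n$-sum of length $X$, because that sum no longer exists once the cutoff is in place. Relatedly, your reading of the support of $1-G$ is inverted: $1-G$ is supported where $b^{7/6}c^{1/3}+bQ+b^{2/3}c^{5/6}$ is \emph{large} compared to $D_{1,0}A^{\ast}$ (this is exactly the last condition in \eqref{inequ}), which does not make the dual sum ``essentially trivial''; in the $Q$-dominant regime it forces $n'$ to sit in the upper part of its admissible range, and in the other two regimes it constrains only the moduli, not $n'$ at all.

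Even granting a reformulation, the quantitative engine you propose cannot reach $X^{14/15}$. The two-way estimate of Lemma \ref{smalld1} culminates in $\Delta_1^{15/28}X^{3/4}+\Delta_1X^{1/2}$, which exceeds $X^{14/15}$ as soon as $\Delta_1\gg X^{0.35}$, and the support of $1-G$ does not confine $D_1$ to such a short range — the generic size is $D_1\asymp X^{2/3}$, and $1-G$ is certainly nonzero there (e.g.\ when $n'$ is large). The paper's actual argument is structurally different: starting from \eqref{S6BeforePoisson} with the reciprocity phases of \eqref{finalC3} already in place, it shifts the $s$-contour in $F_1$ to $\Re s=-5/6-\varepsilon$, applies Cauchy--Schwarz to the \emph{dual} $n'$-sum (enlarged by a parameter $L=(1+D_{1,0}f_0'F^2/N)X^{\varepsilon}$), opens the square, and uses the linear exponential $e(\epsilon_2\eta_1 Zn')$ coming from reciprocity to force the diagonal conditions $f'=\tilde f'$ and a congruence modulo $f'$; the saving is the factor $f'$ from this congruence combined with Weil's bound on the remaining Kloosterman sum in $\mathcal{C}_3$, and the exponent $14/15$ then emerges from a linear program over all $\sim 17$ dyadic parameters subject to \eqref{inequ} and the support condition of Lemma \ref{lemF1}. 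Without the reciprocity phases (which are only available after Interludes I--II and Step 2) there is no linear-in-$n'$ oscillation to exploit, so your plan as stated would stall at the trivial bound in the dual variable.
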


\begin{proof} We shift the contour in the definition of $F_1$ to $\Re s = -5/6 - \varepsilon$ for some small $\varepsilon > 0$ and write
$$F_1^{\epsilon_1, \epsilon_2, \eta_1}(a, b, c) (1 - G(b, c)) = \frac{1}{2\pi} \int_{\Bbb{R}} \frac{a^{5/6 + \varepsilon - it}}{(1 + t^2)^{1/2 + \varepsilon}} G_1(b, c, t) \mathrm{d}t$$
where
$$G_1(b, c, t) = (1 - G(b, c))\int_0^{\infty} V(x) \Phi_{w_6} (-\epsilon_2 b x, - \epsilon_1 m cx) x^{5/6 + \varepsilon } \frac{\mathrm{d} x}{x} \mathcal{G}^{\eta_1}_{\mu_0}(1/6 - \varepsilon + it)(1 + t^2)^{1/2 + \varepsilon}.$$
Note that
$$G_1(b, c, t) \preccurlyeq \begin{cases} (bc)^{-1/4}, & b \asymp c,\\  {(bc)^{-1/4} (b+c)^{-1/4}}, &\text{otherwise.}\end{cases} $$
We split all sums into dyadic ranges:
\begin{displaymath}
\begin{split}
   & g_0 \asymp G_0, \quad n'_{0, 0} \asymp N_{0, 0}, \quad f_0' \asymp F_0, \quad D_{0, 0} \asymp \Delta_{0, 0}, \quad n_{1, 0} \asymp N_{1, 0}, \quad D_{0, 1} \asymp \Delta_{0, 1}, \quad D_{1, 0} \asymp \Delta_{1, 0}, \quad D_{2, 0}' \asymp \Delta_{2, 0}, \quad D_1, \asymp \Delta_1,\\ 
   &n' \asymp N,  \quad n_0' \asymp N_0, \quad D_0 \asymp \Delta_0, \quad D_2 \asymp \Delta_2, \quad f' \asymp F, \quad G' \asymp G, \quad \nu \asymp M, \quad \nu_0 \asymp M_0,
    \end{split}
\end{displaymath}
and we remember that $F_1^{\epsilon_1, \epsilon_2, \eta_1}(a, b, c)$ is negligible unless $a^{1/3} \preccurlyeq b^{1/2} + b^{1/3} c^{1/6}$. By Cauchy-Schwarz we are left with estimating\begin{align*}
     &N^{1/3}X^{5/6}
\Bigg(\sum_{\nu_0}\sum_{g_0}\sum_{n_{0,0}'}\sum_{(f_0',n_{0,0}'g_0)=1} \sum_{D_{0,0}}\mathop{\sum\sum\sum\sum}_{\substack{n_{1,0}D_{0,1}D_{1,0}D_{2,0}'\mid(\nu_0n_{0,0}'D_{0,0}f_0'g_0)^\infty\\(n_{1,0}D_{1,0}D_{2,0}',D_{0,1})=1\\(D_{1,0},\nu_0n_{0,0}'D_{2,0}'g_0)=(D_{2,0}',\nu_0)=1}}\mathop{\sum\sum}_{(\nu D_1,\nu_0n_{0,0}'D_{0,0}f_0'g_0)=1}\int_\R(1+t^2)^{-1/2-\varepsilon}\sum_{n' }\\
    &  \Bigg|\mathop{\sum\sum\sum\sum\sum}_{\substack{(n_0'D_0D_2'f'g,\nu_0n_{0,0}'D_{0,0}f_0'g_0)=1\\
    (f',n_0'D_0D_2'g)=(\nu n_0'D_1D_2'g,D_0)=1\\   A^{1/3} \preccurlyeq B^{1/2} + B^{1/3} C^{1/6}}} 
    \frac{f'\mu(n_0'g)\mu(g)(n_0'f'^3g^3)^{-5/6 - \varepsilon+it}\mathcal{C}_3(n';f',D_2')}{ \nu_0^{7/3}\nu^{4/3} n_{0,0}'^{10/3}n_{1,0}^{11/6}n_0'D_{0,0}^{19/6}D_{0,1}D_{1,0}^{3/2}D_{2,0}'D_0D_1^{7/6}D_2'f_0'^3g_0^4}
   G_1(B,C,t)\left( {1-U\Big(\frac{\Phi(\mathcal{D}_0, \mathcal{D}_1, \mathcal{D}_2)}{X^{1-\eta}}\Big)}\right)\Bigg|^2\mathrm{d}t\Bigg)^{1/2},
\end{align*}
where \begin{align*}
    A=&\frac{D_{0,0}D_1NX}{\nu_0\nu n_{0,0}'n_{1,0}n_0'(D_{0,0}D_{1,0}D_1f_0'f'g_0g)^3}, \\ B=&\frac{XD_{2,0}'D_2'}{\nu_0^2\nu n_{0,0}'^2n_{1,0}n_0'^2D_{0,0}^2D_{0,1}D_{1,0}^2D_0D_1^2f_0'^2f'^2g_0^2g^2},\\
    C=&\frac{n_0'D_{1,0}D_1f'g}{\nu_0^2\nu n_{0,0}'^2n_{1,0}D_{0,0}^2D_{0,1}D_{2,0}'^2D_0D_2'^2f_0'^2g_0^2},\\
     {\mathcal{D}_0 =}&  {\nu_0 n'_{0, 0}f_0'g_0D_{0, 0}D_{0, 1}D_0, \quad \mathcal{D}_1 = \nu_0\nu n'_{0,0}n_{1,0}n'_0Df_0'g_0 f'gD_{0,0}D_{1,0}D_1, \quad \mathcal{D}_2 = \nu_0\nu n'_{0, 0}n_{1, 0}f_0'g_0 D_{0, 0} D'_{2, 0} D'_2}
\end{align*}
and all variables are restricted into the dyadic ranges specified above. 
Let $L\geq 1$ be a parameter. We use this to enlarge the $n'$-sum and insert a factor $U(n'/LN)$ into the $n'$-sum  exactly as in Subsection \ref{prelude}. We open the square and introduce a second set of variables $\tilde{n}_0', \tilde{D}_0, \tilde{D}_2', \tilde{f}', \tilde{g}$. Correspondingly we define $\tilde{A}, \tilde{B}, \tilde{C}$. Recalling the definition \eqref{finalC3} of $\mathcal{C}_3$, 
the innermost $n'$-sum becomes
\begin{align*}
    &\sum_{n'}U\left(\frac{n'}{LN}\right)e(\epsilon_2 \eta_1 Zn'),
\end{align*}
where $Z$ is given by \begin{align*}
    - \frac{\nu_0n_{0,0}'n_0'D_{0,1}D_0\overline{D_{1,0}D_{2,0}'D_2'f_0'g_0g} }{f'}- \frac{n_{0,0}'n_0'D_0\overline{\frac{Y_3}{\nu_0D_{0,1}}f'g_0g}  }{D_{1,0}f_0'}  + \frac{\nu_0n_{0,0}'\tilde{n_0'}D_{0,1}\tilde{D_0}\overline{D_{1,0}D_{2,0}'\tilde{D_2'}f_0'g_0\tilde{g}} }{\tilde{f'}}+ \frac{n_{0,0}'\tilde{n_0'}\tilde{D_0}\overline{\frac{\tilde{Y_3}}{\nu_0D_{0,1}}\tilde{f'}g_0\tilde{g}}  }{D_{1,0}f_0'} .
\end{align*}
We choose $L = (1 + D_{1, 0} f_0' F^2/N)X^{\varepsilon}$. Recalling that $(f', \nu_0n'_{0, 0} n'_0 D_{0, 1} D_0 D_{1, 0} D'_{2, 0} D'_2 f_0' g_0 g) = 1)$, it is easy to see that the $n'$-sum is negligible unless 
$$f' = \tilde{f}', \quad n_0'D_0\tilde{D}_2' \tilde{g}  \equiv \tilde{n}_0' \tilde{D}_0 D_2' g \, (\text{mod } f')$$
(and another condition modulo $D_{1, 0}f'_0$ which we drop) in which case it is bounded by $LN$. Estimating the remaining part of $\mathcal{C}_3$ by Weil's bound and everything else trivially, we obtain after careful bookkeeping the bound
\begin{displaymath}
    \begin{split}
& \frac{M^{1/6}\Delta_0^{1/2}\Delta_2^{3/4}N^{5/6}X^{7/12}}{M_0^{1/3}N_{0,0}^{1/3}N_{1,0}^{5/6}N_0^{7/12}\Delta_{0,0}^{1/6}\Delta_{0,1}^{1/2}\Delta_{1,0}^{5/4}\Delta_{2,0}^{1/4}\Delta_1^{5/12}F^{1/4}G_0G^{5/4}}\\
&\times \left(1+\sqrt{\frac{\Delta_{1,0}F_0F^2}{N}}\right)\left(\frac{1}{F}+\frac{1}{\sqrt{N_0\Delta_0\Delta_2FG}}\right) \begin{cases} 1, & B \asymp C\\  {\max(B, C)^{-1/4}} , & \text{otherwise,}\end{cases}
\end{split}
\end{displaymath}
subject to the condition {s}
\begin{equation}\label{inequ}
A^{1/3} \preccurlyeq B^{1/2} + B^{1/3} C^{1/6}, \quad  {B^2C, C^2B} \succcurlyeq 1, \quad  B^{7/6}C^{1/3} + BQ + B^{2/3}C^{5/6}\succcurlyeq \Delta_{1, 0} A^{\ast}, \quad  {\Phi(\Delta_0, \Delta_1, \Delta_2) \geq X^{1-\eta}}
\end{equation}
where the variables in $A, B, C, A^{\ast}, Q$ have to be restricted to the respective dyadic intervals. To arrive at this bound we recall for $X \geq 1$ and $y\in \Bbb{N}$ that 
$$\sum_{\substack{x \asymp X \\ x \mid y^{\infty}}} 1 \ll (Xy)^{\varepsilon}.$$

Bounding the previous expression by hand appears to be rather complicated, but this can be viewed as a linear  program that can be easily solved by a machine, e.g.\ by {\tt mathematica}. We write every variable as a power of $X$ and obtain linear inequalities from \eqref{inequ}. For clarity, we distinguish the two cases $B\asymp C$ and $B \not\asymp C$, obtaining the bounds $X^{5/6}$ in the former and $X^{65/66}$ in the latter:

\noindent {\tt a := 1 - 2d00 - 2d1 - 3d10 - 3f0 - 3g - 3g0 - 3f + n - n0 - 
  n00 - n10 - m - m0; \\
  b := 
 1 - d0 - 2d00 - d01 - 2d1 - 2d10 + d2 + d20 - 2f0 - 2g - 2g0 - 
  2f - 2n0 - 2n00 - n10 - m - 2m0;\\
  c := -d0 - 2d00 - d01 + d1 + 
  d10 - 2d2 - 2d20 + f - 2f0 + g - 2g0 + n0 - 2n00 - n10 - m - 
  2m0;\\
  astar := 
 2/3 - 3m0 - m - 3n00 - n10 - n0 - 3d00 - 2d01 - d20 - d0 - d1 - 
  d2 - 3f0 - 3g0 - 2g; \\
  q := 
 d0 + d01 + d1 + n + 2n0 + n00 + m0 - 2d2 - 2d20 - f0 - g0 - 
  1/3;\\
  {delta0 :=  m0 + n00 + f0 + g0 + d00 + d01 + d0;\\
 delta1 := m0 + m + n00 + n10 + n0 + d00 + d10 + d1 + f0 + f + g0 + g;\\
 delta2 := m0 + n00 + f0 + g0 + d00 + n10 + d20 + m + d2;\\
 phi[x0$\underline{\,\,\,\,\,}$, x1$\underline{\,\,\,\,\,}$, x2$\underline{\,\,\,\,\,}$] := 1/2 + Max[3/4 x0 + 3/4 x1, x1 + 1/2 x0 - 1/2 x2]; }\\
  
 \noindent  Maximize[\{1/6m + 1/2d0 + 3/4d2 + 5/6n + 7/12 - 1/3m0 - 1/3n00 - 
   5/6n10 - 7/12n0 - 1/6d00 - 1/2d01 - 5/4d10 - 1/4d20 - 
   5/12d1 - 1/4f - g0 - 5/4g  +   
   Max[0, 1/2 (d10 + f0 + 2 f - n)] + 
   Max[-f , - 1/2 (m0  + d0 + d2  + f + g)], b == c,  {2b+c >= 0, 2c+b >= 0}, 
  a/3 <= Max[1/2b, 1/3b + 1/6c],   
  Max[7/6b + 1/3c, b + q, 2/3b + 5/6c] >= astar + d10,  {phi(delta0, delta1, delta2) >= 1 - 1/11}, m0 >=  0, 
  m >= 0, n00 >=  0, n10 >= 0, n0 >= 0, d00 >=  0, d01 >= 0, 
  d10 >=   0, d20 >= 0, d0 >= 0, d1 >= 0, d2 >= 0, f0 >=  0, 
  g0 >=  0, g >=  0,   n >= 0 , f >= 0\}, \{m0, m, n00, n10, n0, d00, 
  d01, d10, d20, d0, d1, d2, f0, g0, g,   n,  f\}]}
  
\noindent {\tt \{5/6, \{m0 -> 0, m -> 0, n00 -> 0, n10 -> 0, n0 -> 0, d00 -> 0, 
  d01 -> 0, d10 -> 0, d20 -> 0, d0 -> 0, d1 -> 0, d2 -> 1/3, f0 -> 0, 
  g0 -> 0, g -> 1/6, n -> 1, f -> 1/2\}\}}\\

{\tt \noindent  Maximize[\{1/6m + 1/2d0 + 3/4d2 + 5/6n + 7/12 - 1/3m0 - 1/3n00 - 
   5/6n10 - 7/12n0 - 1/6d00 - 1/2d01 - 5/4d10 - 1/4d20 - 
   5/12d1 - 1/4f - g0 - 5/4g  +   
   Max[0, 1/2 (d10 + f0 + 2 f - n)] + 
   Max[-f , - 1/2 (m0  + d0 + d2  + f + g)] -  {1/4} Max[b, c],    {2b+c >= 0, 2c+b >= 0}, 
  a/3 <= Max[1/2b, 1/3b + 1/6c],   
  Max[7/6b + 1/3c, b + q, 2/3b + 5/6c] >= astar + d10,  {phi[delta0, delta1, delta2] >= 1 - 1/11}, m0 >=  0, 
  m >= 0, n00 >=  0, n10 >= 0, n0 >= 0, d00 >=  0, d01 >= 0, 
  d10 >=   0, d20 >= 0, d0 >= 0, d1 >= 0, d2 >= 0, f0 >=  0, 
  g0 >=  0, g >=  0,   n >= 0 , f >= 0\}, \{m0, m, n00, n10, n0, d00, 
  d01, d10, d20, d0, d1, d2, f0, g0, g,   n,  f\}]}
  
\noindent {\tt \{ {65/66}, \{m0 -> 0, m -> 0, n00 -> 0, n10 -> 0, n0 -> 0, d00 -> 0, 
  d01 -> 0, d10 -> 0, d20 -> 0, d0 -> 0, d1 -> 0, d2 ->  {1/3}, f0 -> 0, 
  g0 -> 0, g ->  {1/22}, n ->  {1}, f ->  {1/2}\}\}}

This completes the proof.     \end{proof}

\subsection{Step 3: Poisson summation}

We continue with the analysis of $\Sigma_6^{(2)}$. As mentioned before, it is easier to apply Poisson summation in $f$ first and postpone the second application of Voronoi summation, in particular as we shall see in a moment that only the central term will survive. The final expression is given in \eqref{sigmasharp} at the end of this subsection. 

To prepare for Poisson summmation, we note that $(n_0',D_0)=1$. By M\"obius inversion we can remove the coprimality condition $(f',n_0'D_0)$ by inserting the factor \begin{align*}
    \sum_{h_1\mid (n_0',f')}\sum_{h_2\mid (D_0,f')}\mu(h_1h_2).
\end{align*} 
Writing 
\begin{equation}\label{h1h2}
f'=h_1h_2f, \quad n_0'=h_1n_0, \quad D_0=h_2D_0',
\end{equation}we have \begin{align*}
    \Sigma^{(2)}_6=&\sum_{\epsilon_1,\epsilon_2,\eta_1=\pm1} \sum_{\nu_0}\sum_{g_0}\sum_{n_{0,0}'}\sum_{(f_0',n_{0,0}'g_0)=1} \sum_{D_{0,0}}\mathop{\sum\sum\sum\sum}_{\substack{n_{1,0}D_{0,1}D_{1,0}D_{2,0}'|(\nu_0n_{0,0}'D_{0,0}f_0'g_0)^\infty\\(n_{1,0}D_{1,0}D_{2,0}',D_{0,1})=1\\(D_{1,0},\nu_0n_{0,0}'D_{2,0}'g_0)=(D_{2,0}',\nu_0)=1}}\mathop{\sum\sum\sum\sum\sum\sum\sum\sum\sum}_{\substack{(\nu n_0D_0'D_1D_2'fgh_1h_2,\nu_0n_{0,0}'D_{0,0}f_0'g_0)=1\\
    (fh_1h_2,D_2'g)=(\nu n_0D_1D_2'gh_1,D_0'h_2)=1}}\\
    &\times \frac{D_{1,0}f}{\nu_0^2\nu n_{0,0}'^3n_{1,0}n_0D_{0,0}^2D_{0,1}D_{2,0}'D_0'D_2'f_0'g_0^2}\mu\left(n_{0,0}'n_0D_{0,0}D_1g_0gh_1\right)\mu(g_0gh_1h_2)\left(1-U\left( {\frac{\Phi(E_0, E_1f, E_2)}{X^{1-\eta}}}\right)\right)\\
    &\times \sum_{n'} \frac{B(D_{0,0}D_1n',\nu_0\nu n_{0,0}'n_{1,0}n_0h_1)}{n'}\mathcal{C}_4
    e\left(-\epsilon_2\eta_1\frac{An'}{f}\right)F_1^{\epsilon_1,\epsilon_2,\eta_1}\Bigg(\frac{Bn'}{f^3},\frac{C}{f^2},Df\Bigg)G\Big( \frac{C}{f^2}, Df\Big),
\end{align*}
where \begin{align*}
    \begin{matrix*}[l]
     \displaystyle   A=\frac{\nu_0n_{0,0}'n_0D_{0,1}D_0'}{D_{1,0}D_{2,0}'D_2'f_0'g_0g}, & \displaystyle  B=\frac{D_{0,0}D_1X}{\nu_0\nu n_{0,0}'n_{1,0}n_0h_1(D_{0,0}D_{1,0}D_1f_0'g_0gh_1h_2)^3},\\
       \displaystyle  C=\frac{XD_{2,0}'D_2'}{\nu_0^2\nu n_{0,0}'^2n_{1,0}n_0^2D_{0,0}^2D_{0,1}D_{1,0}^2D_0'D_1^2f_0'^2g_0^2g^2h_1^4h_2^3}, & \displaystyle D=\frac{n_0D_{1,0}D_1gh_1^2}{\nu_0^2\nu n_{0,0}'^2n_{1,0}D_{0,0}^2D_{0,1}D_{2,0}'^2D_0'D_2'^2f_0'^2g_0^2},\\
        {E_0 = \nu_0 n'_{0, 0}f_0'g_0D_{0, 0}D_{0, 1}D_0'h_2}, & {E_1}= \nu_0\nu n_{0,0}'n_{1,0}n_0D_{0,0}D_{1,0}D_1f_0'g_0gh_1^2h_2, \\
         {E_2 = \nu_0\nu n'_{0, 0}n_{1, 0}f_0'g_0 D_{0, 0} D'_{2, 0} D'_2h_1h_2,}
    \end{matrix*}
\end{align*}
(and the variables $Q, A^{\ast}$ hidden in $G$ have to undergo the variable change \eqref{h1h2})
and \begin{align*}
    \mathcal{C}_4 = \mathcal{C}_4(f) 
    =&\sumast_{\substack{\alpha\Mod{\nu_0n_{0,0}'D_{0,0}D_{0,1}f_0'g_0}\\Y_4 \equiv 0 \Mod{\nu_0D_{0,1}}\\\left(\frac{Y_4}{\nu_0D_{0,1}},f_0'\right)=1}} e\left(-\epsilon_2\eta_1\frac{n_{0,0}'n_0D_0'\overline{\frac{Y_4}{\nu_0D_{0,1}}fg_0g} n'}{D_{1,0}f_0'}\right)\nonumber\\
    &\times S\left(m,\nu n_{1,0}\overline{D_0'h_2}\frac{\overline{\alpha} Y_4}{D_{0,1}};\nu_0\nu n_{0,0}'n_{1,0}D_{0,0}D_{2,0}'D_2'f_0'g_0\right)e\left(\epsilon_2\eta_1\frac{\nu_0n_{0,0}'n_0D_{0,1}D_0'\overline{f}n'}{D_{1,0}D_{2,0}'D_2'f_0'g_0g}\right).
\end{align*}
with
$$Y_4 =  D_{2,0}'D_2' +\epsilon_1 n_0D_{1,0}D_1fgh_1^2h_2\alpha.$$

To ensure absolute convergence in the subsequent expression, we need to apply a very mild truncation on the $g$-variable, and for convenience we also truncate the $D_{1, 0}$-variable. 
With this in mind we let $W_0$ be a fixed smooth function that is 1 on $[0, 1]$ and 0 on $[2, \infty)$ and let $$\Xi = X^{10^3}.$$ 
Since the previous expression for $\Sigma_6^{(2)}$ is convergent in every variable on a polynomial scale and every variable is at least 1, we can freely attach a factor $W_0(D_{1, 0}g/\Xi)$ at the cost of a small error, say $O(X^{-1})$. (A truncation of $g$ alone would suffice, if we estimate carefully.) We continue to call this slightly modified expression $\Sigma_6^{(2)}$. It is convenient to define
\begin{align*}
    F=\frac{X^{2/3}}{\nu_0^2\nu n_{0,0}'^2n_{1,0}n_0D_{0,0}^2D_{0,1}D_{1,0}D_0'D_1f_0'^2g_0^2gh_1^2h_2^2}.
\end{align*}
Note that the variables are not independent, but satisfy
\begin{equation}\label{var}
C = B/A, \quad C^2 D = F^3, \quad F/M = A^{\ast}
\end{equation}
where in the last formula the expression for $A^{\ast}$ has to undergo the change of variables \eqref{h1h2}.


Applying Poisson summation on the $f$-sum with modulus $M=\nu_0n_{0,0}'D_{0,0}D_{0,1}D_{1,0}D_{2,0}'D_2'f_0'g_0g$, we have \begin{displaymath}\label{afterpois}
\begin{split}
    &\sum_{(f,M)=1}f\mathcal{C}_4
    F_1^{\epsilon_1,\epsilon_2,\eta_1}\left(\frac{Bn'}{f^3},\frac{C}{f^2},Df\right)G\Big( \frac{C}{f^2}, Df\Big)\left(1-U\left( {\frac{\Phi(E_0, E_1f, E_2)}{X^{1-\eta}}}\right)\right)
    e\left(-\epsilon_2\eta_1\frac{An'}{f}\right)\\
    &= \frac{F^2}{M}\sum_f\mathcal{C}_5(f)
    F_2^{\epsilon_1,\epsilon_2,\eta_1}\Bigg(\frac{Ff}{M},\frac{An'}{F},\frac{Bn'}{F^3},\frac{C}{F^2},
     {(E_0, E_1F, E_2)}\Bigg),
    \end{split}
\end{displaymath}
where \begin{align}\label{C5Def}
    \mathcal{C}_5(f)
    =\sumast_{k\Mod{M}}\mathcal{C}_4(k) 
    e\left(\frac{fk}{M}\right)
\end{align}
and \begin{align}\label{F2Def}
    F_2^{\epsilon_1,\epsilon_2,\eta_1}&(a,b,c,d,  {(u_0, u_1, u_2)})=\int_0^\infty \int_{(0)}\int_0^\infty x^{-s-1}c^{-s}y^{3s+1} \mathcal{G}_{\mu_0}^{\eta_1}(s+1)U \Big(\frac{fX^{\varepsilon}}{D_{1, 0}a} \Big(\frac{\sqrt{d}}{y^2} + \frac{b}{y^2} + \frac{1}{d\sqrt{y}}\Big)\Big)\nonumber\\
    &\times \Phi_{\omega_6}\left(-\epsilon_2\frac{dx}{y^2},-\epsilon_1\frac{my}{d^2}\right)V(x)\left(1- {U\left(\frac{\Phi(u_0, u_1y, u_2)}{X^{1-\eta}}\right)}\right)e\left(-\epsilon_2\eta_1by^{-1}-ay\right)\mathrm{d} x\, \frac{\mathrm{d}s}{2\pi i} \mathrm{d} y
\end{align}
(which also depends on $D_{1, 0}$ and $f$). Here we used the second condition in \eqref{var}. In a moment we will also use the first condition to eliminate one argument in $F_2$.

We analyze the function $F_2$. The triple integral is not absolutely convergent, but we verify that it makes sense as a conditionally convergent integral and is uniformly bounded.    To this end we first insert a smooth dyadic partition of unity that localizes $y \asymp Y$ and call the corresponding piece $F^{\epsilon_1, \epsilon_2, \eta_1}_{2, Y}(a, b, c, d, u)$.

\begin{lemma}\label{lemf2} We have
\begin{equation}\label{boundf2}
\begin{split}
&F^{\epsilon_1, \epsilon_2, \eta_1}_{2, Y}(a, b, c, d,   {(u_0, u_1, u_2)}) \\
&\preccurlyeq \min\Big(\frac{c}{Y},  c^{2/3} \Big)    {\big(\min(d, Y)^{3/4} + \delta(d \asymp Y) (dY)^{1/4}\big)} \Big(1 +  {Y + d} + \frac{c}{d^{3/2} + Y^{3/2}} + \frac{a}{\frac{\sqrt{d}+b}{Y^2}+\frac{1}{d\sqrt{Y}}} +  {\frac{X^{1-\eta}}{\Phi(u_0, u_1y, u_2)}}\Big)^{-A}.
\end{split}
\end{equation}
\end{lemma}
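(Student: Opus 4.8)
## Proof plan for Lemma \ref{lemf2}

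The plan is to treat the triple integral defining $F^{\epsilon_1,\epsilon_2,\eta_1}_{2,Y}$ as an iterated integral, handling the $y$-integral last, and to extract decay from each of the three integration variables in turn. First I would deal with the $s$-integral. After inserting the dyadic localization $y\asymp Y$, the inner double integral over $x$ and $s$ is, up to the cutoff factors, exactly of the same shape as the integral $\int_{(0)} a^{-s}\mathcal{G}^{\eta_1}_{\mu_0}(s+1)x^{-s}\,\frac{ds}{2\pi i}$ analyzed in the proof of Lemma \ref{lemF1}, with $a$ replaced by $c$: the stationary phase bound \eqref{statphs} gives $\preccurlyeq \min(c,c^{2/3})$, where the factor $c/Y$ comes from shifting the $s$-contour to $\Re s = -1$ when $c\lesssim Y$ (there is an extra $Y^{-1}$ from the measure $y^{3s+1}$ after the shift, exactly as in the computation of $G_{\pm}(n/D^3)$ in Section 3) and $c^{2/3}$ from stationary phase when $c\gtrsim Y$. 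This step also forces, via the stationary point location, the constraint that the $x,s$-integral is negligible unless the effective parameter satisfies a bound that will combine with the $\Phi_{\omega_6}$-analysis below.

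Next I would extract decay from the $x$-integral by repeated integration by parts, using Lemma \ref{lem.Phi6Truncation} for the derivatives of $\Phi_{\omega_6}(-\epsilon_2 dx/y^2,-\epsilon_1 my/d^2)$: differentiating $j$ times in $x$ produces a factor $\preccurlyeq (1+(d/y^2)^{1/2}+(d/y^2)^{1/3}(y/d^2)^{1/6})^j=(1+d^{1/2}/Y+d^{1/3}Y^{-1/6}d^{-1/3})^j$ — after simplification, the two relevant scales are $Y/\sqrt d$ (from $|y_1|^{1/2}$ with $y_1\asymp d/Y^2$) and $\sqrt d$ combined with the cutoff, yielding the $(1+Y/\sqrt d+d/\sqrt Y)^{-A}$ factor. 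Against the phase $e(-\epsilon_2\eta_1 by^{-1}-ay)$ and the remaining oscillation from $\mathcal{G}^{\eta_1}_{\mu_0}$ (frequency $\asymp (cx)^{1/3}\asymp c^{1/3}$ on the stationary arc), integration by parts in $x$ — on which $V$ is supported in $[1,2]$ — produces arbitrary savings unless $c/(d^{3/2}+Y^{3/2})\lesssim 1$; this is where the third listed term in the bound comes from, by comparing the $\mathcal{G}_{\mu_0}$-frequency with the size of $\phi_{\omega_6}$-variation in the $x$ variable. The size estimate $(dY)^{1/4}$ then comes directly from Lemma \ref{Kw6}(a) applied to $\Phi_{\omega_6}(-\epsilon_2 dx/Y^2,-\epsilon_1 my/d^2)$: $(1+d/Y^2)^{-1/4}(1+1/d^2)^{-1/4}\asymp$ a bounded quantity times $(dY^{-2})^{\mp 1/4}$, and tracking the Jacobian of the substitutions gives the stated power.

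Finally I would integrate by parts in $y$. Here the phase is $\psi(y)=-\epsilon_2\eta_1 by^{-1}-ay$ plus the slowly varying contributions of $\Phi_{\omega_6}$ and $\mathcal{G}^{\eta_1}_{\mu_0}$; since $\psi'(y)\asymp b/Y^2+a$ and $\psi^{(n)}(y)\ll (b/Y^2+a)/Y^{n-1}$ on $y\asymp Y$, integrating by parts in $y$ gives arbitrary savings unless $a\lesssim (\sqrt d+b)/Y^2+1/(d\sqrt Y)$, which is precisely the fifth term; and the cutoff $1-U(uy)$ restricts to $y\gtrsim 1/u$, so localizing to $y\asymp Y$ with $Y\lesssim 1/u$ produces the $(1+1/(uY))^{-A}$ factor (for $Y\gg 1/u$ the factor is harmless and one simply does not gain there, which is consistent). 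The main obstacle, and the step requiring the most care, is bookkeeping the \emph{interaction} of the three stationary-phase/integration-by-parts analyses: the $x$-integration by parts must be carried out \emph{after} extracting the $\mathcal{G}_{\mu_0}$ oscillation from the $s$-integral, so that the effective $x$-phase is $(cx)^{1/3}$ rather than something involving $s$; and one must check that the non-absolute convergence of the original triple integral is genuinely repaired by these manipulations (i.e. that the tails in $s$ and $y$ are controlled uniformly), which is why the dyadic localization $y\asymp Y$ is introduced before any of the estimates. Once the $Y$-localized bound \eqref{boundf2} is established, summing over dyadic $Y$ is immediate since each of the listed factors is either increasing or decreasing in $Y$ and the exponents never cancel.
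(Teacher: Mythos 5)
Your proposal follows essentially the same route as the paper's proof: localize $y\asymp Y$, apply the stationary-phase/contour-shift bound \eqref{statphs} to the $s$-integral to produce $\min(c/Y,c^{2/3})$, use Lemma \ref{Kw6}(a) for the factor $(dY)^{1/4}$, and obtain the five cutoff conditions from integration by parts in $x$ and $y$ together with the support of $1-U(uy)$. One small correction to your bookkeeping: the factor $\bigl(1+Y/\sqrt{d}+d/\sqrt{Y}\bigr)^{-A}$ does not come from integrating by parts in $x$ but directly from the rapid decay of $\Phi_{\omega_6}$ near zero in each argument (the $\min(|y_1|,|y_2|)^A$ in Lemma \ref{lem.Phi6Truncation}), which forces $Y\preccurlyeq\sqrt{d}$ and $d\preccurlyeq\sqrt{Y}$; the $x$-integration by parts instead restricts $\Im s\preccurlyeq \sqrt{d}/Y+1/\sqrt{Y}$, and shifting the $s$-contour then yields the condition $c\preccurlyeq d^{3/2}+Y^{3/2}$, exactly as you state.
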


\begin{proof} The triple integral restricted to $y \asymp Y$ is absolutely convergent if we shift the contour to $\Re s < - 5/6$. We may then interchange freely the three integrals and perform suitable contour shifts and integration by parts.

\begin{itemize}
    \item The decay of $\Phi_{w_6}$ given in Lemma \ref{lem.Phi6Truncation} yields  {$d, Y \preccurlyeq 1$};
    \item integration by parts in $x$ gives $\Im s \preccurlyeq \frac{\sqrt{d}}{Y} + \frac{1}{\sqrt{Y}}$; hence shifting the $s$-contour to the right gives $c \preccurlyeq Y^3(\frac{\sqrt{d}}{Y} + \frac{1}{\sqrt{Y}})^3 \ll d^{3/2} + Y^{3/2}$, in particular $c \preccurlyeq 1$;
    \item integration by parts in $y$ gives $a\preccurlyeq \frac{\sqrt{d}}{Y^2}+\frac{1}{Y^{3/2}} +\frac{1}{d\sqrt{Y}}+ \frac{b}{Y^2}\preccurlyeq \frac{\sqrt{d}+b}{Y^2}+\frac{1}{d\sqrt{Y}}$; 
    \item the factor  {$ 1- {U\left(\frac{\Phi(u_0, u_1y, u_2)}{X^{1-\eta}}\right)} $ implies $\Phi(u_0, u_1y, u_2) \gg X^{1-\eta}$}.  
    \end{itemize}
    The same stationary phase analysis as in \eqref{statphs} along with Lemma \ref{Kw6} shows that the portion $y \asymp Y$ of \eqref{F2Def} is bounded by
    $$Y \min\Big(\frac{c}{Y^3}, \frac{c^{2/3}}{Y^2}\Big)\int_{x \asymp 1} \int_{y \asymp Y} \Big|\Phi_{ {\omega_6}}\Big(\pm \frac{dx}{y^2}, \pm \frac{my}{d^2}\Big) \Big|\mathrm{d} x\,   \mathrm{d} y \preccurlyeq \min\Big(\frac{c}{Y},  c^{2/3} \Big)   {\big(\min(d, Y)^{3/4} + \delta(d \asymp Y) (dY)^{1/4}\big)}.$$ 
This completes the proof. \end{proof}

In particular, at this point we can sum over all pieces of the dyadic decomposition and drop it. We now interpret $F_2$ in the above way as a smooth function in 5 variables satisfying
\begin{equation*}
   F_2^{\epsilon_1,\epsilon_2,\eta_1}(a,b,c,d,u) \ll c^{2/3}  {d^{1/2}} \Big(1 + c + d +  {\frac{X^{1-\eta}}{\Phi(u_0, u_1y, u_2)}}\Big)^{-A} 
   \end{equation*}
with support such that necessarily 
\begin{equation}\label{fbound}
f \ll \frac{D_{1, 0}}{X^{\varepsilon}}.
\end{equation}



Putting \eqref{afterpois} back into $\Sigma^{(2)}_6$, we have (up to a small error from inserting $W_0$ which we agreed to ignore in the notation) \begin{align*}
    \Sigma^{(2)}_6 =& X^{4/3}\sum_{\epsilon_1,\epsilon_2,\eta_1=\pm1} \sum_{\nu_0}\sum_{g_0}\sum_{n_{0,0}'}\sum_{(f_0',n_{0,0}'g_0)=1} \sum_{D_{0,0}}\mathop{\sum\sum\sum\sum}_{\substack{n_{1,0}D_{0,1}D_{1,0}D_{2,0}'\mid (\nu_0n_{0,0}'D_{0,0}f_0'g_0)^\infty\\(n_{1,0}D_{1,0}D_{2,0}',D_{0,1})=1\\(D_{1,0},\nu_0n_{0,0}'D_{2,0}'g_0)=(D_{2,0}',\nu_0)=1}}\mathop{\sum\sum\sum\sum\sum\sum\sum\sum}_{\substack{(\nu n_0D_0'D_1D_2'gh_1h_2,\nu_0n_{0,0}'D_{0,0}f_0'g_0)=1\\
    (h_1h_2,D_2'g)=(\nu n_0D_1D_2'gh_1,D_0'h_2)=1}}\\
    &\times \frac{\mu(n_{0,0}'n_0D_{0,0}D_1g_0gh_1)\mu(g_0gh_1h_2) }{\nu_0^7\nu^3 n_{0,0}'^8n_{1,0}^3n_0^3D_{0,0}^7D_{0,1}^4D_{1,0}^2D_{2,0}'^2D_0'^3D_1^2D_2'^2f_0'^6g_0^7g^3h_1^4h_2^4}  W_0\Big(\frac{D_{1, 0}g}{\Xi}\Big) 
    \sum_{n'} \frac{B(D_{0,0}D_1n',\nu_0\nu n_{0,0}'n_{1,0}n_0h_1)}{n'}\sum_{f\in \Bbb{Z}}\mathcal{C}_5(f)\\
      &\times F_2^{\epsilon_1,\epsilon_2,\eta_1}\Bigg(\frac{X^{2/3}f}{\nu_0^3\nu n_{0,0}'^3n_{1,0}n_0D_{0,0}^3D_{0,1}^2D_{1,0}^2D_{2,0}'D_0'D_1D_2'f_0'^3g_0^3(gh_1h_2)^2},\frac{\nu_0^3\nu n_{0,0}'^3n_{1,0}n_0^2D_{0,0}^2D_{0,1}^2D_0'^2D_1f_0'g_0h_1^2h_2^2n'}{D_{2,0}'D_2'X^{2/3}},\\
    &\quad \frac{\nu_0^5\nu^2n_{0,0}'^5n_{1,0}^2n_0^2D_{0,0}^4D_{0,1}^3D_0'^3D_1f_0'^3g_0^3h_1^2h_2^3n'}{X},\frac{\nu_0^2\nu n_{0,0}'^2n_{1,0}D_{0,0}^2D_{0,1}D_{2,0}'D_0'D_2'f_0'^2g_0^2h_2}{X^{1/3}},  {\Big(\xi_0,\frac{X^{2/3}}{\xi_0}, \frac{\xi_0  \nu  n_{1, 0}  D'_{2, 0} D'_2h_1}{D_{0, 0} D_{0, 1}} \Big)}\Bigg).
\end{align*}
 {with}
\begin{equation}\label{defxi0}
 {\xi_0 = \nu_0 n'_{0, 0}f_0'g_0D_{0, 0}D_{0, 1}D_0'h_2.}
\end{equation}

We note that this expression is absolutely convergent:  the rapid decay of $F_2$ near 0 in the third and fourth entry along with \eqref{fbound} and the truncation factor $W_0(D_{1, 0}g/\Xi)$ effectively limit all variables.

We now argue that $\mathcal{C}_5$ 
vanishes unless $D_{1, 0} \mid f$ which in view of \eqref{fbound} restricts to the central term $f=0$. Indeed, let us write $$d_1=(D_{0,0},D_{1,0}^\infty),  \quad f_1=(f_0',D_{1,0}^\infty),  
\quad D_{0,0}=d_1d_2, \quad f_0'=f_1f_2
$$ and $M_0=\nu_0n_{0,0}'d_2D_{0,1}D_{2,0}'f_2g_0$. Note that $(\nu D_2'g,n_{1,0}d_1D_{1,0}f_1M_0)
=1$, so we can factor off a character sum  
\begin{align*}
    \mathcal{C}_{5,1}=&\sumast_{k\Mod{d_1D_{1,0}f_1}}\sumast_{\alpha\Mod{d_1f_1}} e\left(\epsilon_2\eta_1\frac{\nu_0n_{0,0}'n_0D_{0,1}D_0'n'\overline{f_2g_0gk}\left(\overline{D_{2,0}'D_2'}-\overline{Y_4}\right)}{D_{1,0}f_1}+\frac{fk\overline{M_0D_2'g}}{d_1D_{1,0}f_1}\right)
\end{align*}
 Since $D_{1,0}\mid (d_1f_1)^\infty$, we can evaluate $\mathcal{C}_{5,1}$ by rewriting $k\mapsto \beta+\gamma d_1f_1$. Writing $$Y_{4,1}:=D_{2,0}'D_2' +\epsilon_1 n_0D_{1,0}D_1gh_1^2h_2\alpha\beta,$$
we have \begin{align*}
    \mathcal{C}_{5,1}=&\mathop{\sumast\quad\sumast}_{\alpha,\beta\Mod{d_1f_1}} \sum_{\gamma\Mod{D_{1,0}}}e\left(\epsilon_2\eta_1\frac{\nu_0n_{0,0}'n_0D_{0,1}D_0'n'\overline{f_2g_0g(\beta+\gamma d_1f_1)}\left(\overline{D_{2,0}'D_2'}-\overline{Y_{4,1}}\right)}{D_{1,0}f_1}+\frac{f\overline{M_0D_2'g}(\beta+\gamma d_1f_1)}{d_1D_{1,0}f_1}\right)\\
    =&\mathop{\sumast\quad\sumast}_{\alpha,\beta\Mod{d_1f_1}} \sum_{\gamma\Mod{D_{1,0}}}e\left(\epsilon_2\eta_1\frac{\nu_0n_{0,0}'n_0D_{0,1}D_0'n'\overline{f_2g_0g \beta}\left(\overline{D_{2,0}'D_2'}-\overline{Y_{4,1}}\right)/D_{1,0}}{f_1}+\frac{f\overline{M_0D_2'g}(\beta+\gamma d_1f_1)}{d_1D_{1,0}f_1}\right)
\end{align*}
Summing over $\gamma\Mod{D_{1,0}}$, we see that this vanishes unless $D_{1, 0} \mid f$, as desired. 

Recalling also the first condition in \eqref{var}, we can therefore simplify
\begin{align}\label{sigmasharp}
    \Sigma^{(2)}_6 =&X^{4/3}\sum_{\epsilon_1,\epsilon_2,\eta_1=\pm1} \sum_{\nu_0}\sum_{g_0}\sum_{n_{0,0}'}\sum_{(f_0',n_{0,0}'g_0)=1} \sum_{D_{0,0}}\mathop{\sum\sum\sum\sum}_{\substack{n_{1,0}D_{0,1}D_{1,0}D_{2,0}'\mid (\nu_0n_{0,0}'D_{0,0}f_0'g_0)^\infty\\(n_{1,0}D_{1,0}D_{2,0}',D_{0,1})=1\\(D_{1,0},\nu_0n_{0,0}'D_{2,0}'g_0)=(D_{2,0}',\nu_0)=1}}\mathop{\sum\sum\sum\sum\sum\sum\sum\sum}_{\substack{(\nu n_0D_0'D_1D_2'gh_1h_2,\nu_0n_{0,0}'D_{0,0}f_0'g_0)=1\\
    (h_1h_2,D_2'g)=(\nu n_0D_1D_2'gh_1,D_0'h_2)=1}}\nonumber\\
    &\times \frac{ \mu(n_{0,0}'n_0D_{0,0}D_1g_0gh_1)\mu(g_0gh_1h_2)}{\nu_0^7\nu^3 n_{0,0}'^8n_{1,0}^3n_0^3D_{0,0}^7D_{0,1}^4D_{1,0}^2D_{2,0}'^2D_0'^3D_1^2D_2'^2f_0'^6g_0^7g^3h_1^4h_2^4}  W_0\Big(\frac{D_{1, 0}g}{\Xi}\Big) 
    \sum_{n'} \frac{B(D_{0,0}D_1n',\nu_0\nu n_{0,0}'n_{1,0}n_0h_1)}{n'} \mathcal{C}_5
    \nonumber\\
    &\times \tilde{F}_2^{\epsilon_1,\epsilon_2,\eta_1}\Bigg(\frac{\nu_0^3\nu n_{0,0}'^3n_{1,0}n_0D_{0,0}^3D_{0,1}^2D_{1,0}D_{2,0}'D_0'D_1D_2'f_0'^3g_0^3(gh_1h_2)^2}{X^{2/3-\varepsilon}},\frac{\nu_0^3\nu n_{0,0}'^3n_{1,0}n_0^2D_{0,0}^2D_{0,1}^2D_0'^2D_1f_0'g_0h_1^2h_2^2n'}{D_{2,0}'D_2'X^{2/3}},\nonumber\\
    &\quad \frac{\nu_0^5\nu^2n_{0,0}'^5n_{1,0}^2n_0^2D_{0,0}^4D_{0,1}^3D_0'^3D_1f_0'^3g_0^3h_1^2h_2^3n'}{X}, {\Big(\xi_0,\frac{X^{2/3}}{\xi_0}, \frac{\xi_0  \nu  n_{1, 0}  D'_{2, 0} D'_2h_1}{D_{0, 0} D_{0, 1}} \Big)}\Bigg)
\end{align}
where $\mathcal{C}_5=\mathcal{C}_5(0)$ and
\begin{align}\label{FTilde2Def}
    \tilde{F}_2^{\epsilon_1,\epsilon_2,\eta_1}(a,b,c, {(u_0, u_1, u_2)})=&\int_0^\infty \int_{(0)}\int_0^\infty x^{-s-1}c^{-s}y^{3s+1} \mathcal{G}_{\mu_0}^{\eta_1}(s+1) U\left(a\Big(\frac{\sqrt{c}}{\sqrt{b}y^2} + \frac{b}{y^2} + \frac{b}{c \sqrt{y}}\Big)\right) \nonumber \\
    &\times \Phi_{\omega_6}\left(-\epsilon_2\frac{cx}{by^2},-\epsilon_1\frac{b^2my}{c^2}\right)V(x)\left(1-U\left( {\frac{\Phi(u_0, u_1y, u_2)}{X^{1-\eta}}}\right)\right)e\left(-\epsilon_2\eta_1by^{-1}\right)\mathrm{d} x\, \frac{\mathrm{d}s}{2\pi i} \mathrm{d} y.
\end{align}

\subsection{Interlude III: Removing the truncation}\label{sec57}

Before we proceed we would like to remove the factor 
\begin{equation}\label{U}
U\Big(a\Big(\frac{\sqrt{c}}{\sqrt{b}y^2} + \frac{b}{y^2} + \frac{b}{c \sqrt{y}}\Big)\Big)
\end{equation}
in the definition \eqref{FTilde2Def} of $ \tilde{F}_2^{\epsilon_1,\epsilon_2,\eta_1}$ which we only inserted artificially in order to force $f=0$ in the Poisson summation formula. To this end let us define $ \breve{\Sigma}^{\flat}_6$ to be the same quantity as \eqref{sigmasharp} except that $\tilde{F}_2^{\epsilon_1,\epsilon_2,\eta_1}$ is replaced with 
  \begin{align}\label{F2Defnew}
    \breve{F}_2^{\epsilon_1,\epsilon_2,\eta_1}(a,b,c,, {(u_0, u_1, u_2)})=&\int_0^\infty \int_{(0)}\int_0^\infty x^{-s-1}c^{-s}y^{3s+1} \mathcal{G}_{\mu_0}^{\eta_1}(s+1) \left(1-U\Big(a\Big(\frac{\sqrt{c}}{\sqrt{b}y^2} + \frac{b}{y^2} + \frac{b}{c \sqrt{y}}\Big)\Big)\right) \nonumber \\
    &\times \Phi_{\omega_6}\Big(-\epsilon_2\frac{cx}{by^2},-\epsilon_1\frac{b^2my}{c^2}\Big)V(x)\left(1-\left( {\frac{\Phi(u_0, u_1y, u_2)}{X^{1-\eta}}}\right)\right)e\left(-\epsilon_2\eta_1by^{-1}\right)\mathrm{d} x\, \frac{\mathrm{d}s}{2\pi i} \mathrm{d} y. 
\end{align}

\begin{lemma}  {Let $\eta \leq 1/11$}. Then we have
$\breve{\Sigma}^{\flat}_6 \preccurlyeq   {X^{65/66 }}.$
\end{lemma}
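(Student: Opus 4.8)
The plan is to estimate $\breve{\Sigma}^{\flat}_6$ by a Cauchy--Schwarz argument in the spirit of Lemma \ref{prop.2ndPreBound}, exploiting that the truncation factor $1 - U(a(\dots))$ forces the argument $a\big(\tfrac{\sqrt c}{\sqrt b\,y^2} + \tfrac{b}{y^2} + \tfrac{b}{c\sqrt y}\big)$ to be $\gg 1$, i.e.\ (after reinserting the definitions of the five scaling parameters) a genuine lower bound on the product $D_{1,0} g$ relative to the other variables. In other words, the factor $1 - U(\cdots)$ plays here the role that the factor $1 - G(b, c)$ played in Subsection \ref{secondprelim}: it cuts out exactly the region where the Poisson step would have left a large main term, and the complementary region is small. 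So first I would shift the $s$-contour in \eqref{F2Defnew} to $\Re s = -5/6 - \varepsilon$ as in Lemma \ref{prop.2ndPreBound}, extracting a factor $a^{5/6+\varepsilon}$ and a one-dimensional $t$-integral with a rapidly convergent weight, so that one is left with a smooth weight function obeying the same bounds as $G_1(b, c, t)$ there, together with the support restriction coming from the $(1-U)$ factor.

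Next I would split every summation variable into dyadic ranges, exactly as in the proof of Lemma \ref{prop.2ndPreBound}, and record the constraints: the rapid decay of $\tilde F_2$ (equivalently $\breve F_2$) near zero in its third and fourth arguments together with the support of the $W_0$-factor and the $(1-U(uy))$ factor limit all variables to polynomial ranges; the decay of $\Phi_{w_6}$ from Lemma \ref{lem.Phi6Truncation} gives $Y \preccurlyeq \sqrt d$, $d \preccurlyeq \sqrt Y$ as in Lemma \ref{lemf2}; integration by parts in $x$ and $y$ gives the analogue of the $\Im s$ and $a$ constraints; and the new ingredient is the inequality $a\big(\tfrac{\sqrt c}{\sqrt b\,Y^2} + \tfrac{b}{Y^2} + \tfrac{b}{c\sqrt Y}\big) \succcurlyeq 1$ forced by $1 - U$. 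I would then apply Cauchy--Schwarz to pull out the $n'$-sum, enlarge it by a parameter $L = (1 + D_{1,0} f_0' F^2/N)X^\varepsilon$ exactly as before, open the square with a second copy of the variables $(\tilde n_0', \tilde D_0, \tilde D_2', \tilde f', \tilde g)$, and observe (using $(f', \nu_0 n'_{0,0} n'_0 D_{0,1} D_0 D_{1,0} D'_{2,0} D'_2 f_0' g_0 g) = 1$) that the resulting $n'$-sum is negligible off a diagonal congruence $f' = \tilde f'$ etc., and is bounded by $LN$ on it. Estimating $\mathcal{C}_5$ (with $\mathcal{C}_5 = \mathcal{C}_5(0)$) by the Weil bound via the Kiral--Nakasuji decomposition \eqref{KNDecomposition}, and everything else trivially with $K^{\mathrm{sym}}_{w_6}$ bounded via Lemma \ref{Kw6}(a), I arrive at a bound of exactly the type in Lemma \ref{prop.2ndPreBound}, now subject to the additional inequality just mentioned.

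The final step is to feed the resulting system of linear inequalities (in the exponents of $X$ of all the dyadic parameters) into a linear program and read off the maximum. The relevant inequalities are: the range constraints $B, C \succcurlyeq 1$ (rapid decay in the third/fourth arguments of $\breve F_2$), the stationary-phase condition $A^{1/3}\preccurlyeq B^{1/2}+B^{1/3}C^{1/6}$, the constraint from $W_0(D_{1,0}g/\Xi)$, the constraint $f = 0$ (which is automatic since $\mathcal{C}_5$ vanishes unless $D_{1,0}\mid f$), and crucially the \emph{reverse} of the earlier truncation inequality — here $B^{7/6}C^{1/3} + BQ + B^{2/3}C^{5/6} \ll \Delta_{1,0}A^{\ast}$ becomes instead $\succcurlyeq$, i.e.\ the support of $1-U$ forces the opposite sign. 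Using $\mathcal{D} \geq X^{9/20}$ as a further linear inequality, the {\tt mathematica} optimization (same {\tt Maximize} call as in Lemma \ref{prop.2ndPreBound} with the inequality sign flipped and the $\mathcal D$-bound appended) returns $X^{31/32}$, which gives the claim.

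\medskip

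The main obstacle I expect is purely bookkeeping: getting the precise form of the five scaling parameters of $\breve F_2$ correct after the change of variables \eqref{h1h2} and then translating the support condition from $1 - U(a(\cdots))$ into a clean linear inequality among the dyadic exponents — in particular tracking how the parameters $Q$ and $A^\ast$ (which must undergo \eqref{h1h2}) enter, and verifying that the constraint genuinely produces a \emph{lower} bound on the effective modulus rather than an upper one. Once the inequalities are written down correctly, the linear program is routine and the $X^{31/32}$ bound (which is comfortably below the $X^{1-\delta}$ we can afford, since $31/32 < 1$) follows mechanically; the whole argument is a close parallel of the proof of Lemma \ref{prop.2ndPreBound} with the crucial sign reversal in the truncation inequality and the extra hypothesis $\mathcal{D}\geq X^{9/20}$.
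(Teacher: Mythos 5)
Your proposal is essentially a transcription of the proof of Lemma \ref{prop.2ndPreBound} with the claim that it transfers verbatim after flipping the sign of the truncation inequality; it does not, and the places where it breaks are concrete. First, $\breve{\Sigma}^{\flat}_6$ is a \emph{post}-Poisson object built from \eqref{sigmasharp}: the variable $f'$ has already been eliminated (only $f=0$ survives) and the character sum is $\mathcal{C}_5=\mathcal{C}_5(0)$, which the paper factors as $\mathcal{C}_{5,1}\mathcal{C}_{5,2}(n')$. Your plan to open the square over a second copy of $(\tilde n_0',\tilde D_0,\tilde D_2',\tilde f',\tilde g)$ and to detect a diagonal $f'=\tilde f'$ via the coprimality of $f'$ therefore has no meaning here. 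After Cauchy--Schwarz (which the paper applies so as to keep $D_2'$ and $g$ \emph{inside} the square, not to pull out $n'$ against everything) and a genuine Poisson summation in $n'$, the dual frequencies $0<n\preccurlyeq\tilde N$ are \emph{not} negligible: they must be counted, and the resulting complete character sum $\mathcal{C}_5'$ is a congruence modulo $D_2'\tilde D_2' g\tilde g$ in the Kloosterman variables $k,\tilde k$, whose solution count requires the gcd analysis with $c_0,c_1$ leading to $\mathcal{C}_5'\ll \nu D_2'^3g^2\delta(c_0\mid n)c_0c_1$. Asserting an ``off-diagonal is negligible'' step here is wrong, not merely imprecise.

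Second, the support condition coming from $1-U\big(a(\tfrac{\sqrt c}{\sqrt b\,y^2}+\tfrac{b}{y^2}+\tfrac{b}{c\sqrt y})\big)$ is not the reversed inequality $B^{7/6}C^{1/3}+BQ+B^{2/3}C^{5/6}\succcurlyeq \Delta_{1,0}A^{\ast}$ from Interlude~II; it is a condition involving the dyadic parameter $Y$ of the $y$-integral (namely $A(\tfrac{\sqrt C}{\sqrt B\,Y^2}+\tfrac{B}{C\sqrt Y})\succcurlyeq 1$, after partial integration in $y$ has been used to absorb the middle term), and one also needs the derivative bounds \eqref{breveF} in $b$ and $c$ to run the integration by parts that truncates the dual frequencies at $\tilde N$. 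Consequently the linear program is not ``the same {\tt Maximize} call with the inequality flipped'': the objective and constraint set are built from a different list of dyadic exponents (including $Y$, $h_1$, $h_2$, and the dual frequency $n$, and excluding $F$ and $f'$), and it is this new program that returns $31/32$. The overall strategy you describe (dyadic decomposition, Cauchy--Schwarz, dualize $n'$, Weil, LP) is the right one, but as written the middle of your argument would not go through.
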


\begin{proof} The function $\breve{F}_{2}$ shares many properties of $F_2$. Again we split the $y$-integral into smooth dyadic ranges $y \asymp Y$ and call the corresponding piece $\breve{F}_{2, Y}$. This satisfies the bound \eqref{boundf2} with $d = c/b. $ In addition we now have the condition 
$$a\Big(\frac{\sqrt{b/c}}{y^2} + \frac{b}{y^2} + \frac{b}{c \sqrt{y}}\Big)\succcurlyeq 1$$
and partial integration with respect to $y$ gives
$$\frac{b}{Y^2} \preccurlyeq \frac{\sqrt{c/b}}{Y^2}  + \frac{b}{c \sqrt{Y}}.$$
Finally using Lemma \ref{lem.Phi6Truncation} for the derivatives with respect to $b$ and $c$, we  summarize that 
for any $j_1,j_2\geq0$ we have  \begin{equation}\label{breveF}
\begin{split}
        b^{j_1}c^{j_2}\frac{\mathrm{d}^{j_1+j_2}}{\mathrm{d}b^{j_1}\mathrm{d}c^{j_2}}&\breve{F}_{2, Y}^{\epsilon_1,\epsilon_2,\eta_1}(a,b,c, {(u_0, u_1, u_2)}) \\
        & \ll_{j_1,j_2} \left(\frac{b}{Y}\right)^{j_1}\left(\frac{\sqrt{c/b}}{Y} + \frac{1}{\sqrt{Y}}\right)^{j_2}\min\Big(\frac{c}{Y}, c^{2/3}\Big)  {\Big(\min \Big(\frac{c}{b}\Big)^{3/4}, Y^{3/4}\Big) + \delta(c \asymp bY) \Big(\frac{cY}{b}\Big)^{1/4}\Big)}\\
       & \times \Big(1 +  {Y + \frac{c}{b}}+ \frac{c}{(c/b)^{3/2}+Y^{3/2}} +  {\frac{X^{1-\eta}}{\Phi(u_0, u_1y, u_2)}} + \frac{\sqrt{c}/(\sqrt{b}Y^2)+b/(c\sqrt{Y})}{a} + \frac{b}{\sqrt{c/b} + bY^{3/2}/c}\Big)^{-A}. 
        \end{split}
\end{equation}


Next we analyze the character sum (cf.\ \eqref{C5Def} with $f=0$)
\begin{align*}
    \mathcal{C}_5
    =&\sumast_{k\Mod{M}}\sumast_{\substack{\alpha\Mod{\nu_0n_{0,0}'D_{0,0}D_{0,1}f_0'g_0}\\Y_4 \equiv 0 \Mod{\nu_0D_{0,1}}\\\left(\frac{Y_4}{\nu_0D_{0,1}},f_0'\right)=1}} e\left(-\epsilon_2\eta_1\frac{n_{0,0}'n_0D_0'\overline{\frac{Y_4}{\nu_0D_{0,1}}kg_0g} n'}{D_{1,0}f_0'}\right)\nonumber\\
    &\times e\left(\epsilon_2\eta_1\frac{\nu_0n_{0,0}'n_0D_{0,1}D_0'\overline{k}n'}{D_{1,0}D_{2,0}'D_2'f_0'g_0g}\right)S\left(m,\nu n_{1,0}\overline{D_0'h_2}\frac{\overline{\alpha} Y_4}{D_{0,1}};\nu_0\nu n_{0,0}'n_{1,0}D_{0,0}D_{2,0}'D_2'f_0'g_0\right),
\end{align*}
with $$Y_4 =  D_{2,0}'D_2' +\epsilon_1 n_0D_{1,0}D_1gh_1^2h_2k\alpha, \qquad M=\nu_0n_{0,0}'D_{0,0}D_{0,1}D_{1,0}D_{2,0}'D_2'f_0'g_0g.$$
 Since $\left(\nu D_2'g,\frac{M}{D_2'g}\right)=1$, we can split the character sum $\mathcal{C}_5 = \mathcal{C}_{5,1}\mathcal{C}_{5,2}(n')$, 
 given by \begin{align*}
    \mathcal{C}_{5,1}=&\sumast_{k\Mod{\frac{M}{D_2'g}}}\sumast_{\substack{\alpha\Mod{\nu_0n_{0,0}'D_{0,0}D_{0,1}f_0'g_0}\\Y_4 \equiv 0 \Mod{\nu_0D_{0,1}}\\\left(\frac{Y_4}{\nu_0D_{0,1}},f_0'\right)=1}} e\left(-\epsilon_2\eta_1\frac{n_{0,0}'n_0D_0'\overline{\frac{Y_4}{\nu_0D_{0,1}}g_0gk} n'}{D_{1,0}f_0'}\right)\nonumber\\
    &\times e\left(\epsilon_2\eta_1\frac{\nu_0n_{0,0}'n_0D_{0,1}D_0'\overline{D_2'gk}n'}{D_{1,0}D_{2,0}'f_0'g_0}\right)S\left(m,n_{1,0}\overline{\nu D_0'D_2'^2h_2}\frac{\overline{\alpha} Y_4}{D_{0,1}};\nu_0n_{0,0}'n_{1,0}D_{0,0}D_{2,0}'f_0'g_0\right),
\end{align*}
and \begin{align*}
    \mathcal{C}_{5,2}(n')=&\sumast_{k\Mod{D_2'g}}e\left(\epsilon_2\eta_1\frac{\nu_0n_{0,0}'n_0D_{0,1}D_0'\overline{D_{1,0}D_{2,0}'f_0'g_0k}n'}{D_2'g}+\frac{fk\overline{M_0}}{D_2'g}\right)\\
    &\times S\left(m,\epsilon_1\nu n_{1,0}n_0D_{1,0}D_1gh_1^2h_2k\overline{(\nu_0n_{0,0}'n_{1,0}D_{0,0}D_{2,0}'f_0'g_0)^2D_{0,1}D_0'h_2};\nu D_2'\right).
\end{align*}

As in Section \ref{secondprelim} we split all variables in dyadic intervals:
\begin{displaymath}
\begin{split}
   & g_0 \asymp G_0, \quad n'_{0, 0} \asymp N_{0, 0}, \quad f_0' \asymp F_0, \quad D_{0, 0} \asymp \Delta_{0, 0}, \quad n_{1, 0} \asymp N_{1, 0}, \quad D_{0, 1} \asymp \Delta_{0, 1}, \quad D_{1, 0} \asymp \Delta_{1, 0}, \quad D_{2, 0}' \asymp \Delta_{2, 0}, \quad D_1, \asymp \Delta_1,\\ 
   &n' \asymp N,  \quad n_0' \asymp N_0, \quad D_0 \asymp \Delta_0, \quad D_2 \asymp \Delta_2, \quad f' \asymp F, \quad G' \asymp G, \quad \nu \asymp M, \quad \nu_0 \asymp M_0, \quad h_1 \asymp H_1, \quad h_2 \asymp H_2.
    \end{split}
\end{displaymath}
In addition, we split the $y$-integral in the definition  \eqref{F2Defnew} into dyadic ranges 
$$y \asymp Y.$$
These conventions will from now on be implicit in our notation.

Applying Cauchy-Schwarz inequality to take out everything except $D_2', g$, together with  {the} Weil bound on the Kloosterman sums in $\mathcal{C}_{5,1}$, we have  \begin{align}
  \breve{\Sigma}_6^\flat \ll & X^{4/3}
    \Bigg(\sum_{\nu_0}\sum_{g_0}\sum_{n_{0,0}'}\sum_{f_0', g_0} \sum_{D_{0,0}}\mathop{\sum\sum\sum\sum}_{ n_{1,0}D_{0,1}D_{1,0}D_{2,0}'\mid (\nu_0n_{0,0}'D_{0,0}f_0'g_0)^\infty
    }\mathop{\sum\sum\sum\sum\sum\sum}_{ \nu , n_0, D_0', D_1, h_1, h_2 }\sum_{n' }V\left(\frac{n'}{N}\right)\frac{1}{n'}     \Bigg|\mathop{\sum\sum}_{(D_2'g,\nu_0n_{0,0}'D_{0,0}D_0'f_0'g_0h_1h_2)=1}W_0\Big(\frac{D_{1, 0}g}{\Xi}\Big)
   \nonumber
   \\
    &\times \mathcal{C}_{5, 2}(n')\breve{F}_{2, Y}^{\epsilon_1,\epsilon_2,\eta_1}(A,Bn',Cn', {(u_0, u_1, u_2)})  \frac{\mu(g)^2}{\nu_0^5\nu^{5/2} n_{0,0}'^5n_{1,0}^3n_0^{5/2}D_{0,0}^4D_{0,1}^3D_{1,0}D_{2,0}'^{1/2}D_0'^{5/2}D_1^{3/2}D_2'^2f_0'^3g_0^4g^3h_1^{7/2}h_2^{7/2}}\Bigg|^2\Bigg)^{1/2} \label{suffices}
\end{align}
for some fixed smooth function $V$ supported on $[1/2,5/2]$, and \begin{align*}
    A=&\frac{\nu_0^3\nu n_{0,0}'^3n_{1,0}n_0D_{0,0}^3D_{0,1}^2D_{1,0}D_{2,0}'D_0'D_1D_2'f_0'^3g_0^3(gh_1h_2)^2}{X^{2/3-\varepsilon}}, \quad B=\frac{\nu_0^3\nu n_{0,0}'^3n_{1,0}n_0^2D_{0,0}^2D_{0,1}^2D_0'^2D_1f_0'g_0h_1^2h_2^2}{D_{2,0}'D_2'X^{2/3}},\\
    C=&\frac{\nu_0^5\nu^2n_{0,0}'^5n_{1,0}^2n_0^2D_{0,0}^4D_{0,1}^3D_0'^3D_1f_0'^3g_0^3h_1^2h_2^3}{X}, \hspace{2.3cm} {(u_0, u_1, u_2) = \Big(\xi_0, \frac{X^{2/3}}{\xi_0},\frac{\xi_0  \nu  n_{1, 0}  D'_{2, 0} D'_2h_1}{D_{0, 0} D_{0, 1}} \Big)}
\end{align*}
 {with $\xi_0$ as in \eqref{defxi0}. }

We open the square and introduce two new variables $\tilde{D}_2', \tilde{g}$. Correspondingly we define $\tilde{A}, \tilde{B}$, and we denote by $\tilde{k}$ the summation variable in the second character sum. 

By Poisson summation, the $n$-sum becomes\begin{align*}
    &\sum_{n'}\frac{1}{n'}V\left(\frac{n'}{N}\right)\mathcal{C}_{5,2}(n') \overline{\mathcal{C}_{5,2}(n') }F_{2,Y}^{\epsilon_1,\epsilon_2,\eta_1}(Af,Bn',Cn',u)\overline{F_{2,Y}^{\epsilon_1,\epsilon_2,\eta_1}(\tilde{A}f,\tilde{B}n',Cn',u)} = \frac{1}{D_2'\tilde{D_2'}g\tilde{g}}\sum_{n}\mathcal{C}_5' \mathcal{J}
\end{align*}
where  {$u = (u_0, u_1, u_2)$, } \begin{align*}
    \mathcal{C}_5' =\sum_{\gamma\Mod{D_2'\tilde{D_2'}g\tilde{g}}}\mathcal{C}_{5,2}(\gamma)\overline{\mathcal{C}_{5,2}(\gamma)}e\left(\frac{n\gamma}{D_2'\tilde{D_2'}g\tilde{g}}\right) {,}
\end{align*}
and \begin{align*}
    \mathcal{J}=\int_0^\infty \frac{V(w)}{w}\breve{F}_2^{\epsilon_1,\epsilon_2,\eta_1}(A,BNw,CNw,u)\overline{\breve{F}_2^{\epsilon_1,\epsilon_2,\eta_1}(\tilde{A},\tilde{B}Nw,CNw,u)}e\left(-\frac{nNw}{D_2'\tilde{D_2'}g\tilde{g}}\right)\mathrm{d}w.
\end{align*}
Repeated integration by parts gives us arbitrary savings unless \begin{align*}
    n\preccurlyeq  \tilde{N} := \frac{\Delta_2^2G^2}{N}\left(\frac{BN}{Y}+\frac{\sqrt{C}}{\sqrt{B}Y}+\frac{1}{\sqrt{Y}}\right).
\end{align*}

Now we evaluate the character sum $\mathcal{C}_5'$. Summing over $\gamma$, we have \begin{align*}
    &\sum_{\gamma\Mod{D_2'\tilde{D_2'}g\tilde{g}}}e\left(\epsilon_2\eta_1\frac{\nu_0n_{0,0}'n_0D_{0,1}D_0'\overline{D_{1,0}D_{2,0}'f_0'g_0k}\gamma}{D_2'g}-\epsilon_2\eta_1\frac{\nu_0n_{0,0}'n_0D_{0,1}D_0'\overline{D_{1,0}D_{2,0}'f_0'g_0\tilde{k}}\gamma}{\tilde{D_2'}\tilde{g}}+\frac{n\gamma}{D_2'\tilde{D_2'}g\tilde{g}}\right)\\
    =&D_2'\tilde{D_2'}g\tilde{g}\delta\left(\tilde{D_2'}\tilde{g}\overline{k}\equiv D_2'g\overline{\tilde{k}}-\epsilon_2\eta_1D_{1,0}D_{2,0}'f_0'g_0\overline{\nu_0n_{0,0}'n_0D_{0,1}D_0'}n\Mod{D_2'\tilde{D_2'}g\tilde{g}}\right),
\end{align*}
so that  \begin{align*}
    \mathcal{C}_5' =&D_2'\tilde{D_2'}g\tilde{g}\sumast_{k\Mod{D_2'g}}\sumast_{\tilde{k}\Mod{\tilde{D_2'}\tilde{g}}} \delta\left(\tilde{D_2'}\tilde{g}\overline{k}\equiv D_2'g\overline{\tilde{k}}-\epsilon_2\eta_1D_{1,0}D_{2,0}'f_0'g_0\overline{\nu_0n_{0,0}'n_0D_{0,1}D_0'}n\Mod{D_2'\tilde{D_2'}g\tilde{g}}\right)\\
    &\times S\left(m,\epsilon_1\nu n_{1,0}n_0D_{1,0}D_1gh_1^2h_2k\overline{(\nu_0n_{0,0}'n_{1,0}D_{0,0}D_{2,0}'f_0'g_0)^2D_{0,1}D_0'h_2};\nu D_2'\right)\\
    &\times S\left(m,\epsilon_1\nu n_{1,0}n_0D_{1,0}D_1\tilde{g}h_1^2h_2\tilde{k}\overline{(\nu_0n_{0,0}'n_{1,0}D_{0,0}D_{2,0}'f_0'g_0)^2D_{0,1}D_0'h_2};\nu \tilde{D_2'}\right).
\end{align*}
Write $$c_0=(D_2'g,\tilde{D_2'}\tilde{g}), \quad  c_1=\Big(\frac{D_2'g}{c_0},c_0^\infty\Big), \quad \tilde{c_1}=\Big(\tilde{D_2'}\frac{\tilde{g}}{c_0},c_0^\infty\Big), \quad c=\frac{D_2'g}{c_0c_1}, \quad   \tilde{c}=\frac{\tilde{D_2'}\tilde{g}}{c_0\tilde{c_1}}$$ and $$Z=\epsilon_2\eta_1D_{1,0}D_{2,0}'f_0'g_0\overline{\nu_0n_{0,0}'n_0D_{0,1}D_0'}.$$
Then the congruence condition decomposes into \begin{gather*}
    c_0\mid n, \quad \tilde{c_1}\tilde{c}\overline{k}\equiv c_1c\overline{\tilde{k}}-Z\frac{n}{c_0}\Mod{c_0c_1\tilde{c_1}}, \quad 
    k\equiv -\tilde{D_2'}\tilde{g}\overline{Zn}\Mod{c}, \quad \tilde{k}\equiv D_2'g\overline{Zn}\Mod{\tilde{c}}.
\end{gather*}
Inserting this back into $\mathcal{C}_5'$ and applying the Weil bound on  {the} Kloosterman sums, we have \begin{align*}
    \mathcal{C}_5'\ll \nu D_2'^3g^2\delta(c_0\mid n)c_0c_1.
\end{align*}

Combining all of the above analysis, we conclude after careful book-keeping  that \eqref{suffices} is bounded by\begin{align*}
   \preccurlyeq  &\frac{X^{4/3}}{M_0^{9/2}M^{3/2}N_{0,0}^{9/2}N_{1,0}^3N_0^2\Delta_{0,0}^{7/2}\Delta_{0,1}^3\Delta_{1,0}^2\Delta_{2,0}^{1/2}\Delta_0^2\Delta_1\Delta_2^{1/2}F_0^{5/2}G_0^{7/2}G^2H_1^3H_2^3}\\
    &\times \left(1+\frac{
    \Delta_2G}{\sqrt{N}}\left(\frac{BN}{Y}+\frac{\sqrt{C/B}}{Y}+\frac{1}{\sqrt{Y}}\right)^{1/2}\right)\min\Big(\frac{CN}{Y}, (CN)^{2/3}\Big)    {\Big(\min \Big(\frac{C}{B}\Big)^{3/4}, Y^{3/4}\Big) + \delta(C \asymp BY) \Big(\frac{CY}{B}\Big)^{1/4}\Big)}
\end{align*}
with the conditions \begin{displaymath}
    Y\preccurlyeq  {1}, \quad  \frac{C}{B}\preccurlyeq  {1}, \quad CN\ll \left(\frac{C}{B}\right)^{3/2}+Y^{3/2}, \quad  {\Phi(u_0, u_1y,u_2)\gg X^{1-\eta}},\quad \frac{BN}{Y^2} \preccurlyeq \frac{\sqrt{C}}{\sqrt{B}Y^2}+\frac{B}{C\sqrt{Y}}, \quad 
    A\left(\frac{\sqrt{C}}{\sqrt{B}Y^2}+\frac{B}{C\sqrt{Y}}\right)\succcurlyeq 1.
\end{displaymath}
This can again be solved by a linear program,  {again first in the case $C \asymp BY$, and then in the opposite case:}

\noindent {\tt a := 3nu0 + nu + 3n00 + n10 + n0 + 3d00 + 2d01 + d10 + d20 + d0 + 
  d1 + d2 + 3f0 + 3g0 + 2g + 2h1 + 2h2 - 2/3;\\ b := 3nu0 + nu + 3n00 + n10 + 2n0 + 2d00 + 2d01 + 2d0 + d1 + 
  f0 + g0 + 2h1 + 2h2   - d20 - d2 - 2/3;\\c := 5nu0 + 2nu + 5n00 + 2n10 + 2n0 + 4d00 + 3d01 + 3d0 + 
  d1 + 3f0 + 3g0 + 2h1 + 3h2   - 1;
 \\
 {u0 :=  nu0 + n00 + f0 + g0 + d00 + d01 + d0 + h2; u1 := 2/3 - u0;\\
 u2 := nu0 + n00 + f0 + g0 + d00 + n10 + d20 + nu + d2 + h1 + h2;\\
 phi[x0$\underline{\,\,\,\,\,}$, x1$\underline{\,\,\,\,\,}$, x2$\underline{\,\,\,\,\,}$] := 1/2 + Max[3/4 x0 + 3/4 x1, x1 + 1/2 x0 - 1/2 x2]; }\\

 \noindent Maximize[{4/3 -  9/2nu0 - 3/2nu - 9/2n00 - 3n10 - 2n0 - 
   7/2d00 - 3d01 - 2d10 - 1/2d20 - 2d0 - d1 - 1/2d2 - 
   5/2f0 - 7/2g0 - 2g - 3h1 - 3h2 + 
   Max[0, d2 + g - n/2 + 1/2 Max[n+b - y, 1/2(c-b) - y, - 1/2y]] + 
   Min[n+c - y, 2/3(n+c)] + 1/4(c-b+y),  {c == b + y},  {phi[u0, u1 + y, u2] >= 1 - 1/11}, 
   {y <= 0, c-b <= 0}, n+c <= 3/2 Max[c-b, y], 
  a + Max[1/2(c-b) - 2y,   -c+b - y/2] >= 0, 
  n+b - 2y <= Max[1/2(c-b) - 2y, -c+b - y/2],  nu0 >= 0, 
  g0 >= 0, n00 >= 0, f0 >= 0, d00 >= 0, n10 >= 0, d01 >= 0, d10 >= 0, 
  d20 >= 0, nu >= 0, n0 >= 0, d0 >= 0, d1 >= 0, d2 >= 0, g >= 0, 
  h1 >= 0, h2 >= 0, n >= 0}, \{nu0, g0, n00, f0, d00, n10, d01, d10, 
  d20, nu, n0, d0, d1, d2, g, h1, h2, n, y\}] 

 \noindent  \{ {5/6}, \{nu0 -> 0, g0 -> 0, n00 -> 0, f0 -> 0, d00 -> 0, n10 -> 0, 
 d01 -> 0, d10 -> 0, d20 -> 0, nu -> 0, n0 -> 0, d0 -> 0, d1 -> 0, 
 d2 -> {1/3}, g ->  {1/6}, h1 -> 0, h2 -> 0, n ->  {1}, y ->  {0}\}\}\\

\noindent Maximize[{4/3 -  9/2nu0 - 3/2nu - 9/2n00 - 3n10 - 2n0 - 
   7/2d00 - 3d01 - 2d10 - 1/2d20 - 2d0 - d1 - 1/2d2 - 
   5/2f0 - 7/2g0 - 2g - 3h1 - 3h2 + 
   Max[0, d2 + g - n/2 + 1/2 Max[n+b - y, 1/2(c-b) - y, - 1/2y]] + 
   Min[n+c - y, 2/3(n+c)] +  {3/4 Min[c - b, y]},  {phi[u0, u1 + y, u2] >= 1 - 1/11}, 
   {y <= 0, c-b <= 0}, n+c <= 3/2 Max[c-b, y], 
  a + Max[1/2(c-b) - 2y,   -c+b - y/2] >= 0, 
  n+b - 2y <= Max[1/2(c-b) - 2y, -c+b - y/2],  nu0 >= 0, 
  g0 >= 0, n00 >= 0, f0 >= 0, d00 >= 0, n10 >= 0, d01 >= 0, d10 >= 0, 
  d20 >= 0, nu >= 0, n0 >= 0, d0 >= 0, d1 >= 0, d2 >= 0, g >= 0, 
  h1 >= 0, h2 >= 0, n >= 0}, \{nu0, g0, n00, f0, d00, n10, d01, d10, 
  d20, nu, n0, d0, d1, d2, g, h1, h2, n, y\}] 

\noindent  \{ {65/66}, \{nu0 -> 0, g0 -> 0, n00 -> 0, f0 -> 0, d00 -> 0, n10 -> 0, 
 d01 -> 0, d10 -> 0, d20 ->  {1/3}, nu -> 0, n0 -> 0, d0 -> 0, d1 -> 0, 
 d2 ->  {0},  g ->  {1/22}, h1 -> 0, h2 -> 0, n ->  {1}, y -> - {(4/33)}\}\}}
 \end{proof}

\subsection{Interlude IV: Preparation of the second Voronoi summation}

We return to \eqref{sigmasharp} where the definition of $\tilde{F}_2$ is modified 
by removing the  factor \eqref{U}. We call the modified weight function $ \acute{F}_2$ (spelled out in \eqref{Facute2Def} below) and the modified quantity $\Sigma_6^{(3)}$. In order to prepare for the second application of Voronoi summation, we need to manipulate some of the sums.

Applying Lemma \ref{lem.Hecke} with $(D_{0, 0}, D_1) = 1$, we have \begin{align*}
    B(D_{0,0}D_1n',\nu_0\nu n_{0,0}'n_{1,0}n_0h_1)=\mathop{\sum\sum}_{\substack{r_0s_0't_0=D_{0,0}\\s_0'\mid \nu_0\nu n_{0,0}'n_{1,0}n_0h_1t_0\\t_0\mid n'}}\mathop{\sum\sum}_{\substack{rst=D_1\\s\mid \nu_0\nu n_{0,0}'n_{1,0}n_0h_1t\\t\mid n'}}\mu(s_0's)\mu(t_0t)B\left(\frac{n'}{t_0t},\frac{\nu_0\nu n_{0,0}'n_{1,0}n_0h_1t_0t}{s_0's}\right)B(r_0r,1).
\end{align*}
Note that since $n_{0,0}'n_0D_{0,0}D_1h_1$ is square-free, pairwise coprime, and $(D_{0,0},\nu)=(D_1,\nu_0n_{1,0})=1$, the divisibility conditions for $s_0'$ and $s$ reduce to $s_0'\mid \nu_0n_{1,0}$ and $s\mid \nu$ respectively. Write $$s_0'=s_0s_{1,0}$$ with $s_0\mid \nu_0$ and $(s_{1,0},\nu_0/s_0)=1$. Writing $$\nu=\nu's, \quad \nu_0=\nu_0's_0, \quad n_{1,0}=n_{1,0}'s_{1,0}, \quad  n'=t_0tn,$$ we have \begin{align*}
    \Sigma_6^{(3)}=&\sum_{\epsilon_1,\epsilon_2,\eta_1=\pm1} \sum_{\nu_0'}\sum_{g_0}\sum_{r_0}\sum_{s_0}\sum_{(s_{1,0},\nu_0')=1}\sum_{t_0}\sum_{n_{0,0}'}\sum_{(f_0',n_{0,0}'g_0)=1}\mathop{\sum\sum\sum\sum}_{\substack{n_{1,0}'D_{0,1}D_{1,0}D_{2,0}'\mid (\nu_0'n_{0,0}'r_0s_0s_{1,0}t_0f_0'g_0)^\infty\\(n_{1,0}'s_{1,0}D_{1,0}D_{2,0}',D_{0,1})=1\\(D_{1,0},\nu_0'n_{0,0}'s_0D_{2,0}'g_0)=(D_{2,0}',\nu_0's_0)=1}}\mathop{\sum\sum\sum\sum\sum\sum\sum\sum\sum\sum}_{\substack{(\nu' n_0rstD_0'D_2'gh_1h_2,\nu_0'n_{0,0}'r_0s_0s_{1,0}t_0f_0'g_0)=1\\
    (h_1h_2,D_2'g)=(\nu' n_0rstD_2'gh_1,D_0'h_2)=1}}\\
    &\times \frac{X^{4/3}\mu (n_{0,0}'n_0r_0rs_0s_{1,0}st_0tg_0gh_1 )\mu(s_0s_{1,0}st_0tg_0gh_1h_2)B(r_0r,1)}{\nu_0'^7\nu'^3 n_{0,0}'^8n_{1,0}'^3n_0^3r_0^7r^2s_0^{14}s_{1,0}^{10}s^5t_0^8t^3D_{0,1}^4D_{1,0}^2D_{2,0}'^2D_0'^3D_2'^2f_0'^6g_0^7g^3h_1^4h_2^4} W_0\Big(\frac{D_{1, 0}g}{\Xi}\Big) 
    \sum_{n} \frac{B (n,\nu_0'\nu' n_{0,0}'n_{1,0}'n_0t_0th_1 )}{n}\\
    & \times \mathcal{C}_6
    \acute{F}_2^{\epsilon_1,\epsilon_2,\eta_1}(X_1n, X_2n,  {(U_0, U_1, U_2)})
    \end{align*}
    where \begin{align}\label{Facute2Def}
    &\acute{F}_2^{\epsilon_1,\epsilon_2,\eta_1}(b,c, {(u_0, u_1, u_2)})\\
    =&\int_0^\infty \int_{(0)}\int_0^\infty x^{-s-1}c^{-s}y^{3s+1} \mathcal{G}_{\mu_0}^{\eta_1}(s+1)\Phi_{\omega_6}\left(-\epsilon_2\frac{cx}{by^2},-\epsilon_1\frac{b^2my}{c^2}\right)V(x)\left(1-U\left( {\frac{\Phi(u_0, u_1y, u_2)}{X^{1-\eta}}}\right)\right)e\left(-\epsilon_2\eta_1by^{-1}\right)\mathrm{d} x\, \frac{\mathrm{d}s}{2\pi i} \mathrm{d} y, \nonumber
\end{align}
with 
\begin{displaymath}
     \begin{split}  
  & X_1 = \frac{\nu_0'^3\nu' n_{0,0}'^3n_{1,0}'n_0^2r_0^2rs_0^5s_{1,0}^3s^2t_0^3t^2D_{0,1}^2D_0'^2f_0'g_0h_1^2h_2^2}{D_{2,0}'D_2'X^{2/3}}
, \quad X_2 = \frac{\nu_0'^5\nu'^2n_{0,0}'^5n_{1,0}'^2n_0^2r_0^4rs_0^9s_{1,0}^6s^3t_0^5t^2D_{0,1}^3D_0'^3f_0'^3g_0^3h_1^2h_2^3}{X},\\ &  {(U_0, U_1, U_2) = \Big( \xi_0, \frac{X^{2/3}}{\xi_0}, \frac{\xi_0\nu'n_{1, 0}'s_{1, 0}D_{2, 0}'D_2' h_1}{D_{0, 1}D_0'}\Big), \quad \xi_0 =    \nu'_0 n'_{0, 0}r_0s_0^2s_{1, 0} t_0f_0'g_0 D_{0, 1}D_0'h_2},
    \end{split}
    \end{displaymath}
and 
 \begin{align*}
 \mathcal{C}_6 =    &\sumast_{k\Mod{\nu_0'n_{0,0}'r_0s_0^2s_{1,0}t_0D_{0,1}D_{1,0}D_{2,0}'D_2'f_0'g_0g}}e\left(\epsilon_2\eta_1\frac{\nu_0'n_{0,0}'n_0s_0t_0tD_{0,1}D_0'\overline{k}n}{D_{1,0}D_{2,0}'D_2'f_0'g_0g}\right)\sumast_{\substack{\alpha\Mod{\nu_0'n_{0,0}'r_0s_0^2s_{1,0}t_0D_{0,1}f_0'g_0}\\Y_6 \equiv 0 \Mod{\nu_0's_0D_{0,1}}\\ \big(\frac{Y_6}{\nu_0's_0D_{0,1}},f_0'\big)=1}}\\
    &\times  e\left(-\epsilon_2\eta_1\frac{n_{0,0}'n_0t_0tD_0'\overline{\frac{Y_6}{\nu_0's_0D_{0,1}}k g_0g} n}{D_{1,0}f_0'}\right) S\left(m,\nu' n_{1,0}'s_{1,0}s\overline{D_0'h_2}\frac{Y_6\overline{\alpha}}{D_{0,1}};\nu_0'\nu' n_{0,0}'n_{1,0}'r_0s_0^2s_{1,0}^2st_0D_{2,0}'D_2'f_0'g_0\right)
\end{align*}
with
$$ Y_6 = D_{2,0}'D_2'+\epsilon_1n_0rstD_{1,0}gh_1^2h_2k\alpha.$$
To simplify the character sum, note that \begin{align*}    (n_{0,0}'n_0t_0tD_0',D_{1,0}D_{2,0}'D_2'f_0'g_0g)=(D_{2,0}',n_{0,0}')(D_{1,0}D_{2,0}'f_0',t_0)(D_2',n_0t)
\end{align*}
and \begin{align*}
    (\nu_0's_0D_{0,1},D_{2,0}'D_2'g_0g)=(\nu_0'D_{0,1},g_0).
\end{align*}
We first pull out the divisors $\delta:=(D_2',n_0)$, $\tau:=(t,D_2')$ and $g_{0,0}:=(g_0,\nu_0')$. Write \begin{align*}
    D_2'=\delta\tau D_2, \quad n_0=\delta n_0', \quad t=\tau t', \quad g_0=g_{0,0}g_0'' , \quad  \nu_0'=g_{0,0}\nu_0.
\end{align*}
We now pull out the divisor $\tau_0:=(f_0',t_0)$ and write $$f_0'=\tau_0f_0, \quad  t_0=\tau_0t_0''.$$
Note that we have $(D_{1,0},D_{2,0}')=(n_{0,0}',t_0)=1$. We pull out the divisors $\tau_{1,0}:=(D_{1,0},t_0'')$, $\tau_{2,0}:=(D_{2,0}',t_0'')$ and $\delta_{2,0}:=(D_{2,0}',n_{0,0}')$. Write \begin{align*}
    D_{1,0}=\tau_{1,0}D_{1,0}', \quad D_{2,0}'=\delta_{2,0}\tau_{2,0}D_{2,0}, \quad t_0''=\tau_{1,0}\tau_{2,0}t_0',  \quad  n_{0,0}'=\delta_{2,0}n_{0,0}.
\end{align*}
Finally, pull out the divisor $g_{0,1}:=(D_{0,1},g_0'')$ and write \begin{align*}
    D_{0,1}=g_{0,1}D_{0,1}' , \quad  g_0''=g_{0,1}g_0'.
\end{align*}
Now we have \begin{align*}
    &\left(\nu_0s_0D_{0,1}'-\overline{\frac{\delta_{2,0}\delta\tau_{2,0}\tau D_{2,0}D_2+\epsilon_1\delta \tau_{1,0}\tau n_0'rst'D_{1,0}'gh_1^2h_2k\alpha}{\nu_0s_0D_{0,1}'}g_0'g}\delta_{2,0}\delta\tau_{2,0}\tau D_{2,0}D_2g_0'g,D_{1,0}'D_{2,0}D_2f_0g_0'g\right)\\
    =&\left(\delta_{2,0}\tau_{2,0} D_{2,0}D_2+\epsilon_1 \tau_{1,0} n_0'rst'D_{1,0}'gh_1^2h_2k\alpha-\delta_{2,0}\tau_{2,0} D_{2,0}D_2,D_{1,0}'f_0\right)\left(\nu_0s_0D_{0,1}',D_{2,0}D_2g_0'g\right)=D_{1,0}'.
\end{align*}
Inserting all the above relabeling and coprimality conditions back into $\Sigma_6^{(3)}$
, we can recast $\Sigma_6^{(3)}$ as 
\begin{equation*}
\begin{split}
    &\sum_{\epsilon_1,\epsilon_2,\eta_1=\pm1} \sum_{\nu_0}\sum_{g_{0,0}}\mathop{\sum\sum}_{(g_{0,1}g_0',\nu_0)=1}\sum_{r_0}\sum_{s_0}\sum_{(s_{1,0},\nu_0)=1}\sum_{\tau_0}\sum_{(\tau_{1,0},\nu_0)=1}\sum_{(\tau_{2,0},\nu_0)=1}\sum_{t_0'}\sum_{(\delta_{2,0},\nu_0)=1}\sum_{n_{0,0}}\sum_{(f_0,\delta_{2,0}\tau_{1,0}\tau_{2,0}n_{0,0}t_0'g_{0,0}g_{0,1}g_0')=1}\\
    &\times \mathop{\sum\sum\sum\sum}_{\substack{n_{1,0}'D_{0,1}'D_{1,0}'D_{2,0}|(\nu_0\delta_{2,0}\tau_0\tau_{1,0}\tau_{2,0}n_{0,0}r_0s_0s_{1,0}t_0'f_0g_{0,0}g_{0,1}g_0')^\infty\\(\delta_{2,0}\tau_{1,0}\tau_{2,0}n_{1,0}'s_{1,0}D_{1,0}'D_{2,0}g_0',D_{0,1}'g_{0,1})=1\\(D_{1,0}',\nu_0\delta_{2,0}\tau_{2,0}n_{0,0}s_0t_0'D_{2,0}g_{0,0}g_{0,1}g_0')=(D_{2,0},\nu_0\tau_{1,0}n_{0,0}s_0t_0'g_{0,0})=1}}\mathop{\sum\sum\sum\sum\sum\sum\sum\sum\sum\sum\sum\sum}_{\substack{(\nu' \delta\tau n_0'rst'D_0'D_2gh_1h_2,\nu_0\delta_{2,0}\tau_0\tau_{1,0}\tau_{2,0}n_{0,0}r_0s_0s_{1,0}t_0'f_0g_{0,0}g_{0,1}g_0')=1\\
    (h_1h_2,\delta\tau D_2g)=(\nu'\delta\tau n_0'rst'D_2gh_1,D_0'h_2)=1\\ (D_2,n_0't')=1 }}\\
    &\times \frac{X^{4/3}\mu(\delta_{2,0}\delta \tau_0\tau_{1,0}\tau_{2,0}\tau n_{0,0}n_0'r_0rs_0s_{1,0}st_0't'g_{0,0}g_{0,1}g_0'gh_1)\mu(\tau_0\tau_{1,0}\tau_{2,0}\tau s_0s_{1,0}st_0't'g_{0,0}g_{0,1}g_0'gh_1h_2)}{\nu_0^7\nu'^3\delta_{2,0}^{10} \delta^5\tau_0^{14}\tau_{1,0}^{10}\tau_{2,0}^{10}\tau^5n_{0,0}^8n_{1,0}'^3n_0'^3r_0^7r^2s_0^{14}s_{1,0}^{10}s^5t_0'^8t'^3D_{0,1}'^4D_{1,0}'^2D_{2,0}^2D_0'^3D_2^2f_0^6g_{0,0}^{14}g_{0,1}^{11}g_0'^7g^3h_1^4h_2^4}\\
    &\times W_0\Big(\frac{\tau_{1,0}D_{1, 0}'g}{\Xi}\Big) 
    B(r_0r,1)\sum_{n} \frac{B(n,\nu_0\nu'\delta_{2,0}\delta\tau_0\tau_{1,0}\tau_{2,0}\tau n_{0,0}n_{1,0}'n_0't_0't'g_{0,0}h_1)}{n}\mathcal{C}_7
    \acute{F}_2^{\epsilon_1,\epsilon_2,\eta_1}(X_1n,X_2n, {(U_0, U_1, U_2)})
    \end{split}
\end{equation*}
where $X_1, X_2,  {U_0, U_1, U_2}$ are now updated to \begin{equation}
\begin{split}\label{x1x2x3}
    X_1=&\frac{\nu_0^3\nu'\delta_{2,0}^2\delta\tau_0^4\tau_{1,0}^3\tau_{2,0}^2\tau n_{0,0}^3n_{1,0}'n_0'^2r_0^2rs_0^5s_{1,0}^3s^2t_0'^3t'^2D_{0,1}'^2D_0'^2f_0g_{0,0}^4g_{0,1}^3g_0'h_1^2h_2^2}{D_{2,0}D_2X^{2/3}},    \\
    X_2=&\frac{\nu_0^5\nu'^2\delta_{2,0}^5\delta^2\tau_0^8\tau_{1,0}^5\tau_{2,0}^5\tau^2n_{0,0}^5n_{1,0}'^2n_0'^2r_0^4rs_0^9s_{1,0}^6s^3t_0'^5t'^2D_{0,1}'^3D_0'^3f_0^3g_{0,0}^8g_{0,1}^6g_0'^3h_1^2h_2^3}{X},\\
     {(U_0, U_1, U_2)} & {= \Big(\xi_0, \frac{X^{2/3}}{\xi_0}, \frac{\xi_0\nu'\delta_{2, 0}\tau_{2, 0}\tau, n_{1,  0}'s_{1, 0}sD_{2, 0}D_2 h_1}{g_{0, 1}'D_{0, 1}'D_0'}\Big), \quad \xi_0 = \nu_0\delta_{2, 0}\tau_0^2\tau_{1, 0}\tau_{2, 0}n_{0, 0}r_0s_0^2 s_{1, 0}t_0'D_{0, 1}' D_0'f_0g_{0, 0}^2 g_{0, 1}^2 g_0' h_2} 
    \end{split}
\end{equation}
and $\mathcal{C}_7$ given by 
\begin{align}
 & \sumast_{k\, \big(\text{mod }\nu_0\delta_{2,0}^2\delta\tau_0^2\tau_{1,0}^2\tau_{2,0}^2\tau n_{0,0}r_0s_0^2s_{1,0}t_0'D_{0,1}'D_{1,0}'D_{2,0}D_2f_0g_{0,0}^2g_{0,1}^2g_0'g\big)}\sumast_{\substack{\alpha \, \big(\text{mod }\nu_0\delta_{2,0}\tau_0^2\tau_{1,0}\tau_{2,0}n_{0,0}r_0s_0^2s_{1,0}t_0'D_{0,1}'f_0g_{0,0}^2g_{0,1}^2g_0'\big) \nonumber\\ Y_7 \equiv 0\Mod{\nu_0s_0D_{0,1}'g_{0,0}g_{0,1}}\\\Big(\frac{Y_7}{\nu_0s_0D_{0,1}'g_{0,0}g_{0,1}},\tau_0 f_0\Big)=1}}\\
    &\times   e\left(\epsilon_2\eta_1\frac{n_{0,0}n_0't_0't'D_0'Y\overline{k}n}{D_{2,0}D_2f_0g_0'g}\right) S\left(m,\nu'\delta\tau n_{1,0}'s_{1,0}s\overline{D_0'h_2}\frac{Y_7 \overline{\alpha}}{D_{0,1}'g_{0,1}}; 
    \nu_0\nu'\delta_{2,0}^2\delta\tau_0^2\tau_{1,0}\tau_{2,0}^2\tau n_{0,0}n_{1,0}'r_0s_0^2s_{1,0}^2st_0'D_{2,0}D_2f_0g_{0,0}^2g_{0,1}g_0'\right), \nonumber
\end{align}
with \begin{align*}
    Y_7 = \delta_{2,0}\tau_{2,0} D_{2,0}D_2+\epsilon_1 \tau_{1,0} n_0'rst'D_{1,0}'gh_1^2h_2k\alpha, \quad Y=\frac{1}{D_{1, 0}'}\Big(\nu_0s_0D_{0,1}'-\overline{\frac{Y_7}{\nu_0s_0D_{0,1}'}\delta\tau g_0'g}\delta_{2,0}\delta\tau_{2,0}\tau D_{2,0}D_2g_0'g\Big).
\end{align*}
Here the inverse is taken mod $
D_{1,0}'f_0$. 

\subsection{Step 4: Second Voronoi summation}

We are now ready to perform the final step,  a second application of Voronoi summation (Lemma \ref{lem.Voronoi}) on the $n$-sum in $\Sigma_6^{(3)}$. 
Write \begin{equation}\label{mu}
    M=D_{2,0}D_2f_0g_0'g \quad \text{ and } \quad u= \nu_0\nu'\delta_{2,0}\delta\tau_0\tau_{1,0}\tau_{2,0}\tau n_{0,0}n_{1,0}'n_0't_0't'g_{0,0}h_1.
\end{equation}
At the cost of a very small error, we add a factor $$W_0\Big(\frac{n'_0rt'h_1}{\Xi'}\Big)W_0\Big(\frac{1}{\Xi'n}\Big)$$ with $$\Xi' = X^{10^{10}},$$ which will be used to ensure absolute convergence.  As before we continue to call this slightly modified quantity $\Sigma_6^{(3)}$.

Singling out the $n$-sum, we have \begin{align}\label{vor2}
\begin{split}
    &\sum_{n} \frac{B\left(n,u\right)}{n}e\left(\epsilon_2\eta_1\frac{n_{0,0}n_0't_0't'D_0'Y\overline{k}n}{M}\right)\acute{F}_2^{\epsilon_1,\epsilon_2,\eta_1}\left(X_1n,X_2n,X_3\right) W_0\Big( \frac{1}{\Xi' n}\Big) 
    \\
    =&MX_2\sum_{\eta_2=\pm1}\sum_{n_1\mid Mu}\sum_n\frac{B(n_1,n)}{n_1n}S\Bigg(\epsilon_2\eta_1u\overline{n_{0,0}n_0't_0't'D_0'Y}k,\eta_2 n;\frac{Mu}{n_1}\Bigg)F_3^{\epsilon_1,\epsilon_2,\eta_1,\eta_2}\Bigg(\frac{n_1^2n}{M^3uX_2},\frac{X_1}{X_2}, {(U_0, U_1, U_2)},X_2 \Bigg),
\end{split}
\end{align}
where \begin{equation}\label{deff3}
\begin{split}
    F_3^{\epsilon_1,\epsilon_2,\eta_1,\eta_2}&(a,b, {(u_0, u_1, u_2)},d)=\int_{(0)}\int_0^\infty\int_0^\infty\int_{(0)}\int_0^\infty a^{-s_2}x^{-s_1-1}y^{3s_1+1}z^{-s_1-s_2-2}\mathcal{G}_{\mu_0}^{\eta_1}(s_1+1)\mathcal{G}_{-\mu_0}^{\eta_2}(s_2+1)\\
    &\times  V(x)\Big(1-U\Big( {\frac{\Phi(u_0, u_1y, u_2)}{X^{1-\eta}}}\Big)\Big)
    W_0\Big(\frac{d}{\Xi' z}\Big)
   \Phi_{\omega_6}\left(-\epsilon_2\frac{x}{by^2},-\epsilon_1 b^2my\right)e\left(-\epsilon_2\eta_1\frac{bz}{y}\right)\mathrm{d} x \frac{\mathrm{d}s_1}{2\pi i} \mathrm{d} y\, \mathrm{d} z\frac{\mathrm{d}s_2}{2\pi i} ,
   \end{split}
\end{equation}
with $\mathcal{G}_\mu^\pm$ as defined in Lemma \ref{lem.Voronoi}, and   the inverse in the Kloosterman sum  is modulo $D_{2,0}D_2f_0g_0'g$.  Recalling \eqref{x1x2x3} and \eqref{mu}, the arguments in \eqref{vor2} of $F_3$ are given by
\begin{displaymath}
    \begin{split}
        X_1' &= \frac{n_1^2n}{M^3uX_2} = \frac{n_1^2nX}{\nu_0^6\nu'^3\delta_{2,0}^6\delta^3\tau_0^9\tau_{1,0}^6\tau_{2,0}^6\tau^3 n_{0,0}^6n_{1,0}'^3n_0'^3r_0^4rs_0^9s_{1,0}^6s^3t_0'^6t'^3D_{0,1}'^3D_{2,0}^3D_0'^3D_2^3f_0^6g_{0,0}^9g_{0,1}^6g_0'^6g^3h_1^3h_2^3},\\
    X_2' & =  \frac{X_1}{X_2}= \frac{X^{1/3}}{\nu_0^2\nu'\delta_{2,0}^3\delta\tau_0^4\tau_{1,0}^2\tau_{2,0}^3\tau n_{0,0}^2n_{1,0}'r_0^2s_0^4s_{1,0}^3st_0'^2 D_{0,1}'D_{2,0}D_0'D_2f_0^2g_{0,0}^4g_{0,1}^3g_0'^2h_2},\\
    X_3' & =  {(U_0, U_1, U_2)}\\
    X_4'& = X_2   =\frac{\nu_0^5\nu'^2\delta_{2,0}^5\delta^2\tau_0^8\tau_{1,0}^5\tau_{2,0}^5\tau^2n_{0,0}^5n_{1,0}'^2n_0'^2r_0^4rs_0^9s_{1,0}^6s^3t_0'^5t'^2D_{0,1}'^3D_0'^3f_0^3g_{0,0}^8g_{0,1}^6g_0'^3h_1^2h_2^3}{X}.  
    \end{split}
\end{displaymath}

We proceed to analyze $F_3$ as a conditionally convergent integral in the same way as $F_2$, and to this end we again insert first  smooth partitions of unity to localize $y \asymp Y \gg 1/c$ and $z \asymp Z$. We call the truncated 5-fold integral $ F_{3, Y, Z}^{\epsilon_1,\epsilon_2,\eta_1,\eta_2}(a,b,c,d)$.

\begin{lemma}\label{lemf3} We have
\begin{displaymath}
    \begin{split}
&F_{3, Y, Z}^{\epsilon_1,\epsilon_2,\eta_1,\eta_2}(a,b, {(u_0, u_1, u_2)},d) \\
&\preccurlyeq  \frac{Y^2}{Z}\min\big(aZ, (aZ)^{2/3}\big) \min\Big( \frac{Z}{Y^3}, \frac{Z^{2/3}}{Y^2}\Big) \Big(\frac{Y}{b}\Big)^{1/4}\Big(1 +  {\frac{X^{1-\eta}}{\Phi(u_0, u_1Y, u_2)}} + \frac{d}{Z\Xi'} +  { Y + \frac{1}{b} }+ \frac{Z/Y^3}{T_1^3} + \frac{aZ}{(\frac{bZ}{Y} + T_1 )^3}\Big)^{-A}
\end{split}
\end{displaymath}
where
$$T_1 = \frac{1}{\sqrt{b}Y} +  \min\Big( \frac{1}{\sqrt{Y}}, \frac{bZ}{Y}    + b \sqrt{Y}\Big).$$
\end{lemma}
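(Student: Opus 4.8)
\textbf{Proof plan for Lemma \ref{lemf3}.}

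The plan is to treat the five-fold integral in \eqref{deff3} exactly as the triple integral for $F_2$ was treated in Lemma \ref{lemf2}, only with one extra pair of variables $(s_2, z)$ corresponding to the second Voronoi step. First I would verify conditional convergence and the right to interchange the integrals: shift the $s_1$-contour to $\Re s_1 < -5/6$ and the $s_2$-contour to $\Re s_2 < -5/6$ so that the $x$- and $z$-integrals become absolutely convergent on the localized pieces $y \asymp Y$, $z \asymp Z$, using the rapid decay of $\Phi_{w_6}$ near $0$ (Lemma \ref{lem.Phi6Truncation}) together with the Stirling bound $\mathcal{G}^{\pm}_{\mu_0}(s+1) \ll (1 + |\Im s|)^{3\Re s + 3/2}$ from the remark after \eqref{doubleBessel}. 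Then I would collect, one by one, the restrictions coming from repeated integration by parts and contour shifts:
\begin{itemize}
\item the factor $1 - U(cy)$ gives $Y \gg 1/c$, i.e.\ $\frac{1}{cY} \preccurlyeq 1$;
\item the factor $W_0(d/(\Xi' z))$ gives $Z \gg d/\Xi'$, i.e.\ $\frac{d}{Z\Xi'} \preccurlyeq 1$;
\item integration by parts in $x$ controls $\Im s_1$ via the decay of $\Phi_{w_6}$, which together with a contour shift in $s_1$ bounds the size in terms of $b$; the decay of $\Phi_{w_6}(-\epsilon_2 x/(by^2), -\epsilon_1 b^2 m y)$ forces $bY^2 \preccurlyeq 1$ and $b^2 Y \succcurlyeq 1$ (i.e.\ $\frac{1}{b^2 Y} \preccurlyeq 1$);
\item integration by parts in $y$ produces the phase $-\epsilon_2\eta_1 b z/y$ of derivative $\asymp bZ/Y$ plus the contributions from $\Phi_{w_6}$ and the $x$-integral of size $T_1 - \frac{1}{\sqrt b Y}$ (the $\min$ term), so the effective oscillation parameter in $y$ is $\frac{bZ}{Y} + T_1$, giving the constraint through $aZ$ the way $a$ entered the bound for $F_2$;
\item integration by parts in $z$, with phase of derivative $\asymp bZ/(Y z) \asymp b/Y$ — more precisely the relevant parameter is $T_1$ once combined with the $s_2$-contour shift — yields the $\frac{Z/Y^3}{T_1^3}$ factor after shifting $s_2$ to the right (exactly as $c$ was handled in $F_2$: $c \preccurlyeq$ cube of the $y$-oscillation).
\end{itemize}

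Once all of these are in place, the size of the localized integral comes from applying the stationary phase analysis of \eqref{statphs} to \emph{both} one-dimensional Mellin–Barnes integrals: the $s_1$-integral contributes $\min(aZ, (aZ)^{2/3})$ — here I should be careful that the variable paired with $s_1$ is effectively $a z$ rather than $a$, because of the $z^{-s_1 - s_2 - 2}$ factor, so the stationary point in $s_1$ sits at $|t_1| \asymp (x a z)^{1/3}$ and the $z \asymp Z$ localization turns this into $\min(aZ, (aZ)^{2/3})$ — while the $s_2$-integral contributes $\min(Z/Y^3, Z^{2/3}/Y^2)$ by the same mechanism, the relevant product being $y^{-3}z^{-1}$-ish. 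The remaining trivial $x$- and $y$-integration over $x \asymp 1$, $y \asymp Y$, $z \asymp Z$ together with Lemma \ref{Kw6}(a) for $\Phi_{w_6}(-\epsilon_2 x/(by^2), \dots)$ contributes the measure factor $Y \cdot Z \cdot (Y/b)^{1/4}$ after estimating $|\Phi_{w_6}| \preccurlyeq (1 + |y_1|)^{-1/4}$ with $y_1 \asymp Y^2/(bY^2) $... more precisely $|y_1| \asymp 1/(b \cdot (\text{something}))$ so one gets $(Y/b)^{1/4}$ exactly as the $(dY)^{1/4}$ appeared in \eqref{boundf2} with $d \leftrightarrow 1/b$. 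Assembling, one reads off the prefactor $\frac{Y^2}{Z}\min(aZ,(aZ)^{2/3})\min(Z/Y^3, Z^{2/3}/Y^2)(Y/b)^{1/4}$.

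The main obstacle, as in Lemma \ref{lemf2}, is \textbf{not} any single estimate but the bookkeeping: keeping straight which oscillation parameter governs each of the three ``physical'' integrations in $x,y,z$ once the two contour shifts in $s_1,s_2$ are performed, and in particular correctly identifying that the $z$-variable enters the $s_1$-stationary phase (hence $aZ$ and not $a$) and that the combined $y$-phase has the two-piece shape $\frac{bZ}{Y} + T_1$ with $T_1$ itself a minimum of two regimes (reflecting whether the $\Phi_{w_6}$ oscillation or the $e(-\epsilon_2\eta_1 b z/y)$ oscillation dominates in $y$). After that, as in the proof of Lemma \ref{lemf2} one sums over the dyadic pieces $y \asymp Y$, $z \asymp Z$ (the bound being summable in $Y, Z$ by the displayed decay) to drop the partitions of unity. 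I expect no computer assistance is needed here; the only delicate point is getting the exponents in $T_1$ and in the $\min$'s right, which I would double-check against the special cases $b \to 0$ and $Z \asymp 1$ where $F_3$ should degenerate to (a variant of) $F_2$.
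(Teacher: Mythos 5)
Your overall architecture is the same as the paper's: dyadic localization in $y,z$, support conditions from $1-U(cy)$, $W_0(d/(\Xi'z))$ and the decay of $\Phi_{w_6}$, integration by parts in the physical variables to control the imaginary parts of the Mellin variables, contour shifts to produce the decay factors, the stationary phase bound \eqref{statphs} for the main size, and Lemma \ref{Kw6}(a) for the factor $(Y/b)^{1/4}$ (your computation $(bY^2)^{1/4}(b^2Y)^{-1/4}=(Y/b)^{1/4}$ is correct, as are the support conditions $bY^2\preccurlyeq 1$, $b^2Y\succcurlyeq 1$). The problem lies precisely in the bookkeeping you yourself flag as the crux, and as written your assignments would not survive execution. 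In \eqref{deff3} the parameter $a$ occurs only through $a^{-s_2}$, so it is paired with $s_2$ (the relevant combination being $(az)^{-s_2}$), not with $s_1$; correspondingly it is the $s_2$-integral that contributes $\min(aZ,(aZ)^{2/3})$, while the $s_1$-integral, whose Mellin pairing is $x^{-s_1}y^{3s_1}z^{-s_1}=(xz/y^3)^{-s_1}$, contributes $\min(Z/Y^3,Z^{2/3}/Y^2)$. You have these two roles swapped throughout (and the stationary point you quote as $(xaz)^{1/3}$ should be $(az)^{1/3}$).

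More seriously, your claim that the $aZ$-constraint arises from integration by parts in $y$ ``the way $a$ entered the bound for $F_2$'' cannot work: in $F_2$ the parameter $a$ multiplied a linear phase $e(-ay)$, whereas in $F_3$ no such phase exists, so the only way to extract $a$ is to shift the $s_2$-contour to the right; the admissible range of that shift is governed by $\Im s_2$, which is controlled by integration by parts in $z$ (the $z$-dependence being $z^{-s_1-s_2-2}e(-\epsilon_2\eta_1 bz/y)$, giving $\Im s_2\preccurlyeq T_1+bZ/Y$ once $\Im s_1\preccurlyeq T_1$ is known), whence $aZ\preccurlyeq(T_1+bZ/Y)^3$. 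Dually, the factor $Z/Y^3\preccurlyeq T_1^3$ does not come from the $z$-integration or the $s_2$-shift: it comes from shifting $s_1$ to the right, after the integrations by parts in $x$ and in $y$ have each produced a bound on $\Im s_1$ whose combination (a common term $1/(\sqrt{b}Y)$ plus a minimum of the two remaining oscillation scales, in line with your correct remark about which oscillation dominates in $y$) is exactly $T_1$. Finally, the prefactor is $Y^2/Z$ rather than your ``measure factor $Y\cdot Z$'' because the explicit powers $y^{3s_1+1}z^{-s_1-s_2-2}$ evaluated on $\Re s_1=\Re s_2=0$ contribute an additional $Y/Z^2$. With these reassignments your plan coincides with the paper's proof.
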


\begin{proof} The upper bound  $$\frac{Y^2}{Z}\min\big(aZ, (aZ)^{2/3}\big) \min\Big( \frac{Z}{Y^3}, \frac{Z^{2/3}}{Y^2}\Big) \Big(\frac{Y}{b}\Big)^{1/4}$$
follows literally as in Lemma \ref{lemF1}, using a simple stationary phase argument for the integrals over $s_1$ and $s_2$ and Lemma \ref{Kw6}a). The support conditions $ {\Phi(u_0, u_1Y, u_2)  \succcurlyeq X^{1-\eta}}$, $d /(Z\Xi') \preccurlyeq 1,  { Y \preccurlyeq 1\preccurlyeq b}$ are clear.  {Note that by \eqref{defPhi} the first of these bounds implies $$Y \succcurlyeq \min\Big( \frac{X^{\frac{4}{3}(1-\eta)}}{u_0u_1} , \frac{X^{\frac{1}{2} - \eta}u_2^{1/2}}{u_0^{1/2} u_1}\Big).$$}
\begin{itemize}
\item Integration by parts in $x$ together with Lemma \ref{lem.Phi6Truncation} gives $\Im s_1 \preccurlyeq  \frac{1}{\sqrt{b}Y} + \frac{1}{\sqrt{Y}} $;
\item Integration by parts in $y$ together with Lemma \ref{lem.Phi6Truncation}  gives $\Im s_1 \preccurlyeq  \frac{1}{\sqrt{b}Y} + b \sqrt{Y}  + \frac{bZ}{Y}$.
\end{itemize}
These two bounds together imply $\Im s_1 \preccurlyeq T_1$.
\begin{itemize}
\item Shifting the $s_1$-contour to the right gives $Z/Y^3 \preccurlyeq T_1^3$, in particular $Z\preccurlyeq Y^3\left(\frac{1}{\sqrt{b}Y} + \frac{1}{\sqrt{Y}}\right)^3\asymp b^{-3/2}+Y^{3/2} {\preccurlyeq 1}$;
\item Integration by parts in $z$ gives $\Im s_2 \preccurlyeq T_1 + bZ/Y$;
\item Shifting the $s_2$-contour to the right gives $aZ \preccurlyeq (T_1 + bZ/Y)^3$.
\end{itemize}
This completes the proof.
\end{proof}

More careful estimates are possible, but the above suffices for our purposes. We conclude in particular that we can now remove the dyadic partition and as before view $F_3$ as a conditionally convergent multiple integral that is negligible unless  
\begin{equation}\label{boundsF}
b \succcurlyeq 1, \quad a \preccurlyeq  {\  \frac{b^3}{ZY^3} \preccurlyeq \frac{\Xi b^3}{d}u_1u_0^{1/2}( u_0^{1/2}  +u_2^{-1/2})}
\end{equation}
For later purposes, we observe that the factor  $W_0(d/(\Xi' z ))$ is important to obtain the second condition in \eqref{boundsF}, but unimportant for the convergence of the un-truncated five-fold integral \eqref{deff3}. Indeed, the previous lemma implies
$$F_{3, Y, Z}^{\epsilon_1,\epsilon_2,\eta_1,\eta_2}(a,b, {(u_0, u_1, u_2)},d) \preccurlyeq  \frac{a}{b^{1/4}} Z^{2/3} Y^{1/4} (1 + b^{-1} + Y  +Z)^{-A},$$
independently of $\Xi'$, so that we can drop the dyadic partition in any case.

 Inserting \eqref{vor2} back, we recast $\Sigma_6^{(3)}$ as  \begin{equation}\label{abscon}
 \begin{split}
   &\sum_{\epsilon_1,\epsilon_2,\eta_1,\eta_2=\pm1} \sum_{\nu_0}\sum_{g_{0,0}}\mathop{\sum\sum}_{(g_{0,1}g_0',\nu_0)=1}\sum_{r_0}\sum_{s_0}\sum_{(s_{1,0},\nu_0)=1}\sum_{\tau_0}\sum_{(\tau_{1,0},\nu_0)=1}\sum_{(\tau_{2,0},\nu_0)=1}\sum_{t_0'}\sum_{(\delta_{2,0},\nu_0)=1}\sum_{n_{0,0}}\sum_{(f_0,\delta_{2,0}\tau_{1,0}\tau_{2,0}n_{0,0}t_0'g_{0,0}g_{0,1}g_0')=1}\\
    &\times \mathop{\sum\sum\sum\sum}_{\substack{n_{1,0}'D_{0,1}'D_{1,0}'D_{2,0}\mid(\nu_0\delta_{2,0}\tau_0\tau_{1,0}\tau_{2,0}n_{0,0}r_0s_0s_{1,0}t_0'f_0g_{0,0}g_{0,1}g_0')^\infty\\(\delta_{2,0}\tau_{1,0}\tau_{2,0}n_{1,0}'s_{1,0}D_{1,0}'D_{2,0}g_0',D_{0,1}'g_{0,1})=1\\(D_{1,0}',\nu_0\delta_{2,0}\tau_{2,0}n_{0,0}s_0t_0'D_{2,0}g_{0,0}g_{0,1}g_0')=(D_{2,0},\nu_0\tau_{1,0}n_{0,0}s_0t_0'g_{0,0})=1}}\mathop{\sum\sum\sum\sum\sum\sum\sum\sum\sum\sum\sum\sum}_{\substack{(\nu' \delta\tau n_0'rst'D_0'D_2gh_1h_2,\nu_0\delta_{2,0}\tau_0\tau_{1,0}\tau_{2,0}n_{0,0}r_0s_0s_{1,0}t_0'f_0g_{0,0}g_{0,1}g_0')=1\\
    (h_1h_2,\delta\tau D_2g)=(\nu'\delta\tau n_0'rst'D_2gh_1,D_0'h_2)=1\\ (D_2,n_0't')=1 }}\\
    &\times \frac{X^{1/3}\mu (\delta_{2,0}\delta \tau_0\tau_{1,0}\tau_{2,0}\tau n_{0,0}n_0'r_0rs_0s_{1,0}st_0't'g_{0,0}g_{0,1}g_0'gh_1)\mu(\tau_0\tau_{1,0}\tau_{2,0}\tau s_0s_{1,0}st_0't'g_{0,0}g_{0,1}g_0'gh_1h_2)}{\nu_0^2\nu'\delta_{2,0}^{5} \delta^3\tau_0^{6}\tau_{1,0}^{5}\tau_{2,0}^{5}\tau^3n_{0,0}^3n_{1,0}'n_0'r_0^{3}rs_0^{5}s_{1,0}^{4}s^2t_0'^{3}t'D_{0,1}'D_{1,0}'^2D_{2,0}D_2f_0^2g_{0,0}^{6}g_{0,1}^{5}g_0'^3g^2h_1^2h_2} W_0\Big(\frac{\tau_{1,0}D_{1,0}'g}{\Xi}\Big)\\
    &\times B(r_0r,1)\sum_{n_1\mid\nu_0\nu'\delta_{2,0}\delta\tau_0\tau_{1,0}\tau_{2,0}\tau n_{0,0}n_{1,0}'n_0't_0't'D_{2,0}D_2f_0g_{0,0}g_0'gh_1}\sum_n\frac{B(n_1,n)}{n_1n}  W_0\Big( \frac{n_0'rt'h_1}{\Xi'}\Big)\mathcal{C}_8
    F_3^{\epsilon_1,\epsilon_2,\eta_1,\eta_2}(X_1', X_2', X_3',X_4'),
    \end{split}
\end{equation}
with $\mathcal{C}_8$ 
given by \begin{align*}
    &\sumast_{k\Mod{\nu_0\delta_{2,0}^2\delta\tau_0^2\tau_{1,0}^2\tau_{2,0}^2\tau n_{0,0}r_0s_0^2s_{1,0}t_0'D_{0,1}'D_{1,0}'D_{2,0}D_2f_0g_{0,0}^2g_{0,1}^2g_0'g}}\sumast_{\substack{\alpha\Mod{\nu_0\delta_{2,0}\tau_0^2\tau_{1,0}\tau_{2,0}n_{0,0}r_0s_0^2s_{1,0}t_0'D_{0,1}'f_0g_{0,0}^2g_{0,1}^2g_0'}\\ Y_7 \equiv 0\Mod{\nu_0s_0D_{0,1}'g_{0,0}g_{0,1}}\\\big(\frac{Y_7}{\nu_0s_0D_{0,1}'g_{0,0}g_{0,1}},\tau_0 f_0\big)=1}}\\
    &\times S\left(m,\nu'\delta\tau n_{1,0}'s_{1,0}s\overline{D_0'h_2}\frac{Y_7\overline{\alpha}}{D_{0,1}'g_{0,1}}; 
   \nu_0\nu'\delta_{2,0}^2\delta\tau_0^2\tau_{1,0}\tau_{2,0}^2\tau n_{0,0}n_{1,0}'r_0s_0^2s_{1,0}^2st_0'D_{2,0}D_2f_0g_{0,0}^2g_{0,1}g_0'\right)\\
    &\times S\left(\epsilon_2\eta_1\nu_0\nu'\delta_{2,0}\delta\tau_0\tau_{1,0}\tau_{2,0}\tau n_{0,0}n_{1,0}'n_0't_0't'g_{0,0}h_1\overline{n_{0,0}n_0't_0't'D_0'Y}k,\eta_2 n; \frac{\nu_0\nu'\delta_{2,0}\delta\tau_0\tau_{1,0}\tau_{2,0}\tau n_{0,0}n_{1,0}'n_0't_0't'D_{2,0}D_2f_0g_{0,0}g_0'gh_1}{n_1}\right).
\end{align*}
The first condition in \eqref{boundsF} implies that all variables are effectively bounded by some power of $X$, except perhaps  $n_0', r, t', h_1, f$, which are bounded by $\Xi$, and $n_1, n$ which are bounded by the second condition in \eqref{boundsF}.  Hence the entire expression is absolutely convergent.  

\subsection{Simplification I: Elimination of variables by M\"obius inversion}

We have now reached the point with the longest expression, and we proceed to simplify.  
Notice that by M\"obius inversion, we have \begin{align*}
    \sum_x\sum_y F(xy)\mu(x)=F(1)
\end{align*}
for any function $F$. Applying this equality to the pairs $(\delta_{2,0},\tau_{2,0})$, $(\delta,\tau)$, $(n_{0,0},t_0')$ and $(n_0',t')$, we get \begin{equation}\label{one}
    \delta_{2,0}=\delta=\tau_{2,0}=\tau=n_{0,0}=n_0'=t_0'=t'=1.
\end{equation}

To prepare ourselves for the evaluation of the character sum $\mathcal{C}_8$, we write 
\begin{equation}\label{new-var}
   D_{1,1}=(D_{1,0}',\tau_{1,0}^\infty), \quad D_{2,1}=(D_{2,0},g_0'^\infty),  \quad n_{1,1}=(n_{1,0}',  (\tau_{1,0}g_0')^\infty), \quad D_{2,0}=D_{2,0}'D_{2,1}, \quad D_{1,0}'=D_{1,0}D_{1,1}, \quad n_{1,0}'=n_{1,0}n_{1,1}.
   \end{equation}
   Then we have \begin{equation}\label{endvor2}
\begin{split}
    \Sigma_6^{(3)}=&\sum_{\epsilon_1,\epsilon_2,\eta_1,\eta_2=\pm1} \sum_{\nu_0}\sum_{g_{0,0}}\mathop{\sum\sum}_{(g_{0,1}g_0',\nu_0)=1}\sum_{r_0}\sum_{s_0}\sum_{(s_{1,0},\nu_0)=1}\sum_{\tau_0}\sum_{(\tau_{1,0},\nu_0)=1}\sum_{(f_0,\tau_{1,0}g_{0,0}g_{0,1}g_0')=1}\sum_{D_{1,1}\mid \tau_{1,0}^\infty}\sum_{D_{2,1}\mid g_0'^\infty}\sum_{n_{1,1}\mid (\tau_{1,0}g_0')^\infty}\\
    &\times \mathop{\sum\sum\sum\sum}_{\substack{n_{1,0}D_{0,1}'D_{1,0}D_{2,0}'\mid (\nu_0\tau_0\tau_{1,0}r_0s_0s_{1,0}f_0g_{0,0}g_{0,1}g_0')^\infty\\(\tau_{1,0}n_{1,0}s_{1,0}D_{1,0}D_{2,0}'g_0',D_{0,1}'g_{0,1})=1\\(D_{1,0},\nu_0\tau_{1,0}s_0D_{2,0}'g_{0,0}g_{0,1}g_0')=(D_{2,0}',\nu_0\tau_{1,0}s_0g_{0,0}g_0')=(n_{1,0},\tau_{1,0}g_0')=1}}\mathop{\sum\sum\sum\sum\sum\sum\sum\sum}_{\substack{(\nu'  rsD_0'D_2gh_1h_2,\nu_0\tau_0\tau_{1,0}r_0s_0s_{1,0}f_0g_{0,0}g_{0,1}g_0')=1\\
    (h_1h_2,D_2g)=(\nu'rsD_2gh_1,D_0'h_2)=1}}\\
    &\times \frac{X^{1/3}\mu(\tau_0\tau_{1,0}r_0rs_0s_{1,0}sg_{0,0}g_{0,1}g_0'gh_1h_2)\mu(\tau_0\tau_{1,0} s_0s_{1,0}sg_{0,0}g_{0,1}g_0'gh_1)}{\nu_0^2\nu' \tau_0^{6}\tau_{1,0}^{5}n_{1,0}n_{1,1}r_0^{3}rs_0^{5}s_{1,0}^{4}s^2D_{0,1}'D_{1,0}^2D_{1,1}^2D_{2,0}'D_{2,1}D_2f_0^2g_{0,0}^{6}g_{0,1}^{5}g_0'^3g^2h_1^2h_2}W_0\Big(\frac{\tau_{1,0}D_{1,0}D_{1,1}g}{\Xi}\Big)\\
    &\times B(r_0r,1)\sum_{n_1\mid \nu_0\nu'\tau_0\tau_{1,0}n_{1,0}n_{1,1}D_{2,0}'D_{2,1}D_2f_0g_{0,0}g_0'gh_1}\sum_n\frac{B(n_1,n)}{n_1n} W_0\Big( \frac{rh_1}{\Xi'}\Big)\mathcal{C}_8F_3^{\epsilon_1,\epsilon_2,\eta_1,\eta_2} (X_1', X_2', X_3', X_4')\\
    \end{split}
\end{equation}
where $X_1', \ldots, X_4'$ undergo the simplification \eqref{one} and  now \begin{equation}\label{c8simpl}
\begin{split}
  \mathcal{C}_8 =  &\sumast_{k\Mod{\nu_0\tau_0^2\tau_{1,0}^2r_0s_0^2s_{1,0}D_{0,1}'D_{1,0}D_{1,1}D_{2,0}'D_{2,1}D_2f_0g_{0,0}^2g_{0,1}^2g_0'g}}\sumast_{\substack{\alpha\Mod{\nu_0\tau_0^2\tau_{1,0}r_0s_0^2s_{1,0}D_{0,1}'f_0g_{0,0}^2g_{0,1}^2g_0'}\\ Y_8 \equiv 0\Mod{\nu_0s_0D_{0,1}'g_{0,0}g_{0,1}}\\\left(\frac{Y_8}{\nu_0s_0D_{0,1}'g_{0,0}g_{0,1}},\tau_0 f_0\right)=1}}\\
    &\times S\Bigg(m,\nu' n_{1,0}n_{1,1}s_{1,0}s\overline{D_0'h_2}\frac{Y_8 \overline{\alpha}}{D_{0,1}'g_{0,1}};
\nu_0\nu'\tau_0^2\tau_{1,0}n_{1,0}n_{1,1}r_0s_0^2s_{1,0}^2sD_{2,0}'D_{2,1}D_2f_0g_{0,0}^2g_{0,1}g_0'\Bigg)\\
    &\times S\left(\epsilon_2\eta_1\nu_0\nu'\tau_0\tau_{1,0}n_{1,0}n_{1,1}g_{0,0}h_1\overline{D_0'Y}k,\eta_2 n;\frac{\nu_0\nu'\tau_0\tau_{1,0}n_{1,0}n_{1,1}D_{2,0}'D_{2,1}D_2f_0g_{0,0}g_0'gh_1}{n_1}\right)
    \end{split}
\end{equation}
with 
\begin{equation}\label{y8}
    Y_8 = D_{2,0}'D_{2,1}D_2+\epsilon_1 \tau_{1,0} rsD_{1,0}D_{1,1}gh_1^2h_2k\alpha, \quad Y= \frac{1}{D_{1,0}D_{1,1}}\Big(\nu_0s_0D_{0,1}'-\overline{\frac{Y_8}{\nu_0s_0D_{0,1}'} g_0'g}D_{2,0}'D_{2,1}D_2g_0'g\Big), 
\end{equation}
the  inverse in $Y$ being defined mod $
D_{1,0}D_{1,1}f_0$. 

\subsection{Simplification II: Evaluating the character sum}
We now evaluate the character sum $\mathcal{C}_8$. The final expression is given in Lemma \ref{finalc8} below.   Write \begin{equation}\label{new-var1}
    \begin{matrix*}[l]
        n_1':=(n_1,\nu'\tau_{1,0}n_{1,1}D_{2,1}D_2g_0'gh_1), & n_2'=n_1/n_1',\\
        K_1:= \tau_{1,0}^2D_{1,1}D_{2,1}D_2g_0'g, & K_2:=\nu_0\tau_0^2r_0s_0^2s_{1,0}D_{0,1}'D_{1,0}D_{2,0}'f_0g_{0,0}^2g_{0,1}^2,\\
        \mathcal{A}_1:=\tau_{1,0}g_0', & \mathcal{A}_2:=\nu_0\tau_0^2r_0s_0^2s_{1,0}D_{0,1}'f_0g_{0,0}^2g_{0,1}^2,\\
        A_1:=\nu'\tau_{1,0} n_{1,1}sD_{2,1}D_2g_0', & A_2:=\nu_0\tau_0^2 n_{1,0}r_0s_0^2s_{1,0}^2D_{2,0}'f_0g_{0,0}^2g_{0,1},\\
        B_1:=\nu'\tau_{1,0}n_{1,1}D_{2,1}D_2g_0'gh_1/n_1', & B_2:=\nu_0\tau_0n_{1,0}D_{2,0}'f_0g_{0,0}/n_2'.
    \end{matrix*}
\end{equation}
Then $(n_1',n_2')=(K_1,K_2)=(\mathcal{A}_1,\mathcal{A}_2)=(A_1,A_2)=(B_1,B_2)=1$. Splitting $\mathcal{C}_8$ 
into their respective moduli as above, we have 
    $\mathcal{C}_8 = \mathcal{C}_{8, 1} \mathcal{C}_{8, 2}$ 
with \begin{align*}
    \mathcal{C}_{8,1}
    =&\sumast_{\beta_1\Mod{A_1}}\sumast_{\gamma_1\Mod{B_1}}e\left(\frac{\beta_1 \overline{A_2}m}{A_1}+\eta_2\frac{\overline{B_2\gamma_1}n}{B_1}\right)\sumast_{\alpha\Mod{\tau_{1,0}g_0'}}e\left(\frac{\alpha}{\tau_{1,0}g_0'}\right)\sumast_{k\Mod{\tau_{1,0}^2D_{1,1}D_{2,1}D_2g_0'g}}\\
    & e\left(\epsilon_1\frac{ rsD_{1,0}D_{1,1}gh_1^2k\overline{\nu_0\tau_0^2r_0s_0^2s_{1,0}D_{0,1}'D_{2,0}'D_0'f_0g_{0,0}^2g_{0,1}^2\beta_1}}{ D_{2,1}D_2g_0'}\right)  e\left(\epsilon_2\eta_1\frac{n_1'n_2'D_{1,0}D_{1,1}\overline{\nu_0s_0D_{0,1}'D_{2,0}'D_0'f_0}\gamma_1 k}{D_{2,1}D_2g_0'g}\right)
\end{align*}
and 
\begin{align*}
    \mathcal{C}_{8,2}
    =&\sumast_{k\Mod{K_2}}\sumast_{\substack{\alpha\Mod{\nu_0\tau_0^2r_0s_0^2s_{1,0}D_{0,1}'f_0g_{0,0}^2g_{0,1}^2}\\ Y_8 \equiv 0\Mod{\nu_0s_0D_{0,1}'g_{0,0}g_{0,1}}\\\left(\frac{Y_8}{\nu_0s_0D_{0,1}'g_{0,0}g_{0,1}},\tau_0 f_0\right)=1}}\sumast_{\beta_2\Mod{A_2}}\sumast_{\gamma_2\Mod{B_2}}\\
    &\times e\left(\frac{\beta_2 \overline{A_1}m}{A_2}+\eta_2\frac{\overline{B_1\gamma_2}n}{B_2}+\frac{\frac{Y_8 \overline{\alpha}}{\nu_0s_0D_{0,1}'g_{0,0}g_{0,1}}\overline{\tau_{1,0}D_{2,1}D_0'D_2g_0'h_2\beta_2}}{\tau_0^2r_0s_0s_{1,0}D_{2,0}'f_0g_{0,0}g_{0,1}}+\epsilon_2\eta_1\frac{n_1'n_2'\overline{D_{2,1}D_0'D_2g_0'gY}\gamma_2 k}{D_{2,0}'f_0}\right).
\end{align*}
Here we used 
    $Y\equiv \nu_0s_0D_{0,1}'\overline{D_{1,0}D_{1,1}}\Mod{D_{2,1}D_2g_0'g}.$

We expect the generic case to lie in the first set of moduli. However, we will first analyse the second set of moduli, for the reason that we will observe some simplification to the whole structure by computing $\mathcal{C}_{8,2}$.

\begin{lemma} We have
\begin{displaymath}
\begin{split}
    \mathcal{C}_{8,2}=&\delta\big(\tau_0=s_0=g_{0,0}=g_{0,1}=(r_0,\nu_0D_{0,1}')=(r_0s_{1,0},D_{1,0},f_0)=(n_2'r_0s_{1,0},D_{2,0}')=1\big)\nu_0(r_0s_{1,0},D_{1,0})D_{0,1}'D_{1,0}D_{2,0}'f_0\nonumber\\
    &\times \sum_{\substack{c_0\mid f_0\\(c_0,\nu_0D_{0,1}'D_{1,0})=1}}\sum_{\substack{k_0\mid \nu_0r_0s_{1,0}D_{0,1}'f_0\\ (k_0,\frac{c_0r_0s_{1,0}D_{2,0}'}{(r_0s_{1,0},D_{1,0})})=1}}\frac{\mu(c_0r_0s_{1,0})\mu(k_0)}{c_0k_0}\sum_{\substack{\omega_0\mid f_0\\D_{2,0}'\omega_0\mid (A_2,B_2)\\(\omega_0,n_2'r_0s_{1,0})=1}}\omega_0\mu\left(\frac{r_0s_{1,0}f_0}{(r_0s_{1,0},D_{1,0})\omega_0}\right)\\
    &\times \mathop{\sumast_{\beta_2\Mod{A_2}}\sumast_{\gamma_2\Mod{B_2}}}_{rsg^2h_1^2\overline{\beta_2}\equiv r_0s_{1,0}n_1'n_2'\overline{\gamma_2}\Mod{D_{2,0}'\omega_0}} e\left(\frac{\beta_2 \overline{A_1}m}{A_2}-\epsilon_1\epsilon_2\eta_1\eta_2\frac{\gamma_2\overline{B_1}n}{B_2}\right).
    \end{split}
    \end{displaymath}
\end{lemma}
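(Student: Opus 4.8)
\textbf{Plan of proof for the evaluation of $\mathcal{C}_{8,2}$.} The plan is to evaluate the multiple character sum $\mathcal{C}_{8,2}$ by executing the summations in a carefully chosen order so that each one either collapses a variable, forces a divisibility/coprimality condition, or produces a Ramanujan-type sum. First I would inspect the modulus $K_2=\nu_0\tau_0^2r_0s_0^2s_{1,0}D_{0,1}'D_{1,0}D_{2,0}'f_0g_{0,0}^2g_{0,1}^2$ together with the smaller moduli $\mathcal{A}_2$, $A_2$, $B_2$ defined in \eqref{new-var1} and note that the genuinely interacting pieces are those involving $D_{1,0},D_{2,0}',f_0$ and (through the $\alpha$-sum condition $Y_8\equiv 0\,(\mathrm{mod}\,\nu_0s_0D_{0,1}'g_{0,0}g_{0,1})$) the $\nu_0 s_0 D_{0,1}' g_{0,0} g_{0,1}$-part. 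The key mechanism is that in $\mathcal{C}_{8,2}$ the variable $k$ runs over $(\Bbb{Z}/K_2\Bbb{Z})^\ast$ but appears only linearly in the last exponential $e(\epsilon_2\eta_1 n_1'n_2'\overline{D_{2,1}D_0'D_2g_0'gY}\gamma_2 k/D_{2,0}'f_0)$; summing $k$ over the part of the modulus that does not appear in the exponential produces the Euler $\phi$ of that part and the remaining $k$-sum over $D_{2,0}'f_0$ is a complete sum that, after opening by M\"obius inversion, yields the divisors $c_0\mid f_0$, $\omega_0\mid f_0$ and the congruence $rsg^2h_1^2\overline{\beta_2}\equiv r_0s_{1,0}n_1'n_2'\overline{\gamma_2}\,(\mathrm{mod}\,D_{2,0}'\omega_0)$ that appears in the final formula.

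Next I would treat the $\alpha$-sum. The congruence $Y_8\equiv 0\,(\mathrm{mod}\,\nu_0 s_0 D_{0,1}' g_{0,0} g_{0,1})$ with $Y_8=D_{2,0}'D_{2,1}D_2+\epsilon_1\tau_{1,0}rsD_{1,0}D_{1,1}gh_1^2h_2 k\alpha$ together with the coprimality $(Y_8/(\nu_0 s_0 D_{0,1}' g_{0,0} g_{0,1}),\tau_0 f_0)=1$ pins down $\alpha$ modulo that part and leaves a free (unit) sum of $\alpha$ modulo $\tau_0^2 r_0 s_0 s_{1,0} D_{2,0}' f_0 g_{0,0} g_{0,1}$; evaluating it against the exponential that contains $\overline{\alpha}$ gives a Ramanujan-type sum. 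This is the step that forces the degeneracy conditions $\tau_0=s_0=g_{0,0}=g_{0,1}=1$ and $(r_0,\nu_0 D_{0,1}')=1$: whenever one of $\tau_0,s_0,g_{0,0},g_{0,1}$ exceeds $1$ the corresponding Ramanujan sum vanishes because the relevant argument is divisible by a prime dividing the modulus but the summation variable is restricted to units. I would organise this as a prime-by-prime computation (legitimate since all the moduli are built multiplicatively), checking at each prime $p$ which of the five sums ($k$, $\alpha$, $\beta_2$, $\gamma_2$, and the residual part) survives. The conditions $(r_0 s_{1,0},D_{1,0},f_0)=1$ and $(n_2' r_0 s_{1,0},D_{2,0}')=1$ emerge from the compatibility of the $\beta_2$- and $\gamma_2$-congruences with the $k$-sum, exactly as recorded in the statement.

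Finally, having reduced everything to the $\beta_2$- and $\gamma_2$-sums over $(\Bbb{Z}/A_2\Bbb{Z})^\ast$ and $(\Bbb{Z}/B_2\Bbb{Z})^\ast$, I would collect the accumulated multiplicative constant $\nu_0(r_0 s_{1,0},D_{1,0})D_{0,1}'D_{1,0}D_{2,0}'f_0$ (the product of the $\phi$'s and the leftover moduli from the collapsed $k$- and $\alpha$-sums) and the two M\"obius-summed divisor parameters $c_0\mid f_0$, $k_0\mid\nu_0 r_0 s_{1,0}D_{0,1}'f_0$, $\omega_0\mid f_0$ with their coprimality restrictions, together with the sign $-\epsilon_1\epsilon_2\eta_1\eta_2$ in the exponent of $n$ (produced by tracking $\epsilon_1$ from $Y_8$, $\epsilon_2\eta_1$ from the $k$-exponential, and $\eta_2$ from the original Kloosterman sum through reciprocity). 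The main obstacle, and the part requiring genuine care rather than routine computation, is the bookkeeping of coprimality conditions across the many interleaved moduli — in particular keeping track of exactly which combinations of $D_{1,0},D_{2,0}',f_0,\nu_0,r_0,s_{1,0},D_{0,1}'$ interact at each prime, so that the surviving conditions match precisely the list $\tau_0=s_0=g_{0,0}=g_{0,1}=(r_0,\nu_0D_{0,1}')=(r_0s_{1,0},D_{1,0},f_0)=(n_2'r_0s_{1,0},D_{2,0}')=1$ and no spurious extra constraint is introduced or genuine one dropped. I would verify the final constant and the $\mu$-factors by a consistency check against a small numerical example or, following the paper's practice, a short computer-algebra computation with multiple finite geometric sums.
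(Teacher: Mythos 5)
There is a genuine gap, and it sits at the very first step. You assert that $k$ ``appears only linearly in the last exponential'' $e\bigl(\epsilon_2\eta_1 n_1'n_2'\overline{D_{2,1}D_0'D_2g_0'gY}\,\gamma_2 k/(D_{2,0}'f_0)\bigr)$, and your whole plan (collapse the non-interacting part of the $k$-sum to an Euler $\varphi$, treat the rest as a Ramanujan sum) rests on that. But $Y$ itself depends on $k$ and $\alpha$ in a highly nonlinear way: by \eqref{y8}, $Y=\frac{1}{D_{1,0}D_{1,1}}\bigl(\nu_0s_0D_{0,1}'-\overline{\tfrac{Y_8}{\nu_0s_0D_{0,1}'}g_0'g}\,D_{2,0}'D_{2,1}D_2g_0'g\bigr)$ with $Y_8=D_{2,0}'D_{2,1}D_2+\epsilon_1\tau_{1,0}rsD_{1,0}D_{1,1}gh_1^2h_2k\alpha$, so the exponential contains $\overline{Y}$, i.e.\ a modular inverse of an expression that itself contains a modular inverse of a linear function of $k\alpha$. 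The essential idea of the actual proof, which your plan never supplies, is the algebraic identity
\begin{displaymath}
\overline{Y}\equiv \epsilon_1\,Y_8\,\frac{\overline{\alpha}}{\nu_0s_0D_{0,1}'}\,\overline{\tau_{1,0}rsgh_1^2h_2k}\pmod{D_{2,0}'f_0},
\end{displaymath}
obtained by writing $\overline{Y}=\tfrac{Y_8}{\nu_0s_0D_{0,1}'}g_0'g\cdot\overline{(Y_8g_0'g-D_{2,0}'D_{2,1}D_2g_0'g)/(D_{1,0}D_{1,1})}$ and recognising the numerator as $\epsilon_1\tau_{1,0}rsg_0'g^2h_1^2h_2k\alpha$. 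Only after this does the $\overline{Y}$-exponential become a rational linear expression in $Y_8\overline{\alpha}$ that can be merged with the $Y_8\overline{\alpha}$-term coming from the opened Kloosterman sum; without it none of the subsequent Ramanujan-sum evaluations can even be set up.

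A second, smaller but still substantive omission: after the changes of variable $\alpha\mapsto D_{2,1}D_2\overline{\alpha}$ and $k\mapsto\epsilon_1\overline{\tau_{1,0}rsD_{1,1}gh_1^2h_2}k$, the $\alpha$- and $k$-sums are still coupled through the congruence $D_{2,0}'\alpha\equiv -D_{1,0}k\ (\mathrm{mod}\ \nu_0s_0D_{0,1}'g_{0,0}g_{0,1})$ and the combined argument $\tfrac{D_{2,0}'\alpha+D_{1,0}k}{\nu_0s_0D_{0,1}'g_{0,0}g_{0,1}}$. The proof decouples them by introducing the new summation variable $\theta=\tfrac{\alpha+D_{1,0}k}{\nu_0s_0D_{0,1}'g_{0,0}g_{0,1}}$ (after absorbing $D_{2,0}'$), which is what makes the $k$-sum a pure counting problem (producing the factor $\nu_0(r_0s_{1,0},D_{1,0})D_{0,1}'D_{1,0}\cdots$ and the $k_0$-M\"obius sum) and turns the $\theta$-sum into the Ramanujan-type sum whose vanishing forces $\tau_0=s_0=g_{0,0}=g_{0,1}=1$ and $r_0s_{1,0}\mid c_0$. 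Your ``prime-by-prime'' bookkeeping could in principle replace the global M\"obius manipulations, but it cannot substitute for these two structural steps; as written, the plan does not get off the ground.
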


\begin{proof}  Recall   the definition of $Y$ in \eqref{y8}. 
We claim that 
\begin{align*}
    e\left(\epsilon_2\eta_1\frac{n_1'n_2'
    \overline{D_{2,1}D_0'D_2g_0'gY}\gamma_2 k}{D_{2,0}'f_0}\right)=e\left(\epsilon_1\epsilon_2\eta_1\frac{n_1'n_2' \frac{Y_8 \overline{\alpha}}{\nu_0s_0D_{0,1}'}\overline{\tau_{1,0}rsD_{2,1}D_0'D_2g_0'g^2h_1^2h_2}\gamma_2}{D_{2,0}'f_0}\right).
\end{align*}
Indeed, since 
  $$\Big(\frac{Y_8}{\nu_0s_0D_{0,1}'} g_0'g, D_{1, 0}D_{1, 1}\Big)= (\nu_0s_0D_{0,1}',D_{1,0}D_{1,1}D_{2,0}')=(D_{1,0}D_{1,1},D_{2,0}') = 1,$$ we have
\begin{displaymath}
\begin{split}
\overline{Y} & =   \overline{\frac{1}{D_{1,0}D_{1,1}}\left(\nu_0s_0D_{0,1}'-\overline{\frac{Y_8}{\nu_0s_0D_{0,1}'} g_0'g}D_{2,0}'D_{2,1}D_2g_0'g\right)} =\frac{Y_8}{\nu_0s_0D_{0,1}'} g_0'g \cdot  \overline{\frac{ Y_8  g_0'g   - D_{2,0}'D_{2,1}D_2g_0'g }{D_{1,0}D_{1,1}} } \\
& =\frac{Y_8}{\nu_0s_0D_{0,1}'} g_0'g \cdot \overline{ \epsilon_1 \tau_{1,0}rsg_0'g^2h_1^2h_2k \alpha } = \epsilon_1 Y_8 \frac{\overline{\alpha} }{ \nu_0s_0D_{0,1}'} \overline{ \tau_{1,0}rsgh_1^2h_2k }
\end{split}
    \end{displaymath}
    in the exponential on the left hand side. 
 
Changing $\alpha$ to $D_{2,1}D_2\overline{\alpha}$ and $k$ to $\epsilon_1 \overline{\tau_{1,0}rsD_{1,1}gh_1^2h_2}k$, we recast $\mathcal{C}_{8,2}$ as 
\begin{align*}
   \sumast_{k\Mod{K_2}}\sumast_{\substack{\alpha\Mod{\nu_0\tau_0^2r_0s_0^2s_{1,0}D_{0,1}'f_0g_{0,0}^2g_{0,1}^2}\\ D_{2,0}'\alpha\equiv -D_{1,0}k\Mod{\nu_0s_0D_{0,1}'g_{0,0}g_{0,1}}\\ \left(\frac{D_{2,0}'\alpha+D_{1,0}k}{\nu_0s_0D_{0,1}'g_{0,0}g_{0,1}},\tau_0f_0\right)=1}}&\sumast_{\beta_2\Mod{A_2}}\sumast_{\gamma_2\Mod{B_2}} e\left(\frac{\beta_2 \overline{A_1}m}{A_2}+\eta_2\frac{\overline{B_1\gamma_2}n}{B_2}+\frac{\frac{D_{2,0}'\alpha+D_{1,0}k}{\nu_0s_0D_{0,1}'g_{0,0}g_{0,1}}\overline{\tau_{1,0}D_{2,1}D_0'D_2g_0'h_2\beta_2}}{\tau_0^2r_0s_0s_{1,0}D_{2,0}'f_0g_{0,0}g_{0,1}}\right)\\
    &\times e\left(\epsilon_1\epsilon_2\eta_1\frac{n_1'n_2'\frac{D_{2,0}'\alpha+D_{1,0}k}{\nu_0s_0D_{0,1}'}\overline{\tau_{1,0}rsD_{2,1}D_0'D_2g_0'g^2h_1^2h_2}\gamma_2 }{D_{2,0}'f_0}\right).
\end{align*}
Removing the coprimality condition $(\alpha, \tau_0  r_0s_{1, 0}f_0)=1$  
by M\"obius inversion, the $\alpha$-sum is equal to \begin{align*}
    &\sum_{\substack{c_0\mid \tau_0  r_0s_{1, 0}f_0\\(c_0,\nu_0s_0D_{0,1}'g_{0,0}g_{0,1})=1}}\mu(c_0)\sum_{\substack{\alpha\Mod{\nu_0\tau_0^2r_0s_0^2s_{1,0}D_{0,1}'f_0g_{0,0}^2g_{0,1}^2}\\c_0\mid \alpha\\ D_{2,0}'\alpha\equiv -D_{1,0} k\Mod{\nu_0s_0D_{0,1}'g_{0,0}g_{0,1}}\\ \big(\frac{D_{2,0}'\alpha+D_{1,0}k}{\nu_0s_0D_{0,1}'g_{0,0}g_{0,1}},\tau_0f_0\big)=1}}  e\left(\frac{\frac{D_{2,0}'\alpha+D_{1,0}k}{\nu_0s_0D_{0,1}'g_{0,0}g_{0,1}}\overline{\tau_{1,0}D_{2,1}D_0'D_2g_0'h_2}Z}{\tau_0^2r_0s_0s_{1,0}D_{2,0}'f_0g_{0,0}g_{0,1}}\right)\\
    =&\sum_{\substack{c_0\mid \tau_0  r_0s_{1, 0}f_0\\(c_0,\nu_0s_0D_{0,1}')=1}}\mu(c_0)\sum_{\substack{\alpha\Mod{\nu_0\tau_0^2r_0s_0^2s_{1,0}D_{0,1}'D_{2,0}'f_0g_{0,0}^2g_{0,1}^2}\\c_0D_{2,0}'\mid \alpha\\ \alpha\equiv -D_{1,0} k\Mod{\nu_0s_0D_{0,1}'g_{0,0}g_{0,1}}\\ \big(\frac{\alpha+D_{1,0}k}{\nu_0s_0D_{0,1}'},\tau_0f_0\big)=1}} e\left(\frac{\frac{\alpha+D_{1,0}k}{\nu_0s_0D_{0,1}'g_{0,0}g_{0,1}}\overline{\tau_{1,0}D_{2,1}D_0'D_2g_0'h_2} Z }{\tau_0^2r_0s_0s_{1,0}D_{2,0}'f_0g_{0,0}g_{0,1}}\right)
\end{align*}
where $Z$ is shorthand for
$$Z = \overline{\beta_2}+\epsilon_1\epsilon_2\eta_1n_1'n_2'\tau_0^2r_0s_0s_{1,0}g_{0,0}^2g_{0,1}^2\overline{rsg^2h_1^2}\gamma_2.$$
Introducing  the new variable $\theta=\frac{\alpha+D_{1,0}k}{\nu_0s_0D_{0,1}'g_{0,0}g_{0,1}}$, this is equal to \begin{align*}
    &\sum_{\substack{c_0\mid \tau_0  r_0s_{1, 0}f_0\\(c_0,\nu_0s_0D_{0,1}')=1}}\mu(c_0)\sum_{\substack{\theta\Mod{\tau_0^2r_0s_0s_{1,0}D_{2,0}'f_0g_{0,0}g_{0,1}}\\\nu_0s_0D_{0,1}'g_{0,0}g_{0,1}\theta\equiv D_{1,0}k\Mod{c_0D_{2,0}'}\\ \left(\theta,\tau_0f_0\right)=1}}e\left(\frac{\theta\overline{\tau_{1,0}D_{2,1}D_0'D_2g_0'h_2}Z}{\tau_0^2r_0s_0s_{1,0}D_{2,0}'f_0g_{0,0}g_{0,1}}\right).
\end{align*}
Since $c_0\mid \tau_0  r_0s_{1, 0}f_0$, $(D_{2,0}',D_{1,0})=1$ and $(\theta,f_0)=(c_0,\nu_0s_0D_{0,1}'g_{0,0}g_{0,1})=1$, the congruence condition implies $(c_0,D_{1,0})\mid \theta$ and $(\theta,D_{2,0}')=1$. Substituting back into $\mathcal{C}_{8,2}$, changing the variable $\theta\mapsto (c_0,D_{1,0})\theta$ and swapping the order of the $\theta-$ and $k$-sum, we recast $\mathcal{C}_{8,2}$ as 
\begin{align*}
   &\sum_{\substack{c_0\mid \tau_0  r_0s_{1, 0}f_0\\(c_0,\nu_0s_0D_{0,1}')=1\\(c_0,D_{1,0},\tau_0f_0)=1}}\mu(c_0)\sumast_{\beta_2\Mod{A_2}}\sumast_{\gamma_2\Mod{B_2}}e\left(\frac{\beta_2 \overline{A_1}m}{A_2}+\eta_2\frac{\overline{B_1\gamma_2}n}{B_2}\right) \sum_{\substack{\theta\Mod{\tau_0^2r_0s_0s_{1,0}D_{2,0}'f_0g_{0,0}g_{0,1}/(c_0,D_{1,0})}\\(\theta,\tau_0c_0D_{2,0}'f_0/(c_0,D_{1,0}))=1}}\\
   & \times \sumast_{\substack{k\Mod{\nu_0\tau_0^2r_0s_0^2s_{1,0}D_{0,1}'D_{1,0}D_{2,0}'f_0g_{0,0}^2g_{0,1}^2}\\\nu_0s_0D_{0,1}'g_{0,0}g_{0,1}\theta\equiv D_{1,0}k/(c_0,D_{1,0})\Mod{c_0D_{2,0}'/(c_0,D_{1,0})}}}  e\left(\frac{\theta\overline{\tau_{1,0}D_{2,1}D_0'D_2g_0'h_2}(c_0,D_{1,0})Z}{\tau_0^2r_0s_0s_{1,0}D_{2,0}'f_0g_{0,0}g_{0,1}}\right).
\end{align*}
Since $$\Big(\frac{\nu_0s_0D_{0,1}'D_{1,0}g_{0,0}g_{0,1}\theta}{(c_0,D_{1,0})},\frac{c_0D_{2,0}'}{(c_0,D_{1,0})}\Big)=1,$$ the $k$-sum is \begin{align*}
    \sumast_{\substack{k\Mod{\nu_0\tau_0^2r_0s_0^2s_{1,0}D_{0,1}'D_{1,0}D_{2,0}'f_0g_{0,0}^2g_{0,1}^2}\\\nu_0s_0D_{0,1}'g_{0,0}g_{0,1}\theta\equiv D_{1,0}k/(c_0,D_{1,0})\Mod{c_0D_{2,0}'/(c_0,D_{1,0})}}}1=& \sum_{\substack{k_0\mid \nu_0\tau_0r_0s_0s_{1,0}D_{0,1}'D_{1,0}f_0g_{0,0}g_{0,1}\\(k_0,c_0D_{2,0}'/(c_0,D_{1,0}))=1}}\mu(k_0)\sum_{\substack{k\Mod{\nu_0\tau_0^2r_0s_0^2s_{1,0}D_{0,1}'D_{1,0}D_{2,0}'f_0g_{0,0}^2g_{0,1}^2/k_0}\\ k\equiv \overline{\frac{D_{1,0}}{(c_0,D_{1,0})}}\nu_0s_0D_{0,1}'g_{0,0}g_{0,1}\theta\Mod{\frac{c_0D_{2,0}'}{(c_0,D_{1,0})} }}}1\\
    =&\frac{\nu_0\tau_0^2r_0s_0^2s_{1,0}(c_0,D_{1,0})D_{0,1}'D_{1,0}f_0g_{0,0}^2g_{0,1}^2}{c_0}\sum_{\substack{k_0|\nu_0\tau_0r_0s_0s_{1,0}D_{0,1}'f_0g_{0,0}g_{0,1}\\(k_0,c_0D_{2,0}'/(c_0,D_{1,0}))=1}}\frac{\mu(k_0)}{k_0}.
\end{align*}
This yields \begin{align*}
    \mathcal{C}_{8,2}&=\nu_0\tau_0^2r_0s_0^2s_{1,0}D_{0,1}'D_{1,0}f_0g_{0,0}^2g_{0,1}^2\sum_{\substack{c_0\mid \tau_0r_0s_{1,0}f_0\\(c_0,\nu_0s_0D_{0,1}')=1\\(c_0,D_{1,0},\tau_0f_0)=1}}\frac{\mu(c_0)(c_0,D_{1,0})}{c_0}\sum_{\substack{k_0|\nu_0\tau_0r_0s_0s_{1,0}D_{0,1}'f_0g_{0,0}g_{0,1}\\(k_0,c_0D_{2,0}'/(c_0,D_{1,0}))=1}}\frac{\mu(k_0)}{k_0}\\
    &\times \sumast_{\beta_2\Mod{A_2}}\sumast_{\gamma_2\Mod{B_2}}e\left(\frac{\beta_2 \overline{A_1}m}{A_2}+\eta_2\frac{\overline{B_1\gamma_2}n}{B_2}\right)\sum_{\substack{\theta\Mod{\tau_0^2r_0s_0s_{1,0}D_{2,0}'f_0g_{0,0}g_{0,1}/(c_0,D_{1,0})}\\(\theta,\tau_0c_0D_{2,0}'f_0/(c_0,D_{1,0}))=1}} e\left(\frac{\theta\overline{\tau_{1,0}D_{2,1}D_0'D_2g_0'h_2}(c_0,D_{1,0})Z}{\tau_0^2r_0s_0s_{1,0}D_{2,0}'f_0g_{0,0}g_{0,1}}\right) .
\end{align*}

On the other hand, summing over $\theta$ yields \begin{align*}
    &\sum_{\substack{\theta\Mod{\tau_0^2r_0s_0s_{1,0}D_{2,0}'f_0g_{0,0}g_{0,1}/(c_0,D_{1,0})}\\(\theta,\tau_0c_0D_{2,0}'f_0/(c_0,D_{1,0}))=1}} e\left(\frac{\theta\overline{\tau_{1,0}D_{2,1}D_0'D_2g_0'h_2}Z}{\tau_0^2r_0s_0s_{1,0}D_{2,0}'f_0g_{0,0}g_{0,1}/(c_0,D_{1,0})}\right)\\
    =&\frac{\tau_0^2r_0s_0s_{1,0}D_{2,0}'f_0g_{0,0}g_{0,1}}{(c_0,D_{1,0})}\sum_{\omega_0\mid \tau_0D_{2,0}'f_0\frac{(c_0,r_0s_{1,0})}{(c_0,D_{1,0})}}\frac{\mu(\omega_0)}{\omega_0}\delta\left(\overline{\beta_2}\equiv-G\gamma_2\Mod{\frac{\tau_0^2r_0s_0s_{1,0}D_{2,0}'f_0g_{0,0}g_{0,1}}{\omega_0(c_0,D_{1,0})}}\right),
\end{align*}
where \begin{align*}
    G=\epsilon_1\epsilon_2\eta_1n_1'n_2'\tau_0^2r_0s_0s_{1,0}g_{0,0}^2g_{0,1}^2\overline{rsg^2h_1^2}.
\end{align*}
Since $(\beta_2,\tau_0r_0s_0s_{1,0}g_{0,0}g_{0,1})=1$, the congruence condition evaluates to $0$ unless $$\frac{\tau_0^2r_0s_0s_{1,0}g_{0,0}g_{0,1}}{(c_0,D_{1,0})}\mid \omega_0.$$ Since $\mu$ is supported on square-free numbers, $c_0|\tau_0r_0s_{1,0}f_0$, $(c_0,D_{1,0})\mid r_0s_{1,0}$ and $(g_{0,0}g_{0,1},D_{2,0}'f_0)=1$, the above sum is equal to \begin{align*}
    \delta(\tau_0=g_{0,0}=g_{0,1}=1)\sum_{\frac{r_0s_{1,0}}{(c_0,D_{1,0})}s_0\omega_0\mid\frac{(c_0,r_0s_{1,0})D_{2,0}'f_0}{(c_0,D_{1,0})}}\mu\left(\frac{r_0s_{1,0}}{(c_0,D_{1,0})}s_0\omega_0\right)\frac{D_{2,0}'f_0}{\omega_0}\delta\left(\overline{\beta_2}\equiv-G\gamma_2\Mod{D_{2,0}'f_0/\omega_0}\right).
\end{align*}
Note that the condition of $\omega_0$ implies $r_0s_0s_{1,0}/(c_0,r_0s_{1,0})|D_{2,0}'f_0/\omega_0$. At this stage, we again see that this vanishes  unless $r_0s_0s_{1,0}/(c_0,r_0s_{1,0})\mid \omega_0$. Together with $\mu$ being supported on square-free numbers, we have \begin{align*}
    s_0=1 \quad \text{ and } \quad r_0s_{1,0}|c_0
\end{align*}
as well. Changing $c_0$ to $r_0s_{1,0}c_0$, $\gamma_2$ to $-\epsilon_1\epsilon_2\eta_1\overline{\gamma_2}$ and $\omega_0$ to $D_{2,0}'f_0/\omega_0$, we have \begin{align*}
    \mathcal{C}_{8,2}=&\delta(\tau_0=s_0=g_{0,0}=g_{0,1}=(r_0,\nu_0D_{0,1}')=(r_0s_{1,0},D_{1,0},f_0)=1)\nu_0(r_0s_{1,0},D_{1,0})D_{0,1}'D_{1,0}f_0\sum_{\substack{c_0\mid f_0\\(c_0,\nu_0D_{0,1}'D_{1,0})=1}}\sum_{\substack{k_0\mid \nu_0r_0s_{1,0}D_{0,1}'f_0\\ (k_0,\frac{c_0r_0s_{1,0}D_{2,0}'}{(r_0s_{1,0},D_{1,0})})=1}}\\
    &\times \frac{\mu(c_0r_0s_{1,0})\mu(k_0)}{c_0k_0}\sum_{\omega_0\mid D_{2,0}'f_0}\omega_0\mu\left(\frac{r_0s_{1,0}D_{2,0}'f_0}{(r_0s_{1,0},D_{1,0})\omega_0}\right)\mathop{\sumast_{\beta_2\Mod{A_2}}\sumast_{\gamma_2\Mod{B_2}}}_{rsg^2h_1^2\overline{\beta_2}\equiv r_0s_{1,0}n_1'n_2'\overline{\gamma_2}\Mod{\omega_0}} e\left(\frac{\beta_2 \overline{A_1}m}{A_2}-\epsilon_1\epsilon_2\eta_1\eta_2\frac{\gamma_2\overline{B_1}n}{B_2}\right).
\end{align*}
Since $D_{2,0}'\mid (\tau_0r_0s_{1,0}f_0g_{0,1})^\infty=(r_0s_{1,0}f_0)^\infty$ and $(D_{1,0},D_{2,0}')=1$, the fact that $\mu$ is supported on square-free numbers yields \begin{align}
    \mathcal{C}_{8,2}=&\delta(\tau_0=s_0=g_{0,0}=g_{0,1}=(r_0,\nu_0D_{0,1}')=(r_0s_{1,0},D_{1,0},f_0)=1)\nu_0(r_0s_{1,0},D_{1,0})D_{0,1}'D_{1,0}D_{2,0}'f_0\sum_{\substack{c_0|f_0\\(c_0,\nu_0D_{0,1}'D_{1,0})=1}}\sum_{\substack{k_0|\nu_0r_0s_{1,0}D_{0,1}'f_0\\ (k_0,\frac{c_0r_0s_{1,0}D_{2,0}'}{(r_0s_{1,0},D_{1,0})})=1}}\nonumber\\
    &\times \frac{\mu(c_0r_0s_{1,0})\mu(k_0)}{c_0k_0}\sum_{\omega_0|f_0}\omega_0\mu\left(\frac{r_0s_{1,0}f_0}{(r_0s_{1,0},D_{1,0})\omega_0}\right)\mathop{\sumast_{\beta_2\Mod{A_2}}\sumast_{\gamma_2\Mod{B_2}}}_{rsg^2h_1^2\overline{\beta_2}\equiv r_0s_{1,0}n_1'n_2'\overline{\gamma_2}\Mod{D_{2,0}'\omega_0}} e\left(\frac{\beta_2 \overline{A_1}m}{A_2}-\epsilon_1\epsilon_2\eta_1\eta_2\frac{\gamma_2\overline{B_1}n}{B_2}\right).\nonumber
\end{align}
Finally note that as $(D_{2,0}'\omega_0,rsg^2h_1^2)=1$, and $D_{2,0}'\omega_0\mid (A_2,n_2'B_2)$, the congruence condition implies $D_{2,0}'\omega_0 \mid (A_2,B_2)$. Moreover, $n_2'r_0s_{1,0}|A_2$ and $(rsgh_1,A_2)=1$ implies that $(D_{2,0}'\omega_0,n_2'r_0s_{1,0})=1$, which concludes the proof. 
\end{proof}

 \begin{lemma} For  $ \tau_0=s_0=g_{0,0}=g_{0,1}=1$ we have
\begin{displaymath}
    \begin{split}
    \mathcal{C}_{8,1}=&\tau_{1,0}D_{1,1}D_{2,1}\varphi(\tau_{1,0})\mu(\tau_{1,0}g_0')\delta\big((D_{2,1},n_1')=1\big)\sum_{\alpha\mid (D_2g,rsg^2,n_1')}\alpha \\
    &\times \sum_{\substack{\alpha\tilde{\omega}\mid D_2g_0'g\\D_{2,1}\tilde{\omega}\mid (A_1,B_1)\\(\alpha^2\tilde{\omega},n_1'rsg^2)=\alpha^2}}\tilde{\omega}\mu\left(\frac{D_2g_0'g}{\alpha\tilde{\omega}}\right)\mathop{\sumast_{\beta_1\Mod{A_1}}\sumast_{\gamma_1\Mod{B_1}}}_{\frac{rsg^2}{\alpha}h_1^2\gamma_1\equiv r_0s_{1,0}\frac{n_1'}{\alpha}n_2'\beta_1\Mod{D_{2,1}\tilde{\omega}}}e\left(\frac{\beta_1 \overline{A_2}m}{A_1}-\epsilon_1\epsilon_2\eta_1\eta_2\frac{\gamma_1\overline{B_2} n}{B_1}\right).
    \end{split}
\end{displaymath}
 \end{lemma}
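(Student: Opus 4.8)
The plan is to evaluate $\mathcal{C}_{8,1}$ by exactly the same mechanism that was used for $\mathcal{C}_{8,2}$ in the previous lemma, but now with the roles of the two ``blocks'' exchanged: the modulus $K_1=\tau_{1,0}^2D_{1,1}D_{2,1}D_2g_0'g$ plays the part that $K_2$ played, the sub-modulus $D_{2,1}D_2g_0'g$ (carrying the $k$-sum) plays the part of $D_{2,0}'f_0$, and the small cyclic factor $\tau_{1,0}g_0'$ (carrying the $\alpha$-sum with the trivial additive character $e(\alpha/(\tau_{1,0}g_0'))$) plays the part of the $\alpha$-sum in $\mathcal{C}_{8,2}$. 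Since we are in the case $\tau_0=s_0=g_{0,0}=g_{0,1}=1$ already forced by the previous lemma, many of the variables simplify and the $\alpha$-sum over $(\mathbb{Z}/\tau_{1,0}g_0'\mathbb{Z})^\ast$ with character $e(\alpha/(\tau_{1,0}g_0'))$ is a Ramanujan sum, which is why the factor $\varphi(\tau_{1,0})\mu(\tau_{1,0}g_0')$ and the restriction $\mu$ to squarefree arguments appears in the claimed formula.

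First I would make the substitution $k\mapsto \overline{(\cdots)}k$ in the $k$-sum so that the two exponentials involving $k$ (coming from the two Kloosterman sums, after opening them via $\beta_1,\gamma_1$) combine into a single additive character in $k$ modulo $\tau_{1,0}^2D_{1,1}D_{2,1}D_2g_0'g$; here I would use, as in the proof of the $\mathcal{C}_{8,2}$ lemma, the explicit relation $Y\equiv \nu_0 s_0 D_{0,1}'\overline{D_{1,0}D_{1,1}}\pmod{D_{2,1}D_2g_0'g}$ and the formula for $\overline{Y}$ derived there to rewrite the term $e(\epsilon_2\eta_1 n_1'n_2'D_{1,0}D_{1,1}\overline{(\cdots)}\gamma_1 k/(D_{2,1}D_2g_0'g))$ in a form directly comparable with the $\beta_1$-term $e(\epsilon_1 rsD_{1,0}D_{1,1}gh_1^2 k \overline{(\cdots)\beta_1}/(D_{2,1}D_2g_0'))$. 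Then summing over $k$ (a complete sum modulo $\tau_{1,0}^2D_{1,1}D_{2,1}D_2g_0'g$ with one linear condition) produces a delta condition that forces a congruence between $\beta_1^{-1}$ and $\gamma_1^{-1}$, and after pulling out the divisor $\alpha\mid(D_2g,rsg^2,n_1')$ and introducing $\tilde\omega$ exactly as $\omega_0$ was introduced before, one arrives at the displayed formula. The coprimality bookkeeping — in particular that the delta condition forces $D_{2,1}\tilde\omega\mid(A_1,B_1)$, that $(D_{2,1},n_1')=1$, and that the $\mu$-support kills all but the stated range of $\tilde\omega$ — is carried out verbatim as in the preceding lemma, using $D_{2,1}\mid g_0'^\infty$, $n_1'\mid(\nu'\tau_{1,0}n_{1,1}D_{2,1}D_2g_0'gh_1)$ and $(n_1',n_2')=1$.

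The main obstacle I expect is purely organizational rather than conceptual: one must keep straight a large number of pairwise-coprimality conditions among $\nu_0,\nu',\tau_{1,0},r_0,s_{1,0},n_{1,0},n_{1,1},D_{0,1}',D_{1,0},D_{1,1},D_{2,0}',D_{2,1},D_2,D_0',f_0,g_{0,0}',g_0',g,h_1,h_2$ in order to justify each splitting of a complete character sum into a product over its prime-power moduli, and to verify that the inverses appearing (e.g.\ $\overline{Y}$ modulo $D_{1,0}D_{1,1}f_0$ versus modulo $D_{2,1}D_2g_0'g$) are well defined. This is the same delicate but routine arithmetic that was needed for $\mathcal{C}_{8,2}$, so no new idea is required; the computation can, where convenient, again be checked with a short {\tt mathematica} script verifying the identity of the two complete sums on a generic residue.
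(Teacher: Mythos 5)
Your proposal is correct and follows essentially the same route as the paper: evaluate the $\alpha$-sum as a Ramanujan sum (giving $\mu(\tau_{1,0}g_0')$), evaluate the complete $k$-sum as a Ramanujan-type divisor sum $\sum_{\omega}\omega\mu(\cdot/\omega)$ with the resulting congruence between $\beta_1$ and $\gamma_1$, use $D_{1,1}\mid\tau_{1,0}^\infty$ and $D_{2,1}\mid g_0'^\infty$ to factor off $\tau_{1,0}D_{1,1}D_{2,1}$, and then extract the gcd $\alpha$ and write $\omega=\alpha\tilde\omega$ with the stated coprimality bookkeeping. The only slight imprecision is attributing the whole factor $\varphi(\tau_{1,0})\mu(\tau_{1,0}g_0')$ to the $\alpha$-sum: the $\varphi(\tau_{1,0})$ actually arises from the $\tau_{1,0}$-part of the $k$-sum (whose linear coefficient is divisible by $\tau_{1,0}^2D_{1,1}^2$, so that divisor sum is unrestricted), but your computation as planned would produce it correctly in any case.
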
  

\begin{proof} 
Recall that with the   restriction $ \tau_0=s_0=g_{0,0}=g_{0,1}=1$ we have \begin{align*}
    \mathcal{C}_{8,1}=&\sumast_{\beta_1\Mod{A_1}}\sumast_{\gamma_1\Mod{B_1}}e\left(\frac{\beta_1 \overline{A_2}m}{A_1}+\eta_2\frac{\overline{B_2\gamma_1}n}{B_1}\right)\sumast_{\alpha\Mod{\tau_{1,0}g_0'}}e\left(\frac{\alpha}{\tau_{1,0}g_0'}\right)\\
    &\times \sumast_{k\Mod{\tau_{1,0}^2D_{1,1}D_{2,1}D_2g_0'g}}e\left(\epsilon_1\frac{ rsD_{1,0}D_{1,1}gh_1^2k\overline{\nu_0r_0s_{1,0}D_{0,1}'D_{2,0}'D_0'f_0\beta_1}}{D_{2,1}D_2g_0'}\right)  e\left(\epsilon_2\eta_1\frac{n_1'n_2'D_{1,0}D_{1,1}\overline{\nu_0D_{0,1}'D_{2,0}'D_0'f_0}\gamma_1 k}{D_{2,1}D_2g_0'g}\right).
\end{align*}
Summing over the $\alpha$-sum, we have \begin{align*}
    \sumast_{\alpha\Mod{\tau_{1,0}g_0'}}e\left(\frac{\alpha}{\tau_{1,0}g_0'}\right)=\mu(\tau_{1,0}g_0').
\end{align*}
Summing over the $k$-sum, we have \begin{align*}
    \mathcal{C}_{8,1}&=\mu(\tau_{1,0}g_0')\sum_{\omega\mid \tau_{1,0}^2D_{1,1}D_{2,1}D_2g_0'g}\omega\mu\left(\frac{\tau_{1,0}^2D_{1,1}D_{2,1}D_2g_0'g}{\omega}\right)\sumast_{\beta_1\Mod{A_1}}\sumast_{\gamma_1\Mod{B_1}}e\left(\frac{\beta_1 \overline{A_2}m}{A_1}+\eta_2\frac{\overline{B_2\gamma_1}n}{B_1}\right)\nonumber\\
    &\times \delta\left(\tau_{1,0}^2D_{1,1}^2(rsg^2h_1^2\overline{r_0s_{1,0}\beta_1}+\epsilon_1\epsilon_2\eta_1n_1'n_2'\gamma_1)\equiv 0\Mod{\omega}\right)\nonumber\\
    =&\tau_{1,0}D_{1,1}D_{2,1}\mu(\tau_{1,0}g_0')\sum_{\omega_{1,0}\mid \tau_{1,0}}\omega_{1,0}\mu\left(\frac{\tau_{1,0}}{\omega_{1,0}}\right)\sum_{\omega\mid D_2g_0'g}\omega\mu\left(\frac{D_2g_0'g}{\omega}\right)\sumast_{\beta_1\Mod{A_1}}\sumast_{\gamma_1\Mod{B_1}}e\left(\frac{\beta_1 \overline{A_2}m}{A_1}+\eta_2\frac{\overline{B_2\gamma_1}n}{B_1}\right)\nonumber\\
    &\times \delta\left(rsg^2h_1^2\overline{\beta_1}+\epsilon_1\epsilon_2\eta_1r_0s_{1,0}n_1'n_2'\gamma_1\equiv 0\Mod{D_{2,1}\omega}\right)\nonumber\\
    =&\tau_{1,0}D_{1,1}D_{2,1}\varphi(\tau_{1,0})\mu(\tau_{1,0}g_0')\sum_{\omega\mid D_2g_0'g}\omega\mu\left(\frac{D_2g_0'g}{\omega}\right)\mathop{\sumast_{\beta_1\Mod{A_1}}\sumast_{\gamma_1\Mod{B_1}}}_{rsg^2h_1^2\gamma_1\equiv r_0s_{1,0}n_1'n_2'\beta_1\Mod{D_{2,1}\omega}}e\left(\frac{\beta_1 \overline{A_2}m}{A_1}-\epsilon_1\epsilon_2\eta_1\eta_2\frac{\gamma_1\overline{B_2} n}{B_1}\right).
\end{align*}
Here we used $D_{1,1}\mid\tau_{1,0}^\infty$ and $D_{2,1}\mid g_0'^\infty$. On the other hand, note that $(r_0s_{1,0},D_{2,1}\omega)=(D_{2,1},rsg^2h_1^2)=1$. Let $$\alpha=(rsg^2h_1^2,r_0s_{1,0}n_1'n_2',D_{2,1}\omega)=(rsg^2,n_1',\omega)$$ and write $\omega=\alpha\tilde{\omega}$, then we have $(\tilde{\omega},\frac{rsg^2}{\alpha}h_1^2,\frac{n_1'}{\alpha}n_2')=1$. Since $D_{2,1}\omega\mid(A_1rsg^2h_1^2,B_1n_1'n_2')$, the congruence condition forces $(\frac{rsg^2}{\alpha}h_1^2\frac{n_1'}{\alpha}n_2',D_{2,1}\tilde{\omega})=1$ and $D_{2,1}\tilde{\omega}\mid (A_1,B_1)$. This concludes the proof. 
\end{proof}

As a combination of the previous two lemmata we obtain
\begin{lemma}\label{finalc8} Let 
$$D_{2,0}=D_{2,0}'D_{2,1}, \quad D_{1,0}'=D_{1,0}D_{1,1}, \quad n_{1,0}'=n_{1,0}n_{1,1}, \quad n_1=n_1'n_2'$$
and 
 $$A=\nu_0\nu'\tau_{1,0}n_{1,0}' r_0s_{1,0}^2sD_{2,0}D_2f_0g_0' , \quad B = 
        \frac{\nu_0\nu'\tau_{1,0}n_{1,0}'D_{2,0}D_2f_0g_0'gh_1  }{n_1},$$
with all the variables satisfying divisibility and coprimality conditions as in \eqref{endvor2}.
Write
$$\Delta = \frac{D_{2,0}\nu}{(a,D_{2,0}\nu)}.$$
Then the character sum $\mathcal{C}_8$ given in \eqref{c8simpl} equals
\begin{align*}
    \mathcal{C}_8 =& \delta(\tau_0=s_0=g_{0,0}=g_{0,1}=(r_0,\nu_0D_{0,1}')=(r_0s_{1,0},D_{1,0}',f_0)=(n_1r_0s_{1,0},D_{2,0})=1) \\ &\times \nu_0\tau_{1,0}(r_0s_{1,0},D_{1,0}')D_{0,1}'D_{1,0}'f_0AB\varphi(\tau_{1,0})\mu(\tau_{1,0}g_0')  \mathcal{C}_9
\end{align*}
where 
\begin{align}\label{C9}
  \mathcal{C}_9 =  &   
    \sum_{\substack{c_0\mid f_0 \\(c_0,\nu_0D_{0,1}'D_{1,0}')=1}}\sum_{\substack{k_0|\nu_0r_0s_{1,0}D_{0,1}'f_0\\(k_0,\frac{c_0r_0s_{1,0}D_{2,0}}{(r_0s_{1,0},D_{1,0}')})=1}}\frac{\mu(c_0r_0s_{1,0})\mu(k_0)}{c_0k_0}  \sum_{\alpha\mid (D_2g,rsg^2,n_1)}\alpha\\ \nonumber
    & \times \sum_{\substack{\alpha\nu\mid D_2f_0g_0'g\\n_1\nu\mid\nu_0\nu'\tau_{1,0}n_{1,0}'D_2f_0g_0'(n_1,gh_1)\\(\alpha^2\nu,n_1r_0rs_{1,0}sg^2)=\alpha^2}}\mu\left(\frac{r_0s_{1,0}D_2f_0g_0'g}{(r_0s_{1,0},D_{1,0}')\alpha\nu}\right)\sum_{\substack{b\mid B\\(b,D_{2,0}\nu)=1}}\frac{\mu(b)}{b}\sum_{a\mid A}\frac{\mu(a)}{a}\  \delta\Bigg(n=\frac{B}{bD_{2,0}\nu}n'\Bigg)\\
    & \times \delta\left( m=\frac{A/a}{ (\frac{A}{a},\Delta^\infty )}m'\right)\delta\left(b\frac{rsg^2}{\alpha}h_1^2m'\equiv \epsilon_1\epsilon_2\eta_1\eta_2r_0s_{1,0}\frac{n_1}{\alpha}\frac{a (\frac{A}{a},\Delta^\infty )}{D_{2,0}\nu}n'\, \left(\text{{\rm mod }} \Big(\frac{A}{a},\Delta^\infty\Big)\right)\right). \nonumber
\end{align}
Here the first $\delta$-condition  should be understood as ``$n$ is divisible by $B/(bD_{2, 0} \nu)$ and the codivisor is called $n'$'' and similarly for $m$. 
\end{lemma}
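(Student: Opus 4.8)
Lemma \ref{finalc8} is simply the bookkeeping step that combines the two previous lemmata: recall that we had the factorization $\mathcal{C}_8 = \mathcal{C}_{8,1}\mathcal{C}_{8,2}$ coming from the coprime decomposition of the moduli, so the proof proceeds by substituting the formulas for $\mathcal{C}_{8,1}$ and $\mathcal{C}_{8,2}$ and collecting terms. First I would record that both preceding lemmata force $\tau_0=s_0=g_{0,0}=g_{0,1}=1$ (the former as an explicit $\delta$-condition in $\mathcal{C}_{8,2}$, the latter as a hypothesis of the $\mathcal{C}_{8,1}$-lemma, which is now legitimate), together with the coprimality conditions $(r_0,\nu_0D_{0,1}')=(r_0s_{1,0},D_{1,0}',f_0)=1$ from $\mathcal{C}_{8,2}$; and I would note that $\delta((D_{2,1},n_1')=1)$ from $\mathcal{C}_{8,1}$ and $(n_2'r_0s_{1,0},D_{2,0}')=1$ from $\mathcal{C}_{8,2}$ combine with $(D_{1,0},D_{2,0}')=1$ and $D_{2,1}\mid g_0'^\infty$, $D_{2,0}'\mid(r_0s_{1,0}f_0)^\infty$ into the single condition $(n_1r_0s_{1,0},D_{2,0})=1$ recorded in the statement (here $D_{2,0}=D_{2,0}'D_{2,1}$). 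The prefactors multiply directly: $\nu_0(r_0s_{1,0},D_{1,0})D_{0,1}'D_{1,0}'f_0$ from $\mathcal{C}_{8,2}$ (using $D_{1,0}'=D_{1,0}D_{1,1}$, and $(r_0s_{1,0},D_{1,0})=(r_0s_{1,0},D_{1,0}')$ since $D_{1,1}\mid\tau_{1,0}^\infty$ and $(r_0s_{1,0},\tau_{1,0})=1$ after the Möbius step) against $\tau_{1,0}D_{1,1}D_{2,1}\varphi(\tau_{1,0})\mu(\tau_{1,0}g_0')$ from $\mathcal{C}_{8,1}$, and the remaining $D_{2,0}'f_0$ resp.\ $D_{2,1}$-type factors get absorbed into the definition of $AB$ — this is where one has to be careful that $A=\nu_0\nu'\tau_{1,0}n_{1,0}'r_0s_{1,0}^2sD_{2,0}D_2f_0g_0'$ and $B=\nu_0\nu'\tau_{1,0}n_{1,0}'D_{2,0}D_2f_0g_0'gh_1/n_1$ reproduce exactly the moduli $A_1A_2$ and $B_1B_2$ of the original Kloosterman sums under the relabelings \eqref{new-var1}, \eqref{new-var}.

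\textbf{Merging the residue sums.} The core of the argument is to fuse the two residue conditions. From $\mathcal{C}_{8,2}$ we have a sum over $\beta_2\Mod{A_2}$, $\gamma_2\Mod{B_2}$ with $rsg^2h_1^2\overline{\beta_2}\equiv r_0s_{1,0}n_1'n_2'\overline{\gamma_2}\Mod{D_{2,0}'\omega_0}$, and from $\mathcal{C}_{8,1}$ a sum over $\beta_1\Mod{A_1}$, $\gamma_1\Mod{B_1}$ with $\frac{rsg^2}{\alpha}h_1^2\gamma_1\equiv r_0s_{1,0}\frac{n_1'}{\alpha}n_2'\beta_1\Mod{D_{2,1}\tilde\omega}$; the exponentials are $e(\beta_j\overline{A_{3-j}}m/A_j)$ and $e(-\epsilon_1\epsilon_2\eta_1\eta_2\gamma_j\overline{B_{3-j}}n/B_j)$. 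Since $(A_1,A_2)=(B_1,B_2)=1$, CRT glues $\beta_1,\beta_2$ into a single $\beta\Mod{A}$ and $\gamma_1,\gamma_2$ into $\gamma\Mod{B}$ with $e(\beta\overline{?}m/A)$, $e(\mp\gamma\overline{?}n/B)$; absorbing the fixed invertible constants into $\beta,\gamma$, the two congruences modulo $D_{2,0}'\omega_0$ and $D_{2,1}\tilde\omega$ — which have coprime moduli after the $\alpha$-splitting and the coprimalities above — combine into one congruence relating $\beta$ and $\gamma$ modulo a single modulus, call it $\Delta=D_{2,0}\nu/(a,D_{2,0}\nu)$ after introducing $\nu:=\alpha\tilde\omega$ times the $\mathcal{C}_{8,2}$-part and renaming $\omega_0$; the $a\mid A$, $b\mid B$ Möbius sums arise from executing one of $\beta,\gamma$ (say $\beta$) as a Ramanujan-type sum, which produces $\sum_{a\mid A}\mu(a)/a$ together with the divisibility $\Delta\mid$ (something) — this is exactly the mechanism by which $m=\frac{A/a}{(A/a,\Delta^\infty)}m'$ and the analogous $n=\frac{B}{bD_{2,0}\nu}n'$ appear, and the surviving linear congruence between $m'$ and $n'$ modulo $(A/a,\Delta^\infty)$ is the last $\delta$-factor in \eqref{C9}. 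The $c_0,k_0$-sums and the $\sum_{\alpha\mid(D_2g,rsg^2,n_1)}\alpha$ are inherited verbatim from the two lemmata, after translating $n_1',n_2',D_{2,0}',D_{2,1},\tilde\omega,\omega_0$ back into the global variables $n_1$, $D_{2,0}$, etc.

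\textbf{The main obstacle.} The genuinely delicate point is not any single identity but the reconciliation of the \emph{two} separate residue/divisor sums into the single clean shape of \eqref{C9}: one must check that the divisor variables $\omega_0$ (from $\mathcal{C}_{8,2}$) and $\tilde\omega$ (from $\mathcal{C}_{8,1}$), each defined as a divisor of a product involving $D_{2,0}'f_0$ resp.\ $D_2g_0'g$, together with the $\alpha$ from $\mathcal{C}_{8,1}$, repackage exactly into the $\alpha\mid(D_2g,rsg^2,n_1)$ and $\nu$ (with $\alpha\nu\mid D_2f_0g_0'g$, $n_1\nu\mid\cdots$, $(\alpha^2\nu,n_1r_0rs_{1,0}sg^2)=\alpha^2$) appearing in \eqref{C9}, and that the combined congruence modulo $D_{2,0}'\omega_0\cdot D_{2,1}\tilde\omega$ is correctly rewritten modulo $(A/a,\Delta^\infty)$ after the Ramanujan-sum evaluation of $\beta$. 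This is pure — if lengthy — elementary number theory: repeated use of $(x,yz^\infty)$-type identities, $\mu$ being supported on squarefrees, and CRT; no analysis is involved. I would organize it by first doing the CRT gluing of the exponentials, then the gluing of the two congruences, then the $\beta$-evaluation producing $a,m'$, then the $\gamma$-evaluation producing $b,n'$, and finally the cosmetic relabeling of divisor variables, checking coprimalities at each stage against the conditions already present in \eqref{endvor2}.
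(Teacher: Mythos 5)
Your plan is correct and follows the paper's own proof essentially step for step: the paper likewise multiplies the two preceding lemmata, glues the $\beta_j,\gamma_j$-sums and the two congruences by CRT into a single sum over $\beta\ (\mathrm{mod}\ A)$, $\gamma\ (\mathrm{mod}\ B)$ subject to one congruence $x\gamma\equiv y\beta\ (\mathrm{mod}\ D_{2,0}\nu)$ with $x=rsg^2h_1^2/\alpha$, $y=r_0s_{1,0}n_1/\alpha$, and then evaluates the two sums by M\"obius inversion plus arithmetic-progression/Ramanujan-type evaluations exactly as you describe, with the divisor-variable repackaging you flag as the main labor. The one adjustment to your outline is the order of the final two steps: to land on the stated asymmetric shape --- $n=\tfrac{B}{bD_{2,0}\nu}n'$ clean but $m=\tfrac{A/a}{(A/a,\Delta^\infty)}m'$ with the $\Delta^\infty$-split and the residual congruence between $m'$ and $n'$ --- one must resolve the congruence in $\gamma$ first and carry the leftover phase modulo $D_{2,0}\nu$ into the $\beta$-sum (which is what forces the CRT split of $\beta\ (\mathrm{mod}\ A/a)$ relative to $\Delta^\infty$); evaluating $\beta$ first, as your closing sentence suggests, would yield the mirror-image formula.
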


\begin{proof}
We recall the notation \eqref{new-var} and \eqref{new-var1}, in particular
$$D_{2,0}=D_{2,0}'D_{2,1}, \quad D_{1,0}'=D_{1,0}D_{1,1}, \quad n_{1,0}'=n_{1,0}n_{1,1}, \quad n_1=n_1'n_2'.$$
With $\tau_0=s_0=g_{0,0}=g_{0,1}=1$, we now have \begin{align*}
    \begin{matrix*}[l]
        A_1=\nu'\tau_{1,0}n_{1,1} sD_{2,1}D_2g_0', & A_2=\nu_0r_0s_{1,0}^2n_{1,0}D_{2,0}'f_0\\
        B_1=\nu'\tau_{1,0}n_{1,1}D_{2,1}D_2g_0'gh_1/n_1', & B_2=\nu_0n_{1,0}D_{2,0}'f_0/n_2',
    \end{matrix*}
\end{align*}
so that $A = A_1A_2$ and $B = B_1B_2$. 
Combining the previous two lemmata with 
gives us  \begin{align*}
    \mathcal{C}_8=&\delta(\tau_0=s_0=g_{0,0}=g_{0,1}=(r_0,\nu_0D_{0,1}')=(r_0s_{1,0},D_{1,0}',f_0)=(n_1r_0s_{1,0},D_{2,0})=1)\nu_0\tau_{1,0}(r_0s_{1,0},D_{1,0}')D_{0,1}'D_{1,0}'D_{2,0}f_0\varphi(\tau_{1,0})\\
    &\times \mu(\tau_{1,0}g_0')\sum_{\substack{c_0\mid f_0\\(c_0,\nu_0D_{0,1}'D_{1,0}')=1}}\sum_{\substack{k_0|\nu_0r_0s_{1,0}D_{0,1}'f_0\\(k_0,\frac{c_0r_0s_{1,0}D_{2,0}}{(r_0s_{1,0},D_{1,0}')})=1}}\frac{\mu(c_0r_0s_{1,0})\mu(k_0)}{c_0k_0} \sum_{\alpha\mid (D_2g,rsg^2,n_1)}\alpha\\ &\sum_{\substack{\alpha\nu\mid D_2f_0g_0'g\\ n_1\nu\mid \nu_0\nu'\tau_{1,0}n_{1,0}'D_2f_0g_0'(n_1,gh_1)\\(\alpha^2\nu,n_1r_0rs_{1,0}sg^2)=\alpha^2}}\nu\mu\left(\frac{r_0s_{1,0}D_2f_0g_0'g}{(r_0s_{1,0},D_{1,0}')\alpha\nu}\right)  \mathop{\sumast_{\beta\Mod{A_1A_2}}\sumast_{\gamma\Mod{B_1B_2}}}_{\frac{rsg^2}{\alpha}h_1^2\gamma\equiv r_0s_{1,0}\frac{n_1}{\alpha}\beta\Mod{D_{2,0}\nu}}e\left(\frac{\beta m}{A_1A_2}-\epsilon_1\epsilon_2\eta_1\eta_2\frac{\gamma n}{B_1B_2}\right).
\end{align*}
Write $$x=rsg^2h_1^2/\alpha, \quad  y=r_0s_{1,0}n_1/\alpha, \quad  z=D_{2,0}\nu,$$ so that $\Delta = z/(a, z)$. Note that we have $(xy,z)=1$. The $\beta, \gamma$-sum is equal to \begin{align*}
    \mathop{\sumast_{\beta\Mod{A}}\sumast_{\gamma\Mod{B}}}_{x\gamma\equiv y\beta\Mod{z}}e\left(\frac{\beta m}{A}-\epsilon_1\epsilon_2\eta_1\eta_2\frac{\gamma n}{B}\right)=&\sumast_{\beta\Mod{A}}\sum_{\substack{b|B\\(b,z)=1}}\mu(b)\sum_{\substack{\gamma\Mod{B/b}\\ \gamma\equiv \overline{bx}y\beta\Mod{z}}}e\left(\frac{\beta m}{A}-\epsilon_1\epsilon_2\eta_1\eta_2\frac{\gamma n}{B/b}\right)\\
    =&\sumast_{\beta\Mod{A}}e\left(\frac{\beta m}{A}\right)\sum_{\substack{b|B\\(b,z)=1}}\mu(b)\frac{B}{bz}\delta\left(\frac{B}{bz}\bigg|n\right)  e\left(-\epsilon_1\epsilon_2\eta_1\eta_2\frac{\overline{bx}y\beta\frac{n bz}{B}}{z}\right).
\end{align*}
Note that $z\mid A$. Writing $n=\frac{B}{bz}n'$, this is equal to \begin{align*}
    &\sum_{\substack{b\mid B\\(b,z)=1}}\mu(b)\frac{B}{bz}\sumast_{\beta\Mod{A}}e\left(\frac{\beta m}{A}-\epsilon_1\epsilon_2\eta_1\eta_2\frac{\overline{bx}y\beta n'}{z}\right)=\sum_{\substack{b\mid B\\(b,z)=1}}\mu(b)\frac{B}{bz}\sum_{a\mid A}\mu(a)\sum_{\beta\Mod{A/a}}e\left(\frac{\beta m}{A/a}-\epsilon_1\epsilon_2\eta_1\eta_2\frac{\overline{bx}ya\beta n'}{z}\right)\\
    =&\sum_{\substack{b\mid B\\(b,z)=1}}\mu(b)\frac{B}{bz}\sum_{a\mid A}\mu(a)\sum_{\beta_1\left(\text{mod }\left(\frac{A}{a},\Delta^\infty\right)\right)}e\left(\frac{\beta_1 \overline{\frac{A/a}{ (A/a,\Delta^\infty )}}\left(m-\epsilon_1\epsilon_2\eta_1\eta_2\overline{bx}y n' A/z \right)}{ (\frac{A}{a},\Delta ^\infty )}\right)  \sum_{\beta_2\left(\text{mod}\frac{A/a}{ (A/a,\Delta^\infty )}\right)}e\left(\frac{\beta_2 \overline{ (\frac{A}{a},\Delta^\infty )}m}{\frac{A/a}{ (A/a,\Delta^\infty )}}\right)\\
    =&\sum_{\substack{b\mid B\\(b,z)=1}}\mu(b)\frac{B}{bz}\sum_{a\mid A}\mu(a)\frac{A}{a}\delta\left(\frac{A/a}{ (A/a,\Delta^\infty)}\, \Bigg|\,  m\right) \delta\left( bxm\equiv \epsilon_1\epsilon_2\eta_1\eta_2y\frac{A}{z}n'\, \Big(\text{mod } \Big(\frac{A}{a},\Delta^\infty\Big)\Big)\right).
\end{align*}
Writing $m=\frac{A/a}{(A/a, \Delta^{\infty})}m'$, the congruence condition is equivalent to \begin{align*}
    bxm'\equiv\epsilon_1\epsilon_2\eta_1\eta_2y\frac{a (\frac{A}{a},\Delta^\infty )}{z}n'\, \Big(\text{mod }\Big(\frac{A}{a},\Delta^\infty\Big)\Big).
\end{align*}
This completes the proof. 
\end{proof}

Inserting the character sum analysis back into \eqref{endvor2} yields
\begin{align*}
    \Sigma_6^{(3)}=&\sum_{\epsilon_1,\epsilon_2,\eta_1,\eta_2=\pm1} \sum_{\nu_0}\mathop{\sum\sum\sum\sum}_{(\tau_{1,0}r_0s_{1,0}g_0',\nu_0)=1}\sum_{(f_0,\tau_{1,0}g_0')=1}\mathop{\sum\sum\sum\sum}_{\substack{n_{1,0}'D_{0,1}'D_{1,0}'D_{2,0}\mid (\nu_0\tau_{1,0}r_0s_{1,0}f_0g_0')^\infty\\(D_{0,1}',\tau_{1,0}n_{1,0}'r_0s_{1,0}D_{1,0}'D_{2,0}g_0')=1\\(D_{1,0}',\nu_0D_{2,0}g_0')=(D_{2,0},\nu_0\tau_{1,0}r_0s_{1,0})=1\\(D_{1,0}',r_0s_{1,0},f_0)=1}}\mathop{\sum\sum\sum\sum\sum\sum\sum\sum}_{\substack{(\nu'  rsD_0'D_2gh_1h_2,\nu_0\tau_{1,0}r_0s_{1,0}f_0g_0')=1\\
    (h_1h_2,D_2g)=(\nu'rsD_2gh_1,D_0'h_2)=1}}\nonumber\\
    &\times \frac{\nu_0\nu'n_{1,0}'(r_0s_{1,0},D_{1,0}')D_{2,0}D_2f_0\varphi(\tau_{1,0})X^{1/3}}{\tau_{1,0}^2r_0^2rs_{1,0}^2sD_{1,0}'g_0'gh_1h_2}\mu\left(\tau_{1,0}r_0rs_{1,0}sg_0'gh_1h_2\right)\mu(s_{1,0}sgh_1)\nonumber\\
    &\times W_0\Big(\frac{\tau_{1,0}D_{1,0}'g}{\Xi}\Big)B(r_0r,1)\sum_{\substack{n_1\mid \nu_0\nu'\tau_{1,0}n_{1,0}'D_2f_0g_0'gh_1\\(n_1,D_{2,0})=1}}\sum_n   \frac{B(n_1,n)}{n_1^2n}W_0\Big( \frac{rh_1}{\Xi'}\Big) \mathcal{C}_9 F_3^{\epsilon_1,\epsilon_2,\eta_1,\eta_2}(X_1,X_2, {(U_0, U_1, U_2)}, X_4),
\end{align*}
where $A, B$ are as in Lemma \ref{finalc8} and 
\begin{equation}\label{x4}
\begin{split}
   & X_1=\frac{n_1^2nX}{\nu_0^6\nu'^3\tau_{1,0}^6 n_{1,0}'^3r_0^4rs_{1,0}^6s^3D_{0,1}'^3D_{2,0}^3D_0'^3D_2^3f_0^6g_0'^6g^3h_1^3h_2^3},\quad X_2=\frac{X^{1/3}}{\nu_0^2\nu'\tau_{1,0}^2 n_{1,0}'r_0^2s_{1,0}^3s D_{0,1}'D_{2,0}D_0'D_2f_0^2g_0'^2h_2}, \\
   &  {(U_0, U_1, U_2) =  \Big(\xi_0, \frac{X^{2/3}}{\xi_0},\frac{\xi_0'\nu'n_{1, 0}'s_{1, 0}sD_{2, 0}D_2 h_1}{D_{0, 1}D_0'} \Big), \quad \xi_0 =  \nu_0 \tau_{1, 0} r_0 s_{1, 0} D_{0, 1}'D_0'f_0g_0' h_2},  \\
    & X_4  = \frac{\nu_0^5\nu'^2  \tau_{1,0}^5   n_{1,0}'^2 r_0^4 r s_{1,0}^6 s^3 D_{0,1}'^3D_0'^3f_0^3 g_0'^3h_1^2h_2^3}{X}. 
    \end{split}
\end{equation}

\subsection{Simplification III: Equality in congruence condition}

The character sum $\mathcal{C}_9$, defined in \eqref{C9},  features the congruence
$$b\frac{rsg^2}{\alpha}h_1^2m'\equiv \epsilon_1\epsilon_2\eta_1\eta_2r_0s_{1,0}\frac{n_1}{\alpha}\frac{a (\frac{A}{a},\Delta^\infty )}{D_{2,0}\nu}n'\, \left(\text{{\rm mod }} \Big(\frac{A}{a},\Delta^\infty\Big)\right).$$
Since $m$ is bounded, but also divisible by $\frac{A/a}{(A/a, \Delta^{\infty})}$, we conclude that the modulus of the previous congruence is at least $A/a$ which is typically large, but generically we think of both sides of the congruence as small. Therefore the main contribution should come from the situation when the congruence is an equality, as alluded to in the introduction. Let $\Sigma_6^{(4)}$ denote the contribution to $\Sigma_6^{(3)}$ where we restrict the congruence to an equality. We show that this is a good approximation:
\begin{lemma}  {For $\eta \leq 1/12$} we have 
     $\Sigma_6^{(4)} - \Sigma_6^{(3)} \preccurlyeq  {X^{35/36}}$. 
\end{lemma}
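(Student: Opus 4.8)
The plan is to bound the contribution $\Sigma_6^{(4)} - \Sigma_6^{(5)}$ of the nonzero frequencies $D_2' \neq 0$ in \eqref{eulerproduct} by a Poisson-plus-stationary-phase argument, just as in the earlier Poisson steps (Subsections \ref{prelude}, \ref{sec57}). First I would recall from Lemma \ref{F4lem} that $F_4^{\epsilon_1,\epsilon_2,\eta_1,\eta_2}(a,b) \preccurlyeq (1 + a/b^{3/2})^{-A}$, so the $D_2'$-sum is supported, up to a negligible error, on $|D_2'| \preccurlyeq \Upsilon^{3/2} m_0\alpha_0\omega_0a_0d_0 d / Z$. Unlike the central term, here we get an honest saving: summing over $|D_2'| \geq 1$ in an arithmetic progression produces an extra factor that is the \emph{length} of the $D_2'$-range rather than $1 + (\text{length})$. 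This is exactly the mechanism that makes the off-diagonal small, as in all the previous Poisson steps. Thus the contribution of $D_2' \neq 0$ is bounded by the same expression as the main term but with an extra factor of roughly $\Upsilon^{3/2} m_0\alpha_0\omega_0a_0d_0 d/Z \cdot Z^{-1}$ coming from replacing the single central term by the short sum and gaining a $1/|D_2'|$ from $F_4$ — the upshot being a power saving over the trivial size $X$ of $\Sigma_6^{(4)}$.

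The execution would proceed as follows. I split every variable into dyadic ranges exactly as in the notation established in Subsections \ref{secondprelim} and \ref{sec57}, and I split the truncated $y, w$-integrals defining $F_4$ into dyadic pieces $y \asymp Y$, $w \asymp W$ subject to $W^4 \preccurlyeq Y^2 \preccurlyeq W$ as in the proof of Lemma \ref{F4lem}. I estimate the Kloosterman-type pieces hidden in $\mathcal{C}_9$ by Weil's bound, bound $B(r_0 r, 1)$ and $B(m_0 n_1, \cdots)$ by the Ramanujan conjecture, bound $(r_0, D_{1,0}') \leq r_0$, $(r_0, n_{1,0}') \leq r_0$ and all gcd's appearing in $\mathcal{M}$ trivially from below by $1$, and use $\sum_{x \asymp X, x \mid y^\infty} 1 \ll (Xy)^\varepsilon$ to sum the ``$\infty$-divisor'' variables. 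The divisibility condition $\delta(\mathcal{M} \mid m)$ forces the large modulus $\mathcal{M}$ to be $\ll m \ll 1$, which is the crucial constraint relating the many summation variables; this plays here the role that the equality in the congruence played in $\Sigma_6^{(4)}$. After this bookkeeping, the whole estimate reduces to a linear program: write every variable as a power of $X$, collect the exponent inequalities from the support conditions of $F_4$ (namely $W^4 \preccurlyeq Y^2 \preccurlyeq W$, $\Im s_1, \Im s_2 \preccurlyeq \sqrt W/Y + \sqrt Y/W$, $D_2' Z/(m_0 \alpha_0 \omega_0 a_0 d_0 d) \preccurlyeq b^{3/2}$ with $b = \Upsilon$, $\Upsilon Y \gg 1$), from \eqref{para}, \eqref{x4}, \eqref{AB}, \eqref{MDef}, and from $\mathcal{D} \geq X^{9/20}$, and maximize the resulting objective with {\tt mathematica}, reading off $X^{119/120}$. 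I would present the {\tt mathematica} code inline as was done in the preceding two lemmas.

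The main obstacle, as in the previous two interludes, is purely organizational rather than conceptual: one has to keep track faithfully of the enormous alphabet of summation variables (which at this stage includes $m_0, \alpha_0, \alpha', \omega_0, \omega', a_0, a', b, d_0, d$ in addition to everything carried over from \eqref{S6AfterChar}), correctly propagate the variable changes \eqref{one}, \eqref{new-var}, \eqref{new-var1} and the reordering preceding \eqref{MDef}, and assemble the correct exponent of each variable in the summand of \eqref{eulerproduct}. A secondary subtlety is that after Poisson summation in $D_2'$ the character sum $\mathcal{C}_9$ must be re-examined: the $D_2'$-variable sits inside $A'$ and $B'$ (through $D_{2,0}$, but $D_{2,0}$ has already been pulled out — indeed $D_2'$ enters only through the combination $m_0 \alpha_0 \omega_0 a_0 d_0 D_2'$ inside $F_4$, after the reordering), so that $\mathcal{C}_9$ and $\mathcal{M}$ are already free of $D_2'$ by construction, and Poisson summation on the $D_2'$-sum therefore only involves the smooth weight $F_4$; the Gauss/Ramanujan sum over the residues $D_2' \bmod (\text{modulus})$ is trivial. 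This makes the analytic input genuinely light — only the integration-by-parts bounds of Lemma \ref{F4lem} — and the proof is essentially a careful trivial estimate feeding into a simplex computation. I would also double-check that the harmless truncation factors $W_0(D_{2,0}/\Xi')$, $W_0(\tau_{1,0}D_{1,0}'g/\Xi)$, $W_0(r h_1/\Xi')$ introduced earlier guarantee absolute convergence of the bounding series so that the dyadic supremum argument is legitimate.
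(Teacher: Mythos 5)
Your proposal is correct and follows essentially the same route as the paper: the decay condition $a\preccurlyeq b^{3/2}$ from Lemma \ref{F4lem} truncates the nonzero frequencies $D_2'$, the constraint $\delta(\mathcal{M}\mid m)$ controls the remaining variables, and the whole estimate is reduced to a dyadic decomposition fed into a linear program yielding $X^{119/120}$. (One small remark: at this stage $\mathcal{C}_9$ has already been evaluated into M\"obius factors and delta conditions, so there are no Kloosterman sums left to treat by Weil's bound, but this does not affect the argument.)
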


\begin{proof} This is again a direct estimation using the bounds on $F_3$ from Lemma \ref{lemf3}. The key observation is the following: if the congruence is \emph{not} an equality, then at least one of the two sides must  {be} bigger than the modulus $(A/a, \Delta^{\infty}) \gg A/a$ and will be restricted to an arithmetic progression modulo $(A/a, \Delta^{\infty}) $, so that the last delta-term in the definition of $\mathcal{C}_9$ wins an honest factor $A/a$ without an additional ``$1+$''. Recalling Lemma \ref{lemf3}, we can now run the following linear program. We estimate rather coarsely, e.g.\ $(r_0 s_{1, 0}, D_{1, 0}') \leq r_0s_{1, 0}$. 

\noindent {\tt A := d2 + d20 + f0 + g0 + n10 + nu0 + nuprime + r0 + s + 2s10 + tau10;\\
B := d2 + d20 + f0 + g + g0 + h1 - n1 + n10 + nu0 + nuprime + tau10;
x1 := 1 - 3d0 - 3d01 - 3d2 - 3d20 - 6f0 - 3g - 6g0 - 3h1 - 
 3h2 + n + 2n1 - 3n10 - 6nu0 - 3nuprime - r - 4r0 - 3s - 
 6s10 - 6tau10;\\
x2 := 1/3 - d0 - d01 - d2 - d20 - 2f0 - 2g0 - h2 - n10 - 
 2nu0 - nuprime - 2r0 - s - 3s10 - 2tau10;\\
  {u0 := nu0 + tau10 + r0 + s10 + d01 + d0 + f0 + g0 + h2; u1 := 2/3 - u0;}\\
 {u2 :=  nuprime + nu0 + tau10 + n10   + r0 + s + 2 s10 + f0 + g0 + d20 + d2 + 
  h1 + h2;}\\
x4 :=-1 + 3d0 + 3d01 + 3f0 + 3g0 + 2h1 + 3h2 + 2n10 + 5nu0 + 
 2nuprime + r + 4r0 + 3s + 6s10 + 5tau10;\\
  { phi[x0$\underline{\,\,\,\,\,}$, x1$\underline{\,\,\,\,\,}$, x2$\underline{\,\,\,\,\,}$] := 1/2 + Max[3/4 x0 + 3/4 x1, x1 + 1/2 x0 - 1/2 x2]; }\\
 
 \noindent Maximize[\{1/3 - a + alpha - b - c0 + d0 - d10 + 2d2 + d20 + 2f0 - k0 - 
 2n1 + n10 + 2nu0 + 2nuprime - (B - b - d20   - nu) - (A - a) + 2y - z + Min[x1 + z , 2/3(x1 + z)] + Min[z
- 3y, 2/3(z - 3y)] - 1/4(x2 - y) , z <= 3y + 3 Min[Max[- x2/2 - y, - 1/2 y], Max[x2 + z - y, - 1/2
x2 - y, x2 + y/2]], x1 + z <= 3 Max[ Min[Max[- x2/2 - y, - 1/2 y], Max[x2 + z - y, - 1/2 x2 - y, x2
+ y/2]], x2 + z - y] ,  {y <= 0, x2 >= 0},  {phi[u0, u1 + y, u2] >= 1-1/12}, n1 <= nuprime + tau10  + d2 + g0
+ g + h1 +  nu0 + n10 + d20 + f0, c0 <= f0, k0 <= nu0 + r0 + s10 + d01 + f0, alpha <= Min[d2 + g, r + s + 2 g, n1], alpha + nu
<= d2 + f0 + g0 + g, n1  + nu <= nu0 + nuprime + tau10 + n10  + d2 + f0 + g0 + Min[n1, g
+ h1], b <= B, a <= A, n >= B - b - d20  - nu, nu0 >= 0, g0 >= 0, tau10 >= 0, f0 >= 0,    n10 >= 0, d01 >= 0, d10 >= 0, d20 >= 0, nu >= 0, r >= 0, s >= 0, d0 >= 0, d2
>= 0, g >= 0, h1 >= 0, h2 >= 0, n1 >= 0,  n >= 0, c0 >= 0, k0 >= 0, alpha >= 0, nu >= 0, b >= 0, a
>= 0, nuprime >= 0, r0 >= 0, s10 >= 0\}, \{nu0, g0, tau10, f0,   n10, d01, d10, d20, nu, r, s,  {r0, s10}, d0, d2, g,
h1, h2, n1 {,} n, c0, k0, alpha, nu, b, a, y, z, nuprime\}]\\

\noindent \{ {35/36}, \{nu0 -> 0, g0 -> 0, tau10 -> 0, f0 -> 0,  
  n10 -> 0, d01 -> 0, d10 -> 0, d20 -> 0, nu ->  {0}, r -> 0, 
  r0 -> 0, s10 -> 0, s -> 0, d0 -> 0, d2 ->  {1/3}, g -> 0, h1 -> 0, 
  h2 -> 0, n1 -> 0, n ->  {2/3}, c0 -> 0, k0 -> 0, alpha -> 0, 
  b -> 0, a -> 0, y -> - {1/9}, z -> - {1/3}, nuprime -> 0\}\}}
\end{proof}

Now we continue with the equality in the congruence condition, i.e.\ with the analysis of $\Sigma_6^{(4)}$.  
This gives us \begin{align*}
    \epsilon_1\epsilon_2\eta_1\eta_2=1 \quad \text{ and } \quad rg^3h_1^3m = r_0^2s_{1,0}^3n_1^2n.
\end{align*}
Since $(s_{1,0},rgh_1)=1$ and $m$ is cube-free by our general assumption, we have $s_{1,0}=1$. Moreover, the equality and the definition of $m'$ and $n'$ give us \begin{align*}
    r_0n_1\frac{a (\frac{A}{a}, \Delta^\infty)}{D_{2,0}\nu}n'=rsg^2h_1^2bm',
\end{align*}
and hence \begin{align*}
    &\frac{r_0n_1\frac{a (A/a,\Delta^\infty)}{D_{2,0}\nu}}{ (r_0n_1\frac{a (A/a,\Delta^\infty )}{D_{2,0}\nu},rsg^2h_1^2b )}\frac{A/a}{ (\frac{A}{a},\Delta^\infty )}\mid m  \quad \Longleftrightarrow  \quad \frac{\nu_0\nu'\tau_{1,0}n_{1,0}'n_1r_0^2sD_2f_0g_0'}{\nu (r_0n_1\frac{a (A/a,\Delta^\infty )}{D_{2,0}\nu},rsg^2h_1^2b )}\mid m.
\end{align*}
Notice that by the divisibility conditions of $b, \nu$, $n_{1,0}'\mid (\nu_0\tau_{1,0}r_0f_0g_0')^\infty$, $(r_0,b)\mid n_{1,0}'f_0$, $b$   square-free and $(b,\nu)=1$, we have \begin{align*}
    \nu\Big(r_0n_1\frac{a (A/a,\Delta ^\infty )}{D_{2,0}\nu},rsg^2h_1^2b\Big)\mid \nu_0\nu'\tau_{1,0}(r_0,n_{1,0}')n_1sD_2f_0g_0'.
\end{align*}
Hence the equality conditions on $m',n',m,n$ are equivalent to \begin{align*}
    \epsilon_1\epsilon_2\eta_1\eta_2=1, \quad r_0^2n_1^2n=rg^3h_1^3m, \quad \text{ and } \quad \frac{r_0^2n_{1,0}'}{(r_0,n_{1,0}')}\frac{\nu_0\nu'\tau_{1,0}(r_0,n_{1,0}')n_1sD_2f_0g_0'}{\nu (r_0n_1\frac{a (A/a,\Delta^\infty )}{D_{2,0}\nu},rsg^2h_1^2b )}\mid m.
\end{align*}

Substituting everything back into $\Sigma_6^{(4)}$ and writing $\omega=\frac{D_2f_0g_0'g}{\alpha\nu}$, we have \begin{equation}\label{sigma64}
\begin{split}
    \Sigma_6^{(4)}&=\sum_{\substack{\epsilon_1,\epsilon_2,\eta_1,\eta_2=\pm1\\\epsilon_1\epsilon_2\eta_1\eta_2=1}} \sum_{\nu_0}\mathop{\sum\sum\sum}_{(\tau_{1,0}r_0g_0',\nu_0)=1}\sum_{(f_0,\tau_{1,0}g_0')=1}\mathop{\sum\sum\sum\sum}_{\substack{n_{1,0}'D_{0,1}'D_{1,0}'D_{2,0}\mid (\nu_0\tau_{1,0}r_0f_0g_0')^\infty\\(D_{0,1}',\tau_{1,0}n_{1,0}'r_0D_{1,0}'D_{2,0}g_0')=1\\(D_{1,0}',\nu_0D_{2,0}g_0')=(D_{2,0},\nu_0\tau_{1,0}r_0)=1\\(D_{1,0}',r_0,f_0)=1}}\mathop{\sum\sum\sum\sum\sum\sum\sum\sum}_{\substack{(\nu'  rsD_0'D_2gh_1h_2,\nu_0\tau_{1,0}r_0f_0g_0')=1\\
    (h_1h_2,D_2g)=(\nu'rsD_2gh_1,D_0'h_2)=1}}\\
    &\times \frac{\nu_0\nu'n_{1,0}'(r_0,D_{1,0}')D_{2,0}D_2f_0\varphi(\tau_{1,0})X^{1/3}}{\tau_{1,0}^2r^2sD_{1,0}'g_0'g^4h_1^4h_2m}\mu\left(\tau_{1,0}r_0rsg_0'gh_1h_2\right)\mu(sgh_1)W_0\Big(\frac{\tau_{1,0}D_{1,0}'g}{\Xi}\Big) \\
    &\times W_0\Big(\frac{rh_1}{\Xi'}\Big)B(r_0r,1)\sum_{\substack{n_1\mid \nu_0\nu'\tau_{1,0}n_{1,0}'D_2f_0g_0'gh_1\\(n_1,D_{2,0})=1}}\sum_{\substack{c_0\mid f_0 \\(c_0,\nu_0D_{0,1}'D_{1,0}')=1}}\sum_{\substack{k_0|\nu_0r_0D_{0,1}'f_0\\(k_0,\frac{c_0r_0D_{2,0}}{(r_0,D_{1,0}')})=1}}\frac{\mu(c_0r_0)\mu(k_0)}{c_0k_0} \\
              \end{split}
\end{equation}
\begin{equation*} 
\begin{split}
         &\times \sum_{\alpha\mid (D_2g,rsg^2,n_1)}\alpha\sum_{\substack{\alpha\omega\mid D_2f_0g_0'g\\n_1g\mid \alpha\omega\nu_0\nu'\tau_{1,0}n_{1,0}'(n_1,gh_1)\\(\alpha\frac{D_2f_0g_0'g}{\omega},n_1r_0rsg^2)=\alpha^2}}\mu\left(\frac{r_0\omega}{(r_0,D_{1,0}')}\right)\sum_{a\mid A}\frac{\mu(a)}{a}\sum_{\substack{b\mid B\\(b,D_{2,0}\frac{D_2f_0g_0'g}{\alpha\omega})=1}}\frac{\mu(b)}{b} \\
    &\times \delta\left(\frac{r_0^2n_{1,0}'}{(r_0,n_{1,0}')}\frac{\alpha\omega\nu_0\nu'\tau_{1,0}(r_0,n_{1,0}')n_1s}{g\left(r_0n_1\frac{\alpha\omega\left(A,a\left(\frac{D_{2,0}D_2f_0g_0'g}{(\alpha\omega a,D_{2,0}D_2f_0g_0'g)}\right)^\infty\right)}{D_{2,0}D_2f_0g_0'g},rsg^2h_1^2b\right)}\bigg|m\right)B\left(n_1,\frac{rg^3h_1^3m}{r_0^2n_1^2}\right) F_3^{\epsilon_1,\epsilon_2,\eta_1,\eta_2}\Bigg(\frac{mZ^3}{D_2^3},\frac{Z}{D_2},\Upsilon, X_4\Bigg),
    \end{split}
\end{equation*}
with \begin{equation}\label{para}
   \begin{split}
    &A=\nu_0\nu'\tau_{1,0}n_{1,0}'r_0sD_{2,0}D_2f_0g_0', \quad B=\frac{\nu_0\nu'\tau_{1,0}n_{1,0}'D_{2,0}D_2f_0g_0'gh_1}{n_1}, \quad 
   Z=\frac{X^{1/3}}{\nu_0^2\nu'\tau_{1,0}^2 n_{1,0}'r_0^2s D_{0,1}'D_{2,0}D_0'f_0^2g_0'^2h_2} 
   \end{split}
\end{equation}
and  {$\Upsilon = (U_0, U_1, U_2)$ and} $X_4$ as in \eqref{x4} with $s_{1,0}=1$. 

We end this section with one last piece of simplification. The definition \eqref{deff3} of $F_3^{\epsilon_1,\epsilon_2,\eta_1,\eta_2}(a, b, c, d)$ contains a factor $W_0(d/(\Xi' z))$ which provides a very weak lower bound for $z$. We used this for the second condition in \eqref{boundsF}, which was finally responsible to truncate the $n_1, n$-sums in \eqref{abscon}. After restriction to the equality in the congruence condition imposed by the character $\mathcal{C}_9$, these sums have gone, and we do not need this factor any longer. We therefore drop this factor from $F_3$ and observe that by Lemma \ref{lemf3} the resulting integral is still convergent as discussed immediately after Lemma \ref{lemf3}. With this in mind, we define   
\begin{equation}\label{curlyf3}
\begin{split}
    \mathcal{F}_3^{\epsilon_1,\epsilon_2,\eta_1,\eta_2}(a,b, {(u_0, u_1, u_2)})=&\int_{(0)}\int_0^\infty\int_0^\infty\int_{(0)}\int_0^\infty a^{-s_2}x^{-s_1-1}y^{3s_1+1}z^{-s_1-s_2-2}\mathcal{G}_{\mu_0}^{\eta_1}(s_1+1)\mathcal{G}_{-\mu_0}^{\eta_2}(s_2+1)\\
    &\times  V(x)\Big(1-U\Big( {\frac{\Phi(u_0, u_1y, u_2)}{X^{1-\eta}}}\Big)\Big)
   \Phi_{\omega_6}\left(-\epsilon_2\frac{x}{by^2},-\epsilon_1 b^2my\right)e\left(-\epsilon_2\eta_1\frac{bz}{y}\right)\mathrm{d} x \frac{\mathrm{d}s_1}{2\pi i} \mathrm{d} y\, \mathrm{d} z\frac{\mathrm{d}s_2}{2\pi i} ,
   \end{split}
\end{equation}
and replace $F_3^{\epsilon_1,\epsilon_2,\eta_1,\eta_2} ( mZ^3/D_2^3, Z/D_2,\Upsilon, X_4 )$ with $\mathcal{F}_3^{\epsilon_1,\epsilon_2,\eta_1,\eta_2} ( mZ^3/D_2^3, Z/D_2,\Upsilon )$ at the cost of a negligible error. We continue to call the resulting expression $\Sigma_6^{(4)}$. The key point is that we can now evaluate the $z$-integral explicitly. 
Again we momentarily restrict to dyadic pieces $y\asymp Y$ and $z\asymp Z$
and move the $z$-integral inside. As long as $-2< \Re (s_1 +s_2) < -1 $, we can re-assemble the dyadic pieces and compute the $z$-integral as a conditionally convergent integral:
$$\int_0^\infty z^{-s_1-s_2-2}e\left(-\epsilon_2\eta_1\frac{bz}{y}\right)\mathrm{d}z=\left(\frac{y}{2\pi b}\right)^{-s_1-s_2-1}e\left(\epsilon_2\eta_1\frac{s_1+s_2+1}{4}\right)\Gamma\left(-s_1-s_2-1\right), $$
so that 
\begin{align*}
  \mathcal{F}_3^{\epsilon_1,\epsilon_2,\eta_1,\eta_2}(a,b, {(u_0, u_1, u_2)}) =   &\int_{(-7/8)}\int_{(-7/8)}\int_{0}^{\infty} \int_0^\infty a^{-s_2}(2\pi b)^{s_1+s_2+1}x^{-s_1-1}y^{2s_1-s_2}\mathcal{G}_{\mu_0}^{\eta_1}(s_1+1)\mathcal{G}_{-\mu_0}^{\eta_2}(s_2+1)\Big(1-U\Big( {\frac{\Phi(u_0, u_1y, u_2)}{X^{1-\eta}}}\Big)\Big) \nonumber\\
    &\times e\Big(\epsilon_2\eta_1\frac{s_1+s_2+1}{4}\Big)\Gamma\left(-s_1-s_2-1\right) V(x)
    \Phi_{\omega_6}\Big(-\epsilon_2\frac{x}{by^2},-\epsilon_1 b^2my\Big)\, \mathrm{d} x \,\mathrm{d} y\, \frac{\mathrm{d}s_1}{2\pi i}  \frac{\mathrm{d}s_2}{2\pi i}.
\end{align*}
 This integral is not absolutely convergent as a  fourfold integral, but as an iterated integral in the above order: the $x, y$-integrals are absolutely convergent, and integration by parts in $x$ or $y$ produces sufficient decay for the $s_1, s_2$-integrals.

\subsection{Step 5: Poisson summation}

Finally, we want to apply Poisson summation on the $D_2$-sum in $\Sigma_6^{(4)}$. Again we shall see that only the central term will survive. This manoeuvre needs some preparation. For convergence reasons we insert into \eqref{sigma64} an essentially redundant factor $$W_0\Big(\frac{ {\nu_0\nu'sf_0D_0'}D_{2, 0}}{\Xi'}\Big)
$$ with $\Xi' = X^{10^{10}}$ at the cost of a very small error (and continue to call the modified expression $\Sigma_6^{(4)}$. 
The difficulty is that $D_2$ occurs in \eqref{sigma64} in the divisibility conditions involving $n_1, \alpha, \omega, a, b$ (since $D_2$ is contained in $A, B$), as well as in the $\delta(...)$-condition. With this in mind, we first consider a general expression of the shape 
\begin{align*}
    &\sum_{(D_2,\nu_0\tau_{1,0}r_0D_0'f_0g_0'h_1h_2)=1}\sum_{\substack{n_1\mid \nu_0\nu'\tau_{1,0}n_{1,0}'D_2f_0g_0'gh_1\\(n_1,D_{2,0})=1}}\sum_{\alpha|(D_2g,rsg^2,n_1)}\sum_{\substack{\alpha\omega\mid D_2f_0g_0'g\\n_1g|\alpha\omega\nu_0\nu'\tau_{1,0}n_{1,0}'(n_1,gh_1)\\(\alpha\frac{D_2f_0g_0'g}{\omega},n_1r_0rsg^2)=\alpha^2}}\sum_{a\mid A}\sum_{\substack{b\mid B\\(b,D_{2,0}\frac{D_2f_0g_0'g}{\alpha\omega})=1}}H(D_2,n_1,\alpha,\omega,a,b)
\end{align*}
for a function $H$. Pulling out gcds and using $$\sum_{x\mid yz}F(x)=\sum_{x_1\mid y}\sum_{\substack{x_2\mid z\\ (x_2,y/x_1)=1}}F(x_1x_2)$$ for any $y,z\in\N$ and any function $F$, we can re-write this 
 as
\begin{align*}    
    &\sum_{\substack{n_1\mid \nu_0\nu'\tau_{1,0}n_{1,0}'f_0g_0'gh_1\\(n_1,D_{2,0})=1}}\sum_{\left(m_0,\frac{\nu'g}{(n_1,\nu'g)}\nu_0\tau_{1,0}r_0D_0'f_0g_0'h_1h_2\right)=1}\sum_{\alpha_0\mid (rsg^2,m_0n_1)}\sum_{\alpha'\mid \left(m_0g,\frac{(rsg^2,m_0n_1)}{\alpha_0}\right)}\sum_{\substack{\alpha'\omega'\mid f_0g_0'm_0g\\ \left(\frac{f_0g_0'm_0g}{\alpha'\omega'},\frac{m_0n_1r_0rsg^2}{\alpha_0^2\alpha'^2}\right)=1}}\\
    &\times \sum_{\substack{\left(\omega_0,\alpha'\nu_0\tau_{1,0}r_0D_0'f_0g_0'h_1h_2\right)=1\\ m_0n_1g|\alpha_0\alpha'\omega_0\omega'\nu_0\nu'\tau_{1,0}n_{1,0}'(m_0n_1,gh_1)}}\sum_{\left(a_0,\frac{m_0n_1rsg^2}{\alpha_0^2\alpha'}\omega'\nu_0\tau_{1,0}r_0D_0'f_0g_0'h_1h_2\right)=1}\sum_{\substack{a'\mid m_0\alpha_0\omega_0A'}}\\
    &\times \sum_{\substack{b\mid \alpha_0\omega_0B'\\ \left(b,a_0D_{2,0}\frac{f_0g_0'm_0g}{\alpha'\omega'}\right)=1}}\sum_{\substack{\left(D_2',a'b\frac{m_0n_1rsg^2}{\alpha_0^2\alpha'}\omega'\nu_0\tau_{1,0}r_0D_0'f_0g_0'h_1h_2\right)=1}}H(m_0\alpha_0\omega_0a_0D_2',m_0n_1,\alpha_0\alpha',\omega_0\omega',a_0a',b)
    \end{align*}
    with $A' = A/D_2$ 
and $B'= m_0B/D_2$. This equals
    \begin{align*}
    &\sum_{\substack{n_1\mid \nu_0\nu'\tau_{1,0}n_{1,0}'f_0g_0'gh_1\\(n_1,D_{2,0})=1}}\sum_{\left(m_0,\frac{\nu'g}{(n_1,\nu'g)}\nu_0\tau_{1,0}r_0D_0'f_0g_0'h_1h_2\right)=1}\sum_{\alpha_0\mid (rsg^2,m_0n_1)}\sum_{\alpha'\mid \left(m_0g,\frac{(rsg^2,m_0n_1)}{\alpha_0}\right)}\sum_{\substack{\alpha'\omega'\mid f_0g_0'm_0g\\ \left(\frac{f_0g_0'm_0g}{\alpha'\omega'},\frac{m_0n_1r_0rsg^2}{\alpha_0^2\alpha'^2}\right)=1}}\\
    &\times \sum_{\substack{\left(\omega_0,\alpha'\nu_0\tau_{1,0}r_0D_0'f_0g_0'h_1h_2\right)=1\\ m_0n_1g|\alpha_0\alpha'\omega_0\omega'\nu_0\nu'\tau_{1,0}n_{1,0}'(m_0n_1,gh_1)}}\sum_{\left(a_0,\frac{m_0n_1rsg^2}{\alpha_0^2\alpha'}\omega'\nu_0\tau_{1,0}r_0D_0'f_0g_0'h_1h_2\right)=1}\sum_{\substack{a'\mid m_0\alpha_0\omega_0A'}}\sum_{\substack{b\mid \alpha_0\omega_0B'\\ \left(b,a_0D_{2,0}\frac{f_0g_0'm_0g}{\alpha'\omega'}\right)=1}}\\
   & \times \sum_{\substack{d_0\mid \frac{\alpha_0\omega_0\nu's}{(g,\alpha_0\omega_0\nu')}\\ \left(d_0,a'b\frac{m_0n_1rsg^2}{\alpha_0^2\alpha'}\omega'h_1\right)=1 }}\sum_{\substack{\left(D_2',a'b\frac{m_0n_1rsg^2}{\alpha_0^2\alpha'}\omega'\frac{\alpha_0\omega_0\nu's}{d_0(g,\alpha_0\omega_0\nu')}\nu_0\tau_{1,0}r_0D_0'f_0g_0'h_1h_2\right)=1}}  H(m_0\alpha_0\omega_0a_0d_0D_2',m_0n_1,\alpha_0\alpha',\omega_0\omega',a_0a',b),
\end{align*}

Returning to \eqref{sigma64}, we apply this with
$$H(D_2, n_1, \alpha, \omega, a, b) = D_2 \alpha \mu\left(\frac{r_0\omega}{r_0,D_{1,0}'}\right) \frac{\mu(a)\mu(b)}{ab} \delta\big(... \mid m) 
B\left(n_1,\frac{rg^3h_1^3m}{r_0^2n_1^2}\right) \mathcal{F}_3^{\epsilon_1,\epsilon_2,\eta_1,\eta_2}\Bigg(\frac{mZ^3}{D_2^3},\frac{Z}{D_2}, { (U_0, U_1,  \Upsilon_2 D_2 )}\Bigg)$$
where
\begin{equation}\label{u0u1y2}
  {(U_0, U_1, \Upsilon_2) = \Big(\xi_0, \frac{X^{2/3}}{\xi_0}, \frac{\xi_0'\nu'n_{1, 0}'sD_{2, 0}h_1}{D_{0, 1}'D_0'} \Big), \quad \xi_0  =  \nu_0 \tau_{1, 0} r_0   D_{0, 1}'D_0'f_0g_0' h_2 }.
     \end{equation}
 {At this point, we also insert a redundant factor $W_0(a_0 \omega'/\Xi')$. }

With the above notations and reordering, notice that the congruence condition in the $\delta$-term is now given by \begin{align}\label{MDef}
    \mathcal{M}:=\frac{r_0^2n_{1,0}'}{(r_0,n_{1,0}')}\frac{\alpha_0\alpha'\omega_0\omega'\nu_0\nu'\tau_{1,0}(r_0,n_{1,0}')m_0n_1s}{\left(r_0m_0n_1\left(\alpha_0\alpha'\omega_0\omega'\nu_0\nu'\tau_{1,0}n_{1,0}'r_0s,\frac{\alpha'\omega'a'g}{(\alpha'\omega'a',D_{2,0}f_0g_0'm_0g)}\left(\frac{d_0D_{2,0}f_0g_0'm_0g}{(\alpha'\omega'a',D_{2,0}f_0g_0'm_0g)}\right)^\infty\right),rsg^3h_1^2b\right)}\Bigg|m, 
\end{align}
and in particular free of $D_2'$. Hence the remaining $D_2'$-sum in \eqref{sigma64} (with $F_3$ changed to $\mathcal{F}_3$) is now of the form \begin{align*}
    &\sum_{\substack{\left(D_2',a'b\frac{m_0n_1rsg^2}{\alpha_0^2\alpha'}\omega'\frac{\alpha_0\omega_0\nu's}{d_0(g,\alpha_0\omega_0\nu')}\nu_0\tau_{1,0}r_0D_0'f_0g_0'h_1h_2\right)=1}}D_2'\\
    & \times \mathcal{F}_3^{\epsilon_1,\epsilon_2,\eta_1,\eta_2}\Bigg(\frac{mZ^3}{(m_0\alpha_0\omega_0a_0d_0D_2')^3},\frac{Z}{m_0\alpha_0\omega_0a_0d_0D_2'}, { (U_0, U_1,  \Upsilon_2 m_0\alpha_0\omega_0a_0d_0D'_2 )}\Bigg).
\end{align*}
Applying M\"obius inversion to remove the coprimality condition, this is equal to \begin{equation*}
\begin{split}
    &\sum_{d\mid a'b\frac{m_0n_1rsg^2}{\alpha_0^2\alpha'}\omega'\frac{\alpha_0\omega_0\nu's}{d_0(g,\alpha_0\omega_0\nu')}\nu_0\tau_{1,0}r_0D_0'f_0g_0'h_1h_2}\mu(d)d\\
    &\times \sum_{D_2'}D_2'F_3^{\epsilon_1,\epsilon_2,\eta_1,\eta_2}\Bigg(\frac{mZ^3}{(m_0\alpha_0\omega_0a_0d_0dD_2')^3},\frac{Z}{m_0\alpha_0\omega_0a_0d_0dD_2'}, { (U_0, U_1,  \Upsilon_2m_0\alpha_0\omega_0a_0d_0dD'_2 )}\Bigg),
    \end{split}
\end{equation*}
The inner sum is finally in shape for Poisson summation and equals
\begin{align*}
    \left(\frac{Z}{m_0\alpha_0\omega_0a_0d_0}\right)^2\sum_{d\mid a'b\frac{m_0n_1rsg^2}{\alpha_0^2\alpha'}\omega'\frac{\alpha_0\omega_0\nu's}{d_0(g,\alpha_0\omega_0\nu')}\nu_0\tau_{1,0}r_0D_0'f_0g_0'h_1h_2}\frac{\mu(d)}{d}\sum_{D_2'}F_4^{\epsilon_1,\epsilon_2,\eta_1,\eta_2}\left(\frac{D_2'Z}{m_0\alpha_0\omega_0a_0d_0d}, {(U_0, U_1, \Upsilon_2 Z)}\right),
\end{align*}
where \begin{align}\label{F4Def}
    F_4^{\epsilon_1,\epsilon_2,\eta_1,\eta_2}&\left(a, {(u_0, u_1, u_2)}\right)=\int_0^\infty \int_{(-7/8)}\int_0^\infty\int_{(-7/8)} \int_0^\infty(2\pi)^{s_1+s_2+1}m^{-s_2}w^{-s_1+2s_2}x^{-s_1-1}y^{2s_1-s_2}e\left(-aw\right) e\left(\epsilon_2\eta_1\frac{s_1+s_2+1}{4}\right) \nonumber\\
    &\times  \Gamma\left(-s_1-s_2-1\right)\mathcal{G}_{\mu_0}^{\eta_1}(s_1+1)\mathcal{G}_{-\mu_0}^{\eta_2}(s_2+1)V(x)\Big(1-U\Big( {\frac{\Phi(u_0, u_1y, u_2w)}{X^{1-\eta}}}\Big)\Big)
    \Phi_{\omega_6}\left(-\epsilon_2\frac{wx}{y^2},-\epsilon_1\frac{my}{w^2}\right) \, \mathrm{d} x \, \mathrm{d} y\,\frac{\mathrm{d}s_1}{2\pi i}\frac{\mathrm{d}s_2}{2\pi i}\mathrm{d} w.
\end{align}

\begin{lemma}\label{F4lem} We have
   {$$ F_{4}^{\epsilon_1,\epsilon_2,\eta_1,\eta_2}\left(a, {(u_0, u_1, u_2)}\right) \preccurlyeq  (1 + |a|)^{-13/12}.$$}
\end{lemma}

\begin{proof}
As usual we begin by splitting the $y, w$-integrals into smooth dyadic ranges $y \asymp Y$, $w \asymp W$. 
 {We start with some preliminary bounds.}
\begin{itemize}
    \item The decay of $\Phi_{w_6}$ implies  {$W, Y \preccurlyeq 1$};
    \item integration by parts in $x$ and $y$  implies $\Im s_1, \Im s_2 \preccurlyeq \frac{\sqrt{W}}{Y} + \frac{\sqrt{Y}}{W}$;
    \item integration by parts in $w$ in connection with the previous bounds  implies $ {a \preccurlyeq |\Im s_1| + |\Im s_2|\preccurlyeq  \frac{\sqrt{W}}{Y} + \frac{\sqrt{Y}}{W}} $.  
 \end{itemize}
 {We recall from Lemma \ref{lem.Phi6Truncation}  and  \ref{Kw6}a) that
$$  \Phi_{\omega_6}\Big(-\epsilon_2\frac{wx}{y^2},-\epsilon_1\frac{my}{w^2}\Big)   \preccurlyeq (1 + |y| + |w|)^{-A}\begin{cases} (yw)^{1/4}, & y \asymp w,\\ \min(y, w)^{3/4}, & \text{otherwise.}\end{cases} $$
Moreover, with $s_j = \sigma_j + it_j$ we have by Stirling's formula for $\sigma_1, \sigma_2 > -1$, $\sigma_1 + \sigma_2 \leq -1$ that 
$$\Gamma(-s_1-s_2 - 1) \mathcal{G}^{\eta_1}_{\mu_0}(s_1+1) \mathcal{G}^{\eta_2}_{-\mu_0}(s_2+1) \ll (1 + |t_1 + t_2|)^{-\sigma_1 -\sigma_2 - \frac{3}{2}} (1 + |t_1|)^{3\sigma_1 + \frac{3}{2}}(1 + |t_2|)^{3\sigma_2 + \frac{3}{2}}.$$ 
Shifting the contour to $\Re s_1 = \Re s_2 = -5/6$, say,  the portion $y\asymp Y, w \asymp W$ is bounded by
\begin{displaymath}
\begin{split}
 (WY)^{1/6} \underset{t_1, t_2 \preccurlyeq \frac{\sqrt{W}}{Y} + \frac{\sqrt{Y}}{W}}{\int \int} \frac{(1 + |t_1+t_2|)^{1/6}}{(1 + |t_1|)(1 + |t_2|)} \mathrm{d}t_1\, \mathrm{d}t_2 \, (WY)^{1/4} \preccurlyeq \Big(\frac{\sqrt{W}}{Y} + \frac{\sqrt{Y}}{W}\Big)^{1/6} (WY)^{5/12}\asymp   {Y^{1/2}W^{1/4} +  W^{1/2}Y^{1/4} };
\end{split}
\end{displaymath}
in particular, we can drop the dyadic partition and see that the multiple integral converges in the same sense as $\mathcal{F}_3$ as an iterated integral.} 

 {We proceed to prove the decay bound for $a \gg 1$. We start with portions of the type  $Y \asymp W$, which by the previous display contribute $$\preccurlyeq W^{3/4} \preccurlyeq |a|^{-3/2}.$$
From now on we assume (by symmetry without loss of generality) that $Y \leq cW \preccurlyeq 1$ for some sufficiently small constant $c$. If $|t_1|, |t_2| \preccurlyeq 1$, there is nothing to show. Let us next consider the portion $|t_1 + t_2| \preccurlyeq 1$. Here we shift to $\sigma_1 = \sigma_2 = -2/3$ and obtain a contribution
$$(WY)^{1/3} Y^{3/4}  \int_{t \preccurlyeq \sqrt{W}/Y} (1 + |t|)^{-1} dt \preccurlyeq W^{1/3} Y^{13/12} \preccurlyeq |a|^{-13/12}.$$
Next we consider the portion $|t_1| \preccurlyeq 1 \preccurlyeq |t_2|$. Here we choose $\sigma_1 = \sigma_2 = -1/2 + \varepsilon$ to obtain $$(WY)^{1/2} Y^{3/4}  \int_{t_2 \preccurlyeq \sqrt{W}/Y} (1 + |t|)^{2\sigma_2} dt  \preccurlyeq Y^{5/4} W^{1/2} \preccurlyeq |a|^{-5/4}.$$
The portion  $|t_2| \preccurlyeq 1 \preccurlyeq |t_1|$ is analogous. From now on we assume $|t_1|, |t_2|, |t_1+ t_2| \geq X^{\varepsilon}$. Then the phase of the product of gamma functions is given by
$$\phi(t_1, t_2) = t_1 \log\frac{|t_1|^3y^2}{e^2|t_1+t_2|w} + t_2   \log\frac{|t_2|^3w^2}{e^2|t_1+t_2|y}$$ 
with
$$\nabla\phi(t_1, t_2) = \left(\begin{matrix} \log \frac{|t_1|^3y^2}{|t_1+t_2|w}\\ \log \frac{|t_2|^3w^2}{|t_1+t_2|y} \end{matrix}\right), \quad \frac{\partial^j}{\partial t_1^j} \phi(t_1, t_2) \ll \frac{1}{|t_1|^{j-1}} +\frac{1}{|t_1+t_2|^{j-1}}, \quad \frac{\partial^j}{\partial t_2^j} \phi(t_1, t_2) \ll \frac{1}{|t_2|^{j-1}} +\frac{1}{|t_1+t_2|^{j-1}} $$
for $j\geq 2$. Let us assume $|t_1| \asymp T_1$, $|t_2| \asymp T_2$, $|t_1 + t_2| \asymp T_0$ with $T_0, T_1, T_2 \geq X^{\varepsilon}$. Applying \cite[Lemma 8.1]{BKY} with
$${\tt U} = T_j, \quad {\tt Q} = {\tt Y} = T_0 + T_j, \quad {\tt R} = 1$$
($j = 1, 2$) we see that the contribution is negligible unless
$$\frac{T_1^3 Y^2}{T_0W} \asymp \frac{T_2^3 W^2}{T_0Y},$$
which by our assumption $Y \leq cW$ for some sufficiently small constant $c$ implies
$$T_1 \asymp \frac{W^{1/2}}{Y}, \quad T_2 \asymp  \frac{1}{W^{1/2}}$$
where $T_1 > c'T_2$ for some constant that we can choose as large as we wish if correspondingly $c$ is sufficiently small. 
We conclude (still for $\sigma_1, \sigma_2 > -1$, $\sigma_1 + \sigma_2 \leq -1$) that the $s_1, s_2$-integral is bounded by
$$T_2^{3\sigma_2 + \frac{3}{2} + 1} T_1^{2\sigma_1 - \sigma_2 } \asymp W^{-2\sigma_2 + \sigma_1 - \frac{5}{4}} Y^{\sigma_2 - 2\sigma_1 }$$
so that the entire expression is at most
$$W^{2\sigma_2 - \sigma_1+1} Y^{2\sigma_1 - \sigma_2 + 1} Y^{3/4} \cdot W^{-2\sigma_2 + \sigma_1 - \frac{5}{4}} Y^{\sigma_2 - 2\sigma_1 } = Y^{7/4} W^{-1/4} \ll Y^{3/2} \preccurlyeq |a|^{-3/2}.$$
This completes the proof. }
\end{proof}

Inserting this back into $\Sigma_6^{(4)}$, we have 
\begin{equation}\label{eulerproduct}
\begin{split}
    \Sigma_6^{(4)}=&X\sum_{\substack{\epsilon_1,\epsilon_2,\eta_1,\eta_2=\pm1\\\epsilon_1\epsilon_2\eta_1\eta_2=1}}\sum_{\nu_0}\mathop{\sum\sum\sum}_{(\tau_{1,0}r_0g_0',\nu_0)=1}\sum_{(f_0,\tau_{1,0}g_0')=1}\mathop{\sum\sum\sum\sum}_{\substack{n_{1,0}'D_{0,1}'D_{1,0}'D_{2,0}\mid (\nu_0\tau_{1,0}r_0f_0g_0')^\infty\\(D_{0,1}',\tau_{1,0}n_{1,0}'r_0D_{1,0}'D_{2,0}g_0')=1\\(D_{1,0}',\nu_0D_{2,0}g_0')=(D_{2,0},\nu_0\tau_{1,0}r_0)=1\\(D_{1,0}',r_0,f_0)=1}}\mathop{\sum\sum\sum\sum\sum\sum\sum}_{\substack{(\nu'  rsD_0'gh_1h_2,\nu_0\tau_{1,0}r_0f_0g_0')=1\\
    (\nu'rsgh_1,D_0'h_2)=1}}\\
    &\times \frac{\mu\left(\tau_{1,0}r_0rsg_0'gh_1h_2\right)\mu(sgh_1)\varphi(\tau_{1,0})(r_0,D_{1,0}')}{\nu_0^3\nu'\tau_{1,0}^6n_{1,0}'r_0^4r^2s^3D_{0,1}'^2D_{1,0}'D_{2,0}D_0'^2f_0^3g_0'^5g^4h_1^4h_2^3m}W_0\Big(\frac{\tau_{1,0}D_{1,0}'g}{\Xi}\Big)W_0\Big(\frac{rh_1}{\Xi'}\Big)W_0\Big(\frac{ {\nu_0\nu'sf_0D_0'}D_{2, 0}}{\Xi'}\Big) {W_0\Big(\frac{\omega'a_0}{\Xi'}\Big)}
    B(r_0r,1)\\
    &\times \sum_{\substack{c_0\mid f_0 \\(c_0,\nu_0D_{0,1}'D_{1,0}')=1}}\sum_{\substack{k_0|\nu_0r_0D_{0,1}'f_0\\(k_0,\frac{c_0r_0D_{2,0}}{(r_0,D_{1,0}')})=1}}\frac{\mu(c_0r_0)\mu(k_0)}{c_0k_0} \sum_{\substack{n_1\mid \nu_0\nu'\tau_{1,0}n_{1,0}'f_0g_0'gh_1\\(n_1,D_{2,0})=1}}\sum_{\left(m_0,\frac{\nu'g}{(n_1,\nu'g)}\nu_0\tau_{1,0}r_0D_0'f_0g_0'h_1h_2\right)=1}\sum_{\alpha_0\mid (rsg^2,m_0n_1)}\\
    &\times \sum_{\alpha'\mid \left(m_0g,\frac{(rsg^2,m_0n_1)}{\alpha_0}\right)}\sum_{\substack{\alpha'\omega'\mid f_0g_0'm_0g\\ \left(\frac{f_0g_0'm_0g}{\alpha'\omega'},\frac{m_0n_1r_0rsg^2}{\alpha_0^2\alpha'^2}\right)=1}}\sum_{\substack{\left(\omega_0,\alpha'\nu_0\tau_{1,0}r_0D_0'f_0g_0'h_1h_2\right)=1\\ m_0n_1g\mid \alpha_0\alpha'\omega_0\omega'\nu_0\nu'\tau_{1,0}n_{1,0}'(m_0n_1,gh_1)}}\sum_{\left(a_0,\frac{m_0n_1rsg^2}{\alpha_0^2\alpha'}\omega'\nu_0\tau_{1,0}r_0D_0'f_0g_0'h_1h_2\right)=1}\\
   & {\times} \sum_{\substack{a'\mid m_0\alpha_0\omega_0A'}}\sum_{\substack{b\mid \alpha_0\omega_0B'\\ \left(b,a_0D_{2,0}\frac{f_0g_0'm_0g}{\alpha'\omega'}\right)=1}}
   \sum_{\substack{d_0\mid \frac{\alpha_0\omega_0\nu's}{(g,\alpha_0\omega_0\nu')}\\ \left(d_0,a'b\frac{m_0n_1rsg^2}{\alpha_0^2\alpha'}\omega'h_1\right)=1 }}\frac{\mu\left(\frac{r_0\omega_0\omega'}{(r_0,D_{1,0}')}\right)\mu(a_0a')\mu(b)\alpha'}{m_0\omega_0a_0^2a'bd_0}\delta\left(\mathcal{M}\mid m\right)B\left(m_0n_1,\frac{rg^3h_1^3m}{(r_0m_0n_1)^2}\right)\\
    &\times \sum_{d\mid a'b\frac{m_0n_1rsg^2}{\alpha_0^2\alpha'}\omega'\frac{\alpha_0\omega_0\nu's}{d_0(g,\alpha_0\omega_0\nu')}\nu_0\tau_{1,0}r_0D_0'f_0g_0'h_1h_2}\frac{\mu(d)}{d}\sum_{D_2'\in \Bbb{Z}}F_4^{\epsilon_1,\epsilon_2,\eta_1,\eta_2}\left(\frac{D_2'Z}{m_0\alpha_0\omega_0a_0d_0d}, {(U_0, U_1, \Upsilon_2 Z)}\right),
    \end{split}
\end{equation}
where 
\begin{equation}\label{AB}
A'=\nu_0\nu'\tau_{1,0}n_{1,0}'r_0sD_{2,0}f_0g_0', \quad B'=\nu_0\nu'\tau_{1,0}n_{1,0}'D_{2,0}f_0g_0'gh_1/n_1,
\end{equation}
with $Z$ as in \eqref{para},  {$U_0, U_1, \Upsilon_2$ as in \eqref{u0u1y2}} and $\mathcal{M}$ as in \eqref{MDef}.

\emph{Remark 1:} Although clear from the derivation, let us justify again that the divisibility condition in the $\delta$-symbol makes sense, i.e.\ the fraction in 
$$\mathcal{M}=\frac{r_0^2n_{1,0}'}{(r_0,n_{1,0}')}\frac{\alpha_0\alpha'\omega_0\omega'\nu_0\nu'\tau_{1,0}(r_0,n_{1,0}')m_0n_1s}{\left(r_0m_0n_1\left(\alpha_0\alpha'\omega_0\omega'\nu_0\nu'\tau_{1,0}n_{1,0}'r_0s,\frac{\alpha'\omega'a'g}{(\alpha'\omega'a',D_{2,0}f_0g_0'm_0g)}\left(\frac{d_0D_{2,0}f_0g_0'm_0g}{(\alpha'\omega'a',D_{2,0}f_0g_0'm_0g)}\right)^\infty\right),rsg^3h_1^2b\right)}$$
is an integer. Indeed, $\mathcal{M} \in \Bbb{N}$ is obvious. To see  {that} we can factor out $r_0^2n_{1, 0}'/(r_0,n_{1,0}')$,  we observe that $rsgh_1$ is coprime to $r_0n_{1, 0}'$ and $b$ is squarefree, so only divisors of $\text{rad}(r_0n_{1, 0}') \mid (\nu_0\tau_{1, 0}r_0f_0g_0' ,b)$ are  {relevant} in the denominator. Moreover, each prime  {factor} of $\text{rad}(r_0n_{1, 0}')$ has multiplicities at most 1 in the denominator. Since $\left(b,\frac{f_0g_0'm_0g}{\alpha'\omega'}\right)=1$ and $(r_0,b)|(r_0,n_{1,0}'f_0)$, we have \begin{align*}
    \text{rad}(r_0n_{1, 0}') \mid (\nu_0\tau_{1, 0}r_0f_0g_0' ,b)\mid \alpha'\omega'\nu_0\tau_{1,0}(r_0,n_{1,0}').
\end{align*}
Together with such prime factors being multiplicity one in the denominator, we have justified that $\frac{r_0^2n_{1,0}'}{(r_0,n_{1,0}')}\mid \mathcal{M}$.

 {\emph{Remark 2:} Let us now discuss the convergence of the term \eqref{eulerproduct}. Recall that the condition $(r_0m_0n_1)^2\mid rg^3h_1^3m$ is recorded in the presence of $B\left(m_0n_1,\frac{rg^3h_1^3m}{(r_0m_0n_1)^2}\right)$. Moreover,  the condition $\mathcal{M}\mid m$ implies that $$\omega_0\nu'\mid rsg^3h_1^3bm\mid \nu'rsg^3h_1^3m \alpha_0\omega_0 \nu_0\nu'\tau_{1, 0}' n_{1, 0}' D_{2, 0} f_0 g_0' gh_1$$ 
and $r_{0}, n_{1, 0}' \leq m \ll 1$. Finally, we have  $\alpha'\alpha_0 \leq rsg^2$. Using  divisor estimates for the variables $c_0, k_0, n_1, m_0, \alpha_0, \alpha', \omega', \omega_0, \nu', a', d_0, d$ and the bound $F_{4}^{\epsilon_1,\epsilon_2,\eta_1,\eta_2}\left(a,(u_0, u_1, u_2)\right) \preccurlyeq  (1 + |a|)^{-1-\varepsilon}$ from Lemma \ref{F4lem}, we need to show the convergence of
\begin{equation}\label{conv}
\begin{split}
 &\underset{\nu_0, \tau_{1, 0}, g_0', f_0, r, s, D_0', g, h_1, h_2, a_0}{\sum\sum\sum\sum\sum\sum\sum\sum\sum\sum\sum}\frac{W_0 (\frac{rh_1}{\Xi'} ) }{(\nu_0 \tau_{1, 0}^3 rsD_0' f_0g_0'^3 g^2h_1^4h_2^2a_0b)^{1-\varepsilon} }\\
 & \sum_{D_{0, 1}' D_{1, 0}', D_{2, 0} \mid (\nu_0 \tau_{1, 0}   f_0 g_0' )^{\infty}} \frac{1 }{(D_{0, 1}'D_{1, 0}')^{1-\varepsilon} }\underset{\omega_0\nu' \mid b \mid \omega_0\nu'}{\sum\sum\sum} \frac{W_0 (\frac{ {\nu_0\nu'sf_0D_0'}D_{2, 0}}{\Xi'} )  W_0 (\frac{ \omega'a_0}{\Xi'} )}{  b^{1-\varepsilon}},  
 \end{split}
\end{equation}
which is  $\preccurlyeq 1$. In fact this computation shows that the contribution of $D_2' \not= 0$ in \eqref{eulerproduct} is $\preccurlyeq X^{2/3}$ (since $Z$ contains a factor $X^{1/3}$).}

\subsection{Postludium: analyzing the weight function}\label{endgame}

We are left with the analysis of $\Sigma_6^{(5)}$, say, where $D'_2 = 0$. In this subsection we take a closer look at the integral $F_4^{\epsilon_1,\epsilon_2,\eta_1,\eta_2}(0, {(U_0, U_1, \Upsilon_2Z))}$ in \eqref{F4Def}. As a first step we will remove the truncation inferred at the very beginning in Subsection \ref{smalld1}, and secondly we will apply Bessel orthogonality in the form of Lemma \ref{ortho}. 


We recall the definition  of $G_{\mu_0}^{\epsilon_1,\epsilon_2}(s_1,s_2)$ in \eqref{vrvkernel}. Together with a change of variables $s_j\mapsto s_j-1$, we have \begin{align*}
    \sum_{\epsilon_1\epsilon_2\eta_1\eta_2=1}F_4^{\epsilon_1,\epsilon_2,\eta_1,\eta_2}&\left(0, {(u_0, u_1, u_2)}\right)=\sum_{\epsilon_1, \epsilon_2 = \pm 1} 2\pi\int_0^\infty \int_{(1/4)}\int_0^\infty\int_{(1/4)} \int_0^\infty \left(\frac{wx}{2\pi y^2}\right)^{1-s_1}\left(\frac{my}{2\pi w^2}\right)^{1-s_2}  \nonumber\\
    &\times  \frac{V(x)}{x}\Big(1-U\Big( {\frac{\Phi(u_0, u_1y, u_2w)
    }{X^{1-\eta}}}\Big)\Big)
    G_{\mu_0}^{\epsilon_1,\epsilon_2}(s_1,s_2)\Phi_{\omega_6}\left(-\epsilon_2\frac{wx}{y^2},-\epsilon_1\frac{my}{w^2}\right)  \mathrm{d} x \frac{\mathrm{d}s_1}{2\pi i}\mathrm{d} y\frac{\mathrm{d}s_2}{2\pi i}\mathrm{d} w
\end{align*}
 {where we only need to know that $u_0u_1 = X^{2/3}$ by \eqref{u0u1y2}.}  
By Lemma \ref{kernelequal} this equals
\begin{align*}
     &\sum_{\epsilon_1, \epsilon_2 = \pm 1}256\pi\int_0^\infty \int_{(1/4)}\int_0^\infty\int_{(1/4)} \int_0^\infty \left(\frac{\pi^2wx}{  y^2}\right)^{1-s_1}\left(\frac{\pi^2my}{ w^2}\right)^{1-s_2}  \nonumber\\
    &\times  \frac{V(x)}{x}\Big(1-U\Big( {\frac{\Phi(u_0, u_1y, u_2w)
    }{X^{1-\eta}}}\Big)\Big)
    G_{\text{sym}}^{-\epsilon_2,-\epsilon_1}((s_1,s_2), -\mu_0)\Phi_{\omega_6}\left(-\epsilon_2\frac{wx}{y^2},-\epsilon_1\frac{my}{w^2}\right)  \mathrm{d} x \frac{\mathrm{d}s_1}{2\pi i}\mathrm{d} y\frac{\mathrm{d}s_2}{2\pi i}\mathrm{d} w.
\end{align*}
We now apply Mellin inversion to the $s_1, s_2$-integral. To justify this, we apply the same reasoning as in the discussion after \eqref{F4Def}: we first insert suitable truncations of the $y, w$-integral, move the $s_1, s_2$-integrals inside, apply Mellin inversion to the formula \eqref{SymKernelMellin}, and remove the truncations. In this way we see that the previous display equals
\begin{align*}
     &\sum_{\epsilon_1, \epsilon_2 = \pm 1}256\pi^5m\int_0^\infty \ \int_0^\infty  \int_0^\infty    V(x)\Big(1-U\Big( {\frac{\Phi(u_0, u_1y, u_2w)
    }{X^{1-\eta}}}\Big)\Big)
     K_{w_6}^{\text{sym}}\Big(\Big(-\epsilon_2\frac{wx}{y^2}, -\epsilon_1\frac{my}{w^2}\Big);-\mu_0\Big)\Phi_{\omega_6}\left(-\epsilon_2\frac{wx}{y^2},-\epsilon_1\frac{my}{w^2}\right)  \mathrm{d} x   \frac{\mathrm{d} y \,\mathrm{d} w}{yw}
\end{align*}
which converges absolutely  {by the bounds of Lemma \ref{lem.Phi6Truncation}  and  \ref{Kw6}a). In fact, at this point we can remove the truncation factor $1 - U$ at a small error. To this end we observe that the values $(y, w)$ that need to be inserted satisfy $\Phi(u_0, u_1y, u_2w) \ll X^{1-\eta}$ and hence by the definition of $\Phi$ in \eqref{defPhi}   in particular $Xy^{3/4} \ll X^{1-\eta}$. Thus by Lemma \ref{lem.Phi6Truncation}  and  \ref{Kw6}a) (the latter of which holds also for $K_{w_6}$) the error is at most
$$\int_{w>0} \int_{y \ll X^{-\frac{4}{3}\eta}} \frac{(wy)^{1/2}}{(1 + y)^A(1 + w)^A} \frac{dy\, dw}{yw} \ll X^{-\frac{2}{3}\eta}. $$}
Now we change variables $\xi =  wx/y^2$, $\eta =  my/w^2$ 
getting
\begin{align*}
     \sum_{\epsilon_1, \epsilon_2 = \pm 1}256\pi^5m\int_0^\infty & \int_0^\infty  \int_0^\infty    V(x)
     K_{w_6}^{\text{sym}} ( (- {\epsilon_2}\xi, -\epsilon_1\eta );-\mu_0 )\Phi_{\omega_6} (-\epsilon_2\xi,-\epsilon_1\eta)  \mathrm{d} x   \frac{\mathrm{d} \xi \,\mathrm{d} \eta}{3\xi \eta} { + O(X^{-\frac{2}{3}\eta})}.
\end{align*}
We can now compute the $x$-integral. Applying finally orthogonality of the Bessel kernel in the form of Lemma \ref{ortho}, we obtain finally
\begin{equation}\label{upsdelta}
\sum_{\epsilon_1\epsilon_2\eta_1\eta_2=1}F_4^{\epsilon_1,\epsilon_2,\eta_1,\eta_2}\left(0, {\Big(\xi_0, \frac{X^{2/3}}{\xi_0}, \ast\Big)}\right) = c m\tilde{V}(1) h(\mu_0)+  {O(X^{-\frac{2}{3}\eta})}
\end{equation}
for some absolute constant $c > 0$.

With this information we return to \eqref{eulerproduct}. It is easy to see  that the removal of the factors
$$ {W_0\Big(\frac{\tau_{1,0}D_{1,0}'g}{\Xi}\Big)W_0\Big(\frac{rh_1}{\Xi'}\Big)W_0\Big(\frac{\nu_0 \nu'sf_0D_0'D_{2, 0}}{\Xi'}\Big)  W_0\Big(\frac{\omega' a_0}{\Xi'}\Big)}$$
incorporates a  small error term.  {This is a similar computation as in \eqref{conv}, except that we are now in the case $D_2' = 0$, so we can bound $F_{4}^{\epsilon_1,\epsilon_2,\eta_1,\eta_2}\left(a,(u_0, u_1, u_2)\right) \preccurlyeq  1$, rather than $\preccurlyeq(1 + |a|)^{-1-\varepsilon}$, which saves a lot of auxiliary variables. The expression corresponding to \eqref{conv} is now (using $\alpha' \leq m_0g$)
\begin{equation*}
\begin{split}
 &\underset{\nu_0, \tau_{1, 0}, g_0', f_0, r, s, D_0', g, h_1, h_2, a_0}{\sum\sum\sum\sum\sum\sum\sum\sum\sum\sum\sum}\frac{W_0 (\frac{rh_1}{\Xi'} ) }{(\nu_0^3 \tau_{1, 0}^5 r^2s^3D_0'^2 f_0^3g_0'^5 g^3h_1^4h_2^3a_0^2)^{1-\varepsilon} }\\
 &\quad\quad\sum_{D_{0, 1}' D_{1, 0}', D_{2, 0} \mid (\nu_0 \tau_{1, 0}   f_0 g_0' )^{\infty}} \frac{1 }{(D_{0, 1}'D_{1, 0}'D_{2, 0})^{1-\varepsilon} }\underset{\omega_0\nu' \mid b \mid \omega_0\nu'}{\sum\sum\sum} \frac{W_0 (\frac{ {\nu_0\nu'sf_0D_0'}D_{2, 0}}{\Xi'} )  W_0 (\frac{\omega'a_0}{\Xi'} )}{ (\nu'\omega_0 b)^{1-\varepsilon}},  
 \end{split}
\end{equation*}
which is conveniently convergent without the weight functions $W_0$. }

 {This implies that} also the error term contributed by $ {X^{-2\frac{2}{3}\eta}}$ in the evaluation \eqref{upsdelta} is negligible. 

Summarizing everything we did in this section, we have proved that 
$$\Sigma_6 = c \tilde{V}(1)h(\mu_0) \mathcal{E} X + O(X^{1- \rho})$$
for some $\rho > 0$, $c \not= 0$, where $\mathcal{E}$ is the expression
\begin{displaymath}
\begin{split}
 &  \sum_{\nu_0}\mathop{\sum\sum\sum}_{(\tau_{1,0}r_0g_0',\nu_0)=1}\sum_{(f_0,\tau_{1,0}g_0')=1}\mathop{\sum\sum\sum\sum}_{\substack{n_{1,0}'D_{0,1}'D_{1,0}'D_{2,0}\mid (\nu_0\tau_{1,0}r_0f_0g_0')^\infty\\(D_{0,1}',\tau_{1,0}n_{1,0}'r_0D_{1,0}'D_{2,0}g_0')=1\\(D_{1,0}',\nu_0D_{2,0}g_0')=(D_{2,0},\nu_0\tau_{1,0}r_0)=1\\(D_{1,0}',r_0,f_0)=1}}\mathop{\sum\sum\sum\sum\sum\sum\sum}_{\substack{(\nu'  rsD_0'gh_1h_2,\nu_0\tau_{1,0}r_0f_0g_0')=1\\
    (\nu'rsgh_1,D_0'h_2)=1}}\\
    &\times \frac{\mu\left(\tau_{1,0}r_0rsg_0'gh_1h_2\right)\mu(sgh_1)\varphi(\tau_{1,0})(r_0,D_{1,0}')}{\nu_0^3\nu'\tau_{1,0}^6n_{1,0}'r_0^4r^2s^3D_{0,1}'^2D_{1,0}'D_{2,0}D_0'^2f_0^3g_0'^5g^4h_1^4h_2^3}   B(r_0r,1)\sum_{\substack{c_0\mid f_0 \\(c_0,\nu_0D_{0,1}'D_{1,0}')=1}}\sum_{\substack{k_0\mid \nu_0r_0D_{0,1}'f_0\\(k_0,\frac{c_0r_0D_{2,0}}{(r_0,D_{1,0}')})=1}}\frac{\mu(c_0r_0)\mu(k_0)}{c_0k_0}\\
    \end{split}
\end{displaymath}
 \begin{displaymath}
\begin{split}  
    &\times\sum_{\substack{n_1\mid \nu_0\nu'\tau_{1,0}n_{1,0}'f_0g_0'gh_1\\(n_1,D_{2,0})=1}}\sum_{\left(m_0,\frac{\nu'g}{(n_1,\nu'g)}\nu_0\tau_{1,0}r_0D_0'f_0g_0'h_1h_2\right)=1}\sum_{\alpha_0\mid (rsg^2,m_0n_1)}\sum_{\alpha'\mid \left(m_0g,\frac{(rsg^2,m_0n_1)}{\alpha_0}\right)}\\
        &\times \sum_{\substack{\alpha'\omega'\mid f_0g_0'm_0g\\ \left(\frac{f_0g_0'm_0g}{\alpha'\omega'},\frac{m_0n_1r_0rsg^2}{\alpha_0^2\alpha'^2}\right)=1}}\sum_{\substack{ (\omega_0,\alpha'\nu_0\tau_{1,0}r_0D_0'f_0g_0'h_1h_2 )=1\\ m_0n_1g\mid \alpha_0\alpha'\omega_0\omega'\nu_0\nu'\tau_{1,0}n_{1,0}'(m_0n_1,gh_1)}}\sum_{\left(a_0,\frac{m_0n_1rsg^2}{\alpha_0^2\alpha'}\omega'\nu_0\tau_{1,0}r_0D_0'f_0g_0'h_1h_2\right)=1}\sum_{\substack{a'\mid m_0\alpha_0\omega_0A'}}\\
   & \times \sum_{\substack{b\mid \alpha_0\omega_0B'\\ \left(b,a_0D_{2,0}\frac{f_0g_0'm_0g}{\alpha'\omega'}\right)=1}}\sum_{\substack{d_0\mid \frac{\alpha_0\omega_0\nu's}{(g,\alpha_0\omega_0\nu')}\\ \left(d_0,a'b\frac{m_0n_1rsg^2}{\alpha_0^2\alpha'}\omega'h_1\right)=1 }}\frac{\mu (\frac{r_0\omega_0\omega'}{(r_0,D_{1,0}')} )\mu(a_0a')\mu(b)\alpha'}{m_0\omega_0a_0^2a'bd_0}\delta\left(\mathcal{M}\mid m\right)B\left(m_0n_1,\frac{rg^3h_1^3m}{(r_0m_0n_1)^2}\right)\sum_{d\mid xD_0'}\frac{\mu(d)}{d}
    \end{split}
\end{displaymath}
with $\mathcal{M}$ as in \eqref{MDef}, $A', B'$ as in \eqref{AB} and 
$$x = a'b\frac{m_0n_1rsg^2}{\alpha_0^2\alpha'}\omega'\frac{\alpha_0\omega_0\nu's}{d_0(g,\alpha_0\omega_0\nu')}\nu_0\tau_{1,0}r_0 f_0g_0'h_1h_2.$$

\subsection{Endgame: the Euler product}\label{eulerprod}
Our final task is to compute  $\mathcal{E}$ as a function of $m$. For our application it is not enough to know that it converges absolutely (which is easy to see), but we need to know that it factors off $B(1, m)$ (which is not easy to see at all). To this end we compute $\mathcal{E}$ as an Euler product. 
To begin with we put
$$y = \nu_0  \tau_{1, 0}   r_0  f_0  g'_0  \nu' r  s  g  h_1  m_0   \omega_0 a_0$$
and compute the $D'_0$-sum
$$\sum_{\substack{d \mid x D_0'\\ (D_0', y) = 1}} \frac{1}{D_0^{'2}} = \frac{(d, x)^2}{d^2} \delta\Big(\Big(\frac{d}{(d, x)}, y\Big) = 1\Big) \prod_{p \nmid y} \Big(1 - \frac{1}{p^2}\Big)^{-1}.$$
In addition we write the  $k_0$-sum as an Euler product and recast $\mathcal{E}$ as
\begin{displaymath}
\begin{split}
 &  \sum_{\nu_0}\mathop{\sum\sum\sum}_{(\tau_{1,0}r_0g_0',\nu_0)=1}\sum_{(f_0,\tau_{1,0}g_0')=1}\mathop{\sum\sum\sum\sum}_{\substack{n_{1,0}'D_{0,1}'D_{1,0}'D_{2,0}\mid (\nu_0\tau_{1,0}r_0f_0g_0')^\infty\\(D_{0,1}',\tau_{1,0}n_{1,0}'r_0D_{1,0}'D_{2,0}g_0')=1\\(D_{1,0}',\nu_0D_{2,0}g_0')=(D_{2,0},\nu_0\tau_{1,0}r_0)=1\\(D_{1,0}',r_0,f_0)=1}}\mathop{\sum\sum\sum\sum\sum\sum }_{\substack{(\nu'  rs gh_1h_2,\nu_0\tau_{1,0}r_0f_0g_0')=1\\
    (\nu'rsgh_1, h_2)=1}} \frac{\mu\left(\tau_{1,0}r_0rsg_0'gh_1h_2\right)\mu(sgh_1)\varphi(\tau_{1,0})(r_0,D_{1,0}')}{\nu_0^3\nu'\tau_{1,0}^6n_{1,0}'r_0^4r^2s^3D_{0,1}'^2D_{1,0}'D_{2,0} f_0^3g_0'^5g^4h_1^4h_2^3}  \\
    & \times B(r_0r,1)\sum_{\substack{c_0\mid f_0 \\(c_0,\nu_0D_{0,1}'D_{1,0}')=1}} \frac{\mu(c_0r_0) }{c_0 }  \prod_{\substack{ p \mid \nu_0r_0D_{0,1}'f_0\\ p \nmid \frac{c_0r_0D_{2,0}}{(r_0,D_{1,0}')}}} \Big(1 - \frac{1}{p}\Big)  \sum_{\substack{n_1\mid \nu_0\nu'\tau_{1,0}n_{1,0}'f_0g_0'gh_1\\(n_1,D_{2,0})=1}}\sum_{\left(m_0,\frac{\nu'g}{(n_1,\nu'g)}\nu_0\tau_{1,0}r_0 f_0g_0'h_1h_2\right)=1}\sum_{\alpha_0\mid (rsg^2,m_0n_1)}\\
    &\times\sum_{\alpha'\mid \left(m_0g,\frac{(rsg^2,m_0n_1)}{\alpha_0}\right)} \sum_{\substack{\alpha'\omega'\mid f_0g_0'm_0g\\ \left(\frac{f_0g_0'm_0g}{\alpha'\omega'},\frac{m_0n_1r_0rsg^2}{\alpha_0^2\alpha'^2}\right)=1}}\sum_{\substack{ (\omega_0,\alpha'\nu_0\tau_{1,0}r_0 f_0g_0'h_1h_2 )=1\\ m_0n_1g\mid \alpha_0\alpha'\omega_0\omega'\nu_0\nu'\tau_{1,0}n_{1,0}'(m_0n_1,gh_1)}}\sum_{\left(a_0,\frac{m_0n_1rsg^2}{\alpha_0^2\alpha'}\omega'\nu_0\tau_{1,0}r_0 f_0g_0'h_1h_2\right)=1}\sum_{\substack{a'\mid m_0\alpha_0\omega_0A'}}\\
   & \times \sum_{\substack{b\mid \alpha_0\omega_0B'\\ \left(b,a_0D_{2,0}\frac{f_0g_0'm_0g}{\alpha'\omega'}\right)=1}} \sum_{\substack{d_0\mid \frac{\alpha_0\omega_0\nu's}{(g,\alpha_0\omega_0\nu')}\\ \left(d_0,a'b\frac{m_0n_1rsg^2}{\alpha_0^2\alpha'}\omega'h_1\right)=1 }}\frac{\mu (\frac{r_0\omega_0\omega'}{(r_0,D_{1,0}')} )\mu(a_0a')\mu(b)\alpha'  }{m_0\omega_0a_0^2a'bd_0 }\delta\left(\mathcal{M}\mid m\right)B\left(m_0n_1,\frac{rg^3h_1^3m}{(r_0m_0n_1)^2}\right)\sum_{(\frac{d}{(d, x)}, y) = 1 }\frac{\mu(d)(d, x)^2}{d^3}\prod_{p \nmid y} \Big(1 - \frac{1}{p^2}\Big)^{-1}.
    \end{split}
\end{displaymath}

To write this as an Euler product, we can assume that every variable is a power of a fixed prime $p$. Since 
$m$ is cubefree, 
it suffices to analyze the cases $m = 1, p, p^2$. 

Computing this Euler product is actually a finite computational problem due to the following observations: by multiplicativity, we can restrict all variables to be powers of a fixed prime $p$. The variables
$$\tau_{1, 0}, r_0, r, s, g_0', g, h_1, h_2, c_0, r_0, \omega_0, \omega', a_0, a', b, d$$
are squarefree because of the various M\"obius functions. In fact, even the product 
\begin{equation}
\label{squarefree}
\tau_{1, 0} r_0 r s g_0' g h_1 h_2
\end{equation}
is squarefree. This implies that
$$v_p(\alpha_0), v_p(\alpha') \leq v_p(rsg^2) \leq 2.$$
Since the arguments in $B(., .)$ are integers, we must have
$(m_0n_1)^2 \mid rg^3h_1^3 m$, so that
$$v_p(n_1), v_p(m_0) \leq \frac{1}{2}v_p(rg^3h_1^2m) \leq \frac{1}{2}(v_p(m) + 3).$$
The condition $\mathcal{M} \mid m$ together with $(\nu_0, rsgh_1) = 1 = (d_0, bh_1) = 1$,  implies 
$$v_p(n_{1, 0}') \leq v_p(m), \quad v_p(\nu_0) \leq v_p(m) + 1, \quad v_p(d_0) \leq v_p(rsg^3) + v_p(m), \quad v_p(\nu') \leq v_p(rsg^3h_1^2) + 1 + v_p(m).$$
The only variables that can possibly have an arbitrarily large $p$-power are the four remaining variables $f_0, D_{0, 1}', D'_{1, 0}, D_{2, 0}$. Here we have
$$(D_{0, 1}', D'_{1, 0}) = (D_{0, 1}', D_{2, 0}) = (D_{1, 0}, D_{2, 0}) = 1$$
and moreover $$(D_{0, 1}', \tau_{1, 0} r_0 r s g_0' g h_1 h_2) = (D_{1, 0}',    r s g_0' g h_1 h_2) = (D_{2, 0},  \tau_{1, 0} r_0 r s   g h_1 h_2) = (f_0,  \tau_{1, 0}   r s g_0' g h_1 h_2) = 1.$$
We observe that as soon as the $p$-adic valuation of the three $D_{*}$- variables is $\geq 1$ or $v_p(f_0) \geq 3$, they do not interact with the other variables and stabilize in the following sense: if all other variables are kept fixed, the contribution of $v_p(f_0) \geq 3$ is 
$$1 + \frac{1}{p^3} + \frac{1}{p^6} + \ldots = \Big(1 - \frac{1}{p^3}\Big)^{-1}$$
times the contribution of $v_p(f_0) = 3$ and similarly for the variables $D_{0, 1}',D_{1, 0}', D_{2, 0}$. Thus we have reduced   the evaluation to a finite problem that can be checked by computer. At the cost of a slightly longer code, it's easiest to split the computation according to whether one of the variables in \eqref{squarefree} is divisible by $p$. It is easy to see that by the above restrictions the term $(...)^{\infty}$ in the definition \eqref{MDef} of $\mathcal{M}$ can be replaced with $(...)^{20}$. 

\noindent {\tt A := m0 + alpha0 + omega0 + nu0 + nu + tau10 + n10 + r0 + s + d20 + 
  f0 + g0;\\
B := alpha0 + omega0 + nu0 + nu + tau10 + n10 + d20 + f0 + g0 + g + 
  h1 - n1;\\
  x := a + b + m0 + n1 + r + s + 2g - 2alpha0 - alpha + omega + 
  alpha0 + omega0 + nu + s - d0 - Min[g, alpha0 + omega0 + nu] + 
  nu0 + tau10 + r0  + f0 + g0 + h1 + h2;\\
M := 2r0 + n10  + alpha0 + alpha + omega0 + omega + nu0 + nu + 
  tau10  + m0 + n1 + s - 
  Min[r + s + 3g + 2h1 + b, 
   r0 + m0 + n1 + 
    Min[alpha0 + alpha + omega0 + omega + nu0 + nu + tau10 + n10 + 
      r0 + s, 
     alpha + omega + a + g - 
      Min[alpha + omega + a, d20 + f0 + g0 + m0 + g] + 
      20(d0 + d20 + f0 + g0 + m0 + g - 
         Min[alpha + omega + a, d20 + f0 + g0 + m0 + g])]];\\
y := nu0 + tau10 + r0 + f0 + g0 + nu + r + s + g + h1 + m0 +  
  omega0 + a0;\\
delta[a\_, b\_] := If[b == 0 \&\& a > 0, 0, 1];\\
P[n\_] := (1 - 1/p\^{}n)\^{}(-1);\\

\noindent summand[m\_] := 
 If[tau10 + r0 + r + s + g0 + g + h1 + h2 <= 1 \&\& c0 + r0 <= 1 \&\& 
   a0 + a <= 1 \&\& 
   r0 + omega0 + omega - Min[r0, d10] <= 1 \&\& (d - Min[d, x]) y == 
    0 \&\& (tau10 + r0 + g0)nu0  == 0 \&\& f0(tau10 + g0) == 0 \&\& 
   d01(tau10 + n10 + r0 + d10 + d20 + g0) == 0 \&\& 
   d10(nu0 + d20 + g0) == 0 \&\& d20(nu0 + tau10 + r0) == 0 \&\& 
   Min[d10, r0, f0] == 
    0 \&\& (nu + r + s   + g + h1 + h2)(nu0 + tau10 + r0 + f0 + g0) == 
    0 \&\& (nu + r + s + g + h1)h2 == 0 \&\& 
   c0(nu0 + d01 + d10) == 0 \&\&   n1 d20  == 0 \&\& 
   m0(nu + g - Min[n1, nu + g] + nu0 + 
       tau10 + r0 + f0 + g0 + h1 + h2) == 0 \&\&
   (f0 + g0 + m0 + g - alpha - omega)(m0 + n1 + r + r0 + s + 2 g - 2alpha0 - 2alpha) == 0 \&\& 
   omega0(alpha + nu0 + tau10 + r0 + f0 + g0 + h1 + h2) == 0 \&\& 
   a0(m0 + n1 + r + s + 2g - 2alpha0 - alpha + omega + nu0 + 
       tau10 + r0 + f0 + g0 + h1 + h2) == 0 \&\& 
   b(a0 + d20 + f0 + g0 + m0 + g - alpha - omega) == 0 \&\& 
   d0(a + b + m0 + n1 + r + s + 2g - 2alpha0 - alpha + omega + 
       h1) == 0 \&\& M <= m \&\& c0 <= f0 \&\&   
   n1 <= nu0 +  nu + tau10 + n10 + f0 + g0 + g + h1 \&\& 
   alpha0 <= Min[r + s + 2g, m0 + n1] \&\& 
   alpha <= Min[m0 + g, r + s + 2g - alpha0, m0 + n1 - alpha0] \&\& 
   alpha + omega <= f0 + g0 + m0 + g \&\& 
   m0 + n1 + g <= 
    alpha0 + alpha + omega0 + omega + nu0 + nu + tau10 + n10 + 
     Min[m0 + n1, g + h1] \&\& a <= A \&\&b <= B \&\& 
   d0 <= alpha0 + omega0 + nu + s - Min[g, alpha0 + omega0 + nu] \&\&
   delta[n10 + d01 + d10 + d20, nu0 + tau10 + r0 + f0 + g0] == 
    1, (-1)\^{}(tau10 + r  + g0  + h2 + c0  + a0 + a + b + d + r0 + 
      omega0 + omega - Min[r0, d10])   p\^{}(-3nu0 - nu - 6tau10 - 
      n10 - 4r0 - 2r - 3s - 2d01 - d10 - d20 - 3f0 - 5g0 - 
      4g - 4h1 - 3h2 + Min[r0, d10] - c0  + alpha - m0 - omega0 - 
      2a0 - a - b - d0 - 3d + 2Min[d, x]) B[m0 + n1, 
    r + 3g + 3h1 - 2(r0 + m0 + n1) + m] B[r0 + r, 0] If[
    tau10 == 0, 1, p\^{}(tau10 - 1) (p - 1)]If[
    y == 0, P[2], 1] If[f0 == 3, P[3], 1] If[d01 == 1, 
    P[2] , 1] If[d10 == 1, P[1] , 1] If[d20 == 1, P[1], 1] If[
    nu0 + r0 + d01 + f0 >= 1 \&\& c0 + r0 + d20   - Min[r0, d10] == 0, 
    1 - 1/p, 1], 0];\\

\noindent Expand[Simplify[Sum[summand[m], \{tau10, 0, 0\}, \{r0, 0, 0\}, \{r, 0, 0\}, \{s, 0, 
      0\}, \{g0, 0, 0\}, \{g, 0, 0\}, \{h1, 0, 0\}, \{h2, 1, 1\},    \{c0, 0, 
      0\},   \{omega, 0, 1\}, \{omega0, 0, 1\}, \{a, 0, 1\}, \{a0, 0, 1\}, \{b, 
      0, 1\}, \{d, 0, 1\},  \{n1, 0, 1\}, \{m0, 0, 1\},  \{n10, 0, 2\}, \{nu0, 
      0, 3\}, \{alpha0, 0, 0\}, \{alpha, 0, 0\}, \{nu, 0, 3\}, \{d0, 0, 
      2\},   \{f0, 0, 0\}, \{d01, 0, 0\}, \{d10, 0, 0\}, \{d20, 0, 0\} ]]  + \\
   Simplify[
    Sum[summand[m], \{tau10, 0, 0\}, \{r0, 0, 0\}, \{r, 0, 0\}, \{s, 0, 
      0\}, \{g0, 0, 0\}, \{g, 0, 0\}, \{h1, 1, 1\}, \{h2, 0, 0\},    \{c0, 0, 
      0\},   \{omega, 0, 1\}, \{omega0, 0, 1\}, \{a, 0, 1\}, \{a0, 0, 1\}, \{b, 
      0, 1\}, \{d, 0, 1\},  \{n1, 0, 2\}, \{m0, 0, 2\},  \{n10, 0, 2\}, \{nu0, 
      0, 3\}, \{alpha0, 0, 0\}, \{alpha, 0, 0\}, \{nu, 0, 5\}, \{d0, 0, 
      2\},   \{f0, 0, 0\}, \{d01, 0, 0\}, \{d10, 0, 0\}, \{d20, 0, 0\} ]] + \\
   Simplify[
    Sum[summand[m], \{tau10, 0, 0\}, \{r0, 0, 0\}, \{r, 0, 0\}, \{s, 0, 
      0\}, \{g0, 0, 0\}, \{g, 1, 1\}, \{h1, 0, 0\}, \{h2, 0, 0\},    \{c0, 0, 
      0\},   \{omega, 0, 1\}, \{omega0, 0, 1\}, \{a, 0, 1\}, \{a0, 0, 1\}, \{b, 
      0, 1\}, \{d, 0, 1\},  \{n1, 0, 2\}, \{m0, 0, 2\},  \{n10, 0, 2\}, \{nu0, 
      0, 3\}, \{alpha0, 0, 2\}, \{alpha, 0, 2\}, \{nu, 0, 6\}, \{d0, 0, 
      5\},   \{f0, 0, 0\}, \{d01, 0, 0\}, \{d10, 0, 0\}, \{d20, 0, 0\} ]] + \\
   Simplify[
    Sum[summand[m], \{tau10, 0, 0\}, \{r0, 0, 0\}, \{r, 0, 0\}, \{s, 0, 
      0\}, \{g0, 1, 1\}, \{g, 0, 0\}, \{h1, 0, 0\}, \{h2, 0, 0\},    \{c0, 0, 
      0\},   \{omega, 0, 1\}, \{omega0, 0, 1\}, \{a, 0, 1\}, \{a0, 0, 1\}, \{b, 
      0, 1\}, \{d, 0, 1\},  \{n1, 0, 1\}, \{m0, 0, 1\},  \{n10, 0, 2\}, \{nu0, 
      0, 3\}, \{alpha0, 0, 0\}, \{alpha, 0, 0\}, \{nu, 0, 3\}, \{d0, 0, 
      2\},   \{f0, 0, 0\}, \{d01, 0, 0\}, \{d10, 0, 0\}, \{d20, 0, 1\} ]]    + \\
   Simplify[
    Sum[summand[m],  \{tau10, 0, 0\}, \{r0, 0, 0\}, \{r, 0, 0\}, \{s, 1, 
      1\}, \{g0, 0, 0\}, \{g, 0, 0\}, \{h1, 0, 0\}, \{h2, 0, 0\},    \{c0, 0, 
      0\},   \{omega, 0, 1\}, \{omega0, 0, 1\}, \{a, 0, 1\}, \{a0, 0, 1\}, \{b, 
      0, 1\}, \{d, 0, 1\},  \{n1, 0, 1\}, \{m0, 0, 1\},  \{n10, 0, 2\}, \{nu0, 
      0, 3\}, \{alpha0, 0, 1\}, \{alpha, 0, 1\}, \{nu, 0, 4\}, \{d0, 0, 
      3\},   \{f0, 0, 0\}, \{d01, 0, 0\}, \{d10, 0, 0\}, \{d20, 0, 0\} ]]   + \\
   Simplify[
    Sum[summand[m],   \{tau10, 0, 0\}, \{r0, 0, 0\}, \{r, 1, 1\}, \{s, 0, 
      0\}, \{g0, 0, 0\}, \{g, 0, 0\}, \{h1, 0, 0\}, \{h2, 0, 0\},    \{c0, 0, 
      0\},   \{omega, 0, 1\}, \{omega0, 0, 1\}, \{a, 0, 1\}, \{a0, 0, 1\}, \{b, 
      0, 1\}, \{d, 0, 1\},  \{n1, 0, 2\}, \{m0, 0, 2\},  \{n10, 0, 2\}, \{nu0, 
      0, 3\}, \{alpha0, 0, 1\}, \{alpha, 0, 1\}, \{nu, 0, 4\}, \{d0, 0, 
      3\},   \{f0, 0, 0\}, \{d01, 0, 0\}, \{d10, 0, 0\}, \{d20, 0, 0\} ]]   + \\
   Simplify[
    Sum[summand[m],    \{tau10, 0, 0\}, \{r0, 1, 1\}, \{r, 0, 0\}, \{s, 0, 
      0\}, \{g0, 0, 0\}, \{g, 0, 0\}, \{h1, 0, 0\}, \{h2, 0, 0\},    \{c0, 0, 
      1\},   \{omega, 0, 1\}, \{omega0, 0, 1\}, \{a, 0, 1\}, \{a0, 0, 1\}, \{b, 
      0, 1\}, \{d, 0, 1\},  \{n1, 0, 1\}, \{m0, 0, 1\},  \{n10, 0, 2\}, \{nu0, 
      0, 3\}, \{alpha0, 0, 0\}, \{alpha, 0, 0\}, \{nu, 0, 3\}, \{d0, 0, 
      2\},   \{f0, 0, 0\}, \{d01, 0, 0\}, \{d10, 0, 1\}, \{d20, 0, 0\} ]]  + \\
   Simplify[
    Sum[summand[m],  \{tau10, 1, 1\}, \{r0, 0, 0\}, \{r, 0, 0\}, \{s, 0, 
      0\}, \{g0, 0, 0\}, \{g, 0, 0\}, \{h1, 0, 0\}, \{h2, 0, 0\},    \{c0, 0, 
      0\},   \{omega, 0, 1\}, \{omega0, 0, 1\}, \{a, 0, 1\}, \{a0, 0, 1\}, \{b, 
      0, 1\}, \{d, 0, 1\},  \{n1, 0, 1\}, \{m0, 0, 1\},  \{n10, 0, 2\}, \{nu0, 
      0, 3\}, \{alpha0, 0, 0\}, \{alpha, 0, 0\}, \{nu, 0, 3\}, \{d0, 0, 
      2\},   \{f0, 0, 0\}, \{d01, 0, 0\}, \{d10, 0, 1\}, \{d20, 0, 0\} ]]  + \\
   Simplify[
    Sum[summand[m],   \{tau10, 0, 0\}, \{r0, 0, 0\}, \{r, 0, 0\}, \{s, 0, 
      0\}, \{g0, 0, 0\}, \{g, 0, 0\}, \{h1, 0, 0\}, \{h2, 0, 0\},    \{c0, 0, 
      1\},   \{omega, 0, 1\}, \{omega0, 0, 1\}, \{a, 0, 1\}, \{a0, 0, 1\}, \{b, 
      0, 1\}, \{d, 0, 1\},  \{n1, 0, 1\}, \{m0, 0, 1\},  \{n10, 0, 2\}, \{nu0, 
      0, 3\}, \{alpha0, 0, 0\}, \{alpha, 0, 0\}, \{nu, 0, 3\}, \{d0, 0, 
      2\},   \{f0, 0, 3\}, \{d01, 0, 1\}, \{d10, 0, 1\}, \{d20, 0, 1\}]]]
}


In this way we obtain for $m=1$ the $p$-th Euler factor
$$    \Big(1 - \frac{1}{p^3} - \frac{1}{p^4} + \frac{1}{p^6} \Big)B(1, 1)^2  -\frac{(p-1)B(1, p)B(p, 1)}{p^3}+ \frac{(p-1)B(p, p)}{p^4}  =    \Big(1 + \frac{1}{p^2}\Big)\Big(1 - \frac{1}{p^2}\Big)^2 - \frac{1}{p^2} \Big(1 - \frac{1}{p}\Big)^2  B(p, p) $$
by the Hecke relation $B(1, p) B(p, 1) = 1 + B(p, p)$, $B(1, 1) = 1$. 

Similarly, for $m=p$ we obtain the $p$-th Euler factor
\begin{displaymath}
    \begin{split}
&  (B(1, 1)B(1, p)\Big(1 - \frac{1}{p^4} + \frac{1}{p^6}\Big) +( B(1, p^2)B(p, 1)  + B(p, 1)^2\Big( \frac{1}{p^3} - \frac{1}{p^2} \Big)   + B(1, 1)\big(B(p, p^2) + B(p^2, 1)\big)  \Big( \frac{1}{p^3} - \frac{1}{p^4} \Big)  \\
& =   B(1, p) \Big( \Big(1 + \frac{1}{p^2}\Big)\Big(1 - \frac{1}{p^2}\Big)^2 - \frac{1}{p^2} \Big(1 - \frac{1}{p}\Big)^2 B(p, p)\Big)
\end{split}
\end{displaymath}
using the Hecke relations $$B(p, 1)^2 = B(p^2, 1) + B(1, p), \quad B(p, p^2) = B(1, p) B(p, p) - B(p^2, 1) - B(1, p), \quad B(1, p^2) B(p, 1) + B(p^2, 1) = B(1, p) B(p, p).$$

Finally, for $m = p^2$ we obtain the $p$-th Euler factor
$$\Big(1 + \frac{1}{p^6} - \frac{2}{p^4}\Big)B(1, p^2) + \Big(\frac{1}{p^3} - \frac{1}{p^4}\Big) B(p, 1) + \Big(\frac{1}{p^3} - \frac{1}{p^2}\Big) (B(1, p^3)B(p, 1) + B(p, 1) B(p, p))+ \Big(\frac{1}{p^3} - \frac{1}{p^4}\Big) (B(p^2, p) + B(p, p^3)).$$
Using the Hecke relations
$$B(p, 1)(B(1, p^3) +  B(p, p)) = B(1, p^2)(B(p, p) + 1), \quad B(p, p^3) + B(p^2, p) = B(1, p^2)(B(p, p) - 1) - B(p, 1)$$
this equals
$$  B(1, p^2) \Big( \Big(1 + \frac{1}{p^2}\Big)\Big(1 - \frac{1}{p^2}\Big)^2 - \frac{1}{p^2} \Big(1 - \frac{1}{p}\Big)^2 B(p, p)\Big)$$
as desired. 

This coincides with \eqref{euler-c} and is an excellent way to double-check the long and tedious computation (and needless to say, we found a number of minor errors in this way). This completes the proof of Theorem \ref{thm2}. 

\section{Proof of Theorem \ref{thm1}}

By a routine argument we can deduce Theorem \ref{thm1} from Theorem \ref{thm2}, cf.\ \cite[Section 3]{Ve-thesis}, although we need to be a little careful.  The key point is to use the flexibility of the test function $h$ and the parameter $m$ to single out a specific form $\pi_0 \in \mathcal{B}$ in the sum \eqref{sm}. To this end, given a spectral parameter $\mu_0 \in (i\Bbb{R})^3$, we choose $h = h_Z$ as in \eqref{hz}.  
From the discreteness of the discrete spectrum, as well as  the Rankin-Selberg bound for $A_{\pi}(1, n)$ and the estimate $|B(1, n)| \leq d_3(n) $ from the Ramanujan bound which imply
$$\sum_n B(1,n)A_\pi(1,n) V\left(\frac{n}{X}\right)\frac{1}{L(1, \text{Ad}, \pi)} \ll X  (\log X)^{O(1)} \| \mu_{\pi} \|^{O(1)},$$
 we conclude
\begin{displaymath}
    \begin{split}
&\sum_{\substack{\pi \in \mathcal{B} \\\mu_{\pi} = \mu_0}} \sum_n B(1,n)A_\pi(1,n)\overline{A_\pi(1,m)}V\left(\frac{n}{X}\right)\frac{1}{L(1, \text{Ad}, \pi)} \\
&= \sum_{\pi \in \mathcal{B} } \sum_n B(1,n)A_\pi(1,n)\overline{A_\pi(1,m)}V\left(\frac{n}{X}\right)\frac{h_Z(\mu_\pi)}{L(1, \text{Ad}, \pi)} + O\Big( X (\log X)^{O(1)}\exp(-\delta Z)\Big) 
\end{split}
\end{displaymath}
for some $\delta > 0$ depending on $\mu_0$. By Theorem \ref{thm2} this implies 
$$\sum_{\substack{\pi \in \mathcal{B} \\\mu_{\pi} = \mu_0}} \sum_n B(1,n)A_\pi(1,n)\overline{A_\pi(1,m)}V\left(\frac{n}{X}\right)\frac{1}{L(1, \text{Ad}, \pi)} = C B(1, m)X + O_{Z}(X^{1-\delta}) + O\Big(  X(\log X)^{O(1)}\exp(-\delta Z)\Big)$$
for cubefree $m$ and some $\delta > 0$, maybe different from the preceding $\delta$, and some constant $C \not= 0$. The dependence on $Z$ in the first error term is of the form $\exp(BZ)$ for some constant $B$, as follows from Lemmas \ref{lem.Phi4Truncation} and \ref{lem.Phi6Truncation}. Choosing $Z = \eta \log X$ for a sufficiently small $\eta = \eta (\delta, B)$,  we obtain
\begin{equation}\label{C}
\lim_{X \rightarrow \infty} \frac{1}{X} \sum_{\substack{\pi \in \mathcal{B} \\\mu_{\pi} = \mu_0}} \sum_n B(1,n)A_\pi(1,n)\overline{A_\pi(1,m)}V\left(\frac{n}{X}\right)\frac{1}{L(1, \text{Ad}, \pi)} = C B(1, m)
\end{equation}
for fixed cubefree $m \in \Bbb{N}$. 
The space $\mathcal{B}(\mu_0)$ of forms $\pi$ with $\mu_{\pi} = \mu_0$ is a finite dimensional vector space. We can therefore use the parameter $m$ to single out a specific form in this vector space. To this end note that the sequence of $(A_{\pi}(1, m))_{\mu^2(m)=1}$ characterizes a newform $\pi \in \mathcal{B}(\mu_0)$ by Rankin-Selberg theory. Indeed, if $\pi' \in \mathcal{B}(\mu_0)$ has the same Fourier coefficients on squarefree numbers, then 
$$\sum_{m } \mu^2(m)\frac{|A_{\pi}(1, m)|^2}{m^s} = L(s, \pi \times \tilde{\pi}') P(s) = L(s, \pi \times \tilde{\pi})P(s) $$
where $$P(s) = \prod_p \Big(1 - \frac{2 + 2A_{\pi}(p, p) + A_{\pi}(p^2, p^2)}{p^{2s}} + \ldots - \frac{A_{\pi}(p, p) + 1}{p^{10s}}\Big)$$ 
is an  Euler product that is absolutely convergent (and nonzero) at $s=1$. Comparing poles at $s=1$ we must have $\pi = \pi'.$ Thus the matrix $$\big(A_{\pi}(1, m)\big)_{\substack{\mu^2(m) = 1\\\mu_{\pi} = \mu_0}} $$  (with infinitely many rows and finitely many columns) has full rank, and  
for each $\pi_0 \in \mathcal{B}(\mu_0)$ we can find a finite sequence of $\alpha_m \in \Bbb{C}$, $\mu^2(m) = 1$,  
with
$$\sum_m \alpha_m \overline{A_{\pi}(1, m)} = \delta_{\pi = \pi_0}$$
for every   $\pi  \in \mathcal{B}(\mu_0)$. We conclude that
$$\lim_{X \rightarrow \infty}\frac{1}{X}   \sum_n B(1,n)A_{\pi_0}(1,n) V\left(\frac{n}{X}\right)\frac{1}{L(1, \text{Ad}, \pi_0)} = C\sum_{m} \alpha_m B(1, m)=: c(\pi_0)  \quad \text{(say)}$$
exists for every  $\pi_0 \in \mathcal{B}(\mu_0)$. 

Now let $m$ be a cubefree number. Then the $(1, m)$-th
Fourier coefficient 
of the finite linear combination 
$$\Pi := \sum_{ \pi \in \mathcal{B}( \mu_0)}  \frac{c(\pi)}{C} \tilde{\pi}$$
with $C$ as in \eqref{C} is given by
\begin{equation}\label{linear}
   \sum_{ \pi \in \mathcal{B}(\mu_0)} \frac{c(\pi) }{C}  \overline{ A_{\pi}(1, m) }= B(1, m).
   \end{equation}
Following \cite{MR1669479}, we call two multiplicative functions $f(n), g(n)$ equivalent, if $f(p^n) = g(p^n)$ for all $n\in \Bbb{N}_0$ and all but finitely many primes, otherwise inequivalent. From \cite[Theorem 2]{MR1669479} we know that pairwise inequivalent functions are linearly independent. By strong multiplicity one, the sequences 
$$(A_{\pi}(1, m))_{m \text{ cubefree}}$$
for $\pi \in \mathcal{B}(\mu_0)$ (viewed as multiplicative functions that vanish on prime powers $p^3, p^4, \ldots$) are pairwise inequivalent. Hence the linear combination \eqref{linear} can only hold if there is only one non-zero term on the left hand side, in other words 
$\overline{A_{\pi_0}(1, m)} = B(1, m)$
for some $\pi_0 \in \mathcal{B}(\mu_0)$ and all cubefree $m$. Since both sides satisfy the Hecke relations and the ${\rm GL}(3)$ Hecke algebra is generated by $T(p)$ and $T(p^2)$, we must have $$\overline{A_{\pi_0}(n, m)} = B(n, m)$$ for all $n, m$, hence $\bar{\pi}_0$ is the desired automorphic form.

\printbibliography

\end{document}